\newcommand{\set}[2]{\left\{ #1 \;\middle|\; #2 \right\}}
\newcommand{\bigset}[2]{\bigg\{ #1 \;\bigg|\; #2 \bigg\}}
\numberwithin{equation}{section}
\theoremstyle{definition}
\newtheorem{theorem}{Theorem}[section]
\newtheorem{definition}[theorem]{Definition}
\newtheorem{convention}[theorem]{Convention}
\newtheorem{lemma}[theorem]{Lemma}
\newtheorem{proposition}[theorem]{Proposition}
\newtheorem{corollary}[theorem]{Corollary}
\newtheorem*{theorem*}{Theorem}
\newtheorem{problem}[theorem]{Problem}
\theoremstyle{remark}
\newtheorem{remark}[theorem]{Remark}
\newtheorem{example}[theorem]{Example}
\newcommand{\blossom}{\text{\ding{96}}}
\newcommand{\vdim}{\underline{\dim}}
\newcommand{\pref}{\mathsf{prefix}}
\newcommand{\suf}{\mathsf{suffix}}
\DeclareMathOperator{\inc}{in}
\DeclareMathOperator{\out}{out}
\DeclareMathOperator{\In}{Pref}
\DeclareMathOperator{\Out}{Suff}
\DeclareMathOperator{\indeg}{indeg}
\DeclareMathOperator{\outdeg}{outdeg}
\DeclareMathOperator{\Routes}{\mathcal{R}}
\DeclareMathOperator{\head}{head}
\DeclareMathOperator{\tail}{tail}
\DeclareMathOperator{\Hom}{Hom}
\DeclareMathOperator{\Adj}{Adj}
\DeclareMathOperator{\conv}{conv}
\DeclareMathOperator{\spann}{span}
\newcommand{\cyclicflows}{\F^{\circlearrowleft}}
\newcommand{\funny}{\F_{\bf 0}^{\circlearrowleft}}
\newcommand{\bdd}{\widehat{\F}}
\DeclareMathOperator{\IndShift}{\mathcal{T}}
\DeclareMathOperator{\KentuckyBij}{\Gamma}
\DeclareMathOperator{\RoutesToStrings}{\Phi}
\DeclareMathOperator{\coloring}{\omega}
\newcommand{\tcb}{\textcolor{blue}}
\newcommand{\tcr}{\textcolor{red}}
\newcommand{\doppel}{\mathcal{M}}
\newcommand{\F}{\mathcal{F}}
\newcommand{\Fred}{\F_{\rm red}}
\newcommand{\cE}{\mathcal{E}}
\newcommand{\exc}{\spann_\RR(\cE)}
\newcommand{\ZZ}{\mathbb{Z}}
\newcommand{\NN}{\mathbb{N}}
\newcommand{\KK}{\mathbb{K}}
\newcommand{\RR}{\mathbb{R}}
\newcommand{\cyclegraph}{H}
\newcommand{\route}{\rho}
\newcommand{\Cliques}{\mathcal{K}}
\newcommand{\MaxCliques}{\Cliques_{\rm max}}
\newcommand{\Cycles}{{\sf Cycles}}
\newcommand{\ve}{\mathbf{e}}
\newcommand{\vx}{\mathbf{x}}
\newcommand{\DKK}{{\sf DKK}}
\newcommand{\DKKred}{\DKK_{\rm red}}
\newcommand{\NK}{{\sf NK}}
\newcommand{\NKred}{\NK_{\rm red}}
\newcommand{\SP}{{\sf SP}}
\newcommand{\HN}{{\sf HN}}
\newcommand{\Normal}{\mathcal{N}}
\newcommand{\gfan}{\pmb{g}{\sf fan}}
\newcommand{\Top}{\operatorname{\sf Top}}
\newcommand{\Bot}{\operatorname{\sf Bot}}
\newcommand{\peak}{\operatorname{\sf peaks}}
\newcommand{\deep}{\operatorname{\sf valleys}}
\newcommand{\h}{\operatorname{\sf c}}
\newcommand{\rep}{\operatorname{rep}}
\DeclareMathOperator{\vol}{vol}
\newcommand{\cyclohedron}{{\mathcal{C}}}
\newcommand{\sources}{{\mathsf{sources}}}
\newcommand{\sinks}{{\mathsf{sinks}}}
\newcommand{\internal}{{\mathsf{internal}}}
\newcommand{\ool}{\overline{\overline{\Lambda}}}
\newcommand{\blackXgrah}{$\bm{\diagup}\kern-3.3mm\bm{\diagdown}$}
\newcommand{\blackXXgraph}{{\raisebox{-.8mm}{\blackXgrah}\kern-1.7mm\raisebox{.8mm}{\blackXgrah}}}
\newcommand{\colorfulX}{$\color{blue}{\bm{\diagup}}\kern-3.3mm\color{red}{\bm{\diagdown}}$}
\newcommand{\colorfulXXgraph}{{\raisebox{-.8mm}{\colorfulX}\kern-1.7mm\raisebox{.8mm}{\colorfulX}}}
\newcommand{\colorfulHashGraph}{{\kern2mm\raisebox{-.6mm}{\colorfulXXgraph}\kern-6.5mm\raisebox{1.3mm}{\colorfulXXgraph}\kern2mm}}
\newcommand{\TheBlackCycletikz}{\begin{tikzpicture}[scale=.15]
            \draw[thick] (0,0) circle (25pt);
            \draw[thick] ( 45:10pt) -- ( 45:40pt);
            \draw[thick] (135:10pt) -- (135:40pt);
            \draw[thick] (225:10pt) -- (225:40pt);
            \draw[thick] (315:10pt) -- (315:40pt);
        \end{tikzpicture}}
\newcommand{\TheCycletikz}{\begin{tikzpicture}[scale=.15]
            \draw[thick,red] (0,0) circle (25pt);
            \draw[thick,blue] ( 45:10pt) -- ( 45:40pt);
            \draw[thick,blue] (135:10pt) -- (135:40pt);
            \draw[thick,blue] (225:10pt) -- (225:40pt);
            \draw[thick,blue] (315:10pt) -- (315:40pt);
        \end{tikzpicture}}
\newcommand{\TheBlackCycle}[1]{\raisebox{-.6mm}{$\TheBlackCycletikz_{#1}$}}
\newcommand{\TheCycle}[1]{\raisebox{-.6mm}{$\TheCycletikz_{#1}$}}
\DeclareMathOperator{\nf}{netflow}
\definecolor{defcolor}{HTML}{e2062c}
\newcommand{\textnew}[1]{\textit{\textbf{\color{defcolor}{#1}}}}
\newcommand{\mathnew}[1]{{\color{defcolor}#1}}
\title[Flows on graphs with cycles, locally gentle algebras, and the Mutoperhedron]{Flow cones of graphs with cycles,\\ locally gentle algebras, and the Mutoperhedron}
\author[Abram]{Antoine Abram}
\address[A.~Abram]{Université de Liège, Belgique}
\email{antoine.abram@uliege.be}
\urladdr{\url{https://antoineabram.codeberg.page}}
\author[Bastidas]{Jose Bastidas}
\address[J.~Bastidas]{LACIM, Université du Québec à Montréal, Canada}
\email{bastidas.math@proton.me}
\urladdr{\url{https://bastidas-jose.codeberg.page}}
\author[Dequêne]{Benjamin Dequêne}
\address[B.~Dequêne]{School of Mathematics, University of Leeds, UK}
\email{b.d.dequene@leeds.ac.uk}
\urladdr{\url{https://sites.google.com/view/benjamin-dequene/}}
\author[Morales]{Alejandro H. Morales}
\address[A. H. Morales]{LACIM, Université du Québec à Montréal, Canada}
\email{morales\_borrero.alejandro@uqam.ca}
\urladdr{\url{https://sites.google.com/view/ahmorales}}
\author[Park]{GaYee Park}
\address[G. Park]{Department of Mathematics, Dartmouth College, USA}
\email{gayee.park@dartmouth.edu}
\urladdr{\url{https://sites.google.com/view/gayeepark}}
\author[Thomas]{Hugh Thomas}
\address[H. Thomas]{LACIM, Université du Québec à Montréal, Canada}
\email{thomas.hugh\_r@uqam.ca}
\urladdr{\url{https://hughrthomas.github.io/}}
\begin{document}

\begin{abstract}
Flow cones of a directed acyclic graph admit a family of unimodular triangulations given by Danilov, Karzanov, and Koshevoy (DKK) whose normal fans are related to (generalizations) of the associahedron and permutohedron. A correspondence between these triangulations for certain graphs and maximal cones of a $g$-vector fan of a gentle quiver associated to the graph was discovered by von Bell, Braun, Bruegge, Hanely, Peterson, Serhiyenko, and Yip in 2022. This correspondence has been fruitful in uncovering lattice structures in the triangulations. We start by showing that this correspondence is actually a linear isomorphism. We then consider flow cones of certain graphs with cycles. For this case, we give a DKK-like triangulation of the cone, and extend the correspondence to the finite $g$-vector fan of a corresponding locally gentle quiver. In addition, we extend to cyclic graphs a mysterious result of Postnikov--Stanley and Baldoni--Vergne, giving the volume of flow polytopes of acyclic graphs as the number of certain integer flows on the same graph.

We illustrate our results with a two-parameter family of cyclic graphs that includes a cycle graph and nested 2-cycles as special cases. We show that the fans of its DKK-like triangulations are respectively isomorphic to the normal fan of the cyclohedron and of a new polytope with the same $f$-vector but different combinatorial type than the permutohedron, which we call the mutoperhedron.
\end{abstract}

\vspace*{-1cm}
\maketitle
\setcounter{tocdepth}{1}
\tableofcontents
\vspace*{-1cm}

\section{Introduction}

Representation theory of quivers studies a specific category associated to a directed graph. One might therefore imagine that there would be plentiful connections between the representation theory of quivers and other topics in the study of directed graphs. Up until recently, that has not been true --- indeed, there has been scarcely any communication between the two subjects. Happily, this is no longer the case. This paper is a contribution to a quite new connection between the representation theory of gentle quivers and the study of flows in directed graphs. This connection was first made in \cite{Kentuckygentle} by a research group at the University of Kentucky, of von Bell, Braun, Bruegge, Hanely, Peterson, Serhiyenko, and Yip, and has since been further developed in \cite{BerSer_Wilting,BERGGREN} by Berggren--Serhiyenko and Berggren, respectively.

We will now provide a brief introduction to both sides of this connection. Full details of definitions can be found later in the paper.

On the one hand, we are interested in flows in a directed graph $G=(V,E)$. Typically, one assumes that $G$ has no cycles; however, as we will show, it can be productive to relax that assumption. The set $V$ of vertices is partitioned into three disjoint sets: sources, sinks, and internal vertices. A \textnew{balanced flow} is an assignment of a non-negative real number weight to each edge of $G$, such that at any internal vertex, the sum of the weights of the incoming edges equals the sum of the weights of the outgoing edges. The collection of all balanced flows forms a cone $\F^+(G)$, and a particular slice of this cone gives rise to a polytope called the \textnew{flow polytope} $\F_1(G)$. This cone and polytope have been the subject of considerable study \cite{BV,berggren2025framingtriangulations,braun2024volumeinequalitiesflowpolytopes,s-permutahedra,Hille,MeszarosMorales,Meszaros-StDizier,RietschWilliams}.  In particular, one is interested in triangulating $\F^+(G)$, i.e., in dividing it into simplicial, unimodular cones \cite{Kentuckygentle,braun2025equatorialflowtriangulationsgorenstein,GonzalezHanusaYip,MMSt,vonBellCeballos,vonBellGonzalezCetinaYip}.
Danilov, Karzanov, and Koshevoy constructed a family of such triangulations \cite{DKK}. These triangulations depend on some auxiliary data called a \textnew{framing}; for now, we shall ignore this. We write $\DKK(G)$ for the triangulation of $\F^+(G)$, which is itself a polyhedral fan. This fan has certain rays that are contained in every maximal cone. It is natural to quotient $\DKK(G)$ by the subspace generated by these rays, obtaining a reduced fan $\DKKred(G)$.

Suppose that the graph $G$ has the property that each internal vertex has two incoming arrows and two outgoing arrows.
The key insight of \cite{Kentuckygentle} is that, given a choice of \textnew{ample framing} for $G$, it is possible to associate to $G$ a gentle quiver $(Q_G,R_G)$, defining a finite-dimensional algebra. Then, the maximal cones of $\DKK(G)$ (or of $\DKKred(G)$) are in natural correspondence with the maximal cones of the $g$-vector fan of $(Q_G,R_G)$.

The $g$-vector fan is an invariant of a finite-dimensional algebra, introduced by Adachi, Iyama, and Reiten in \cite{adachi-iyama-reiten}. Since that time, it has received considerable attention \cite{HPS18, KPY23, PPPP23,PPP}.
Making the connection between triangulations of the flow cone and the $g$-vector fan of a corresponding algebra is not just a curiosity: it also has implications on both sides of the connection. This has already been exploited in \cite{Kentuckygentle} and \cite{BerSer_Wilting}.

On the flow cone side, the $g$-vector fan is known to be simplicial and unimodular, so, as we shall discuss further below, this provides a new proof of unimodularity of the DKK triangulation for these graphs.  Also, and more significantly, it suggests the possibility of widening the range of graphs $G$ that can be considered. While flows on graphs with cycles are just beginning to be considered \cite{BERGGREN}, for certain graphs with cycles, there is still a corresponding quiver with relations $(Q_G,R_G)$. This quiver may not be gentle, but it is still locally gentle. Thanks to the work in \cite{aokiyurikusa,PPPlocg}, we can associate to it a finite $g$-vector fan, which we show contains a triangulation of the flow cone.

On the other hand, for algebraists interested in $g$-vector fans, our results provide new approaches to their study, in particular an algorithmic method for enumerating maximal cones.

We now summarize the contributions of our paper in more detail.

\subsection{Linear isomorphism of $g$-vector fan and $\DKKred(G)$}
\label{ss:intro_linear_iso}

Let $G=(V,E)$ be a connected acyclic directed graph.
A framing $F$ on $G$ is a choice, for each internal vertex $v$ of $G$, of an order for its incoming edges and an order for its outgoing edges.
The framing $F$ is said to be \textnew{ample} if the corresponding reduced fan $\DKKred(G,F)$ is {\em complete}.
In \cite{Kentuckygentle}, the authors show that ample framings exist essentially only for \textnew{full} graphs; that is, graphs in which each internal vertex has exactly two incoming and two outgoing edges, and that such a framing is equivalent to a certain bi-colouring of the edges of $G$. See \cref{ss:DAG_to_gentle} for details.
Moreover, using a different terminology, they prove that the reduced fan $\DKKred(G,F)$ is combinatorially isomorphic to the $g$-vector fan of an associated gentle quiver $(Q_G,R_G)$.

Our first result shows that the combinatorial isomorphism between the reduced fan $\DKKred(G,F)$ and $\gfan(Q_G,R_G)$ is, in fact, a {\em linear} isomorphism.  We achieve this by constructing an explicit linear map $\phi$ (\cref{def:phi}) that realizes this isomorphism. This map was independently considered by Berggren in~\cite{BERGGREN}.

\begin{theorem}[\cref{prop:linear-iso}]
    Let $(G,F)$ be an amply framed DAG and $(Q_G,R_G)$ be the associated reduced gentle quiver.
    The map $\phi$ above induces a linear isomorphism between $\DKKred(G,F)$ and $\gfan(Q_G,R_G)$.
\end{theorem}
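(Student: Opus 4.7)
The plan is to leverage the combinatorial isomorphism of \cite{Kentuckygentle} and upgrade it to a linear one via the explicit map $\phi$ from \cref{def:phi}. Since $\phi$ is linear by construction, the goal reduces to three verifications: that $\phi$ is a bijection between the ambient vector spaces containing the two fans, that $\phi$ identifies the ray generators of $\DKKred(G,F)$ with those of $\gfan(Q_G,R_G)$ in accordance with the combinatorial bijection of \cite{Kentuckygentle}, and that this ray-level match extends to cones.

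First, I would check that $\phi$ is a linear bijection of ambient spaces, either by exhibiting an explicit inverse or by a dimension count together with a verification that $\ker \phi = 0$. Passing to $\DKKred$ is essential here: the subspace spanned by the rays common to every maximal cone of $\DKK(G,F)$ must collapse to $0$ in the ambient space of $\gfan(Q_G,R_G)$, whose dimension is governed by the number of simples of $(Q_G,R_G)$. A matching of these dimensions, plus explicit computation that $\phi$ kills exactly the common-ray subspace, yields the desired linear bijection.

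The heart of the argument, and the step I expect to be the main obstacle, is the ray computation: for each route $\route$ of $G$ indexing a ray of $\DKKred(G,F)$, one must evaluate $\phi(\ve_\route)$ and recognize the answer as the $g$-vector of the string module $M_\route$ associated to $\route$ under the bijection between routes and strings induced by the ample framing. This requires unpacking both the formula defining $\phi$ and the explicit description of $g$-vectors of string modules over the gentle algebra $\KK(Q_G,R_G)$, and then matching the two expressions edge by edge along $\route$, keeping careful track of signs determined by the bi-colouring encoding the ample framing.

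Once the ray identification is in place, the conclusion is formal: a linear map that sends the ray generators of one simplicial cone bijectively to the ray generators of another simplicial cone automatically sends the first cone onto the second, and faces to faces. Because the combinatorial isomorphism of \cite{Kentuckygentle} assembles the matched simplicial maximal cones into a complete fan on each side, $\phi$ descends to a linear isomorphism of fans $\DKKred(G,F) \cong \gfan(Q_G,R_G)$, proving the theorem.
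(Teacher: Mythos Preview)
Your plan is correct and matches the paper's own proof essentially step for step: show that $\phi$ annihilates the exceptional routes so that it descends to the reduced flow space, compute $\phi({\bf 1}_\route)$ for each non-exceptional route and identify it with the $g$-vector $\pmb{g}_{\Gamma(\route)}$ via the peak/valley description, and then invoke the combinatorial isomorphism of \cite{Kentuckygentle} to conclude. The only organizational difference is that the paper does not separately verify bijectivity of $\overline{\phi}$ on ambient spaces; this follows automatically once the rays are matched bijectively and both fans are known to be complete.
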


\subsection{$g$-vector fans of locally gentle quivers}

Palu, Pilaud, and Plamondon introduce in \cite{PPP} the reduced non-kissing complex $\NKred(Q,R)$ of a gentle quiver $(Q,R)$ and prove that the $g$-vector fan of $(Q,R)$ is a realization of this complex.
In a follow-up paper, \cite{PPPlocg}, they extend the definition of $\NKred(Q,R)$ to the case where $(Q,R)$ is only locally gentle. In the locally gentle case, they do not show that $\NKred(Q,R)$ can be interpreted as a $g$-vector fan; in fact, they do not show that $\NKred(Q,R)$ admits a realization as a fan.

Subsequently, Aoki and Yurikasa showed in \cite{aokiyurikusa} how to define a $g$-vector fan for the complete gentle algebra defined from a locally gentle quiver $(Q,R)$. We show that indeed $\NKred(Q,R)$ can be interpreted as a $g$-vector fan.

\begin{theorem}[\cref{thm:connection_g-vector_NKred}]
    If $(Q,R)$ is a locally gentle quiver, the $g$-vector fan of the complete gentle algebra defined by $(Q,R)$ is given by a suitable embedding of $\NKred(Q,R)$ into a real vector space.
\end{theorem}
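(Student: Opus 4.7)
The plan is to adapt the proof strategy of Palu--Pilaud--Plamondon \cite{PPP} for ordinary gentle algebras to the complete gentle algebra setting in which the Aoki--Yurikasa $g$-vector fan of \cite{aokiyurikusa} lives. First I would set up a dictionary between walks on $(Q,R)$ and $2$-term presilting complexes over the complete gentle algebra: indecomposable string modules correspond to finite walks, and each walk yields a canonical two-term projective presentation, built from the maximal subwalks ending or starting at its endpoints. This identification is exactly as in the gentle case, and should carry over to the locally gentle case because the gentle relations are imposed locally at each vertex of $Q$, which is the defining feature of ``locally gentle''.

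Second, I would establish two compatibilities. On the level of rays: the vector $\mathbf{g}_w$ that \cite{PPPlocg} attach to a walk $w$ in defining $\NKred(Q,R)$ coincides with the $g$-vector of the corresponding $2$-term complex in the sense of \cite{aokiyurikusa}, because both are read off from the same projective presentation. On the level of adjacency: two walks are non-kissing in the sense of \cite{PPPlocg} if and only if the associated $2$-term complexes are $\Hom$- and $\mathrm{Ext}^1$-orthogonal. This equivalence is verified vertex-by-vertex in the gentle case \cite{PPP}, and since the locally gentle hypothesis imposes the same local constraints, the same argument transfers. Combining these, maximal non-kissing facets match maximal $2$-term silting objects, so the embedding $w\mapsto \mathbf{g}_w$ realizes $\NKred(Q,R)$ as $\gfan(Q,R)$.

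The hardest part, and the genuinely new content beyond \cite{PPP}, will be the finiteness bookkeeping that arises when $Q$ has oriented cycles: the complete gentle algebra is then typically infinite-dimensional, and one must confirm that the Aoki--Yurikasa fan does not acquire extra rays from asymptotic walks or band-type complexes, and that the passage from $\NK(Q,R)$ to $\NKred(Q,R)$ corresponds exactly to collapsing the lineality subspace in the Aoki--Yurikasa construction. Once this matching of ``what counts as a ray'' is pinned down on both sides, the walk-to-module dictionary of the gentle case extends without substantial surprises, and the linear embedding from the second step delivers the desired isomorphism of fans.
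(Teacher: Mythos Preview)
Your plan is a reasonable outline, but it takes a substantially different route from the paper. You propose to work directly with $2$-term presilting complexes over the (possibly infinite-dimensional) complete gentle algebra $\Lambda$, building the walk-to-complex dictionary and verifying the kissing/$\Hom$-orthogonality match in that setting. The paper instead avoids infinite-dimensional silting theory entirely: it invokes Aoki--Yurikusa's result that $\gfan(\Lambda)=\gfan(\overline\Lambda)$ for a specific finite-dimensional quotient $\overline\Lambda$, then passes further to another finite-dimensional quotient $\ool=\KK Q/\langle R,C\rangle$ (killing all oriented cycles) and shows $\gfan(\overline\Lambda)=\gfan(\ool)$ via a result of Adachi on quotients by socles of projective-injectives. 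Since $\ool$ is a finite-dimensional string algebra, the standard $\tau$-tilting/string combinatorics of \cite{PPP,BDMTY20} applies directly, and the only new ingredient is a small modification of the $g$-vector formula (their Lemma~3.12) to account for strings that would complete an oriented cycle---these are precisely what correspond to the infinitely-spiralling walks of \cite{PPPlocg}.

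The ``hardest part'' you flag---ruling out extra rays from band-type complexes and handling asymptotic walks in the infinite-dimensional setting---is exactly what the paper's reduction sidesteps. Your direct approach would essentially require reproving a classification of $2$-term silting objects over $\Lambda$, which is doable (this is part of what \cite{aokiyurikusa} accomplish) but is considerably more work than the paper's quotient trick. One small correction: the passage from $\NK$ to $\NKred$ is not a lineality-space quotient; it simply discards the straight maximal strings, which lie in every facet but do not contribute rays to the $g$-vector fan (their $g$-vectors vanish).
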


If an algebra has only finitely many brick representations, then it is known that its $g$-vector fan is complete, and it is the outer normal fan of a Minkowski sum of {\em Harder--Narasimhan (HN) polytopes} \cite{aoki2024fanspolytopestiltingtheory, JF2}.
We further show that, under good circumstances, this result extends to the locally gentle case.

\begin{proposition}[\cref{prop:locallygentlegvectorfan}]
    If $(Q,R)$ is a locally gentle quiver such that every non-oriented cycle contains a relation, then the $g$-vector fan of $(Q,R)$ is the outer normal fan of a suitable sum of Harder--Narasimhan polytopes.
\end{proposition}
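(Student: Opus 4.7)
My strategy is to reduce to the result of \cite{aoki2024fanspolytopestiltingtheory,JF2}: for an algebra with only finitely many brick modules, the $g$-vector fan is complete and is realized as the outer normal fan of the Minkowski sum of the Harder--Narasimhan polytopes attached to the bricks. I will apply this to the complete gentle algebra $\widehat{A}$ associated with $(Q,R)$, and then transfer the conclusion to $(Q,R)$ via \cref{thm:connection_g-vector_NKred}.

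The first step is to translate the hypothesis into the absence of \emph{bands} in $(Q,R)$. Recall that over a (locally) gentle quiver, a band is a cyclic reduced walk that does not factor through any relation at consecutive steps and is not a proper power of a shorter such walk. Every band traces out a non-oriented cycle in the underlying graph. Hence, if every non-oriented cycle contains a relation, $(Q,R)$ admits no bands.

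The second step is to invoke the string/band classification, in the form used by \cite{PPPlocg} to describe facets of $\NKred(Q,R)$, and conclude that in the absence of bands the brick representations are classified by finitely many finite strings. This is where I expect the main technical work: since the complete gentle algebra is generally infinite-dimensional, the classification of bricks must be invoked in its version for complete (pseudo-compact) algebras, and one must verify that no extra bricks arise from infinite strings under the no-band hypothesis. Once finiteness of bricks is in hand, \cite{aoki2024fanspolytopestiltingtheory,JF2} realizes the $g$-vector fan of $\widehat{A}$ as the outer normal fan of the Minkowski sum of the Harder--Narasimhan polytopes of the bricks, and \cref{thm:connection_g-vector_NKred} transports this realization back to the $g$-vector fan of $(Q,R)$, yielding the claim.
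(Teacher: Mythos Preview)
Your overall strategy---reduce to the Fei/Aoki--Higashitani--Iyama--Kase--Mizuno theorem by establishing finiteness of bricks---matches the paper's. But there is a genuine gap in how you execute it.

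The error is in your first step. You assert that ``every band traces out a non-oriented cycle in the underlying graph,'' and hence the hypothesis forces $(Q,R)$ to have no bands. This is false. If $(Q,R)$ is locally gentle but not gentle, then by definition there is an \emph{oriented} cycle with no relation, and the path going once around this cycle is a cyclic string, hence a band. The hypothesis only rules out bands coming from \emph{non-oriented} cycles; bands from oriented cycles survive. Consequently, the complete gentle algebra $\Lambda$ has infinitely many band modules, hence infinitely many bricks, and \cref{thm:JFei} does not apply to it. (There is also the separate issue that \cref{thm:JFei} is stated for finite-dimensional algebras, and $\Lambda$ is infinite-dimensional precisely when such oriented cycles exist.)

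The paper's fix is to apply \cref{thm:JFei} not to $\Lambda$ but to the finite-dimensional quotient $\ool = \KK Q/\langle R,C\rangle$, where $C$ is the ideal generated by the oriented cycles. Quotienting by $C$ kills exactly the bands coming from oriented cycles; the remaining potential bands would come from non-oriented cycles without relations, and those are excluded by hypothesis. Hence $\ool$ has finitely many bricks (\cref{lem:overline-bricks}), \cref{thm:JFei} gives $\gfan(\ool)$ as the normal fan of a sum of HN polytopes, and \cref{thm:connection_g-vector_NKred} identifies $\gfan(\ool)$ with $\gfan(Q,R)$. Your invocation of \cref{thm:connection_g-vector_NKred} is the right bridge, but it should be used to transport the result \emph{from} $\ool$, not from $\Lambda$.
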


As a consequence of this result and the linear isomorphism discussed in \cref{ss:intro_linear_iso},
we obtain a realization of the reduced fan $\DKKred(G,F)$ of an amply framed DAG $(G,F)$ as the normal fan of a Minkowski sum of \emph{order polytopes} of {\em fence posets} (equivalently, a Minkowski sum of Harder--Narasimhan polytopes).

\begin{theorem}[\cref{cor:DKKred-is-polytopal}]
    The reduced fan $\DKKred(G,F)$ is polytopal. Explicitly, it is the normal fan of a Minkowski sum of order polytopes.
\end{theorem}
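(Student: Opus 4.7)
The plan is to chain the results in this section. By \cref{prop:linear-iso}, the map $\phi$ is a linear isomorphism carrying $\DKKred(G,F)$ onto $\gfan(Q_G,R_G)$. Polytopality is preserved under invertible linear maps: if $\gfan(Q_G,R_G)$ is the outer normal fan of a polytope $P$, then $\DKKred(G,F)$ is the normal fan of the image of $P$ under the dual map $(\phi^{-1})^\top$. Consequently, the corollary reduces to showing that $\gfan(Q_G,R_G)$ is polytopal.

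To obtain polytopality of $\gfan(Q_G,R_G)$, I would invoke \cref{prop:locallygentlegvectorfan}. The quiver $(Q_G,R_G)$ produced by the construction of \cref{ss:DAG_to_gentle} is gentle, and in particular locally gentle, so the only thing to check is the hypothesis of the proposition: that every non-oriented cycle of $Q_G$ contains a relation. Granting this, \cref{prop:locallygentlegvectorfan} presents $\gfan(Q_G,R_G)$ as the outer normal fan of a Minkowski sum of Harder--Narasimhan polytopes, from which the polytopality of $\DKKred(G,F)$ follows immediately by the transfer above.

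The main obstacle, then, is the verification of the cycle-relation hypothesis for $(Q_G,R_G)$. One clean route is via completeness: ampleness of the framing $F$ means that $\DKKred(G,F)$ is a complete fan, and since $\phi$ is a linear isomorphism of fans, $\gfan(Q_G,R_G)$ is complete as well. For a gentle algebra, completeness of the $g$-vector fan is known to be equivalent to having only finitely many bricks, which, for gentle quivers, is precisely the condition that every non-oriented cycle contains a relation. Alternatively, one can argue directly from the construction in \cref{ss:DAG_to_gentle}: since $G$ is acyclic, any non-oriented cycle in $Q_G$ must traverse some edge of $G$ in both directions, producing a ``turnaround'' at an internal vertex $v$; the framing at $v$ has been set up precisely so that the two arrows meeting at such a turnaround form a relation of $R_G$, placing a relation on the cycle. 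Either route yields the hypothesis, and hence the corollary.
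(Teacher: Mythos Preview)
Your overall strategy matches the paper's exactly: transfer polytopality through the linear isomorphism $\overline\phi$ of \cref{prop:linear-iso}, then realize $\gfan(Q_G,R_G)$ as the normal fan of a Minkowski sum of HN polytopes once the finitely-many-bricks hypothesis is checked. The paper invokes \cref{thm:JFei} directly rather than \cref{prop:locallygentlegvectorfan} (since $(Q_G,R_G)$ is already gentle, hence finite-dimensional), but this is immaterial.

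One correction to your Route~B: the phrase ``must traverse some edge of $G$ in both directions'' is not right --- a simple undirected cycle in $Q_G$ uses each edge of $G$ once. What you want (and what the paper does, in the paragraph preceding \cref{cor:DKKred-is-polytopal}) is the contrapositive: in a relation-free undirected cycle of $Q_G$, the direction of the arrows changes exactly when the colour changes, and unwinding the construction of $Q_G$ from $G$ one finds that every step of such a cycle is traversed \emph{with} the $G$-orientation. Hence a relation-free cycle in $Q_G$ would yield a directed cycle in $G$, contradicting acyclicity. Equivalently, in your formulation, an undirected cycle in the DAG $G$ must have a vertex $v$ where two incident cycle-edges are both outgoing (or both incoming) in $G$; since the ample framing gives them different colours, in $Q_G$ they become a length-two path of mixed colour, i.e.\ a relation. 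Your Route~A via completeness and $\tau$-tilting finiteness is a valid alternative that sidesteps this computation.
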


\subsection{Triangulations of flow cones of cyclic graphs}
\label{ss:intro-tiangulations_cyclic}

After studying the case of flow cones of directed acyclic graphs, we relax the acyclic assumption and allow full graphs that have directed cycles.
These full cyclic graphs $\cyclegraph$ must have a variant of the ample framing called \textnew{cyclic ample framing} requiring every minimal directed cycle to be monochromatic.
Note that, in his study of flows for graphs with cycles, Berggren \cite{BERGGREN} has the opposite assumption; that is, every directed cycle is required to be non-monochromatic.
We show that the flow cone $\F^+(\cyclegraph)$ of a cyclic graph $\cyclegraph$ with a cyclic ample framing has a DKK-like unimodular triangulation denoted by $\DKK(\cyclegraph)$.

\begin{theorem}[\cref{thm:cyclic-DKK}]
    Let $\cyclegraph$ be a graph with a cyclic ample framing.
    Then, $\DKK(\cyclegraph)$ is a unimodular triangulation of the flow cone $\F^+(\cyclegraph)$ of $\cyclegraph$.
\end{theorem}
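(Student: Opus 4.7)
The plan is to generalize the acyclic DKK framework by treating each monochromatic minimal directed cycle as an extra ``closed route'' that is automatically compatible with every other route. Define a \emph{route} in $\cyclegraph$ to be either a maximal source-to-sink directed path or a minimal directed cycle, and extend the compatibility (non-kissing) relation verbatim from the acyclic setting. Then $\DKK(\cyclegraph)$ would be defined as the simplicial fan whose cones are the positive spans of the characteristic flows of pairwise compatible sets of routes, with maximal cones corresponding to maximal such sets.

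The first key point is to verify that any closed route $C$ is compatible with every other route. At any vertex $v$ shared by $C$ and a route $\route$, monochromaticity of $C$ forces the two edges of $C$ at $v$ to lie on the same side of the framed incoming and outgoing orders at $v$, so no kissing pattern can arise. Consequently every minimal cycle belongs to every maximal compatible collection, and the corresponding rays lie in every maximal cone of $\DKK(\cyclegraph)$. In particular, each maximal cone decomposes as a direct sum of the common ``cycle cone'' $\mathcal{C}$ (spanned by the characteristic flows of the monochromatic minimal cycles) and a maximal cone arising from a compatible family of source-to-sink routes.

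Next I would show that the maximal cones cover $\F^+(\cyclegraph)$ and meet in faces. Given $f \in \F^+(\cyclegraph)$, I would canonically decompose $f$ by greedily subtracting maximal nonnegative multiples of the characteristic flows of the monochromatic minimal cycles until the residual has no directed cycle in its support. The residual is a balanced flow supported on an acyclic subgraph $\cyclegraph_f$ of $\cyclegraph$ that inherits an ample framing from the cyclic one; by the acyclic DKK theorem the residual lies in a unique maximal cone there. Re-adding the extracted cycle flows places $f$ in a unique maximal cone of $\DKK(\cyclegraph)$. Unimodularity then follows from the acyclic DKK unimodularity together with the fact that each monochromatic cycle flow contributes an independent integer direction beyond the span of the acyclic routes.

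The main obstacle will be to prove that the greedy cycle-extraction step is well-defined when minimal cycles share edges or vertices, so that the residual, and hence the containing cone, is independent of the order of extraction. I would address this by using the cyclic ample framing to induce a canonical partial order on the minimal cycles that certifies uniqueness of the residual. A secondary technical point is to verify that the restriction of a cyclic ample framing to the acyclic residual subgraph $\cyclegraph_f$ is again ample, which should follow from a local analysis at the vertices where cycle edges have been removed, combined with the completeness property of the corresponding reduced fan.
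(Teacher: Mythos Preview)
Your overall strategy---extract maximal cycle flows, then apply the acyclic DKK theorem to the residual---is exactly the paper's approach. However, you have misidentified the obstacles, and in doing so missed the one genuine gap.

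First, the ``main obstacle'' you anticipate does not exist. Under a cyclic ample framing, distinct minimal directed cycles are automatically edge-disjoint (indeed they share at most one vertex): if $C$ and $C'$ shared an edge, or two vertices, one could splice them to produce a non-monochromatic minimal cycle, contradicting the framing hypothesis. This is \cref{lem:cyclic_three_obs} in the paper. Consequently the greedy extraction $f_0 = f - \sum_C m_C \mathbf{1}_C$ with $m_C = \min_{e\in C} f(e)$ is well-defined and order-independent for free; no canonical partial order on cycles is needed.

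Second, your ``secondary technical point'' is both false and unnecessary. The framing inherited by the acyclic residual $G$ is in general \emph{not} ample (removing an edge from a cycle leaves a vertex with in- or out-degree~$1$, so $G$ is not even full). Fortunately the acyclic DKK theorem (\cref{thm:DKK}) holds for \emph{any} framed DAG, not just amply framed ones, so this does not matter.

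Third, there is a real gap in your uniqueness argument. Showing that the greedy residual is canonical does not by itself show that every compatible decomposition of $f$ uses exactly $m_C$ copies of each cycle~$C$: a priori some other decomposition could use fewer copies of $C$ and compensate with good routes whose union covers every edge of $C$. The paper closes this gap directly: if a set $K$ of pairwise compatible good routes has $\sum_{\route\in K}\mathbf{1}_\route$ positive on every edge of a minimal cycle $C$, then (after pruning) there exist consecutive edges $e_1,e_2\in C$ and routes $\route,\route'\in K$ with $e_1\in\route\setminus\route'$ and $e_2\in\route'$, and monochromaticity of $C$ forces $\route$ and $\route'$ to be incompatible along their overlap. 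You need this argument (or an equivalent one) to complete the proof.
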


As in the acyclic case, there is a reduced fan $\DKKred(\cyclegraph)$ that is complete whenever $\cyclegraph$ is endowed with a cyclic ample framing.
We show that $\DKKred(\cyclegraph)$ is linearly isomorphic to a subfan of the $g$-vector fan of a locally gentle quiver constructed from $\cyclegraph$ (\cref{thm:quotient-subfan}).
Consequently, the reduced fan $\DKKred(\cyclegraph)$ is the normal fan of a Minkowski sum of projections of Harder--Narasimhan polytopes (\cref{thm:polytope-dual-to-cyclic-gfan}).

\subsection{Cyclic volume integer flows}

In the case of acyclic graphs $G$,
since any unimodular triangulation of the flow cone $\F^+(G)$ restricts to a unimodular triangulation of the flow polytope~$\F_1(G)$,
we can compute the volume $\vol(\F_1(G))$ of the flow polytope by counting the number of maximal simplices in $\DKK(G,F)$.
This contrasts with an earlier intriguing formula by Postnikov--Stanley \cite{PS} and Baldoni--Vergne \cite{BV}, which gives the volume as the number of certain integer flows on $G$ satisfying a variant of the balanced flow condition.
That is, for a directed acyclic graph $G$, we have that
\begin{equation} \label{eq: vol as number int flows}
\vol(\F_1(G)) = \# \F_{\bf 0}^\NN(G),
\end{equation}
where $\# \F_{\bf 0}^\NN(G)$ is the number of integer flows on $G$ with zero flow on the sources of $G$ and net flow at each internal vertex $v$ given by its indegree minus 1 (see \cref{thm:innerflowsindeg}).  Because of this result, we call the integer flows counted on the RHS above \textnew{volume integer flows}. In practice, counting these integer flows is an effective tool to calculate the volume of the flow polytope \cite{CKM,Z}.
Moreover, thanks to the link between DKK triangulations and $g$-vector fans, counting these flows also becomes a way to enumerate maximal cones in these latter fans.
Lastly, Morales--Mészáros--Striker gave in \cite{MMSt} a correspondence between the maximal cones of the DKK triangulation of a framed DAG $(G,F)$ and the volume integer flows on $G$.

We extend \eqref{eq: vol as number int flows} to the case of graphs with cyclic ample framings.
To do this, first we define the \textnew{cyclic volume integer flows} of $\cyclegraph$ and extend the correspondence from \cite{MMSt} to give a bijection between these flows and the maximal cliques in the DKK-like triangulation of $\cyclegraph$ (\cref{prop:cliques-to-cyclic-funny}).
When $\cyclegraph$ has at least one directed cycle, the polyhedron $\F_1(\cyclegraph)$ is unbounded and has no finite volume.
We thus introduce a polyhedral complex called the \textnew{flow complex} of $\cyclegraph$ denoted $\bdd(\cyclegraph)$ by restricting the amount of flow that can loop completely around a cycle.
This complex coincides with the flow polytope whenever $\cyclegraph$ is acyclic, and its volume is consistently given by the number of cyclic volume integer flows of $\cyclegraph$.

\begin{theorem}[\cref{thm:normVolCyclicIntegerFlow}]
    Let $\cyclegraph$ be a directed graph with a cyclic ample framing, then
    \[
        \vol\,\bdd(\cyclegraph) = \#\funny(\cyclegraph),
    \]
    where $\#\funny(\cyclegraph)$ is the number of cyclic volume integer flows of $H$.
\end{theorem}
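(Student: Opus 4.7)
The plan is to extend the proof of the Postnikov--Stanley--Baldoni--Vergne identity \eqref{eq: vol as number int flows} to the cyclic setting, using the DKK-like unimodular triangulation of \cref{thm:cyclic-DKK} and the bijection of \cref{prop:cliques-to-cyclic-funny} as the replacement ingredients for the acyclic case. In the acyclic case one observes that a unimodular triangulation of $\F_1(G)$ has exactly $\vol\,\F_1(G)$ top-dimensional simplices and then bijects these simplices with volume integer flows. The same outline should carry over once the bounded flow complex $\bdd(\cyclegraph)$ is triangulated unimodularly by restricting $\DKK(\cyclegraph)$.

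First, I would show that the unimodular triangulation $\DKK(\cyclegraph)$ of the flow cone $\F^+(\cyclegraph)$ induces a unimodular polyhedral subdivision of $\bdd(\cyclegraph)$. Since $\bdd(\cyclegraph)$ is obtained from $\F^+(\cyclegraph)$ by slicing at netflow~$1$ at the sources and additionally capping the amount of flow looping around each minimal directed cycle at a lattice-rational value, it suffices to verify that intersecting each maximal cone of $\DKK(\cyclegraph)$ with these constraints yields pieces of normalized volume one. Expressing each maximal cone in the basis of its primitive ray generators, I expect the cycle rays and the non-cycle rays to decouple: the non-cycle coordinates range over a standard simplex (from the netflow slicing), while the cycle coordinates range over a unit cube (from the cycle cap). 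A product of a standard simplex and a unit cube is a unimodular lattice polytope, so each piece contributes exactly~$1$ to the total normalized volume, independent of any further unimodular refinement chosen on the cubical factor. Summing over the maximal cones of $\DKK(\cyclegraph)$ then gives $\vol\,\bdd(\cyclegraph)$ as the number of such cones.

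Second, invoking \cref{prop:cliques-to-cyclic-funny} identifies the maximal cones of $\DKK(\cyclegraph)$ (equivalently, the maximal cliques of the DKK-like complex) with the elements of $\funny(\cyclegraph)$, so that
\[
\vol\,\bdd(\cyclegraph) \;=\; \#\{\text{maximal cones of }\DKK(\cyclegraph)\} \;=\; \#\funny(\cyclegraph),
\]
which is the desired identity.

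The main obstacle is justifying the clean decoupling of the ray coordinates claimed in the first step. Concretely, within each maximal cone of $\DKK(\cyclegraph)$, one must show that the rays coming from monochromatic minimal cycles of $\cyclegraph$ form a sublattice complementary to the rays coming from source-to-sink routes, and moreover that the defining inequalities of $\bdd(\cyclegraph)$ become coordinate-wise constraints in this split basis. This relies on the fact that in a cyclic ample framing every minimal directed cycle is monochromatic, which should force the cycle rays to appear in every maximal cone (mirroring the role of the ubiquitous rays quotiented out to form $\DKKred(\cyclegraph)$). Once this local structural picture is established, the volume computation reduces to the bijective count above.
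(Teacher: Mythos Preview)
Your overall two-step outline (restrict the unimodular fan $\DKK(\cyclegraph)$ to $\bdd(\cyclegraph)$, then invoke \cref{prop:cliques-to-cyclic-funny}) matches the paper's, but your first step contains a genuine error coming from a misreading of the definition of $\bdd(\cyclegraph)$.

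The flow complex is \emph{not} obtained by slicing at total source netflow $1$ \emph{and} separately capping the circulation on each cycle. It is defined by the \emph{single} piecewise-linear equation
\[
\sum_{\substack{v \in \sources(\cyclegraph)\\ e \in \out(v)}} f(e) \;+\; \sum_{C \in \Cycles(\cyclegraph)} \min_{e \in C} f(e) \;=\; 1.
\]
Inside a maximal cone $\sigma_K$, write $f = \sum_{\route \in K} a_\route\,{\bf 1}_\route$. A good route contributes exactly $a_\route$ to the first sum, and (using that in every maximal clique each cycle $C$ has a unique edge $e_C$ used by no other route of $K$, as shown in the proof of \cref{prop:cliques-to-cyclic-funny}) one has $\min_{e\in C} f(e) = a_C$. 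So the defining condition becomes simply
\[
\sum_{\route\in K} a_\route = 1,
\]
and $\sigma_K \cap \bdd(\cyclegraph)$ is the \emph{simplex} $\conv\{{\bf 1}_\route : \route \in K\}$, not a product of a simplex with a cube. Equivalently, the paper just checks that ${\bf 1}_\route \in \bdd(\cyclegraph)$ for every $\route\in\Routes(\cyclegraph)$, whence the cliques give a unimodular triangulation of $\bdd(\cyclegraph)$ and each maximal clique contributes~$1$ to the volume.

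Your ``decoupling into simplex~$\times$~cube'' picture is therefore wrong, and it would also give the wrong count: a unit $m$-cube has normalized volume $m!$, so under your description each maximal cone would contribute $|\Cycles(\cyclegraph)|!$, not~$1$. The actual argument is simpler than what you propose; there is no cubical factor to deal with at all.
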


\subsection{Cyclohedron, mutoperhedron, and beyond}

One trend in the study of volumes and DKK triangulations of flow polytopes of acyclic graphs is to find families of graphs whose flow polytopes have interesting volume formulas \cite{CKM,JangKim,Yip2019,Z}. Another more recent trend is to realize geometrically known posets, complexes, and lattices as the dual graphs of DKK triangulations for a suitable choice of a graph and framing \cite{Kentuckygentle,s-permutahedra,MeszarosSubword,vonBellCeballos,vonBellGonzalezCetinaYip}. For instance, for the path graph of $n-1$ internal vertices with an ample framing, the flow polytope has volume given by a Catalan number $\frac{1}{n+1}\binom{2n}{n}$ and the normal fan of the DKK triangulation is isomorphic to the normal fan of the {\em associahedron} \cite{vonBellGonzalezCetinaYip} (\cref{fig:full_graph_and_routes}).

We continue these trends in the case of flow cones of cyclic graphs using the results described above. For the cycle graph $\TheCycle{n}$ with $n$ internal vertices and a cyclic ample framing, we show that the fan of its reduced DKK-like triangulation is combinatorially isomorphic to the normal fan of the {\em cyclohedron}: the polytope that encodes the centrally symmetric triangulations of a regular $2n$-gon. (\cref{prop:DKKfan-cyclo} and \cref{fig:ex:DKKred_H3}). As a consequence of the results mentioned in \cref{ss:intro-tiangulations_cyclic}, we obtain a realization of the cyclohedron as a slice of the {\em type D associahedron}.

The cycle graph is part of a two-parameter family of graphs $H_{c,p}$ of $c$ nested monochromatic $p$-cycles (\cref{fig:Hcp}). By counting cyclic volume integer flows, we obtain a simple formula for the volume of the flow complex of $H_{c,p}$.

\begin{theorem}[\cref{thm:hkr-vol}]
    For the graph $\cyclegraph_{c,p}$ we have that
    \[
        \vol\bdd(\cyclegraph_{c,p}) =  \binom{(c+1)(p-1)}{p-1,\ldots,p-1}.
    \]
In particular, $\vol\bdd(\TheBlackCycle{p})=\#\funny(\TheBlackCycle{p})=\binom{2p-2}{p-1}$ and $\vol\bdd(\cyclegraph_{c,2})=\#\funny(\cyclegraph_{c,2})=(c+1)!$.
\end{theorem}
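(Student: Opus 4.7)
The plan is to invoke \cref{thm:normVolCyclicIntegerFlow}, which reduces the volume statement to the purely enumerative identity
\[
\#\funny(\cyclegraph_{c,p}) = \binom{(c+1)(p-1)}{p-1,\ldots,p-1}.
\]
So the entire task becomes counting cyclic volume integer flows on $\cyclegraph_{c,p}$. First I would unpack the definition of $\cyclegraph_{c,p}$ as $c$ nested monochromatic $p$-cycles. Since every internal vertex has indegree $2$, a cyclic volume integer flow assigns a nonnegative integer to each edge, vanishes on sources, and satisfies net-flow $1$ at every internal vertex. Exploiting the monochromatic structure, I would express the cycle-edge values in terms of the ``radial'' (between-layer) edge values, modulo a global closure condition per cycle.

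Next I would construct a bijection between such flows and permutations of the multiset consisting of $c+1$ symbols each appearing $p-1$ times. The $c+1$ symbols correspond to the $c+1$ layers defined by the nested structure (equivalently, the $c+1$ radial positions, or the $c$ annular regions plus an outer one), while the $p-1$ copies of each symbol correspond to the $p-1$ active slots around a cycle once a base vertex is fixed. The local balance equations should force any word of this shape to determine a unique cyclic volume integer flow and vice versa, producing the multinomial coefficient on the nose.

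Before committing to details I would check the two stated special cases as sanity checks. For $c=1$, the formula specializes to $\binom{2p-2}{p-1}$, which should match a direct ballot-style count on the single $p$-cycle $\TheCycle{p}$. For $p=2$, the formula specializes to $(c+1)!$, which should correspond to linear orders on $c+1$ objects, consistent with the fact that at each nested $2$-cycle the cyclic ample framing leaves only a binary choice whose accumulation is a factorial. Matching these two reductions will confirm that the bijection was set up with the right conventions.

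The main obstacle will be making the bijection fully precise: in the cyclic setting the local net-flow equations around a directed cycle are not independent, and one must carefully identify which combinations of edge weights are free and which are determined by the closure constraint. I expect to handle this by choosing a spanning tree compatible with the cyclic ample framing, showing that the $(c+1)(p-1)$ residual ``loop'' parameters are nonnegative integers with no further constraint beyond the multiset shape, and reading them off layer by layer around the cycle to recover the word. If this direct bijection proves cumbersome, a fallback is induction on $c$: the base case $c=1$ is the single-cycle count $\binom{2p-2}{p-1}$, and the inductive step adjoins one more nested $p$-cycle contributing exactly $p-1$ new free parameters, which is precisely what multiplying by $\binom{(c+1)(p-1)}{p-1}$ records.
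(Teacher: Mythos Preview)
Your reduction via \cref{thm:normVolCyclicIntegerFlow} and your fallback plan of induction on $c$ are exactly what the paper does, so the overall architecture is sound. Two points, however, are imprecise in ways that matter.

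First, your description of a cyclic volume integer flow is incomplete: by \cref{def:cycintfl}, each cycle $C$ must contain exactly one edge $e_C$ with zero flow, and the netflow at $\tail(e_C)$ is $0$, not $1$. This marked zero edge per cycle is essential both to the count and to any bijection you set up.

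Second, and more substantively, the inductive step is not as automatic as ``adjoining one more nested $p$-cycle contributing exactly $p-1$ new free parameters.'' When you peel off the innermost cycle, the inflow it receives is a vector $\mathbf{a}=(a_1,\ldots,a_p)$ determined by the flow on the outer $c-1$ cycles, with total $\sum a_i=(c-1)(p-1)$. A priori the number of completions on the inner cycle depends on the individual $a_i$, not just their sum, so the induction would a priori be a sum over all such $\mathbf{a}$. The paper isolates this as a separate lemma (\cref{lem:cyclicVolIntFlowH1p}), proved via a bijection with barred words and a cycle-lemma rotation argument, showing the count is $\binom{\sum a_i+2p-2}{p-1}$ independently of the distribution. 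This invariance is the real content of the proof; without it the induction does not close. Your proposed global bijection to multiset permutations may exist, but it would have to encode the same phenomenon, and your outline does not yet indicate how.
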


We call the new simple polytope dual to the DKK triangulation of $\F(H_{c,2})$ the \textnew{mutoperhedron} \textnew{$\doppel_c$}
(\cref{fig:doppel3}). The choice of this name stems from the following surprising result.

\begin{theorem}[\cref{thm:f-vect_doppel}]
 The mutoperhedron $\doppel_c$ and the permutohedron $\Pi_{c+1}$ have the same $f$-vector and $h$-vector. However, for $c \geq 3$, these polytopes are {\em not} combinatorially isomorphic.
\end{theorem}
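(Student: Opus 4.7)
The proof splits naturally into two parts: (A) showing $\doppel_c$ and $\Pi_{c+1}$ share the same $f$-vector (whence, by the Dehn--Sommerville relations for simple polytopes, the same $h$-vector), and (B) exhibiting a combinatorial invariant that distinguishes them for $c \geq 3$.

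For part (A), the plan is to describe the faces of $\doppel_c$ combinatorially and match them with ordered set partitions. Since $\doppel_c$ is defined as the polytope whose outer normal fan is the reduced DKK fan of $\cyclegraph_{c,2}$, the faces of $\doppel_c$ of codimension $k$ are in bijection with the $k$-dimensional cones of $\DKKred(\cyclegraph_{c,2})$. Via the linear isomorphism with the $g$-vector fan of the associated locally gentle quiver (\cref{thm:quotient-subfan}), these cones correspond to faces of the reduced non-kissing complex $\NKred$ of that quiver. First I would work out this non-kissing complex explicitly for the specific quiver associated to $\cyclegraph_{c,2}$; the combinatorial objects labelling its faces should admit a dimension-preserving bijection with ordered set partitions of $\{0,1,\dots,c\}$, which are exactly the labels of faces of $\Pi_{c+1}$. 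I expect the bijection to arise naturally from grouping the nested 2-cycles of $\cyclegraph_{c,2}$ according to which cycle "absorbs" which amount of flow, mirroring the way adjacent transpositions define faces of the permutahedron. From this bijection the coincidence $f_k(\doppel_c)=f_k(\Pi_{c+1})$ follows, and the equality of $h$-vectors is automatic since both polytopes are simple.

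For part (B), the task is to produce a single combinatorial obstruction. The most promising direction is to examine the 2-faces and vertex figures. The permutahedron $\Pi_{c+1}$ has a very rigid local structure: every 2-face is either a square or a hexagon, and each vertex lies in exactly $\binom{c}{2}$ two-faces whose type distribution is determined by the descent structure of the symmetric group. My plan is to identify a vertex $v$ of $\doppel_c$ (for $c=3$ first, to check concretely) and compute which 2-faces contain it, showing either that some 2-face is not a square or hexagon, or that the multiset of 2-face shapes at $v$ does not match any vertex of $\Pi_{c+1}$. An alternative (and likely equivalent) distinguishing invariant is that $\Pi_{c+1}$ is a zonotope, so its 1-skeleton admits a proper edge-coloring with $c$ colors in which each vertex is incident to one edge of each color; I would check whether the edge graph of $\doppel_c$ admits such a coloring by looking for odd-length cycles forcing a clash, which rules out zonotopality for $c \geq 3$.

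The main obstacle is part (B): identifying the precise local invariant that separates the two polytopes. Since the $f$- and $h$-vectors already agree, the distinguishing feature must be detected at the level of the face lattice, and the challenge is that the vertices of $\doppel_c$ do not all look the same (unlike in $\Pi_{c+1}$, which is vertex-transitive). I would therefore lean on the non-kissing combinatorics developed in part (A) to pinpoint a vertex of $\doppel_c$ whose star has a shape incompatible with the rigid, Coxeter-theoretic local structure at any vertex of $\Pi_{c+1}$, and use small cases $c=3,4$ as sanity checks before stating the general argument.
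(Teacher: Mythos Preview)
Your plan for part (B) is essentially what the paper does: it finds a pentagonal $2$-face of $\doppel_3$ (the facet dual to the route $\route_{(12,34)}$ is adjacent to exactly five other facets), while all $2$-faces of $\Pi_{c+1}$ are squares or hexagons. The obstruction propagates to higher $c$ because the facet of $\doppel_c$ indexed by $(\{1\},\{2,\dots,c+1\})$ is combinatorially isomorphic to $\doppel_{c-1}$, so a pentagonal $2$-face persists. Your alternative zonotopality test would also work, but the pentagon is more direct.

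For part (A), your route through the $g$-vector fan and the non-kissing complex is an unnecessary detour, and your description stops short of an actual bijection (``I expect the bijection to arise naturally\dots'' is not yet an argument). The paper works entirely on the flow side. Facets of both polytopes are already indexed by bipartitions $(A,B)$ of $[c+1]$: in $\Pi_{c+1}$ two facets meet iff the bipartitions are \emph{nested}; in $\doppel_c$ two facets meet iff the corresponding routes are \emph{compatible}. The paper encodes a bipartition by an \emph{arc diagram} on vertices $0,1,\dots,c+1$ (top arcs over the $A$-gaps, bottom arcs under the $B$-gaps), and introduces a single intermediate object: a \emph{consistent multiset of arcs of rank $k$} (non-crossing, with equal in- and out-degree at every internal node, and $k$ arcs at each endpoint). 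It then proves two bijections with this common target: collections of $k$ nested bipartitions $\leftrightarrow$ consistent rank-$k$ multisets (via ``stacking''), and collections of $k$ compatible routes $\leftrightarrow$ consistent rank-$k$ multisets (via a different, ``inside-out'' threading rule that avoids the interference configurations characterizing incompatibility). Composing gives a dimension-preserving bijection between cones of the two normal fans, hence equal $f$-vectors; equal $h$-vectors follow since both polytopes are simple.

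So the missing idea in your plan is precisely this intermediate multiset-of-arcs object and the two distinct ways of threading it into arc diagrams. Without it you would be stuck trying to build a direct bijection between nested and compatible families, which is awkward because compatibility of routes in $\cyclegraph_{c,2}$ is governed by a more intricate ``non-crossing and non-interfering'' condition than simple nesting.
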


\subsection*{Outline}  We begin, in \cref{sec:bg graphs flows}, by recalling crucial background on flows of a framed connected acyclic directed graph (DAG) $(G,F)$, including the flow cone, the flow polytope, and the Danilov--Karzanov--Koshevoy (DKK) triangulations of those objects. Then, in \cref{s:gentle}, we recall essential tools and results from the study of gentle algebras and complete gentle algebras, in particular focusing on $g$-vectors of $\tau$-rigid modules over them, and the properties of their $g$-vector fans.
This section already contains some new results,
relating $g$-vector fans of complete gentle algebras to the reduced non-kissing complexes of \cite{PPPlocg}.

In \cref{s:conections-acyclic},
we show that the DKK fan associated with the reduced DKK triangulation of the flow cone of $(G,F)$ is linearly isomorphic to the $g$-vector fan of the gentle algebra associated to $(Q_G, R_G)$. In \cref{s:HN}, we prove that the type $A_n$ shard polytopes of \cite{ppr20shard} are exactly the Harder--Narasimhan polytopes of the indecomposable modules of some gentle quiver.

Then, in \cref{s:flow-cyclics}, we relax our assumptions and work with directed graphs that admit directed cycles. In particular, we construct a DKK-like triangulation of the flow cone of these graphs.
In \cref{s:completegentlealg}, we generalize previous results by describing a linear isomorphism from the reduced triangulation $\DKKred(\cyclegraph)$ to a certain subfan of the $g$-vector fan of an associated locally gentle quiver $(Q_\cyclegraph,R_\cyclegraph)$.

Finally, we focus on the two-parameter family of graphs $H_{c,p}$ with cyclic ample framing in \cref{s:HCP} and study the mutoperhedron in \cref{ss:TheDoppel}.

\subsection*{Acknowledgements} This paper grew out of a working group at LACIM in Montr\'eal, which included Sarah Brauner, Jake Levinson, and Franco Saliola. We thank them for their contributions. We thank Jonah Berggren and Khrystyna Serhiyenko for discussions surrounding their work \cite{BerSer_Wilting,BERGGREN}, and also Rafael Gonz\'alez De L\'eon, Chris Hanusa, Joel Kamnitzer, Pierre-Guy Plamondon, and Martha Yip for helpful discussions. This work was facilitated by computer experiments using Sage~\cite{sagemath} and its geometric combinatorics features developed by the Sage-Combinat community~\cite{Sage-Combinat}.

The first-named author was partially supported by NSERC doctoral research scholarship. The third-named author thanks the \emph{CHARMS program grant (ANR-19-CE40-0017-02)}, and the \emph{Engineering and Physical Sciences Research Council (EP/W007509/1)} for their partial funding support.
The fourth and sixth-named authors acknowledge support from the FRQNT team grant 10.69777/341288.
The fourth-named author acknowledges the support of the Natural Sciences and Engineering Research Council of Canada (NSERC) funding reference number RGPIN-2024-06246, and was partially supported by the NSF grant DMS-2154019.
The sixth-named author acknowledges the support of the Natural Sciences and Engineering Research Council of Canada (NSERC) funding reference number RGPIN-2022-03960 and the Canada Research Chairs Program CRC-2021-00120.

\section{Background on directed graphs and flow polytopes}
\label{sec:bg graphs flows}

In this section, we recall some definitions and set our notation.
We mainly follow the conventions and notation of \cite{Kentuckygentle,DKK}.

\subsection{Notation,
walks,
and routes}
\label{ss:paths-and-routes}

Let $G = (V,E) = \big(V(G),E(G)\big)$ be a connected acyclic directed graph, commonly referred to as a \emph{DAG}.
For a vertex $v \in V$ of $G$,
we denote by \textnew{$\inc(v)$} and \textnew{$\out(v)$} the set of incoming and outgoing edges of $v$,
with sizes denoted by \textnew{$\indeg(v)$} and \textnew{$\outdeg(v)$}, respectively.
A vertex $v \in V$ is a \textnew{source} if $\indeg(v) = 0$, a \textnew{sink} if $\outdeg(v) = 0$, or \textnew{internal} if it is neither a source nor a sink.
We let \textnew{$\sources(G)$}, \textnew{$\sinks(G)$}, and \textnew{$\internal(G)$} denote the set of sources, sinks, and internal vertices of $G$, respectively.
If $G$ contains at least two vertices, these sets partition $V$.

Given an edge $e = (u,v) \in E$,
we say that $u$ is the \textnew{tail} and $v$ is the \textnew{head} of $e$, and we write
\[
    \mathnew{\tail(e)} := u
    \qquad\text{ and }\qquad
    \mathnew{\head(e)} := v.
\]
Finally, an edge $e \in E$ between internal vertices is called \textnew{idle} if it is either the only edge coming out of its tail or the only edge going to its head.

A(n oriented) \textnew{walk} in $G$
is either a single vertex $v \in V$\footnote{This will be called a {\em lazy path} in the language of quivers in \cref{ss:vocabulary}.} or a sequence of edges $e_1,\dots,e_k$ such that $\head(e_i) = \tail(e_{i+1})$ for all $i$.
A \textnew{route} is a maximal walk; that is, a walk that starts at a source and ends at a sink.
We let \textnew{$\Routes(G)$} denote the set of all routes of $G$.

Given a route $\route = e_1,\dots,e_k \in \Routes(G)$ passing through an internal vertex $v$,
the \textnew{prefix} and \textnew{suffix} of $\route$ at $v$ are the
subwalks
\[
    \mathnew{\pref(\route,v)} := e_1,\dots,e_i
    \qquad\text{ and }\qquad
    \mathnew{\suf(\route,v)} := e_{i+1},\dots,e_k,
\]
where $i \in [k-1]$ is the index such that $\head(e_i) = v = \tail(e_{i+1})$.
Lastly, let \textnew{$\In(v)$} and \textnew{$\Out(v)$} be the set of
walks from a source to $v$ and from $v$ to a sink, respectively.
That is,
\begin{align*}
    \mathnew{\In(v)} & :=
    \set{ \pref(\route,v) }{ \route \in \Routes(G) \text{ goes through } v }, &
    \text{ and }\\
    \mathnew{\Out(v)} & :=
    \set{ \suf(\route,v) }{ \route \in \Routes(G) \text{ goes through } v }.
\end{align*}

\subsection{Flows}
\label{ss:flows-defs}

A \textnew{flow} $f$ of a graph $G$ is an assignment of a real number $f(e)$ to each edge $e \in E$ of~$G$. That is, a flow is an element of $\RR^E$.
Given a flow $f \in \RR^E$ and a vertex $v \in V$, the \textnew{netflow} of $f$ at $v$ is
\[
    \mathnew{\nf_f(v)} :=
    \sum_{e\in \out(v)} f(e) - \sum_{e \in \inc(v)} f(e).
\]
A flow $f \in \RR^E$ is said to be \textnew{balanced} if $\nf_f(v) = 0$ for each internal vertex $v \in \internal(G)$.

The \textnew{flow space} of $G$ is the subspace
\[
    \mathnew{\F(G)} := \set{ f \in \RR^E }{ f \text{ is balanced} }
\]
of all balanced flows, and the \textnew{flow cone} $\F^+(G)$ is the polyhedral cone in $\F(G)$ consisting of nonnegative balanced flows. That is,
\[
    \mathnew{\F^+(G)} := \F(G) \cap (\RR_{\geq 0})^E = \set{ f \in \F(G) }{ f(e) \geq 0 \text{ for all } e \in E }.
\]
The facet-defining inequalities of $\F^+(G)$ inside $\F(G)$ are precisely $f(e) \geq 0$ for non-idle edges $e$,
and the rays of $\F^+(G)$ are determined by routes, as we now explain.
We identify routes $\route \in \Routes(G)$ with the unit flow ${\bf 1}_\route \in \RR^E$ along it:
\[
    \mathnew{{\bf 1}_\route(e)} := \begin{cases}
        1 & \text{if } e \in \route,\\
        0 & \text{otherwise.}
    \end{cases}
\]
The rays of $\F^+(G)$ are precisely the rays of the form $\RR_{\geq 0}\{{\bf 1}_\route\}$ for $\route \in \Routes(G)$.

Observe that the balance conditions for flows in the flow cone only involve internal vertices.
Thus, the flow cone (and the flow space) does not change if we require that sources and sinks have degree one.
Moreover, by the balance condition, the flow on any idle edge is completely determined by the flow on its adjacent edges.
Thus, without loss of generality, we restrict to the following case.

\begin{convention} \label{convention:graphs no idle edges}
The graphs we consider have no idle edges and are such that all sources and sinks have degree one.
\end{convention}

The \textnew{flow polytope} $\F_1(G)$ of $G$ is the slice of the cone of nonnegative balanced flows $\F^+(G)$ consisting of {\em unit flows}:
\[
    \mathnew{\F_1(G)} := \bigset{ f \in \F^+(G) }{\smashoperator[r]{\sum_{\substack{v \in \sources(G) \\ e \in \out(v)}}} \quad f(e) = 1 }.
\]
Equivalently, it is the polytope with vertices $\set{ {\bf 1}_\route }{ \route \in \Routes(G) }$.
Note that
\[
    \dim(\F(G)) = \# E(G) - \# \internal(G)
    \hspace{.05\linewidth}\text{and}\hspace{.05\linewidth}
    \dim(\F_1(G)) = \# E(G) - \# \internal(G) - 1.
\]

More generally, given ${\bf a} = (a_v)_{v} \in \RR^{\internal(G)}$
and ${\bf c} = (c_s)_{s} \in \RR^{\sources(G)}$, we let
\begin{align*}
    \mathnew{\F(G,{\bf a})} & := \set{ f \in \RR^E }{ \nf_f(v) = a_v \text{ for all } v \in \internal(G) }, & \text{and} \\
    \mathnew{\F_{\bf c}(G,{\bf a})} & := \set{ f \in \F(G,{\bf a}) }{ \nf_f(s) = c_s \text{ for all } s \in \sources(G) }.
\end{align*}
So $\F(G) = \F(G,{\bf 0})$.

Lastly, we denote by $\F^\NN(G,{\bf a})$ and $\F_{\bf c}^\NN(G,{\bf a})$ the set of nonnegative integer points (simply called \textnew{integer flows}) of $\F(G,{\bf a})$ and $\F_{\bf c}(G,{\bf a})$, respectively.

\begin{remark}
Note that $\F_{\bf c}^\NN(G,{\bf a})$ is a finite set and its size can be computed via the following multivariate generating function
\begin{equation} \label{gf: kpf}
    \sum_{\bf a, \bf c} \#\F_{\bf c}^\NN(G,{\bf a}) \prod_{v \in \sources(G)} z_s^{c_s}\prod_{v \in \internal(G)} z_v^{a_v} \,=\, \prod_{\substack{(u,v) \in E(G),\\ v\in \sinks(G)}} (1-z_u)^{-1}\prod_{\substack{(u,v) \in E(G)\\ v\not\in \sinks(G)}} (1-z_uz_v^{-1})^{-1}.
\end{equation}
\end{remark}

\subsection{DKK triangulations}\label{subsectDKK}

Given a connected DAG $G$, a \textnew{framing} at an internal vertex $v$ of $G$ is a
choice of total orders $\preceq_{\inc(v)}$ and $\preceq_{\out(v)}$ on the sets $\inc(v)$ and $\out(v)$.
A \textnew{framed DAG} $(G,F)$ is a connected DAG $G$ equipped with a framing $(\preceq^F_{\inc(v)} , \preceq^F_{\out(v)})$ at each internal vertex.

Fix a framed DAG $(G,F)$.
The framing $F$ induces a total order on each of the sets $\In(v)$ of walks from a source to an internal vertex $v$, as follows.
Given two distinct walks $\route, \route' \in \In(v)$ from a source to the same internal vertex $v \in \internal(G)$, let $e \in \route$ and $e' \in \route'$ be the last edges at which $\route$ and $\route'$ differ and $u = \head(e) = \head(e')$.
Then,
\[
    \route \prec^F_{\In(v)} \route' \quad\text{if and only if}\quad e \prec^F_{\inc(u)} e'.
\]
Dually, $F$ also induces a total order on $\Out(v)$:
given two distinct walks $\route,\route' \in \Out(v)$,
\[
    \route \prec^F_{\Out(v)} \route' \quad\text{if and only if}\quad e \prec^F_{\out(u)} e',
\]
where $e \in \route$ and $e' \in \route'$ are the first edges at which $\route$ and $\route'$ differ and $u = \tail(e) = \tail(e')$.

Since we only work with one framing $F$ at a time, we drop the superscript $F$ from the notation above.

\begin{definition}[\cite{DKK}]
\label{def:compatible-routes}
Let $G$ be a connected DAG with a fixed framing $F$.
Two routes $\route,\route' \in \Routes(G)$ that have a common internal vertex $v$ are \textnew{incompatible at $v$}
if $\pref(\route,v) \prec_{\In(v)} \pref(\route',v)$ and $ \suf(\route',v) \prec_{\Out(v)} \suf(\route,v)$ or vice versa.
Otherwise, the routes are \textnew{compatible at $v$}.

Two routes $\route,\route' \in \Routes(G)$ are \textnew{compatible} if they are compatible at every common internal vertex.
A route $\route \in \Routes(G)$ is \textnew{exceptional} if it is compatible with all other routes.
We denote the set of exceptional routes of $(G,F)$ by $\cE = \cE(G,F)$.
\end{definition}

A \textnew{clique} $K \subseteq \Routes(G)$ is a set of pairwise compatible routes.
We denote by $\mathnew{\Cliques(G)} = \Cliques(G,F)$ the set of cliques of the framed DAG $(G,F)$,
and by $\mathnew{\MaxCliques(G)} \subseteq \Cliques(G)$ the collection of maximal cliques.
Given a clique $K \in \Cliques(G)$, let $\sigma_K$ be the cone spanned by the (indicator vector of) routes in $K$, and
$\Delta_K$ be its intersection with the flow polytope $\F_1(G)$.
Explicitly,
\begin{align*}
    \mathnew{\sigma_K} & := \RR_{\geq 0}\set{ {\bf 1}_\route }{ \route \in K } \subseteq \RR^E, \text{and}\\
    \mathnew{\Delta_K} & := \conv\set{ {\bf 1}_\route }{ \route \in K } \subseteq \RR^E.
\end{align*}
Danilov--Karzanov--Koshevoy \cite{DKK} proved that these cones (resp. simplices)
form a regular unimodular triangulation of the flow cone (resp. flow polytope).

\begin{theorem}[{\cite[Thm. 1, 2]{DKK}}]
	\label{thm:DKK}
	Let $(G,F)$ be a framed DAG.
	The collection of cones
	\[ \mathnew{\DKK(G,F)} := \set{ \sigma_K }{ K \in \Cliques(G) }\]
	(resp. of simplices $\set{ \Delta_K }{ K \in \Cliques(G) }$)
	yields a regular unimodular triangulation of $\F^+(G)$ (resp. of $\F_1(G)$).
\end{theorem}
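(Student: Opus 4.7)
The plan is to verify the four defining properties of a regular unimodular triangulation of $\F^+(G)$ — covering, relative-interior-disjointness, unimodularity, and regularity — and to deduce the statement for $\F_1(G)$ by slicing with the hyperplane $\sum_{e\in\out(s)}f(e)=1$, which meets each ray $\RR_{\geq 0}\mathbf{1}_\rho$ in the single point $\mathbf{1}_\rho$. The central tool is the \emph{uncrossing (swap) operation}: given two routes $\rho,\rho'\in\Routes(G)$ incompatible at a common internal vertex $v$, the walks $\rho'':=\pref(\rho,v)\cdot\suf(\rho',v)$ and $\rho''':=\pref(\rho',v)\cdot\suf(\rho,v)$ are again routes, compatible at $v$, and satisfy $\mathbf{1}_\rho+\mathbf{1}_{\rho'}=\mathbf{1}_{\rho''}+\mathbf{1}_{\rho'''}$. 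I would first introduce a potential on finite multisets of routes — for example, the sum over pairs of routes of the number of internal vertices at which they are incompatible, weighted by the prefix ranks in each $\prec_{\In(v)}$ — and verify that a single uncrossing strictly decreases it, so that iterated uncrossings terminate.

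For covering, every $f\in\F^+(G)$ admits a nonnegative route decomposition $f=\sum_\rho c_\rho\mathbf{1}_\rho$ by the classical greedy flow decomposition on a DAG, applicable thanks to \cref{convention:graphs no idle edges}. Whenever two routes in the support are incompatible at some $v$, subtracting $t=\min(c_\rho,c_{\rho'})$ from each of $c_\rho,c_{\rho'}$ and adding $t$ to $c_{\rho''},c_{\rho'''}$ preserves $f$ while strictly decreasing the potential; at termination, the support lies in a clique $K$, so $f\in\sigma_K$. To upgrade this covering to a triangulation, I would prove by induction on $|\internal(G)|$ that every maximal clique $K$ has size exactly $\dim\F(G)=|E|-|\internal(G)|$ and that $\{\mathbf{1}_\rho:\rho\in K\}$ is a $\ZZ$-basis of the flow lattice $\F(G)\cap\ZZ^E$. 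The induction removes a source $s$ and the unique edge $e$ out of it, and the framing at $v=\head(e)$ determines how the maximal cliques upstairs split over those in the smaller graph. Unimodularity is then inherited from the total unimodularity of the edge–vertex incidence matrix of $G$. Linear independence moreover gives disjointness of relative interiors: if $f$ lay in the relative interior of $\sigma_K\cap\sigma_{K'}$ for $K\neq K'$, a route in the symmetric difference together with its coefficient in both decompositions would force an incompatibility inside one of the cliques, a contradiction.

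For regularity, I would use the height function $h(\rho):=-\sum_{v} r_v(\rho)\,s_v(\rho)$, where the sum runs over internal vertices traversed by $\rho$ and $r_v(\rho),s_v(\rho)$ are the ranks of $\pref(\rho,v)$ and $\suf(\rho,v)$ in $\prec_{\In(v)}$ and $\prec_{\Out(v)}$, respectively. A direct computation shows that at a shared internal vertex $v$ the sign of $(r_v(\rho)-r_v(\rho'))(s_v(\rho)-s_v(\rho'))$ distinguishes compatibility from incompatibility, giving $h(\rho)+h(\rho')>h(\rho'')+h(\rho''')$ whenever $(\rho,\rho')$ is incompatible and $(\rho'',\rho''')$ is its compatible swap. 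Iterating uncrossings then shows that the minimum of $\sum_\rho c_\rho h(\rho)$ over nonnegative route decompositions of any fixed $f$ is attained precisely on clique-supported decompositions, so the lifted lower envelope projects onto the cones $\sigma_K$, certifying regularity.

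The main obstacle is the equidimensionality and basis property in the inductive step: without it, the uncrossing argument only establishes a \emph{covering} of $\F^+(G)$ rather than a bona fide triangulation, and disjointness of interiors is out of reach. The source-removal induction must be set up so that the framing descends coherently and the compatibility relation tracks cleanly across the reduction; once this combinatorial skeleton is in place, termination of uncrossing, the unimodularity assertion, and the calibration of the height function become essentially bookkeeping.
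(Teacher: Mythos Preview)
The paper does not prove this statement; it is quoted as background from \cite{DKK} (Theorems~1 and~2 there) and used as a black box throughout. There is therefore no ``paper's own proof'' to compare against.

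That said, your outline is broadly faithful to the argument in \cite{DKK}: the uncrossing (swap) operation, the height function $h(\rho)=-\sum_v r_v(\rho)s_v(\rho)$, and the strict inequality $h(\rho)+h(\rho')>h(\rho'')+h(\rho''')$ under a swap are exactly the ingredients Danilov--Karzanov--Koshevoy use to establish regularity. Two points deserve tightening. First, your disjointness argument is underspecified: having two distinct nonnegative route decompositions of the same $f$ does not by itself produce an incompatibility inside one clique; in \cite{DKK} this is handled by showing that the height function is \emph{strictly} convex across swaps, so the minimizing decomposition is unique, which simultaneously gives disjointness and regularity without a separate induction. Second, your proposed induction on $|\internal(G)|$ for equidimensionality and the lattice-basis property is not the route taken in \cite{DKK}; there the unimodularity is read off directly from the fact that the $\mathbf{1}_\rho$ for $\rho\in K$ form a triangular system against a suitable ordering of the non-idle edges, and the common size of maximal cliques follows a posteriori from the triangulation being full-dimensional. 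Your source-removal induction could be made to work, but as you note, the coherence of the framing under the reduction is the delicate point, and it is not needed if you lean fully on the height function.
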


Recall that a polyhedral cone $\sigma \subseteq \RR^d$ is \textnew{rational} if it can be written as $\sigma = \RR_{\geq 0} \{ v_1 , \dots , v_k \}$ with each $v_i \in \ZZ^d$. In this case, we always choose $k$ minimal and $v_i$ to be the smallest nonzero integer vector in the ray $\RR_{\geq 0} \{v_i\}$ it spans. With this notation, the cone $\sigma$ is \textnew{unimodular} if $\{ v_1 , \dots , v_k \}$ is a $\ZZ$-basis for $\spann_\RR \{ v_1,\ldots,v_k \} \cap \ZZ^d$. In particular, a unimodular cone is necessarily simplicial.
A triangulation $\Sigma = \{\sigma_i\}_i$ of a cone is unimodular if each cone $\sigma_i$ is unimodular.

A simplex $\Delta = \conv\{v_1,\ldots,v_k\} \subseteq \RR^d$ is \textnew{unimodular} if the cone $\RR_{\geq 0} \Delta = \RR_{\geq 0} \{ v_1 , \dots , v_k \}$ is unimodular, and a triangulation $\{\Delta_i\}_i$ of a polytope is {unimodular} if all its simplices are unimodular.
Crucially, a unimodular simplex has normalized volume $1$.
For the definition of a \textnew{regular} triangulation see \cite[\S 2.2]{triangulations_book}.

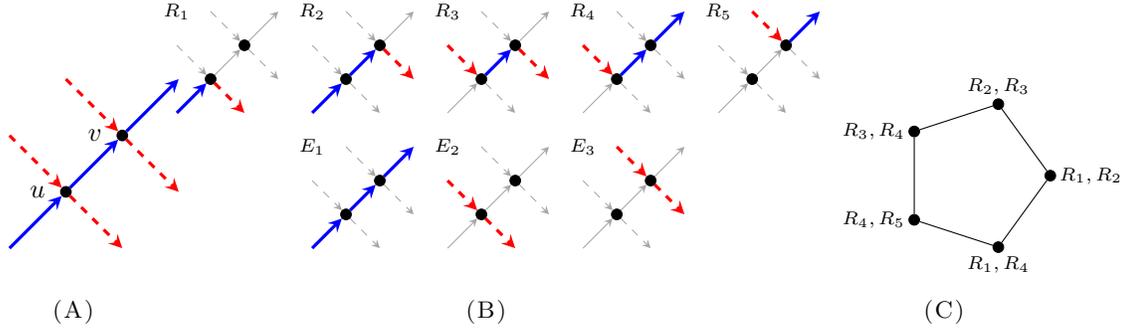
\begin{figure}[ht]
	\centering
	\begin{subfigure}{.12\linewidth}
		\centering
        \begin{tikzpicture}[-stealth,scale=.75]
    \node[circle, draw, inner sep=0pt, minimum size=4pt,fill] (u) at (1,1) {};
    \node[circle, draw, inner sep=0pt, minimum size=4pt,fill] (v) at (2,2) {};

    \node[left] at (0.8,1) {$u$};
    \node[left] at (1.8,2) {$v$};

    \draw[red,dashed,very thick] (0,2) -- (u);
    \draw[red,dashed,very thick] (u) -- (2,0);
    \draw[red,dashed,very thick] (1,3) -- (v);
    \draw[red,dashed,very thick] (v) -- (3,1);

    \draw[blue,very thick] (0,0) -- (u);
    \draw[blue,very thick] (u) -- (v);
    \draw[blue,very thick] (v) -- (3,3);
\end{tikzpicture}
        \caption{}
		\label{subfig:full_graph_and_routes1}
	\end{subfigure}
	\begin{subfigure}{.6\linewidth}
		\centering
        \begin{tikzpicture}[-stealth,scale=.45]

    \begin{scope}
        \node at (0,3) {\scriptsize $R_1$};
        \node[circle, draw, inner sep=0pt, minimum size=4pt,fill] (u) at (1,1) {};
        \node[circle, draw, inner sep=0pt, minimum size=4pt,fill] (v) at (2,2) {};
        \draw[blue,very thick] (0,0) -- (u);
        \draw[red,dashed,very thick] (u) -- (2,0);
        \draw[gray!70!white,dashed] (0,2) -- (u);
        \draw[gray!70!white] (u) -- (v);
        \draw[gray!70!white,dashed] (v) -- (3,1);
        \draw[gray!70!white,dashed] (1,3) -- (v);
        \draw[gray!70!white] (v) -- (3,3);
    \end{scope}

    \begin{scope}[shift={(04,0)}]
        \node at (0,3) {\scriptsize $R_2$};
        \node[circle, draw, inner sep=0pt, minimum size=4pt,fill] (u) at (1,1) {};
        \node[circle, draw, inner sep=0pt, minimum size=4pt,fill] (v) at (2,2) {};
        \draw[blue,very thick] (0,0) -- (u);
        \draw[gray!70!white,dashed] (u) -- (2,0);
        \draw[gray!70!white,dashed] (0,2) -- (u);
        \draw[blue,very thick] (u) -- (v);
        \draw[red,dashed,very thick] (v) -- (3,1);
        \draw[gray!70!white,dashed] (1,3) -- (v);
        \draw[gray!70!white] (v) -- (3,3);
    \end{scope}

    \begin{scope}[shift={(08,0)}]
        \node at (0,3) {\scriptsize $R_3$};
        \node[circle, draw, inner sep=0pt, minimum size=4pt,fill] (u) at (1,1) {};
        \node[circle, draw, inner sep=0pt, minimum size=4pt,fill] (v) at (2,2) {};
        \draw[gray!70!white] (0,0) -- (u);
        \draw[gray!70!white,dashed] (u) -- (2,0);
        \draw[red,dashed,very thick] (0,2) -- (u);
        \draw[blue,very thick] (u) -- (v);
        \draw[red,dashed,very thick] (v) -- (3,1);
        \draw[gray!70!white,dashed] (1,3) -- (v);
        \draw[gray!70!white] (v) -- (3,3);
    \end{scope}

    \begin{scope}[shift={(12,0)}]
        \node at (0,3) {\scriptsize $R_4$};
        \node[circle, draw, inner sep=0pt, minimum size=4pt,fill] (u) at (1,1) {};
        \node[circle, draw, inner sep=0pt, minimum size=4pt,fill] (v) at (2,2) {};
        \draw[gray!70!white] (0,0) -- (u);
        \draw[gray!70!white,dashed] (u) -- (2,0);
        \draw[red,dashed,very thick] (0,2) -- (u);
        \draw[blue,very thick] (u) -- (v);
        \draw[gray!70!white,dashed] (v) -- (3,1);
        \draw[gray!70!white,dashed] (1,3) -- (v);
        \draw[blue,very thick] (v) -- (3,3);
    \end{scope}

    \begin{scope}[shift={(16,0)}]
        \node at (0,3) {\scriptsize $R_5$};
        \node[circle, draw, inner sep=0pt, minimum size=4pt,fill] (u) at (1,1) {};
        \node[circle, draw, inner sep=0pt, minimum size=4pt,fill] (v) at (2,2) {};
        \draw[gray!70!white] (0,0) -- (u);
        \draw[gray!70!white,dashed] (u) -- (2,0);
        \draw[gray!70!white,dashed] (0,2) -- (u);
        \draw[gray!70!white] (u) -- (v);
        \draw[gray!70!white,dashed] (v) -- (3,1);
        \draw[red,dashed,very thick] (1,3) -- (v);
        \draw[blue,very thick] (v) -- (3,3);
    \end{scope}

    \begin{scope}[shift={(04,-4)}]
        \node at (0,3) {\scriptsize $E_1$};
        \node[circle, draw, inner sep=0pt, minimum size=4pt,fill] (u) at (1,1) {};
        \node[circle, draw, inner sep=0pt, minimum size=4pt,fill] (v) at (2,2) {};
        \draw[blue,very thick] (0,0) -- (u);
        \draw[gray!70!white,dashed] (u) -- (2,0);
        \draw[gray!70!white,dashed] (0,2) -- (u);
        \draw[blue,very thick] (u) -- (v);
        \draw[gray!70!white,dashed] (v) -- (3,1);
        \draw[gray!70!white,dashed] (1,3) -- (v);
        \draw[blue,very thick] (v) -- (3,3);
    \end{scope}

    \begin{scope}[shift={(08,-4)}]
        \node at (0,3) {\scriptsize $E_2$};
        \node[circle, draw, inner sep=0pt, minimum size=4pt,fill] (u) at (1,1) {};
        \node[circle, draw, inner sep=0pt, minimum size=4pt,fill] (v) at (2,2) {};
        \draw[gray!70!white] (0,0) -- (u);
        \draw[red,dashed,very thick] (u) -- (2,0);
        \draw[red,dashed,very thick] (0,2) -- (u);
        \draw[gray!70!white] (u) -- (v);
        \draw[gray!70!white,dashed] (v) -- (3,1);
        \draw[gray!70!white,dashed] (1,3) -- (v);
        \draw[gray!70!white] (v) -- (3,3);
    \end{scope}

    \begin{scope}[shift={(12,-4)}]
        \node at (0,3) {\scriptsize $E_3$};
        \node[circle, draw, inner sep=0pt, minimum size=4pt,fill] (u) at (1,1) {};
        \node[circle, draw, inner sep=0pt, minimum size=4pt,fill] (v) at (2,2) {};
        \draw[gray!70!white] (0,0) -- (u);
        \draw[gray!70!white,dashed] (u) -- (2,0);
        \draw[gray!70!white,dashed] (0,2) -- (u);
        \draw[gray!70!white] (u) -- (v);
        \draw[red,dashed,very thick] (v) -- (3,1);
        \draw[red,dashed,very thick] (1,3) -- (v);
        \draw[gray!70!white] (v) -- (3,3);
    \end{scope}

\end{tikzpicture}
        \caption{}
		\label{subfig:full_graph_and_routes3}
	\end{subfigure} 
    \begin{subfigure}{.2\linewidth}
        \centering
		\begin{tikzpicture}
            \node[circle, draw, inner sep=0pt, minimum size=4pt,fill] (a)   at (  0:1) {};
            \node[right] at (  0:1) {\scriptsize $R_1,R_2$};
            \node[circle, draw, inner sep=0pt, minimum size=4pt,fill] (b)   at ( 72:1) {};
            \node[above] at ( 72:1) {\scriptsize $R_2,R_3$};
            \node[circle, draw, inner sep=0pt, minimum size=4pt,fill] (c)   at (144:1) {};
            \node[left] at (144:1) {\scriptsize $R_3,R_4$};
            \node[circle, draw, inner sep=0pt, minimum size=4pt,fill] (d)   at (216:1) {};
            \node[left] at (216:1) {\scriptsize $R_4,R_5$};
            \node[circle, draw, inner sep=0pt, minimum size=4pt,fill] (e)   at (288:1) {};
            \node[below] at (288:1) {\scriptsize $R_1,R_4$};
            \draw (a) -- (b) -- (c) -- (d) -- (e) -- (a);
        \end{tikzpicture}
       \caption{}
   \label{subfig:full_graph_and_routes2}
	\end{subfigure}
	\caption{
        (A) A full DAG $G = \blackXXgraph$ with an ample framing $F$ (at each vertex, $\text{\tcr{red}} \prec \text{\tcb{blue}}$),
        (B) the routes of $(G,F)$ of which the last three are exceptional, and
        (C) illustration of the dual of the triangulation $\DKKred(G,F)$.}
    \label{fig:full_graph_and_routes}
\end{figure}

Since the exceptional routes $\cE(G,F)$ are in every maximal cone of $\DKK(G,F)$, it is natural to quotient out by the linear span their indicator vectors $\mathnew{\spann_\RR(\cE)} := \RR \set{ {\bf 1}_\route }{ \route \in \cE(G,F) }$.
Let \textnew{$\F(G)_{\rm red} :=\F(G)/\spann_\RR(\cE)$} be the \textnew{reduced flow space} and denote by $\pi_\cE : \F(G) \to \F(G)_{\rm red}$ be the corresponding projection map.
The \textnew{reduced fan $\DKKred(G,F)$} is the projection of the triangulation $\DKK(G,F)$ to $\F(G)_{\rm red}$:
\[
    \DKKred(G,F) := \set{ \pi_\cE(\sigma_K) }{ \sigma_K \in \DKK(G,F) }.
\]

\begin{definition}[\cite{DKK}]
\label{def:ample_framing_acyclic}
A framing $F$ of $G$ is \textnew{ample} if the cone $\spann_\RR(\cE) \cap \F^+(G)$
is not contained in any facet of the flow cone $\F^+(G)$.
Equivalently, $F$ is ample if the reduced fan $\DKKred(G,F)$ is \textnew{complete}, i.e. the union of the cones in $\DKKred(G,F)$ equals $\F(G)_{\rm red}$.
\end{definition}

In the same work, Danilov--Karzanov--Koshevoy \cite[Prop.~5]{DKK}
showed that
a framing $F$ on $G$ is ample if and only if each non-idle edge of $G$ belongs to an exceptional route in $\cE(G,F)$.

\begin{example}
    \label{ex:master_ex_acyclic_1}
    Let $\colorfulXXgraph := (\blackXXgraph,F)$ be the amply framed DAG in \cref{subfig:full_graph_and_routes1}.
    It has 8 routes, 3 of which are exceptional.
    The non-exceptional routes $R_1,R_2,R_3,R_4,R_5$ and the exceptional routes $E_1,E_2,E_3$ are illustrated in \cref{subfig:full_graph_and_routes3}.
    The latter appear in all the maximal cliques of $\colorfulXXgraph$.
    These maximal cliques, with exceptional routes omitted, are:
    \[
        \{R_1,R_2\}, \{R_2,R_3\},\{R_3,R_4\},\{R_4,R_5\},\{R_1,R_5\}.
    \]
    The dual graph of the triangulation, where two cliques are adjacent if they differ by one route, is illustrated in \cref{subfig:full_graph_and_routes2}.
\end{example}

Von Bell--Braun--Bruegge--Hanely--Peterson--Serhiyenko--Yip \cite{Kentuckygentle}
completely characterized the graphs with no idle edges that admit an ample framing.
A DAG $G$ is called \textnew{full} if every internal vertex $v \in \internal(G)$ satisfies $\indeg(v) = \outdeg(v) = 2$.

\begin{theorem}[{\cite[Lemma 3.1, Theorem 3.9]{Kentuckygentle}}]
    \label{lem:ample-deg2}
    A graph $G$ with no idle edges admits an ample framing if and only if $G$ is full.
\end{theorem}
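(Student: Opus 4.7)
The plan is to prove the two directions separately. For the forward direction (admits ample framing $\Rightarrow$ $G$ is full), I would argue by contradiction. Suppose $F$ is an ample framing of $G$ and some internal vertex $v$ has $\indeg(v) = k \geq 3$; the case $\outdeg(v) \geq 3$ is symmetric. Label the in-edges $e_1 \prec_{\inc(v)} \cdots \prec_{\inc(v)} e_k$ and take a \emph{middle} edge $e_i$ with $1 < i < k$. The goal is to show that $e_i$ lies on no exceptional route, contradicting the characterization of ampleness stated right after \cref{def:ample_framing_acyclic}. Suppose otherwise: let $\rho \in \cE(G,F)$ pass through $e_i$, and set $s := \suf(\rho, v) \in \Out(v)$. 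The critical observation is that $\Out(v)$ depends only on $v$ and not on how routes arrive at $v$: any $s' \in \Out(v)$ can be realized as the suffix at $v$ of a route $\rho_j$ through any prescribed in-edge $e_j$, by prepending a source-to-$\tail(e_j)$ walk followed by $e_j$. Comparing $\rho$ with any such route $\rho_1$ through $e_1$: the last edges where $\pref(\rho_1)$ and $\pref(\rho)$ differ are $e_1$ and $e_i$ (both with head $v$), so $\pref(\rho_1) \prec_{\In(v)} \pref(\rho)$ by the order $e_1 \prec e_i$. Exceptionality of $\rho$ then forces $\suf(\rho_1) \preceq_{\Out(v)} s$; letting the suffix of $\rho_1$ range over all of $\Out(v)$ yields $s = \max \Out(v)$. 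A symmetric argument using routes through $e_k$ gives $s = \min \Out(v)$, so $|\Out(v)| = 1$, which contradicts $|\Out(v)| \geq \outdeg(v) \geq 2$ (the latter bound is provided by \cref{convention:graphs no idle edges}).

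For the reverse direction ($G$ full $\Rightarrow$ $G$ admits an ample framing), I would in fact show that \emph{every} framing $F$ on a full $G$ is ample. The plan is to exhibit, for each non-idle edge $e$, an exceptional route through $e$ obtained by \emph{extremal extension}: traversing downstream from $e$, at each internal vertex $v$ reached, one identifies whether the entering edge is the min-in or the max-in of $v$ and exits through the correspondingly extremal min-out or max-out; the upstream half is extended symmetrically until reaching a source. By construction, the resulting route $\rho_e$ uses either the pair (min-in, min-out) or the pair (max-in, max-out) at every internal vertex it visits.

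The hardest part will be verifying that $\rho_e$ is indeed exceptional, i.e., compatible with every other route at every shared vertex. When $\indeg(v) = \outdeg(v) = 2$, the local case analysis on the in/out pair used by another route $\rho'$ at a shared vertex $v$ reduces, via a propagation argument, to showing that the extremal pattern of $\rho_e$ forces the prefix and suffix orders at $v$ to agree in direction. This propagation can be organized by induction along $\rho_e$ in topological order. Once this is established, every non-idle edge lies on such an exceptional $\rho_e$, so $F$ is ample.
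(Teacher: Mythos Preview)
Your forward direction is correct and clean: the argument that a middle in-edge at a vertex of indegree $\geq 3$ cannot lie on any exceptional route is exactly right, and the use of the characterization stated after \cref{def:ample_framing_acyclic} is appropriate.

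The reverse direction, however, has a genuine error. You claim that \emph{every} framing on a full graph is ample, but this is false. Consider three internal vertices $u,v,w$ in a line with a framing in which the edge $(u,v)$ is the min-out of $u$ but the max-in of $v$. Following your extremal-extension recipe from $(u,v)$ produces a route $\rho_e$ that uses (min-in, min-out) at $u$ and (max-in, max-out) at $v$; but then a route $\rho'$ that enters $u$ via the max-in, traverses $(u,v)$, and exits $v$ via the min-out is incompatible with $\rho_e$ already at $u$ (you get $\pref(\rho_e)\prec\pref(\rho')$ while $\suf(\rho')\prec\suf(\rho_e)$). In fact, one can check that in this framing \emph{no} route through $(u,v)$ is exceptional, so the framing is not ample. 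The difficulty is precisely that your min--min/max--max pattern can flip along the route when an out-edge that is extremal at its tail is oppositely extremal at its head; the ``propagation'' you allude to does not go through in that case.

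The correct statement is that a full graph \emph{admits some} ample framing, and the paper (following \cite{Kentuckygentle}) obtains this via the bi-colouring characterization stated just after \cref{lem:ample-deg2}: an ample framing corresponds to a $2$-colouring of the edges such that the two in-edges and the two out-edges at each internal vertex receive different colours. One then argues that such a colouring always exists for a full graph (e.g.\ by observing the relevant adjacency graph is bipartite). Your extremal-extension idea \emph{does} work once one restricts to framings satisfying the consistency condition that each edge has the same rank at its head and tail---in that case the min--min/max--max pattern never flips and the resulting monochromatic routes are indeed exceptional---but that is exactly the content of the bi-colouring, and its existence is what needs to be proved.
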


In an ample framing of a full graph $G$, if an edge $(u,v)$ is first (resp. second) in $\out(u)$ then it also must be first (resp. second) in $\inc(v)$ \cite[Corollary 3.11]{Kentuckygentle}.
Thus, an ample framing on a full graph $G$ is equivalent to a bi-colouring $\coloring : E(G) \to \{\text{\tcr{red}},\text{\tcb{blue}}\}$ of the edges of $G$ such that any two edges having the same head or the same tail receive different colours.
Henceforth, we adopt the convention that $\text{\tcr{red}} < \text{\tcb{blue}}$ and will sometimes write $\tcr{1}$ and $\tcb{2}$ instead of $\text{\tcr{red}}$ and $\text{\tcb{blue}}$.
With this interpretation, a route is exceptional if and only if it is monochromatic. Moreover, we can characterize incompatibility with the following condition. Because of this, we denote such (full) graphs with an ample framing by $(G,\coloring)$ and say that $(G,\coloring)$ is an \textnew{amply framed DAG}.

\begin{corollary}[{\cite[\S 3.1]{Kentuckygentle}}]
	\label{cor:coherence_full_graphs_ample_framings}
	Given an amply framed DAG $(G,\coloring)$, two routes $\route$ and $\route'$ are incompatible if and only if $\route$ and $\route'$ have a maximal common subwalk going from a vertex $u$ to a vertex $v$ (potentially $u=v$) such that
    the colour of the incoming edge to $u$ in $\route$ and the colour of the outgoing edge from $v$ in $\route$ are different.
    (Equivalently, by the maximality of the common subwalk, the colour of the incoming edge to $u$ in $\route'$ and the colour of the outgoing edge from $v$ in $\route'$ are different.)
\end{corollary}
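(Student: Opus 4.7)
The plan is to prove both implications directly from the definitions of $\prec_{\In(w)}$ and $\prec_{\Out(w)}$, using the bi-colouring description of the ample framing: at every internal vertex there is exactly one \tcr{red} and one \tcb{blue} incoming edge, and likewise for outgoing edges. Under the convention $\text{\tcr{red}} < \text{\tcb{blue}}$, comparing two walks in $\In(v)$ (respectively, in $\Out(v)$) amounts to comparing the colours of the last (respectively, first) edges on which they differ.

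For the forward direction, suppose $\route$ and $\route'$ are incompatible at a common internal vertex $w$, with, say, $\pref(\route, w) \prec_{\In(w)} \pref(\route', w)$ and $\suf(\route', w) \prec_{\Out(w)} \suf(\route, w)$. I take the maximal common subwalk of $\route$ and $\route'$ through $w$, going from some vertex $u$ to some vertex $v$. Since the two prefixes at $w$ are strictly ordered (hence distinct) yet agree from $u$ onward, they must disagree on the incoming edge to $u$; in particular $u$ has an incoming edge in both routes and, by the bi-colouring, one of these is \tcr{red} while the other is \tcb{blue}. (Note that \cref{convention:graphs no idle edges} rules out the degenerate case of $u$ being a common source, since then both prefixes would start at $u$ and take the unique edge out of it, contradicting distinctness.) Unpacking $\prec_{\In(w)}$, the assumed inequality then forces the incoming edge to $u$ in $\route$ to be \tcr{red}. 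Dually, the outgoing edges from $v$ in the two routes are distinct, and $\prec_{\Out(w)}$ forces the one in $\route$ to be \tcb{blue}. This yields the claimed colour mismatch in $\route$ between the incoming edge to $u$ and the outgoing edge from $v$.

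For the converse, given a maximal common subwalk from $u$ to $v$ along which, say, the incoming edge to $u$ in $\route$ is \tcr{red} and the outgoing edge from $v$ in $\route$ is \tcb{blue}, I first observe that $u$ is internal (it has an incoming edge by assumption and an outgoing edge onto the common subwalk); by the maximality of the subwalk combined with the ample bi-colouring, the incoming edge to $u$ in $\route'$ must be \tcb{blue}, and analogously the outgoing edge from $v$ in $\route'$ must be \tcr{red} (this is the parenthetical assertion). I then verify incompatibility at the common internal vertex $u$: the prefixes $\pref(\route,u)$ and $\pref(\route',u)$ differ last at the incoming edges to $u$, giving $\pref(\route,u) \prec_{\In(u)} \pref(\route',u)$; the suffixes $\suf(\route,u)$ and $\suf(\route',u)$ agree along the common subwalk up to $v$ and then first differ at the outgoing edges from $v$, giving $\suf(\route',u) \prec_{\Out(u)} \suf(\route,u)$.

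The only real obstacle is the bookkeeping of these two comparisons: $\prec_{\In(w)}$ is governed by the \emph{last} pair of differing edges, while $\prec_{\Out(w)}$ is governed by the \emph{first}, and the colour convention $\text{\tcr{red}} < \text{\tcb{blue}}$ must be threaded through both to correctly identify which route carries which colour at $u$ and $v$. Once this is pinned down, both directions reduce to a direct inspection of the endpoint edges of the maximal common subwalk together with the two-colour constraint at each internal vertex.
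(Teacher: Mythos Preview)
Your argument is correct. The paper does not supply its own proof of this corollary; it simply records the statement and attributes it to \cite[\S 3.1]{Kentuckygentle}, so there is no ``paper's proof'' to compare against. Your direct unpacking of $\prec_{\In(w)}$ and $\prec_{\Out(w)}$ via the bi-colouring is exactly the intended mechanism.

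Two very minor points of polish. In the converse direction, when you assert that the incoming edge to $u$ in $\route'$ exists (and hence is \tcb{blue}), it is worth making explicit the one-line reason: routes begin at sources, and $u$ is internal (you already showed it has an incoming edge), so $\route'$ cannot start at $u$ and therefore enters $u$ along some edge. Also, your parenthetical ``an outgoing edge onto the common subwalk'' does not literally apply when $u=v$ and the common subwalk is a single vertex; in that case the outgoing edge you need is the assumed \tcb{blue} edge out of $v=u$ in $\route$, which still witnesses that $u$ is internal. Neither of these affects the validity of the proof.
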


\begin{remark}
In \cref{s:conections-acyclic} we will interpret this incompatibility condition in the context of non-kissing complexes from \cite{PPP,PPPlocg}.
\end{remark}

The following results will be helpful in what follows.

\begin{proposition}
    \label{prop:max-cliques-use-all-edges}
    Let $(G,F)$ be a framed DAG.
    Then, any maximal clique $K$ of $(G,F)$ uses every edge of $G$.
\end{proposition}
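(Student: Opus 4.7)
The plan is to argue by contradiction via a dimension count, using \cref{thm:DKK}. Suppose toward contradiction that $K \in \MaxCliques(G)$ is a maximal clique and some edge $e \in E(G)$ is not used by any route $\rho \in K$. Then every generator ${\bf 1}_\rho$ of $\sigma_K$ satisfies ${\bf 1}_\rho(e) = 0$, so
\[
    \sigma_K \,\subseteq\, H_e \cap \F^+(G),
    \qquad \text{where} \qquad
    H_e := \set{ f \in \F(G) }{ f(e) = 0 }.
\]

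By \cref{convention:graphs no idle edges}, the edge $e$ is non-idle. As recalled in \cref{ss:flows-defs}, the facet-defining inequalities of $\F^+(G)$ inside $\F(G)$ are precisely $f(e') \geq 0$ for non-idle edges $e'$. In particular $H_e \cap \F^+(G)$ is a facet of $\F^+(G)$, and therefore
\[
    \dim \big( H_e \cap \F^+(G) \big) \,=\, \dim \F^+(G) - 1.
\]

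On the other hand, \cref{thm:DKK} states that $\DKK(G,F) = \set{ \sigma_{K'} }{ K' \in \Cliques(G) }$ is a triangulation of the full-dimensional cone $\F^+(G)$. Since distinct routes yield distinct indicator vectors, the assignment $K' \mapsto \sigma_{K'}$ is an inclusion-preserving bijection between cliques and the (simplicial) cones of the triangulation, whose faces correspond to subcliques. Hence a maximal clique $K$ yields an inclusion-maximal cone $\sigma_K$ of the triangulation, and in any triangulation of a full-dimensional cone such inclusion-maximal cones are themselves top-dimensional. Thus $\dim \sigma_K = \dim \F^+(G)$, contradicting the containment $\sigma_K \subseteq H_e \cap \F^+(G)$, and so every edge of $G$ must be covered by some route of $K$. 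The only step requiring a moment of care is the identification of maximal cliques with top-dimensional cones; this is immediate from the simpliciality of $\DKK(G,F)$ and the injectivity of $K' \mapsto \sigma_{K'}$, so I do not anticipate any substantive obstacle.
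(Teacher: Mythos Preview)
Your proof is correct and follows essentially the same dimension-counting approach as the paper. The paper phrases the contradiction by passing to the subgraph $G \setminus e$ and comparing $\dim\F(G)$ with $\dim\F(G\setminus e)$, whereas you stay inside $G$ and use that $\{f(e)=0\}\cap\F^+(G)$ is a proper facet; these are two presentations of the same idea, and your justification that maximal cliques correspond to full-dimensional cones of $\DKK(G,F)$ is the same step the paper invokes.
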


\begin{proof}
By \cref{thm:DKK}, maximal cliques $K$ correspond to the rays of the full-dimensional cones in the $\DKK$ triangulation of $\F(G)$.
Thus, any maximal clique contains $\dim(\F(G)) = \# E(G) - \# \internal(G)$ routes.
If $K$ does not use an edge $e$ of $G$, then $K$ is also a maximal clique of the graph $G \setminus e$, which is a contradiction since $\dim(\F(G \setminus e)) = \dim(\F(G)) - 1$.
\end{proof}

\begin{remark}
In \cite{BerSer_Wilting} Berggren and Serhiyenko explained that $\DKK(G,F)$ of a framed DAG $(G,F)$ with a certain condition called {\em rooted}, is a subcomplex of $\DKK(\tilde{G},\tilde{F})$ where $(\tilde{G},\tilde{F})$ is an amply framed DAG obtained from $(G,F)$ by elementary graph operations.
That is to say, a good understanding of the amply framed case is sufficient to permit an understanding of a much broader class of framed DAGs.
\end{remark}

\subsection{Volume integer flows}

Let $(G,F)$ be a framed acyclic directed graph.
Since the DKK triangulation of $\F_1(G)$ is unimodular,
the \textnew{normalized volume} of $\F_1(G)$, denoted by $\vol(\F_1(G))$,
is precisely the number of maximal cliques of $(G,F)$.
The following result of Baldoni--Vergne \cite{BV} and Postnikov--Stanley \cite{PS}
computes $\vol(\F_1(G))$ in terms of certain integer flows on $G$.

\begin{theorem}[\cite{BV,PS}]
	\label{thm:innerflowsindeg}
	For a connected DAG $G$, we have that
	\[
		\vol(\F_1(G)) = \# \F_{\bf 0}^\NN(G,{\bf d}),
	\]
where ${\bf d} := (\indeg(v)-1)_{v \in \internal(G)}$.
\end{theorem}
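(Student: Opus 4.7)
The plan is to combine two complementary ingredients. First, any framing $F$ on $G$ endows $\F_1(G)$ with a unimodular triangulation, so the normalized volume reduces to a count of maximal cliques. Second, a bijection between maximal cliques and volume integer flows finishes the job. Notice that ampleness plays no role in this argument: the theorem concerns the (unframed) graph $G$, and any framing may be used as auxiliary data in the proof.

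\textbf{Step 1: Volume as a clique count.} Fix any framing $F$ on $G$. By \cref{thm:DKK}, the simplices $\Delta_K$ for $K \in \MaxCliques(G,F)$ form a unimodular triangulation of $\F_1(G)$. Because each top-dimensional unimodular lattice simplex contributes exactly $1$ to the normalized volume of the ambient lattice polytope, summing over the top cells of this triangulation yields
\[
    \vol(\F_1(G)) = \#\MaxCliques(G,F).
\]

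\textbf{Step 2: Maximal cliques $\leftrightarrow$ volume integer flows.} Next, I would invoke the bijection of Morales--Mészáros--Striker \cite{MMSt} between $\MaxCliques(G,F)$ and $\F_{\bf 0}^\NN(G,{\bf d})$. Concretely, this map sends a maximal clique $K$ to the integer flow $f_K$ obtained by recording, at each internal vertex $v$, how the routes of $K$ distribute across the edges of $\inc(v)$ and $\out(v)$ with respect to the framing orders. One checks that $f_K$ is nonnegative, has netflow $\indeg(v) - 1$ at each internal vertex and netflow $0$ at each source, and that $K \mapsto f_K$ is both injective and surjective. Combining Steps 1 and 2 yields $\vol(\F_1(G)) = \#\F_{\bf 0}^\NN(G,{\bf d})$.

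The main obstacle lies in Step 2: while the construction $K \mapsto f_K$ is explicit, proving injectivity and surjectivity requires a careful analysis of compatibility at internal vertices, which is precisely the technical core of \cite{MMSt}. As a historically earlier alternative, one could bypass DKK entirely and follow Baldoni--Vergne and Postnikov--Stanley: extract $\#\F_{\bf 0}^\NN(G,{\bf d})$ as the coefficient of $\prod_{v \in \internal(G)} z_v^{\indeg(v)-1}$ in the rational generating function~\eqref{gf: kpf} (after setting $z_s = 0$ for each source), then identify this coefficient with $\vol(\F_1(G))$ via an iterated-residue (Jeffrey--Kirwan) computation. This route is more classical but technically heavier, whereas the DKK-based argument above is cleaner given the machinery already in place.
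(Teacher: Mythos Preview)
The paper does not prove this theorem: it is quoted as a known result of Baldoni--Vergne and Postnikov--Stanley, with no argument supplied. Your proposal therefore cannot be compared to a proof in the paper, but it is worth noting that your argument is sound and in fact reverses the logical flow the paper uses. The paper takes \cref{thm:innerflowsindeg} as input, combines it with \cref{thm:DKK} to deduce \cref{cor:cliques_and_integer_flows} (that $\#\MaxCliques(G,F) = \#\F_{\bf 0}^\NN(G,{\bf d})$), and only then records the M\'esz\'aros--Morales--Striker bijection as an independent combinatorial explanation of that corollary. You instead take \cref{thm:DKK} and the bijection of \cite{MMSt} as the inputs and recover \cref{thm:innerflowsindeg} as output, which is a legitimate alternative derivation and avoids the residue machinery of \cite{BV,PS} that you mention in your closing paragraph.

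Two small points. First, your informal description of the map $K \mapsto f_K$ in Step~2 (``recording how the routes of $K$ distribute across the edges'') does not quite match the actual definition $\Phi(K)(e) = n_K(e) - 1$ via counts of distinct prefixes, though you rightly defer the details to \cite{MMSt}. Second, you should be aware that the proof in \cite{MMSt} is self-contained and does not rely on \cref{thm:innerflowsindeg}, so there is no circularity in your route; but since the paper places the bijection \emph{after} \cref{thm:innerflowsindeg} in its exposition, a reader following the paper's order might initially worry about this.
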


The integer flows in $\F_{\bf 0}^\NN(G,{\bf d})$ have netflow $0$ at each source and netflow $\indeg(v) - 1$ at each internal vertex $v$.
Due to the result above, we call these the \textnew{volume integer flows} of $G$.

\begin{remark}
From the generating function in \eqref{gf: kpf}, the number of integer flows can be computed as
\begin{equation} \label{gf: volume integer flows}
   \# \F_{\bf 0}^\NN(G,{\bf d}) = [{\bf z}^{\bf d}] \prod_{\substack{(u,v) \in E(G)\\ u \in \internal(G), v\in \sinks(G)}} (1-z_u)^{-1} \prod_{\substack{(u,v)\in E(G)\\ u,v \in \internal(G)}} (1-z_uz_v^{-1})^{-1},
\end{equation}
where $[\cdot]$ denotes coefficient extraction and ${\bf z}^{\bf d}=\prod_{v \in \internal(G)} z_v^{\indeg(v)-1}$.
\end{remark}

The following is an immediate consequence of \cref{thm:DKK,thm:innerflowsindeg}.

\begin{corollary}
    \label{cor:cliques_and_integer_flows}
    Let $(G,F)$ be a framed DAG. Then,
    \[
        \#\MaxCliques(G,F) = \# \F_{\bf 0}^\NN(G,{\bf d}).
    \]
\end{corollary}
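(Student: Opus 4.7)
The plan is to chain together the two results cited just before the statement. First I would invoke \cref{thm:DKK} to note that the collection $\{\Delta_K : K \in \Cliques(G)\}$ is a unimodular triangulation of the flow polytope $\F_1(G)$. The maximal simplices in this triangulation correspond precisely to maximal cliques $K \in \MaxCliques(G,F)$; indeed, since the triangulation is a simplicial complex, any clique can be extended to a maximal one, and $\Delta_K$ is a top-dimensional simplex if and only if $K$ is maximal (so that $\# K = \dim(\F_1(G))+1$, as noted in the proof of \cref{prop:max-cliques-use-all-edges}).

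Next, I would use the unimodularity part of \cref{thm:DKK}: each maximal simplex $\Delta_K$ has normalized volume exactly $1$. Since normalized volume is additive over a triangulation, it follows that
\[
    \vol(\F_1(G)) = \#\MaxCliques(G,F).
\]
Finally, I would apply \cref{thm:innerflowsindeg}, which identifies $\vol(\F_1(G))$ with $\#\F_{\bf 0}^\NN(G,{\bf d})$, to conclude the desired equality.

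There is no genuine obstacle here; the only subtlety worth spelling out explicitly is the identification of top-dimensional simplices with maximal cliques, which requires that the simplicial complex underlying a unimodular triangulation of a polytope is pure of dimension $\dim(\F_1(G))$. This is standard and follows from the fact that a triangulation of a polytope covers it by top-dimensional simplices.
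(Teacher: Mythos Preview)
Your proposal is correct and follows exactly the approach the paper indicates: the paper states this corollary as ``an immediate consequence of \cref{thm:DKK,thm:innerflowsindeg}'' without further proof, and your argument simply unpacks that chain of implications. The extra care you take in identifying maximal cliques with top-dimensional simplices is a reasonable elaboration of what the paper leaves implicit.
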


\begin{example}
    \label{ex:master_ex_acyclic_2}
    Let $\colorfulXXgraph$ be the amply framed DAG in \cref{subfig:full_graph_and_routes1}.
    We can see that the flow polytope $\F_1(\blackXXgraph)$ has normalized volume~5 in two different ways:
    \begin{enumerate}
        \item The framed DAG $\colorfulXXgraph$ has five maximal cliques, which are shown in \cref{subfig:full_graph_and_routes2}.
        \item The DAG $\colorfulXXgraph$ has five integer flows with netflow $1 = \indeg(v) - 1$ at every internal vertex~$v$ and netflow $0$ at each source.
        These integer flows are illustrated in \cref{fig:example_volume_integer_flows}.
    \end{enumerate}

\end{example}

\begin{figure}[ht]
	\centering
    \begin{tikzpicture}[-stealth,scale=.5]

    \begin{scope}
        \draw[gray!70!white] (0,0) -- (1,1);
        \draw[thick] (1,1) -- (2,0) node [red, midway, fill = white, inner sep = .3pt] {\scriptsize $1$};
        \draw[gray!70!white] (0,2) -- (1,1);
        \draw[gray!70!white] (1,1) -- (2,2);
        \draw[thick] (2,2) -- (3,1) node [red, midway, fill = white, inner sep = .3pt] {\scriptsize $1$};
        \draw[gray!70!white] (1,3) -- (2,2);
        \draw[gray!70!white] (2,2) -- (3,3);
        \node at (1.5,-1) {$R_1,R_2$};
    \end{scope}

    \begin{scope}[shift={(06,0)}]
        \draw[gray!70!white] (0,0) -- (1,1);
        \draw[gray!70!white] (1,1) -- (2,0);
        \draw[gray!70!white] (0,2) -- (1,1);
        \draw[thick] (1,1) -- (2,2) node [red, midway, fill = white, inner sep = .3pt] {\scriptsize $1$};
        \draw[thick] (2,2) -- (3,1) node [red, midway, fill = white, inner sep = .3pt] {\scriptsize $2$};
        \draw[gray!70!white] (1,3) -- (2,2);
        \draw[gray!70!white] (2,2) -- (3,3);
        \node at (1.5,-1) {$R_2,R_3$};
    \end{scope}

    \begin{scope}[shift={(12,0)}]
        \draw[gray!70!white] (0,0) -- (1,1);
        \draw[gray!70!white] (1,1) -- (2,0);
        \draw[gray!70!white] (0,2) -- (1,1);
        \draw[thick] (1,1) -- (2,2) node [red, midway, fill = white, inner sep = .3pt] {\scriptsize $1$};
        \draw[thick] (2,2) -- (3,1) node [red, midway, fill = white, inner sep = .3pt] {\scriptsize $1$};
        \draw[gray!70!white] (1,3) -- (2,2);
        \draw[thick] (2,2) -- (3,3) node [red, midway, fill = white, inner sep = .3pt] {\scriptsize $1$};
        \node at (1.5,-1) {$R_3,R_4$};
    \end{scope}

    \begin{scope}[shift={(18,0)}]
        \draw[gray!70!white] (0,0) -- (1,1);
        \draw[gray!70!white] (1,1) -- (2,0);
        \draw[gray!70!white] (0,2) -- (1,1);
        \draw[thick] (1,1) -- (2,2) node [red, midway, fill = white, inner sep = .3pt] {\scriptsize $1$};
        \draw[gray!70!white] (2,2) -- (3,1);
        \draw[gray!70!white] (1,3) -- (2,2);
        \draw[thick] (2,2) -- (3,3) node [red, midway, fill = white, inner sep = .3pt] {\scriptsize $2$};
        \node at (1.5,-1) {$R_4,R_5$};
    \end{scope}

    \begin{scope}[shift={(24,0)}]
        \draw[gray!70!white] (0,0) -- (1,1);
        \draw[thick] (1,1) -- (2,0) node [red, midway, fill = white, inner sep = .3pt] {\scriptsize $1$};
        \draw[gray!70!white] (0,2) -- (1,1);
        \draw[gray!70!white] (1,1) -- (2,2);
        \draw[gray!70!white] (2,2) -- (3,1);
        \draw[gray!70!white] (1,3) -- (2,2);
        \draw[thick] (2,2) -- (3,3) node [red, midway, fill = white, inner sep = .3pt] {\scriptsize $1$};
        \node at (1.5,-1) {$R_1,R_5$};
    \end{scope}

\end{tikzpicture}
	\caption{
        The five volume integer flows from \cref{subfig:full_graph_and_routes3} (only edges with non-zero flows are labelled).
        Below each flow, we list the clique that corresponds to it via the bijection $\Phi^{-1}$. We do not list the exceptional routes $E_1,E_2,E_3$, which are present in each clique.
        }
    \label{fig:example_volume_integer_flows}
\end{figure}
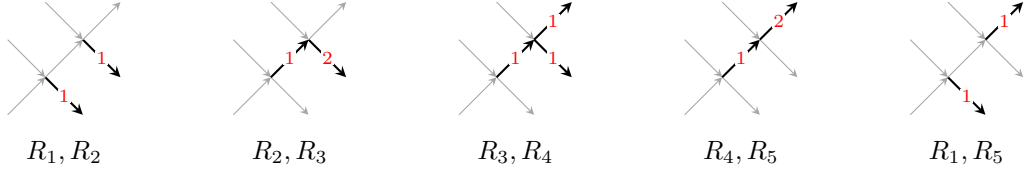

\begin{example}
    \label{ex:big_ex_acyclic}
    Let $\colorfulHashGraph$ be the amply framed DAG in \cref{subfig:full_edge_labeling1}.
    We can see that the flow polytope $\F_1(\colorfulHashGraph)$ has normalized volume~42 in two different ways:
    \begin{enumerate}
    \item The framed DAG $\colorfulHashGraph$ has 42 maximal cliques, which are cumbersome to compute.
    \item The DAG $\colorfulHashGraph$ has 42 integer flows with netflow $1 = \indeg(v) - 1$ at every internal vertex~$v$ and netflow $0$ at each source. This number of flows can be computed from the generating function in \eqref{gf: volume integer flows}. Indeed,
    labelling the internal vertices 1,2,3,4 with 1 on the left and 4 on the right, we have

    \begin{multline*}
    \# \F_{\bf 0}^\NN(\colorfulHashGraph,{\bf 1}) =  [z_1z_2z_3z_4] \\\left(1-z_{2}\right)^{-1} \left(1-z_{3}\right)^{-1} \left(1-z_{4}\right)^{-2}\left(1-\frac{z_{1}}{z_{2}}\right)^{-1} \left(1-\frac{z_{1}}{z_{3}}\right)^{-1} \left(1-\frac{z_{2}}{z_{4}}\right)^{-1} \left(1-\frac{z_{3}}{z_{4}}\right)^{-1}
    =  42.
    \end{multline*}
    \end{enumerate}
\end{example}

For any framed DAG $(G,F)$, Mészáros--Morales--Striker construct in \cite{MMSt} a bijective correspondence
$\Phi = \Phi_{(G,F)} : \MaxCliques(G,F) \to \F_{\bf 0}^\NN(G,{\bf d})$,
between the maximal cliques of $(G,F)$ and the volume integer flows of $G$, thus giving a combinatorial proof of \cref{cor:cliques_and_integer_flows}.
We now recall their construction.

Given a maximal clique $K \in \MaxCliques(G,F)$, let $\mathnew{\Phi(K)} \in \ZZ^E$ be the flow
\[
    \mathnew{\Phi(K)(e)} := n_K(e) - 1,
\]
where
\[
    \mathnew{n_K(e)} := \# \set{\pref(\route,\head(e))}{\route \in K,\; e \in \route}
\]
is the number of different prefixes up to edge $e$ among the routes $\route \in K$ containing $e$.

\begin{theorem}[{\cite{MMSt}}]
	\label{cor:bijection_clique_to_integer_flows}
	Let $(G,F)$ be a framed DAG.
	Then, for any maximal clique ${K \in \MaxCliques(G,F)}$,
    the flow $\Phi(K)$ is a volume integer flow.
	Moreover, the map $\Phi : \MaxCliques(G,F) \to \F_{\bf 0}^\NN(G,{\bf d})$ is a bijection.
\end{theorem}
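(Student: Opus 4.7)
The plan is to first verify that $\Phi(K) \in \F_{\bf 0}^\NN(G,{\bf d})$ for every maximal clique $K$, and then to deduce bijectivity from the equality of cardinalities provided by \cref{thm:DKK} and \cref{thm:innerflowsindeg}. Non-negativity of $\Phi(K)$ is immediate from \cref{prop:max-cliques-use-all-edges}, which guarantees $n_K(e) \ge 1$ for every edge. The netflow condition at a source is trivial: a source $v$ has a unique outgoing edge $e$, and every route containing $e$ shares the prefix $\{e\}$ at $\head(e)$, so $n_K(e) = 1$ and $\Phi(K)(e) = 0$. The substance of the well-definedness step is therefore the balance condition at internal vertices.

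For an internal vertex $v$, the identity $\nf_{\Phi(K)}(v) = \indeg(v) - 1$ rearranges to
\[
\sum_{e \in \out(v)} n_K(e) \;-\; \sum_{e \in \inc(v)} n_K(e) \;=\; \outdeg(v) - 1.
\]
Partitioning the distinct prefixes at $v$ by their final edge collapses the incoming sum to the number $m_K(v)$ of distinct prefixes at $v$ realized by routes of $K$ through $v$. For the outgoing sum, introduce the bipartite graph $B_K(v)$ whose left vertices are those distinct prefixes, whose right vertices are the outgoing edges $\out(v)$ (all of which are used, by \cref{prop:max-cliques-use-all-edges}), and whose edges are the realized pairs $(P,e)$; the outgoing sum then equals $|E(B_K(v))|$, and the identity reduces to the claim that $B_K(v)$ is a tree. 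Acyclicity follows from compatibility at $v$: a cycle in $B_K(v)$ produces two routes of $K$ with prefixes $P \prec P'$ at $v$ whose outgoing edges satisfy $e' \prec e$, and the corresponding suffixes are then ordered oppositely, contradicting \cref{def:compatible-routes}. Connectedness follows from maximality of $K$: if $B_K(v)$ separated, one could splice a prefix from one component with the continuation of a route from the other to obtain a new route compatible with every element of $K$, contradicting maximality.

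To conclude, note that $|\MaxCliques(G,F)| = \vol(\F_1(G)) = |\F_{\bf 0}^\NN(G,{\bf d})|$ by \cref{thm:DKK} and \cref{thm:innerflowsindeg}, so bijectivity of $\Phi$ follows once injectivity (or, equivalently, the existence of an inverse) is established. The cleanest inverse $\Psi$ processes internal vertices in a topological order: at each $v$, the framing orders the incoming and outgoing edges taken with multiplicities $f(e)+1$, and threading these tokens in the framing-induced order pairs prefixes with continuations to yield the routes of a unique maximal clique. The main obstacle will be the connectedness step of the tree argument above: unlike acyclicity, which is a local consequence of compatibility at $v$, connectedness requires that a spliced route be simultaneously compatible with every route of $K$ at all of their common vertices, a global condition. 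A careful greedy choice of the spliced route---or, equivalently, a parallel induction maintaining the explicit inverse $\Psi$ as the topological sweep progresses---should dispatch this difficulty.
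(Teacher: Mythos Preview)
The paper does not prove this statement; it is quoted from \cite{MMSt} with no argument given. So there is nothing on the paper's side to compare against, and your proposal stands on its own.

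Your outline is correct and can be completed, but you overestimate the difficulty of the connectedness step. No globally compatible spliced route is needed. Your local computation already shows $\nf_{\Phi(K)}(v) = \indeg(v) - c_v$, where $c_v$ is the number of components of the (acyclic, by your argument) bipartite graph $B_K(v)$. Now sum the netflows over all vertices. Sources contribute $0$; a sink $t$ with incoming edge $e_t$ contributes $1 - n_K(e_t)$, and $\sum_t n_K(e_t) = |K|$ since the distinct prefixes at a sink are exactly the routes of $K$ ending there. Using $\sum_{v \in \internal(G)} \indeg(v) = |E| - |\sinks(G)|$ and that total netflow vanishes, one obtains
\[
\sum_{v \in \internal(G)} c_v \;=\; |E| - |K| \;=\; |\internal(G)|,
\]
the last equality because $|K| = \dim \F(G) = |E| - |\internal(G)|$ by \cref{thm:DKK}. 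Since each $c_v \geq 1$, every $c_v$ equals $1$, and each $B_K(v)$ is a tree.

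Your sketch of the inverse $\Psi$ is also right, and the ingredient that makes it precise is already implicit in your acyclicity argument: compatibility at $v$ forces $B_K(v)$ to be \emph{non-crossing} with respect to the framing orders, meaning that if $(P_i, e_j)$ and $(P_{i'}, e_{j'})$ are both edges with $P_i \prec P_{i'}$ then $e_j \preceq e_{j'}$. A non-crossing bipartite tree on two totally ordered vertex sets is uniquely determined by its right-degree sequence, so $B_K(v)$ is determined by the values $n_K(e) = \Phi(K)(e) + 1$ for $e \in \out(v)$. The topological sweep you describe then reconstructs $K$ from $\Phi(K)$, giving injectivity; bijectivity follows from \cref{cor:cliques_and_integer_flows} (equivalently \cref{thm:DKK} plus \cref{thm:innerflowsindeg}) as you say.
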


\begin{example}
	\label{ex:master_ex_acyclic_3}
    \cref{fig:example_volume_integer_flows} shows
    each volume integer flow of $\blackXXgraph$ labelled by the non-exceptional routes of $\colorfulXXgraph$ in the corresponding maximal clique.
\end{example}

\section{Background on gentle algebras and $g$-vector fans}
\label{s:gentle}

\subsection{Some vocabulary}
\label{ss:vocabulary}
A \textnew{quiver} is a quadruplet $Q=(Q_0,Q_1,s,t)$ where:
\begin{itemize}[itemsep=1mm]
    \item $Q_0$ and $Q_1$ are sets, respectively called the \textnew{vertex} and \textnew{arrow sets} of $Q$;
    \item $s,t: Q_1 \longrightarrow Q_0$ are functions, respectively called \textnew{source} and \textnew{target functions}.
\end{itemize}
For $\alpha\in Q_1$, the vertex $s(\alpha)$ is the vertex at which $\alpha$ begins and $t(\alpha)$ is the vertex at which $\alpha$ ends.
A quiver is said to be \textnew{finite} whenever $Q_0$ and $Q_1$ are finite sets. From now on, we assume that all the quivers we will study are finite.

A \textnew{path} in $Q$ is either a formal element $e_q$ for some $q \in Q_0$, called the \textnew{lazy path at $q$}, or a finite sequence of arrows $\alpha_1,\ldots, \alpha_n \in Q_1$, which will be written $\alpha_1 \alpha_2 \cdots \alpha_n$, such that $s(\alpha_{i+1}) = t(\alpha_i)$ for all $i \in \{1, \ldots, n-1\}$. Define the \textnew{length} of a path as the number of arrows that constitute it. For a given path $p = \alpha_1 \alpha_2 \cdots \alpha_n$, we write its source $\mathnew{s(p)} := s(\alpha_1)$ and its target $\mathnew{t(p)} := t(\alpha_n)$. For a lazy path $s(e_q) = t(e_q) = q$. A path $p$ is an \textnew{oriented cycle} whenever $p$ is non-lazy and $s(p) = t(p)$. (Note that contrary to some conventions, an oriented cycle is allowed to repeat vertices and edges.) A quiver is said to be \textnew{acyclic} if it does not contain any oriented cycle.

For any arrow $\alpha \in Q_1$, we introduce its \textnew{formal inverse $\alpha^{-1}$} such that $s(\alpha^{-1}) = t(\alpha)$ and~$t(\alpha^{-1}) = s(\alpha)$. A \textnew{walk} in $Q$ is either a lazy path or a finite sequence of arrows and inverse arrows $\beta_1 \beta_2 \ldots \beta_m$ such that $s(\beta_{i+1}) = t(\beta_i)$ for all $i \in \{1, \ldots, m-1\}$. For a given walk $w$, we write its source $s(w)$ and its target $t(w)$.

\subsection{Gentle quivers}
\label{ss:gentle-quiver}
A \textnew{locally gentle quiver} is a pair $(Q,R)$ with $Q$ a quiver and $R$ a subset of paths in $Q$ of length $2$, such that for any $\alpha \in Q_1$:
\begin{itemize}[itemsep=1mm]
\item there exists at most one $\beta \in Q_1$ such that $t(\alpha)=s(\beta)$ and $\alpha \beta \notin R$;
\item there exists at most one $\beta' \in Q_1$ such that $t(\alpha)=s(\beta')$ and $\alpha \beta' \in R$;
\item there exists at most one $\gamma \in Q_1$ such that $t(\gamma)=s(\alpha)$ and $\gamma \alpha \notin R$;
\item there exists at most one $\gamma' \in Q_1$ such that $t(\gamma')=s(\alpha)$ and $\gamma' \alpha \in R$;
\end{itemize}
We draw a locally gentle quiver $(Q,R)$ as the directed graph corresponding to $Q$ with an additional dashed curve joining $\alpha$ and $\beta$ whenever $\alpha \beta \in R$.

One way to express the four conditions that make $(Q,R)$ a locally gentle quiver is by saying that, near each vertex $v$, the quiver looks like this, where we are allowed to remove arrows, and also to make identifications among the vertices that appear.

\[\begin{tikzpicture}[>= angle 60,->]
        \node (v) at (0,0) {$v$};
        \node (a) at (1,1) {$\bullet$};
        \node (b) at (1,-1) {$\bullet$};
        \node (c) at (-1,-1) {$\bullet$};
        \node (d) at (-1,1) {$\bullet$};
        \draw (a) to (v);
        \draw (v) to (b);
        \draw (d) to (v);
        \draw (v) to (c);

        \draw[dotted,-] ([yshift=-.25cm, xshift=-.2cm]v.west) arc[start angle = -140, end angle = -220, x radius=.4cm, y radius =.4cm];
		\draw[dotted,-] ([yshift=-.25cm,xshift=.2cm]v.east) arc[start angle = 320, end angle = 400, x radius=.4cm, y radius =.4cm];
		\end {tikzpicture} \]

A locally gentle quiver $(Q,R)$ is said to be \textnew{gentle} whenever every oriented cycle $\alpha_1 \ldots \alpha_k$ has $\alpha_i \alpha_{i+1} \in R$ for some $i \in \{1,\ldots,k-1\}$ or else $\alpha_k \alpha_1 \in R$.

Let $\KK$ be an algebraically closed field, and let $Q$ be a quiver. The \textnew{path algebra} $\KK Q$ is the $\KK$-vector space with basis the set of the paths in $Q$, equipped with a multiplication defined by concatenation:
\[p_1 \cdot p_2 = \begin{cases}
    p_1 p_2 & \text{if } t(p_1)=s(p_2), \\
    0 & \text{otherwise.}
\end{cases}\]
A $\KK$-algebra $\Lambda$ is said to be \textnew{gentle} whenever there exists a gentle quiver $(Q,R)$ such that $\Lambda\simeq\KK Q/\langle R\rangle$, where $\langle R \rangle$ is the two-sided ideal generated by the elements of $R$. A gentle algebra is necessarily finite-dimensional.

\subsection{Representations of a locally gentle quiver}

Let $(Q,R)$ be a locally gentle quiver. A \textnew{representation} of $(Q,R)$ (over $\KK$) is a pair $M = ((M_q)_{q \in Q_0}, (M_\alpha)_{\alpha \in Q_1})$ such that:
\begin{itemize}[itemsep=1mm]
\item for all $q \in Q_0$, $M_q$ is a $\KK$-vector space;
\item for all $\alpha \in Q_1$, $M_\alpha : M_{s(\alpha)} \longrightarrow M_{t(\alpha)}$ is a $\KK$-linear transformation;
\item for all $\alpha \beta \in R$, $M_\beta M_\alpha = 0$.
\end{itemize}
One can see a representation of $(Q,R)$ as an assignment of a $\KK$-vector space to each vertex of $Q$, and an assignment of a $\KK$-linear transformation to each arrow, satisfying the vanishing conditions given by $R$. We say that $M$ is \textnew{finite-dimensional} whenever $M_q$ is finite-dimensional for all $q \in Q_0$. Write $\mathnew{\rep_\KK(Q,R)}$ for the set of finite-dimensional representations of $(Q,R)$ (over $\KK$).

Given $M,N \in \rep_\KK(Q,R)$, we define a \textnew{morphism} of representations $h : M\longrightarrow N$ to be a collection of linear maps $(h_q : M_q \longrightarrow N_q)_{q \in Q_0}$ such that for all $\alpha \in Q_1$, $h_{t(\alpha)} M_\alpha = N_\alpha h_{s(\alpha)}$. Write $\Hom(M,N)$ for the $\KK$-vector space of these morphisms.

A morphism $h\in\Hom(M,N)$ is an \textnew{isomorphism} provided there exists a morphism denoted by $h^{-1}\in\Hom(N,M)$ with $h\circ h^{-1}=\textrm{id}_N$ and $h^{-1}\circ h=\textrm{id}_M$. Equivalently, $h$ is an isomorphism if $h_q$ is bijective for every $q \in Q_0$. If such an isomorphism exists, we say that $M$ and $N$ are \textnew{isomorphic}, and we write $M \simeq N$.

Endowing $\rep_\KK(Q,R)$ with the morphisms described above puts a category structure on $\rep_{\KK}(Q,R)$. This category is equivalent to the category of finite-dimensional $(\KK Q/\langle R \rangle)$-modules \cite[Theorem III.1.6]{ASS06}.

A \textnew{subrepresentation} of a representation $M$ consists of a subspace $N_q$ of $M_q$ for each $q\in Q_0$, such that for any $\alpha\in Q_1$, we have that $M_\alpha(N_{s(\alpha)}) \subseteq N_{t(\alpha)}$. Equivalently, a subrepresentation of $M$ is a representation $N$ with a morphism from $N$ to $M$ which is an inclusion at each vertex.
See \cref{ex:HN-polytope} for a list of the subrepresentations of the representation $\KK \longleftarrow \KK \longrightarrow \KK$.

\subsection{Indecomposable representations of locally gentle quivers}

Fix a locally gentle quiver $(Q,R)$. Let $M,N \in \rep_\KK(Q,R)$. We define the \textnew{direct sum} of $M$ and $N$, denoted by $M \oplus N$, to be the representation such that:
\begin{itemize}[itemsep=1mm]
    \item for all $q \in Q_0$, $(M \oplus N)_q = M_q \oplus N_q$;
    \item for all $\alpha \in Q_1$, we define $(M\oplus N)_\alpha$ by $(M \oplus N)_\alpha(x,y) = (M_\alpha(x),N_\alpha(y))$, for any $ x \in M_{s(\alpha)}$ and $y \in N_{s(\alpha)}$.
\end{itemize}
Note that $M \oplus N \in \rep_\KK(Q,R)$. A nonzero representation $E \in \rep_\KK(Q,R)$ is said to be \textnew{indecomposable} if whenever $E \cong M \oplus N$, for some $M,N \in \rep_\KK(Q,R)$, then either $M \cong 0$ or $N \cong 0$.
Butler and Ringel first classified the indecomposable representations of gentle quivers \cite{BR87}. This description was extended to the locally gentle quivers by Crawley-Boevey \cite{CB89}. We do not provide a full description here. Readers interested in more details from a combinatorial perspective can consult \cite{BDMTY20,PPP}.

A walk $w$ in $Q$ is said to be \textnew{reduced} if either it is a lazy path or $w = \beta_1 \ldots \beta_m$ such that $\beta_{i+1} \neq \beta_i^{-1}$ for all $i \in \{1, \ldots, m-1\}$. A \textnew{string} $\rho=\beta_1\dots\beta_m$ in $(Q,R)$ is a reduced walk such that for any $i \in \{1,\ldots,m-1\}$, both $\beta_i \beta_{i+1} \notin R$ and $\beta_{i+1}^{-1} \beta_{i}^{-1} \notin R$.
\begin{example}
    \label{ex:An-example}
    A quiver is said to be of type $A_n$ if it has $n$ vertices $\{1,2,\dots,n\}$ and $n-1$ arrows, connecting vertices $i$ and $i+1$ for $1\leq i\leq n-1$. Note that the orientations of the arrows are not fixed, so there are $2^{n-1}$ different quivers of type $A_n$.

    Consider $Q$ any $A_n$ quiver, with no relations. There are $n^2$ strings of $(Q,\varnothing)$, uniquely determined by specifying their starting and ending vertices.
\end{example}

\begin{example}
\label{ex:runningGQR}
Let us consider the following gentle quiver.
\[\begin{tikzpicture}[>= angle 60,->]
	    \node (QR) at (-1,0){$(Q,R) =$};
		\node (a) at (1,1) {$1$};
		\node (b) at (0,0) {$2$};
        \node (c) at (2,0) {$3$};
		\node (d) at (1,-1) {$4$};
		\draw (a) to node[above left]{$\alpha$} (b);
        \draw (a) to node[above right]{$\beta$} (c);
        \draw (b) to node[below left]{$\gamma$} (d);
        \draw (c) to node[below right]{$\delta$} (d);
		\draw[dotted,-] ([yshift=-.25cm, xshift=-.2cm]c.west) arc[start angle = -140, end angle = -220, x radius=.4cm, y radius =.4cm];
		\draw[dotted,-] ([yshift=-.25cm,xshift=.2cm]b.east) arc[start angle = 320, end angle = 400, x radius=.4cm, y radius =.4cm];
		\end {tikzpicture} \]
There are sixteen strings in $(Q, R)$: in addition of the lazy paths and the paths given by exactly one arrow or one inverse arrow, we have $\alpha^{-1} \beta$, $\beta^{-1} \alpha$, $\gamma \delta^{-1}$, and $\delta \gamma^{-1}$.
\end{example}

Consider a string $\rho = \beta_1 \ldots \beta_m$ of $(Q,R)$. We define the \textnew{standard string representation} $\mathnew{M(\rho)} \in \rep_\KK(Q,R)$ as follows:
\begin{itemize}[itemsep=1mm]
    \item set $v_0 = s(\beta_1)$ and, for all $i \in \{1,\ldots,m\}$, $v_i = t(\beta_i)$;
    \item consider formal variables $x_0,\ldots,x_m$;
    \item for all $q \in Q_0$, we define $M(\rho)_q$ as the $\KK$-vector space generated with basis $\set{x_i}{v_i = q}$;
    \item for all $\alpha \in Q_1$, we define $M(\rho)_\alpha: M(\rho)_{s(\alpha)} \longrightarrow M(\rho)_{t(\alpha)}$ as the linear transformation such that:
    \[M(\rho)_\alpha (x_i) = \begin{cases}
        x_{i-1} & \text{if } \alpha = \beta_i^{-1}; \\
        x_{i+1} & \text{if } \alpha = \beta_{i+1};\\
        0 & \text{otherwise.}
    \end{cases}\]
\end{itemize}
Note that the first two cases are mutually exclusive since $\rho$ is reduced,
thus $M(\rho)$ is a well-defined representation of $(Q,R)$.
A representation $M$ is a \textnew{string representation} if it is isomorphic to some standard string representation.

\begin{example}
\label{ex:An-example-cont}
Continuing \cref{ex:An-example}, let $Q$ be an $A_n$ quiver with no relations. Let $\gamma_{ij}$ denote the unique string starting at vertex $i$ and ending at vertex $j$. The standard string representation of $\gamma_{ij}$ is one-dimensional at each vertex $k$ with $i\leq k\leq j$ (if $i\leq j$) or $j\leq k\leq i$ (if $i\geq j$). The linear maps between successive 1-dimensional vector spaces are identity maps, while the linear maps involving a 0-dimensional vector space are (necessarily) the zero map.
\end{example}

\begin{example} Let us take the gentle quiver in \cref{ex:runningGQR}. The figure below shows the string representation of $\alpha^{-1} \beta$.
    \[\begin{tikzpicture}[>= angle 60,->]
	    \node (QR) at (-1.5,0){$M(\alpha^{-1}\beta) = $};
		\node (a) at (1,1) {$\KK$};
		\node (b) at (0,0) {$\KK$};
        \node (c) at (2,0) {$\KK$};
		\node (d) at (1,-1) {$0$};
		\draw (a) to node[above left]{$1$} (b);
        \draw (a) to node[above right]{$1$} (c);
        \draw (b) to (d);
        \draw (c) to (d);
		\draw[dotted,-] ([yshift=-.25cm, xshift=-.2cm]c.west) arc[start angle = -140, end angle = -220, x radius=.4cm, y radius =.4cm];
		\draw[dotted,-] ([yshift=-.25cm,xshift=.2cm]b.east) arc[start angle = 320, end angle = 400, x radius=.4cm, y radius =.4cm];
		\end {tikzpicture} \]
\end{example}

\cref{ex:An-example-cont} is an essential motivating example. For a quiver of type $A_n$ with no relations, it has long been understood that the indecomposable representations are exactly the string representations, and two string representations are isomorphic if and only if the strings are either identical or one is the reverse of the other. This extends to the setting of gentle quivers.

\begin{theorem} [\cite{BR87,CB89,CB18}]
    Let $(Q,R)$ be a locally gentle quiver and $\KK$ a field. Any string representation is indecomposable, and two string representations are isomorphic if and only if the two strings either coincide or one is the reversal of the other.
\end{theorem}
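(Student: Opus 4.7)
The plan is to split the theorem into two claims: indecomposability of $M(\rho)$, and the isomorphism classification. For the easy direction of the isomorphism claim, observe that the reverse walk $\rho^{-1} := \beta_m^{-1} \cdots \beta_1^{-1}$ is itself a string (both the reducedness condition and the "no relation" condition are symmetric under reversal), and the bijection of canonical basis vectors $x_i \leftrightarrow x_{m-i}$ induces an isomorphism $M(\rho) \cong M(\rho^{-1})$ that commutes with the arrow actions prescribed in the definition of $M(\cdot)$.

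For indecomposability, I would prove that $\operatorname{End}(M(\rho))$ is a local $\KK$-algebra; since $M(\rho)$ is finite-dimensional, this is equivalent to indecomposability. The technical core is the classical combinatorial description (due to Butler--Ringel in the gentle case, and to Crawley--Boevey in the locally gentle case) of a $\KK$-basis of $\Hom(M(\sigma), M(\rho))$ indexed by so-called \emph{graph maps}: these consist of a common substring of $\sigma$ and $\rho$ together with hook/cohook boundary conditions at the endpoints. The "at most one" clauses in the definition of a locally gentle quiver are precisely what make these boundary conditions well-defined. First I would recall/establish this description carefully. Then, specialising to $\sigma = \rho$, the trivial total self-overlap of $\rho$ with itself produces the identity endomorphism, while any non-trivial self-overlap strictly shifts indices along the string and therefore yields a nilpotent endomorphism (with respect to the natural filtration by position on the canonical basis). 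Consequently $\operatorname{End}(M(\rho)) = \KK \cdot \mathrm{id} \oplus N$ with $N$ a nilpotent ideal, so the endomorphism algebra is local.

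For the non-trivial direction of the isomorphism claim, suppose $\varphi : M(\rho) \to M(\rho')$ and $\psi : M(\rho') \to M(\rho)$ are mutually inverse morphisms. Writing each as a $\KK$-linear combination of the graph-map basis elements of $\Hom(M(\rho), M(\rho'))$ and $\Hom(M(\rho'), M(\rho))$ respectively, the relation $\psi \circ \varphi = \mathrm{id}_{M(\rho)}$ forces the existence, among the pairs of graph-map contributions, of one realising a \emph{total} overlap between $\rho$ and $\rho'$; such a total overlap is exactly the data of an identification $\rho = \rho'$ or $\rho = (\rho')^{-1}$.

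The main obstacle is establishing the graph-map basis for $\Hom$-spaces between string modules. This requires a delicate case analysis of how a morphism can behave at the endpoints of overlapping substrings, and it relies crucially on the locally gentle hypothesis to exclude pathological configurations (for instance, an arrow both extending an overlap and being forbidden by a relation). Once that combinatorial machinery is in place, the indecomposability and the isomorphism classification both follow as structural consequences, mirroring the familiar picture from type $A_n$ described in \cref{ex:An-example-cont}.
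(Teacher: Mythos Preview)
The paper does not provide its own proof of this statement: it is quoted as a background result from the literature (Butler--Ringel, Crawley--Boevey), with no argument given. So there is nothing in the paper to compare your proposal against.

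That said, your outline is the standard approach from the cited references, and it is sound. A few remarks. For indecomposability, you do not actually need the full graph-map basis of $\Hom$; it is enough to show that every endomorphism of $M(\rho)$ is a scalar plus a nilpotent, and this can be argued more directly by tracking how an endomorphism acts on the canonical basis $x_0,\dots,x_m$ and using the locally gentle axioms to force the ``diagonal'' part to be a single scalar. For the isomorphism classification, your argument via ``some composite of graph maps must realise the identity'' is correct in spirit, but one should be a little careful: the composition of two graph maps is not in general a single graph map, so one needs to argue that the identity component in the graph-map expansion of $\psi\circ\varphi$ can only arise from a pair of graph maps each of which is a total overlap. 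This is straightforward once you note that any graph map which is not a total overlap strictly decreases the length of the image string, hence cannot contribute to the identity. With those small clarifications, your plan would constitute a complete proof along the lines of \cite{BR87,CB89}.
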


In fact, it is possible, and not much more difficult, to describe all the indecomposable representations of a locally gentle quiver $(Q,R)$: they come in two disjoint classes, the string representations already discussed, and band representations. See \cite{BR87, CB89}.
\textnew{Band representations} are associated to cyclic strings, where a non-lazy string $\rho = \beta_1 \ldots \beta_m$ is said to be \textnew{cyclic} whenever $ \beta_2 \ldots \beta_m \beta_1$ is also a string in $(Q,R)$.

If $(Q,R)$ is sufficiently nice, then all indecomposable representations are string representations.

\begin{proposition}[\cite{BR87,CB89}]
    Let $(Q,R)$ be a locally gentle quiver. If $(Q,R)$ does not admit a cyclic string, then every indecomposable finite-dimensional representation of $(Q,R)$ is a string representation.
\end{proposition}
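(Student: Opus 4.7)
The plan is to invoke the Butler--Ringel/Crawley--Boevey classification of indecomposable finite-dimensional representations of a locally gentle quiver as already cited in the paper. That classification states that every indecomposable finite-dimensional representation is isomorphic either to a \emph{string representation} $M(\rho)$, where $\rho$ is a string, or to a \emph{band representation} $M(b,\lambda,n)$, where $b$ is a \emph{band} (essentially an equivalence class of cyclic strings, up to rotation and reversal), $\lambda \in \KK^\times$, and $n \in \NN_{>0}$. The key observation is that the very definition of a band requires the existence of a cyclic string: one cannot form a band without an underlying non-lazy string $\rho = \beta_1 \dots \beta_m$ satisfying the cyclic condition $\beta_2 \dots \beta_m \beta_1$ is also a string. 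Thus, under our hypothesis that $(Q,R)$ admits no cyclic string, the set of bands of $(Q,R)$ is empty, so no band representations exist.

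First, I would recall precisely the statement of the classification as in \cite{BR87,CB89}, being careful to note that for locally gentle quivers it is the Crawley-Boevey version \cite{CB89} that applies (since $(Q,R)$ need not be gentle). Second, I would unpack the definition of a band to make explicit that every band gives rise to (and is determined by) a cyclic string in the sense defined just before the proposition statement. Third, I would conclude: since no cyclic string exists, there are no bands; by the classification dichotomy, every indecomposable finite-dimensional representation of $(Q,R)$ must be a string representation $M(\rho)$.

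There is essentially no obstacle here, as this is a direct application of the classification theorem together with the observation that the absence of cyclic strings removes the possibility of band representations. The only minor subtlety worth checking is the precise compatibility of conventions: some authors define bands to include primitive cyclic words and require that the associated string representation not already be isomorphic to a string representation of a shorter string, but this is irrelevant for our argument since we are ruling out bands altogether. Hence the proof reduces to a two-line invocation of \cite{BR87,CB89} once the terminology is aligned, and the main work in writing the proof is simply making clear that ``no cyclic string'' is precisely the hypothesis that forbids band representations from appearing.
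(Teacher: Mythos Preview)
Your proposal is correct and matches the paper's approach exactly: the paper does not give an independent proof of this proposition but simply cites \cite{BR87,CB89}, having already recorded just before it that the indecomposables of a locally gentle quiver fall into two disjoint classes, string representations and band representations, with band representations associated to cyclic strings. Your argument is precisely this observation written out.
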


Note that if the hypotheses of the proposition hold, then $(Q,R)$ is actually gentle, since every oriented cycle includes a relation.

\subsection{$g$-vector fan of an algebra}
\label{ss:general-g-vector}

Let $\Lambda$ be a finite-dimensional algebra with $n$ isomorphism classes of simple modules. (In particular, one can think about a gentle algebra corresponding to a quiver with $n$ vertices, but in this section, we prefer to be more general, to underline that these definitions apply to arbitrary finite-dimensional algebras. In subsequent sections, we will return to the gentle setting and present a combinatorial interpretation of this material.) A reference for the material in this section is \cite{adachi-iyama-reiten}. We number the simple modules $S_1$ to $S_n$ in some order, and we let $P_1, \dots, P_n$ be their projective covers.

\begin{definition}
    \label{def:compatible-modules}
    An indecomposable module $M$ of $\Lambda$ is called \textnew{$\tau$-rigid} if $\Hom(M,\tau M)=0$, where $\tau$ denotes the Auslander--Reiten translation. Two $\tau$-rigid indecomposable modules $M,N$ are said to be \textnew{compatible} if we have $\Hom(M,\tau N)=\Hom(N,\tau M)=0$.
\end{definition}
We will not define the Auslander--Reiten translation in full generality.
Rather, in \cref{thm:g-vect-blossom}, we will present an explicit combinatorial description of $\tau$-rigidity and compatibility.

\begin{example}
\label{ex:g-vector_running_example}
Consider the quiver
\[\begin{tikzpicture}[>= angle 60,->]
	    \node (QR) at (-1,0.5){$Q =$};
		\node (a) at (1,1) {$2$};
		\node (b) at (0,0) {$1$};
		\draw (a) to node[above left]{$\alpha$} (b);
		\end {tikzpicture}. \]
This quiver has the following three isomorphism classes of indecomposable representations:
\[
    \begin{tikzpicture}[>= angle 60,->]
		\node (QR) at (-1,0.5){$M_1=$};
        \node (a) at (1,1) {$0$};
		\node (b) at (0,0) {$\KK$};
		\draw (a) to node[above left] {}(b);

        \begin{scope}[xshift=4cm]
            \node (QR) at (-1,0.5){$M_2=$};
		\node (a) at (1,1) {$\KK$};
		\node (b) at (0,0) {$\KK$};
		\draw (a) to node[above left] {$1$} (b);
        \end{scope}

        \begin{scope}[xshift=8cm]
             \node (QR) at (-1,0.5){$M_3=$};
		\node (a) at (1,1) {$\KK$};
		\node (b) at (0,0) {$0$};
		\draw (a) to node[above left] {}(b);
        \end{scope}
	\end{tikzpicture}
\]

Their Auslander--Reiten translations are as follows.
\begin{itemize}[itemsep=1mm]
    \item $\tau M_1 = 0 = \tau M_2$; and,
    \item $\tau M_3 \cong M_1$,
\end{itemize}
Note that, aside from the endomorphism spaces, the only non-zero homomorphism spaces among indecomposable modules are $\Hom(M_1, M_2)$ and $\Hom(M_2, M_3)$. So the compatible pairs are $(M_1, M_2)$ and $(M_2,M_3)$.
\end{example}

For a $\tau$-rigid module $M$, consider its minimal projective presentation:

\[\begin{tikzcd}
	P' & P & M & 0
	\arrow[from=1-1, to=1-2]
    \arrow[from=1-2, to=1-3]
    \arrow[from=1-3, to=1-4]
\end{tikzcd},\]

Here, $P$ and $P'$ are projective modules, which are therefore direct sums of the indecomposable modules $P_1,\ldots,P_n$.
By the minimality assumption, and the hypothesis that $M$ is $\tau$-rigid, $P$ and $P'$ will not have any common direct summands \cite[Proposition 2.5]{adachi-iyama-reiten}, so we can record the information of the isomorphism classes of $P$ and $P'$ in a single vector $\pmb{g}_M = (g_1,\dots,g_n)$ where $g_i$ is the number of copies of $P_i$ in a direct sum decomposition of $P$, if this is positive, or otherwise negative the number of copies of $P_i$ in a direct sum decomposition of $P'$. The vector $\pmb{g}_M$ is the called the \textnew{$g$-vector} of $M$.

\begin{example}
    \label{ex:g-vector_running_example2}
Continuing \cref{ex:g-vector_running_example}, the $g$-vectors of the three indecomposable modules are:
\[
    \pmb{g}_{M_1}=\ve_1,\quad
    \pmb{g}_{M_2}= \ve_2,\quad
    \pmb{g}_{M_3}= \ve_2-\ve_1.
\]
\end{example}

There is a fan whose rays are given by the $g$-vectors of the $\tau$-rigid indecomposable modules, and whose faces are spanned by rays corresponding to pairwise compatible collections.
This fan is never complete. It turns out to be natural to add further $n$ rays, generated by the negatives of the standard basis vectors. Naively, the $g$-vector $-\ve_i$ would correspond to the ``presentation''
\[\begin{tikzcd}
	P_i & 0 & M & 0
	\arrow[from=1-1, to=1-2]
    \arrow[from=1-2, to=1-3]
    \arrow[from=1-3, to=1-4],
\end{tikzcd}\]
but there is no $\Lambda$-module $M$ of which this is the minimal presentation. It is nonetheless convenient to consider a formal object denoted $P_i[1]$, which corresponds to this ``presentation.'' We assign to $P_i[1]$ the $g$-vector $-\ve_i$ and call these additional objects \textnew{shifted projective modules}.
We further say that $P_i[1]$ is compatible with $P_j[1]$ for all $j$, and that $P_i[1]$ is compatible with any module which has no support at vertex $i$. (It is possible to give a less formal description of the objects $P_i[1]$, and a more conceptual explanation of why they should be included. For this, see \cite{adachi-iyama-reiten,derksen-fei}.) Adding these further $n$ rays and defining cones by the extension of compatibility to include these new rays, we get a fan \cite[Theorem 1.9]{DIJ}, which is called the \textnew{$g$-vector fan of $\Lambda$}, and denoted $\gfan(\Lambda)$.
If $(Q,R)$ is the quiver of $\Lambda$, we also write $\gfan(Q,R)$ for $\gfan(\Lambda)$.

For an algebra with a finite number of indecomposable $\tau$-rigid modules, the $g$-vector fan is complete \cite[Theorem 4.7]{A21}.

\begin{example}
Continuing \cref{ex:g-vector_running_example,ex:g-vector_running_example2}, the $g$-vector fan $\gfan(Q)$ is illustrated in \cref{fig:gvect_running_example}. There is a two-dimensional cone of the fan generated by each pair of cyclically adjacent rays.
\end{example}

\begin{figure}[!ht]
    \centering
    \begin{tikzpicture}[>= angle 60,->, scale=1.5]
	   \draw[-,step=1.0,dash pattern={on 10pt off 2pt on 5pt off 2pt}, black, opacity=0.2,line width=.4mm] (-1.5,-1.5) grid (1.5,1.5);
        \draw[-, black,line width=.4mm] (-1.5,0) edge (1.5,0);
        \draw[-, black,line width=.4mm] (0,-1.5) edge (0,1.5);
        \draw[-,black,line width=.4mm] (0,0) edge (-1.5,1.5);
        \draw[blue,line width=.6mm] (0,0) -- (0,1);
        \draw[blue,line width=.6mm] (0,0) -- (1,0);
        \draw[blue,line width=.6mm] (0,0) -- (-1,1);
        \draw[blue,line width=.6mm] (0,0) -- (0,-1);
        \draw[blue,line width=.6mm] (0,0) -- (-1,0);
        \node[blue] at (-0.7,-0.3){$P_1[1]$};
        \node[blue] at (0.3,-0.7){$P_2[1]$};
        \node[blue] at (0.7,-0.3){$M_1$};
        \node[blue] at (0.3,0.7){$M_2$};
        \node[blue] at (-0.5,0.8){$M_3$};
    \end{tikzpicture}
    \caption{The $g$-vector fan $\gfan(Q)$ of the quiver $Q$ in \cref{ex:g-vector_running_example}.
        The vector $\pmb{g}_{M}$ is simply labelled by $M$.}
    \label{fig:gvect_running_example}
\end{figure}
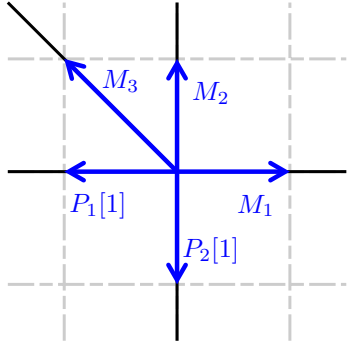

\subsection{Blossoming quivers}
\label{ss:blossoming_quivers}
Let $(Q,R)$ be a gentle quiver. We now describe a new quiver $(Q^\blossom,R^\blossom)$, containing $(Q,R)$ as a full subquiver. The idea is straightforward: we want all the vertices of $Q$ to have indegree 2 and outdegree 2. They cannot have a larger indegree or outdegree, so the only problem is that the indegree or outdegree might be too small. In this case, add new arrows pointing into or out of the vertices to make up the necessary degree. The other end of each such added arrow is a node that has no other arrows incident to it.
Following \cite{PPP}, we call these new arrows and vertices \textnew{blossoming arrows} and \textnew{blossoming vertices}, and the resulting quiver $Q^\blossom$ the \textnew{blossoming quiver} of $Q$.

It is also straightforward to define a new set of \textnew{blossoming relations} $R^\blossom$ whose restriction to $Q$ is $R$, and such that $(Q^\blossom,R^\blossom)$ is a gentle quiver; the blossoming quiver is uniquely defined up to isomorphism.

\begin{example} We continue \cref{ex:g-vector_running_example}. The quiver with no relations $(Q,\varnothing)$ is gentle. Its blossoming quiver is shown below:
\[\begin{tikzpicture}[>= angle 60,->]
        \node (1a) at (0,2) {$\bullet$};
        \node (1b) at (2,2) {$\bullet$};
        \node (2a) at (-1,1) {$\bullet$};
        \node (2b) at (-1,-1) {$\bullet$};
         \node (c) at (2,0) {$\bullet$};
		\node (d) at (1,-1) {$\bullet$};
		\node (a) at (1,1) {$2$};
		\node (b) at (0,0) {$1$};
		\draw (a) to (b);
       \draw (a) to (c);
        \draw (b) to (d);
        \draw (1a) to (a);
        \draw (1b) to (a);
        \draw (2a) to (b);
        \draw (b) to (2b);
        \draw[dotted,-] ([yshift=-.25cm, xshift=-.2cm]a.west) arc[start angle = -140, end angle = -220, x radius=.4cm, y radius =.4cm];
         \draw[dotted,-] ([yshift=-.25cm, xshift=-.2cm]b.west) arc[start angle = -140, end angle = -220, x radius=.4cm, y radius =.4cm];
         \draw[dotted,-] ([yshift=-.25cm,xshift=.2cm]a.east) arc[start angle = 320, end angle = 400, x radius=.4cm, y radius =.4cm];
		 \draw[dotted,-] ([yshift=-.25cm,xshift=.2cm]b.east) arc[start angle = 320, end angle = 400, x radius=.4cm, y radius =.4cm];
		\end {tikzpicture} \]
\end{example}

A \textnew{maximal string} is a non-lazy string in $(Q^\blossom,R^\blossom)$ which starts and ends at a blossoming vertex. We tacitly identify a maximal string with its reversal. A maximal string is called \textnew{straight} if it (or its inverse) is a path in $(Q^\blossom, R^\blossom)$, i.e., if all its arrows are forward arrows or all its arrows are inverse arrows.
A string that is not straight is called \textnew{bent}.

It is a remarkable fact, established in \cite{BDMTY20,PPP}, that, for $(Q,R)$ a gentle algebra, the isomorphism classes of indecomposable modules plus the shifted projective modules of $(Q,R)$ are in natural correspondence with the bent maximal strings in $(Q^\blossom, R^\blossom)$ (where a maximal string and its reversal are identified). We will recall a combinatorial way to see this bijective correspondence.

Given $\alpha \in Q_1^\blossom$, let $\nu_\alpha$ be the longest path $\nu$ in $(Q^\blossom,R^\blossom)$ such that $\nu\alpha^{-1}$ is a string in $(Q^\blossom,R^\blossom)$.
We define the \textnew{cohook} at $\alpha$ in $(Q^\blossom,R^\blossom)$ as the string $ \h_{(\alpha)} = \nu_\alpha\alpha^{-1} $. If $(Q,R)$ is gentle, then the cohook at $\alpha$ is a well-defined string beginning at a blossoming vertex.

\begin{definition}
\label{def:add-cohooks}
Let $\rho$ be a string in $(Q,R)$. We define a string $\tau(\rho)$ in $(Q^\blossom,R^\blossom)$ as follows. We view $\rho$ as a string in $(Q^\blossom, R^\blossom)$ whose support is included in $(Q,R)$. In $(Q^\blossom, R^\blossom)$, there exists a unique pair $(\alpha, \beta)$ of arrows in $(Q^\blossom)_1$ such that $\alpha^{-1} \rho \beta$ is a string in $(Q^\blossom, R^\blossom)$.
We then set
\[
    \mathnew{\tau(\rho)} := \h_{(\alpha)} \rho \h_{(\beta)}^{-1}.
\]
\end{definition}

For $\rho$ a string in $(Q,R)$, we define $\tau M_\rho$ to be the representation $M_{\tau(\rho)}$ of $(Q^\blossom,R^\blossom)$.
(For those familiar with Auslander--Reiten translation, $\tau M_\rho$ is the Auslander--Reiten translation of $M_\rho$ in the category of representations of $(Q^\blossom,R^\blossom)$. See \cite{BDMTY20,PPP} for more details.)

\begin{example}
\label{ex:running-bigger}
Consider the blossoming quiver of the gentle quiver considered in \cref{ex:runningGQR}. The result of adding cohooks to the lazy string at 1 is shown with thick arrows in \cref{fig:max-string}.

\begin{figure}[ht]
    \centering
    \begin{tikzpicture}[>= angle 60,->]
        \node[gray!70] (1a) at (0,2) {$\bullet$};
        \node[gray!70] (1b) at (2,2) {$\bullet$};
        \node (2a) at (-1,1) {$\bullet$};
        \node[gray!70] (2b) at (-1,-1) {$\bullet$};
        \node (3a) at (3,1) {$\bullet$};
        \node[gray!70] (3b) at (3,-1) {$\bullet$};
        \node[gray!70] (4a) at (0,-2) {$\bullet$};
        \node[gray!70] (4b) at (2,-2) {$\bullet$};
		\node (a) at (1,1) {$1$};
		\node (b) at (0,0) {$2$};
        \node (c) at (2,0) {$3$};
		\node[gray!70] (d) at (1,-1) {$4$};
		\draw[very thick] (a) to (b);
        \draw[very thick] (a) to (c);
        \draw[gray!70] (c) to (d);
        \draw[gray!70] (b) to (d);
        \draw[gray!70] (1a) to (a);
        \draw[gray!70] (1b) to (a);
        \draw[very thick] (2a) to (b);
        \draw[gray!70] (b) to (2b);
        \draw[very thick] (3a) to (c);
        \draw[gray!70] (c) to (3b);
        \draw[gray!70] (d) to (4a);
        \draw[gray!70] (d) to (4b);
        \draw[dotted,-] ([yshift=-.25cm, xshift=-.2cm]a.west) arc[start angle = -140, end angle = -220, x radius=.4cm, y radius =.4cm];
        \draw[dotted,-] ([yshift=-.25cm, xshift=-.2cm]b.west) arc[start angle = -140, end angle = -220, x radius=.4cm, y radius =.4cm];
		\draw[dotted,-] ([yshift=-.25cm, xshift=-.2cm]c.west) arc[start angle = -140, end angle = -220, x radius=.4cm, y radius =.4cm];
        \draw[dotted,-] ([yshift=-.25cm, xshift=-.2cm]d.west) arc[start angle = -140, end angle = -220, x radius=.4cm, y radius =.4cm];
        \draw[dotted,-] ([yshift=-.25cm,xshift=.2cm]a.east) arc[start angle = 320, end angle = 400, x radius=.4cm, y radius =.4cm];
		\draw[dotted,-] ([yshift=-.25cm,xshift=.2cm]b.east) arc[start angle = 320, end angle = 400, x radius=.4cm, y radius =.4cm];
        \draw[dotted,-] ([yshift=-.25cm,xshift=.2cm]c.east) arc[start angle = 320, end angle = 400, x radius=.4cm, y radius =.4cm];
        \draw[dotted,-] ([yshift=-.25cm,xshift=.2cm]d.east) arc[start angle = 320, end angle = 400, x radius=.4cm, y radius =.4cm];
    \end {tikzpicture}
    \caption{The blossoming quiver $(Q^\blossom,R^\blossom)$ for the gentle quiver $(Q,R)$ from \cref{ex:runningGQR}, with the maximal string (drawn with thick arrows) corresponding to the lazy path at $1$.}
    \label{fig:max-string}
\end{figure}
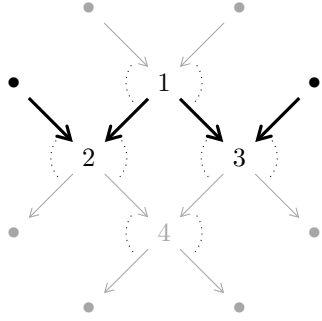
\end{example}

We also define $\tau$ on the shifted projective objects of $(Q,R)$. Let $i \in Q_0$. There are two strings in $(Q^\blossom,R^\blossom)$ which can be obtained by starting at vertex $i$ and continuing as far as possible following reverse arrows. Define $\rho_{I_i}$ to be the maximal string obtained by gluing together these two strings, and let $I_i$ be the corresponding representation of $(Q^\blossom,R^\blossom)$. We then define $\tau P_i[1]=I_i$.

Given $(Q,R)$, we denote by $\mathnew{\IndShift(Q,R)}$ the set of isomorphism classes of indecomposable string representations of $(Q,R)$, and the shifted projective modules $P_i[1]$ for $i \in Q_0$.

\begin{theorem}[{\cite[Lemma 2.42]{PPP}\cite[Theorem 5.1]{BDMTY20}}]
\label{thm:maxstringandindec}
Let $(Q,R)$ be a gentle quiver. Then there is a bijective correspondence $\Psi$:
\begin{equation}
    \label{eq:def-Psi}
    \Psi : \IndShift(Q,R) \rightarrow \big\{ \text{bent maximal strings of } (Q^\blossom,R^\blossom) \big\}.
\end{equation}

The bijection $\Psi$ sends $M_\rho$ to $\tau(\rho)$, and sends $P_i[1]$ to $\rho_{I_i}$.
\end{theorem}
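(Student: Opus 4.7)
The plan is to prove this correspondence by constructing an explicit inverse to $\Psi$, following the strategy of \cite{BDMTY20,PPP}. The analysis rests on the peak/valley pattern of walks in $(Q^\blossom,R^\blossom)$: a \emph{peak} (resp.\ \emph{valley}) of a walk at an internal vertex $v$ means that the two arrows of the walk incident to $v$ form a forward-to-reverse (resp.\ reverse-to-forward) transition. Since blossoming vertices have degree one, they cannot be interior to any walk, hence peaks and valleys occur only at vertices of $Q_0$.

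First I verify that $\Psi$ is well-defined. For a string representation $M_\rho$, the gentleness of $(Q,R)$ guarantees the unique existence of arrows $\alpha, \beta \in Q_1^\blossom$ making $\alpha^{-1}\rho\beta$ a string; the cohooks $\h_{(\alpha)}$ and $\h_{(\beta)}$ then extend the walk maximally to blossoming vertices on each side, so $\tau(\rho)$ is maximal. It is bent because the junction between $\nu_\alpha$ and $\alpha^{-1}$ (or, if $\nu_\alpha$ is lazy, a subsequent junction involving $\alpha^{-1}$) forces a direction change. A symmetric and easier argument shows that $\rho_{I_i}$ is bent and maximal with a single peak at $i$.

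To construct $\Psi^{-1}$, I classify bent maximal strings $w$ in $(Q^\blossom,R^\blossom)$ by whether they have any valleys. If $w$ has no valleys, then analyzing its direction-change pattern forces $w$ to have a unique peak at some $i \in Q_0$ with a maximal forward path before it and a maximal reverse path after it, so $w = \rho_{I_i}$ and I set $\Psi^{-1}(w) := P_i[1]$. If $w$ has at least one valley, I decompose $w = \nu_L \, \alpha^{-1}\, \rho\, \beta\, \nu_R^{-1}$, where $\nu_L$ is the maximal initial forward subpath (possibly lazy), $\alpha^{-1}$ the immediately following reverse arrow, with symmetric definitions at the right end. The middle $\rho$ avoids blossoming vertices in its interior (these could only appear at the two extreme ends of a walk), and is therefore a string in $(Q,R)$; I set $\Psi^{-1}(w) := M_\rho$.

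The main obstacle is to verify, in the second case, that the extracted prefix $\nu_L\alpha^{-1}$ truly equals the cohook $\h_{(\alpha)}$ from \cref{def:add-cohooks}. This reduces to two matching properties: the maximality of $\nu_L$ (both as the initial forward part of $w$ and as the longest extension $\nu_\alpha$ in the cohook definition), which follows from the maximality of $w$; and the uniqueness of $\alpha$ such that $\alpha^{-1}\rho\beta$ is a string, which is precisely where the four gentleness axioms are used. Once this is established, the identities $\Psi \circ \Psi^{-1} = \mathrm{id}$ and $\Psi^{-1}\circ\Psi = \mathrm{id}$ follow by unwinding the definitions.
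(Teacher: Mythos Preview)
The paper does not supply its own proof of this theorem: it is stated with citations to \cite[Lemma~2.42]{PPP} and \cite[Theorem~5.1]{BDMTY20} and used as a black box. There is therefore nothing in the paper to compare your argument against directly.

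That said, your sketch is correct and follows the strategy of the cited references: build $\Psi^{-1}$ by stripping the cohooks from a bent maximal string, distinguishing the shifted-projective case by the absence of a direction change of one type. Two minor points are worth tightening. First, your peak/valley convention is the reverse of the one the paper adopts later in \cref{ss:gentle-g} (there a peak is a reverse-to-forward transition, corresponding to $e_v$ lying on top of the string); your argument is internally consistent, but a reader comparing it with the surrounding text will be confused. Second, when you assert that ``the middle $\rho$ avoids blossoming vertices in its interior, and is therefore a string in $(Q,R)$,'' you also need that the \emph{endpoints} $s(\alpha)$ and $s(\beta)$ lie in $Q_0$. This holds because the presence of a reverse-to-forward transition forces the walk to continue past $\alpha^{-1}$ (respectively, to have started before $\beta$), so $s(\alpha)$ and $s(\beta)$ have degree at least two in $Q^\blossom$; but the point deserves a sentence.
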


\begin{example}
\label{ex:running example quiver and indecomposable modules}
We continue \cref{ex:g-vector_running_example}. The five elements of $\IndShift(Q,R)$ correspond to the following five bent maximal strings.
$$\begin{tikzpicture}[xscale=2.5]
\node(d1) at (3,0){\scalebox{0.6}{\begin{tikzpicture}[>= angle 60,->]
        \node[gray!70] (1a) at (0,2) {$\bullet$};
        \node (1b) at (2,2) {$\bullet$};
        \node (2a) at (-1,1) {$\bullet$};
        \node[gray!70] (2b) at (-1,-1) {$\bullet$};
         \node[gray!70] (c) at (2,0) {$\bullet$};
		\node[gray!70] (d) at (1,-1) {$\bullet$};
		\node (a) at (1,1) {$2$};
		\node (b) at (0,0) {$1$};
		\draw[very thick] (a) to (b);
       \draw[gray!70] (a) to (c);
        \draw[gray!70] (b) to (d);
        \draw[gray!70] (1a) to (a);
        \draw[very thick] (1b) to (a);
        \draw[very thick] (2a) to (b);
        \draw[gray!70] (b) to (2b);
        \draw[dotted,-] ([yshift=-.25cm, xshift=-.2cm]a.west) arc[start angle = -140, end angle = -220, x radius=.4cm, y radius =.4cm];
         \draw[dotted,-] ([yshift=-.25cm, xshift=-.2cm]b.west) arc[start angle = -140, end angle = -220, x radius=.4cm, y radius =.4cm];
         \draw[dotted,-] ([yshift=-.25cm,xshift=.2cm]a.east) arc[start angle = 320, end angle = 400, x radius=.4cm, y radius =.4cm];
		 \draw[dotted,-] ([yshift=-.25cm,xshift=.2cm]b.east) arc[start angle = 320, end angle = 400, x radius=.4cm, y radius =.4cm];
		\end {tikzpicture}}};
    \node(e1) at (4,0){\scalebox{0.6}{\begin{tikzpicture}[>= angle 60,->]
        \node (1a) at (0,2) {$\bullet$};
        \node (1b) at (2,2) {$\bullet$};
        \node[gray!70] (2a) at (-1,1) {$\bullet$};
        \node[gray!70] (2b) at (-1,-1) {$\bullet$};
         \node[gray!70] (c) at (2,0) {$\bullet$};
		\node[gray!70] (d) at (1,-1) {$\bullet$};
		\node (a) at (1,1) {$2$};
		\node[gray!70] (b) at (0,0) {$1$};
		\draw[gray!70] (a) to (b);
       \draw[gray!70] (a) to (c);
        \draw[gray!70] (b) to (d);
        \draw[very thick] (1a) to (a);
        \draw[very thick] (1b) to (a);
        \draw[gray!70] (2a) to (b);
        \draw[gray!70] (b) to (2b);
        \draw[dotted,-] ([yshift=-.25cm, xshift=-.2cm]a.west) arc[start angle = -140, end angle = -220, x radius=.4cm, y radius =.4cm];
         \draw[dotted,-] ([yshift=-.25cm, xshift=-.2cm]b.west) arc[start angle = -140, end angle = -220, x radius=.4cm, y radius =.4cm];
         \draw[dotted,-] ([yshift=-.25cm,xshift=.2cm]a.east) arc[start angle = 320, end angle = 400, x radius=.4cm, y radius =.4cm];
		 \draw[dotted,-] ([yshift=-.25cm,xshift=.2cm]b.east) arc[start angle = 320, end angle = 400, x radius=.4cm, y radius =.4cm];
		\end {tikzpicture}}};
    \node(c1) at (2,0){\scalebox{0.6}{\begin{tikzpicture}[>= angle 60,->]
        \node[gray!70] (1a) at (0,2) {$\bullet$};
        \node[gray!70] (1b) at (2,2) {$\bullet$};
        \node (2a) at (-1,1) {$\bullet$};
        \node[gray!70] (2b) at (-1,-1) {$\bullet$};
         \node (c) at (2,0) {$\bullet$};
		\node[gray!70] (d) at (1,-1) {$\bullet$};
		\node (a) at (1,1) {$2$};
		\node (b) at (0,0) {$1$};
		\draw[very thick] (a) to (b);
       \draw[very thick] (a) to (c);
        \draw[gray!70] (b) to (d);
        \draw[gray!70] (1a) to (a);
        \draw[gray!70] (1b) to (a);
        \draw[very thick] (2a) to (b);
        \draw[gray!70] (b) to (2b);
        \draw[dotted,-] ([yshift=-.25cm, xshift=-.2cm]a.west) arc[start angle = -140, end angle = -220, x radius=.4cm, y radius =.4cm];
         \draw[dotted,-] ([yshift=-.25cm, xshift=-.2cm]b.west) arc[start angle = -140, end angle = -220, x radius=.4cm, y radius =.4cm];
         \draw[dotted,-] ([yshift=-.25cm,xshift=.2cm]a.east) arc[start angle = 320, end angle = 400, x radius=.4cm, y radius =.4cm];
		 \draw[dotted,-] ([yshift=-.25cm,xshift=.2cm]b.east) arc[start angle = 320, end angle = 400, x radius=.4cm, y radius =.4cm];
		\end {tikzpicture}}};
    \node(b1) at (1,0){\scalebox{0.6}{\begin{tikzpicture}[>= angle 60,->]
        \node[gray!70] (1a) at (0,2) {$\bullet$};
        \node[gray!70] (1b) at (2,2) {$\bullet$};
        \node[gray!70] (2a) at (-1,1) {$\bullet$};
        \node (2b) at (-1,-1) {$\bullet$};
         \node (c) at (2,0) {$\bullet$};
		\node[gray!70] (d) at (1,-1) {$\bullet$};
		\node (a) at (1,1) {$2$};
		\node (b) at (0,0) {$1$};
		\draw[very thick] (a) to (b);
       \draw[very thick] (a) to (c);
        \draw[gray!70] (b) to (d);
        \draw[gray!70] (1a) to (a);
        \draw[gray!70] (1b) to (a);
        \draw[gray!70] (2a) to (b);
        \draw[very thick] (b) to (2b);
        \draw[dotted,-] ([yshift=-.25cm, xshift=-.2cm]a.west) arc[start angle = -140, end angle = -220, x radius=.4cm, y radius =.4cm];
         \draw[dotted,-] ([yshift=-.25cm, xshift=-.2cm]b.west) arc[start angle = -140, end angle = -220, x radius=.4cm, y radius =.4cm];
         \draw[dotted,-] ([yshift=-.25cm,xshift=.2cm]a.east) arc[start angle = 320, end angle = 400, x radius=.4cm, y radius =.4cm];
		 \draw[dotted,-] ([yshift=-.25cm,xshift=.2cm]b.east) arc[start angle = 320, end angle = 400, x radius=.4cm, y radius =.4cm];
		\end {tikzpicture}}};

    \node(a1) at (0,0){\scalebox{0.6}{\begin{tikzpicture}[>= angle 60,->]
        \node[gray!70] (1a) at (0,2) {$\bullet$};
        \node[gray!70] (1b) at (2,2) {$\bullet$};
        \node[gray!70] (2a) at (-1,1) {$\bullet$};
        \node (2b) at (-1,-1) {$\bullet$};
         \node[gray!70] (c) at (2,0) {$\bullet$};
		\node (d) at (1,-1) {$\bullet$};
		\node[gray!70] (a) at (1,1) {$2$};
		\node (b) at (0,0) {$1$};
		\draw[gray!70] (a) to (b);
       \draw[gray!70] (a) to (c);
        \draw[very thick] (b) to (d);
        \draw[gray!70] (1a) to (a);
        \draw[gray!70] (1b) to (a);
        \draw[gray!70] (2a) to (b);
        \draw[very thick] (b) to (2b);
        \draw[dotted,-] ([yshift=-.25cm, xshift=-.2cm]a.west) arc[start angle = -140, end angle = -220, x radius=.4cm, y radius =.4cm];
         \draw[dotted,-] ([yshift=-.25cm, xshift=-.2cm]b.west) arc[start angle = -140, end angle = -220, x radius=.4cm, y radius =.4cm];
         \draw[dotted,-] ([yshift=-.25cm,xshift=.2cm]a.east) arc[start angle = 320, end angle = 400, x radius=.4cm, y radius =.4cm];
		 \draw[dotted,-] ([yshift=-.25cm,xshift=.2cm]b.east) arc[start angle = 320, end angle = 400, x radius=.4cm, y radius =.4cm];
		\end {tikzpicture}}};
          \node (a2) at (0,-1.5) {$M_1$};
                 \node (b2) at (1,-1.5) {$M_2$};
                 \node (c2) at (2,-1.5) {$M_3$};
                 \node (d2) at (3,-1.5) {$P_1[1]$};
                 \node (e2) at (4,-1.5){$P_2[1]$};
\end{tikzpicture}$$
\end{example}

\subsection{Non-kissing complex for gentle quivers}
\label{ss:nonkissing_gentle}
This subsection follows \cite[Section 2]{PPPlocg}. Let $(Q,R)$ be a gentle quiver, with blossoming quiver $(Q^\blossom,R^\blossom)$. Let $\rho = \beta_1 \ldots \beta_k$ be a string of $(Q^\blossom,R^\blossom)$. A \textnew{substring} of $\rho$ is either
 \begin{itemize}[itemsep=1mm]
     \item a lazy path $e_{s(\beta_i)}$ for some $i \in \{1,\ldots,k\}$ or $e_{t(\beta_k)}$; or,
     \item a string $\beta_i \ldots \beta_j$ for some $1 \leqslant i \leqslant j \leqslant k$.
 \end{itemize}
 A substring $\sigma = \beta_i \ldots \beta_j$ is said to be \textnew{on the top} of $\rho$ whenever both of the following conditions are satisfied:
 \begin{itemize}[itemsep=1mm]
     \item either $i=1$ or $\beta_{i-1}^{-1} \in Q_1$, and
     \item either $j=n$ or $\beta_{j+1} \in Q_1$.
 \end{itemize}
 Dually, we say that $\sigma$ is \textnew{at the bottom} of $\rho$ whenever both of the following conditions are satisfied:
 \begin{itemize}[itemsep=1mm]
     \item either $i=1$ or $\beta_{i-1} \in Q_1$, and,
     \item either $j=n$ or $\beta_{j+1}^{-1} \in Q_1$.
 \end{itemize}
 (Note that these definitions can in particular be applied to cases where $\sigma$ is a lazy path.) See \cref{fig:TopBot} to visualize them. We write $\Top^0(\rho)$ for the multiset of substrings of $\rho$ that are on the top of $\rho$ and contained in $Q$ (rather than $Q^\blossom$). We consider these strings up to reversal. Similarly, we write $\Bot^0(\rho)$ for the multiset of substrings of $\rho$ that are at the bottom of $\rho$ and contained in $Q$.
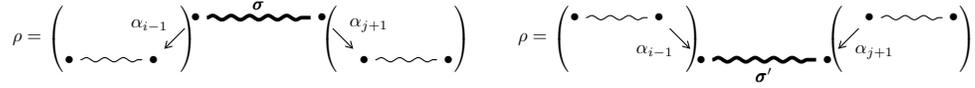
\begin{figure}[h!]
\centering
		\scalebox{0.7}{\begin{tikzpicture}[yshift = 1cm, xshift = 5cm, ->,line width=0.2mm,>= angle 60,color=black, scale=0.8]
			\node (rho) at (-1,.5){$\rho =$};
			\node (1) at (0,0){$\bullet$};
			\node (2) at (2,0){$\bullet$};
			\node (3) at (3,1){$\bullet$};
			\node (4) at (6,1){$\bullet$};
			\node (5) at (7,0){$\bullet$};
			\node (6) at (9,0){$\bullet$};
			\draw[-,decorate, decoration={snake,amplitude=.4mm}] (1) -- (2);
			\draw (3) -- node[above left]{$\alpha_{i-1}$} (2);
			\draw[-,line width=0.7mm,decorate, decoration={snake,amplitude=.4mm}] (3) -- node[above]{$\pmb \sigma$} (4);
			\draw (4) -- node[above right]{$\alpha_{j+1}$} (5);
			\draw[-,decorate, decoration={snake,amplitude=.4mm}] (5) -- (6);
			\node at (-.3,0.5){$\left(\vphantom{\begin{matrix}
						\\
						\\
						\\
				\end{matrix}}\right.$};
			\node at (2.8,0.5){$\left.\vphantom{\begin{matrix}
						\\
						\\
						\\
				\end{matrix}}\right)$};
			\node at (6.2,0.5){$\left(\vphantom{\begin{matrix}
						\\
						\\
						\\
				\end{matrix}}\right.$};
			\node at (9.3,0.5){$\left.\vphantom{\begin{matrix}
						\\
						\\
						\\
				\end{matrix}}\right)$};

			\begin{scope}[xshift = 12cm,yshift=1cm]
				\node (rho) at (-1,-.5){$\rho =$};
				\node (1) at (0,0){$\bullet$};
				\node (2) at (2,0){$\bullet$};
				\node (3) at (3,-1){$\bullet$};
				\node (4) at (6,-1){$\bullet$};
				\node (5) at (7,0){$\bullet$};
				\node (6) at (9,0){$\bullet$};
				\draw[-,decorate, decoration={snake,amplitude=.4mm}] (1) -- (2);
				\draw (2) -- node[below left]{$\alpha_{i-1}$} (3);
				\draw[-,line width=0.7mm,decorate, decoration={snake,amplitude=.4mm}] (3) -- node[below]{$\pmb \sigma'$} (4);
				\draw (5) -- node[below right]{$\alpha_{j+1}$} (4);
				\draw[-,decorate, decoration={snake,amplitude=.4mm}] (5) -- (6);
				\node at (-.3,-0.5){$\left(\vphantom{\begin{matrix}
							\\
							\\
							\\
					\end{matrix}}\right.$};
				\node at (2.8,-0.5){$\left.\vphantom{\begin{matrix}
							\\
							\\
							\\
					\end{matrix}}\right)$};
				\node at (6.2,-0.5){$\left(\vphantom{\begin{matrix}
							\\
							\\
							\\
					\end{matrix}}\right.$};
				\node at (9.3,-0.5){$\left.\vphantom{\begin{matrix}
							\\
							\\
							\\
					\end{matrix}}\right)$};
			\end{scope}
		\end{tikzpicture}}
  \caption{Illustration of a substring $\sigma$ at the top of $\rho$ (left), and a substring $\sigma'$ at the bottom of $\rho$ (right).}
  \label{fig:TopBot}
  \end{figure}

\begin{definition}
    Let $(\rho,\mu)$ be a pair of maximal strings of $(Q,R)$. We say that $\rho$ \textnew{kisses} $\mu$ if $\Top^0(\rho) \cap \Bot^0(\mu) \neq \varnothing$. We say that $\rho$ and $\mu$ are \textnew{kissing} if $\rho$ kisses $\mu$ or $\mu$ kisses $\rho$ (or both). Otherwise, $\rho$ and $\mu$ are \textnew{non-kissing}. We say that a string $\mu$ is \textnew{self-kissing} if $\mu$ kisses itself.
\end{definition}

\begin{definition}
The \textnew{non-kissing complex} of $(Q,R)$ is the simplicial complex $\NK(Q,R)$
whose vertices are given by non-self-kissing maximal strings of $(Q^\blossom,R^\blossom)$, and whose faces are given by the collections of pairwise non-kissing maximal strings.

Recall that a maximal string is called straight if it (or its reversal) uses only arrows from $Q_1$, not inverse arrows.
A maximal string that is straight cannot kiss any other string, so that they appear in all facets of $\NK(Q,R)$. The \textnew{reduced non-kissing complex} $\NKred(Q,R)$ is the simplicial complex whose faces are the collections of pairwise non-kissing bent strings of $(Q,R)$.
\end{definition}

\begin{example} We continue \cref{ex:g-vector_running_example}. The reduced non-kissing complex is shown in \cref{fig:reducednonkisscx}.

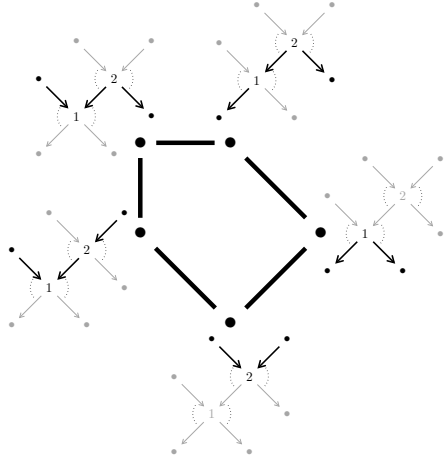
\begin{figure}[!ht]
\centering
\begin{tikzpicture}[scale=1.2] \node (a) at (1,0) {$\bullet$} ;
\node (b) at (0,1) {$\bullet$};
\node (c) at (-1,1) {$\bullet$};
\node (d) at (-1,0) {$\bullet$};
\node (e) at (0,-1){$\bullet$};
\draw[line width=.6mm] (a) -- (b);
\draw[line width=.6mm] (b) -- (c);
\draw[line width=.6mm] (c) -- (d);
\draw[line width=.6mm] (d) -- (e);
\draw[line width=.6mm] (e) -- (a);
\node(d1) at (-1.8,-.4){\scalebox{0.5}{\begin{tikzpicture}[>= angle 60,->]
        \node[gray!70] (1a) at (0,2) {$\bullet$};
        \node (1b) at (2,2) {$\bullet$};
        \node (2a) at (-1,1) {$\bullet$};
        \node[gray!70] (2b) at (-1,-1) {$\bullet$};
         \node[gray!70] (c) at (2,0) {$\bullet$};
		\node[gray!70] (d) at (1,-1) {$\bullet$};
		\node (a) at (1,1) {$2$};
		\node (b) at (0,0) {$1$};
		\draw[very thick] (a) to (b);
       \draw[gray!70] (a) to (c);
        \draw[gray!70] (b) to (d);
        \draw[gray!70] (1a) to (a);
        \draw[very thick] (1b) to (a);
        \draw[very thick] (2a) to (b);
        \draw[gray!70] (b) to (2b);
        \draw[dotted,-] ([yshift=-.25cm, xshift=-.2cm]a.west) arc[start angle = -140, end angle = -220, x radius=.4cm, y radius =.4cm];
         \draw[dotted,-] ([yshift=-.25cm, xshift=-.2cm]b.west) arc[start angle = -140, end angle = -220, x radius=.4cm, y radius =.4cm];
         \draw[dotted,-] ([yshift=-.25cm,xshift=.2cm]a.east) arc[start angle = 320, end angle = 400, x radius=.4cm, y radius =.4cm];
		 \draw[dotted,-] ([yshift=-.25cm,xshift=.2cm]b.east) arc[start angle = 320, end angle = 400, x radius=.4cm, y radius =.4cm];
		\end {tikzpicture}}};
    \node(e1) at (0,-1.8){\scalebox{0.5}{\begin{tikzpicture}[>= angle 60,->]
        \node (1a) at (0,2) {$\bullet$};
        \node (1b) at (2,2) {$\bullet$};
        \node[gray!70] (2a) at (-1,1) {$\bullet$};
        \node[gray!70] (2b) at (-1,-1) {$\bullet$};
         \node[gray!70] (c) at (2,0) {$\bullet$};
		\node[gray!70] (d) at (1,-1) {$\bullet$};
		\node (a) at (1,1) {$2$};
		\node[gray!70] (b) at (0,0) {$1$};
		\draw[gray!70] (a) to (b);
       \draw[gray!70] (a) to (c);
        \draw[gray!70] (b) to (d);
        \draw[very thick] (1a) to (a);
        \draw[very thick] (1b) to (a);
        \draw[gray!70] (2a) to (b);
        \draw[gray!70] (b) to (2b);
        \draw[dotted,-] ([yshift=-.25cm, xshift=-.2cm]a.west) arc[start angle = -140, end angle = -220, x radius=.4cm, y radius =.4cm];
         \draw[dotted,-] ([yshift=-.25cm, xshift=-.2cm]b.west) arc[start angle = -140, end angle = -220, x radius=.4cm, y radius =.4cm];
         \draw[dotted,-] ([yshift=-.25cm,xshift=.2cm]a.east) arc[start angle = 320, end angle = 400, x radius=.4cm, y radius =.4cm];
		 \draw[dotted,-] ([yshift=-.25cm,xshift=.2cm]b.east) arc[start angle = 320, end angle = 400, x radius=.4cm, y radius =.4cm];
		\end {tikzpicture}}};
    \node(c1) at (-1.5,1.5){\scalebox{0.5}{\begin{tikzpicture}[>= angle 60,->]
        \node[gray!70] (1a) at (0,2) {$\bullet$};
        \node[gray!70] (1b) at (2,2) {$\bullet$};
        \node (2a) at (-1,1) {$\bullet$};
        \node[gray!70] (2b) at (-1,-1) {$\bullet$};
         \node (c) at (2,0) {$\bullet$};
		\node[gray!70] (d) at (1,-1) {$\bullet$};
		\node (a) at (1,1) {$2$};
		\node (b) at (0,0) {$1$};
		\draw[very thick] (a) to (b);
       \draw[very thick] (a) to (c);
        \draw[gray!70] (b) to (d);
        \draw[gray!70] (1a) to (a);
        \draw[gray!70] (1b) to (a);
        \draw[very thick] (2a) to (b);
        \draw[gray!70] (b) to (2b);
        \draw[dotted,-] ([yshift=-.25cm, xshift=-.2cm]a.west) arc[start angle = -140, end angle = -220, x radius=.4cm, y radius =.4cm];
         \draw[dotted,-] ([yshift=-.25cm, xshift=-.2cm]b.west) arc[start angle = -140, end angle = -220, x radius=.4cm, y radius =.4cm];
         \draw[dotted,-] ([yshift=-.25cm,xshift=.2cm]a.east) arc[start angle = 320, end angle = 400, x radius=.4cm, y radius =.4cm];
		 \draw[dotted,-] ([yshift=-.25cm,xshift=.2cm]b.east) arc[start angle = 320, end angle = 400, x radius=.4cm, y radius =.4cm];
		\end {tikzpicture}}};
    \node(b1) at (.5,1.9){\scalebox{0.5}{\begin{tikzpicture}[>= angle 60,->]
        \node[gray!70] (1a) at (0,2) {$\bullet$};
        \node[gray!70] (1b) at (2,2) {$\bullet$};
        \node[gray!70] (2a) at (-1,1) {$\bullet$};
        \node (2b) at (-1,-1) {$\bullet$};
         \node (c) at (2,0) {$\bullet$};
		\node[gray!70] (d) at (1,-1) {$\bullet$};
		\node (a) at (1,1) {$2$};
		\node (b) at (0,0) {$1$};
		\draw[very thick] (a) to (b);
       \draw[very thick] (a) to (c);
        \draw[gray!70] (b) to (d);
        \draw[gray!70] (1a) to (a);
        \draw[gray!70] (1b) to (a);
        \draw[gray!70] (2a) to (b);
        \draw[very thick] (b) to (2b);
        \draw[dotted,-] ([yshift=-.25cm, xshift=-.2cm]a.west) arc[start angle = -140, end angle = -220, x radius=.4cm, y radius =.4cm];
         \draw[dotted,-] ([yshift=-.25cm, xshift=-.2cm]b.west) arc[start angle = -140, end angle = -220, x radius=.4cm, y radius =.4cm];
         \draw[dotted,-] ([yshift=-.25cm,xshift=.2cm]a.east) arc[start angle = 320, end angle = 400, x radius=.4cm, y radius =.4cm];
		 \draw[dotted,-] ([yshift=-.25cm,xshift=.2cm]b.east) arc[start angle = 320, end angle = 400, x radius=.4cm, y radius =.4cm];
		\end {tikzpicture}}};
    \node(a1) at (1.7,.2){\scalebox{0.5}{\begin{tikzpicture}[>= angle 60,->]
        \node[gray!70] (1a) at (0,2) {$\bullet$};
        \node[gray!70] (1b) at (2,2) {$\bullet$};
        \node[gray!70] (2a) at (-1,1) {$\bullet$};
        \node (2b) at (-1,-1) {$\bullet$};
         \node[gray!70] (c) at (2,0) {$\bullet$};
		\node (d) at (1,-1) {$\bullet$};
		\node[gray!70] (a) at (1,1) {$2$};
		\node (b) at (0,0) {$1$};
		\draw[gray!70] (a) to (b);
       \draw[gray!70] (a) to (c);
        \draw[very thick] (b) to (d);
        \draw[gray!70] (1a) to (a);
        \draw[gray!70] (1b) to (a);
        \draw[gray!70] (2a) to (b);
        \draw[very thick] (b) to (2b);
        \draw[dotted,-] ([yshift=-.25cm, xshift=-.2cm]a.west) arc[start angle = -140, end angle = -220, x radius=.4cm, y radius =.4cm];
         \draw[dotted,-] ([yshift=-.25cm, xshift=-.2cm]b.west) arc[start angle = -140, end angle = -220, x radius=.4cm, y radius =.4cm];
         \draw[dotted,-] ([yshift=-.25cm,xshift=.2cm]a.east) arc[start angle = 320, end angle = 400, x radius=.4cm, y radius =.4cm];
		 \draw[dotted,-] ([yshift=-.25cm,xshift=.2cm]b.east) arc[start angle = 320, end angle = 400, x radius=.4cm, y radius =.4cm];
		\end {tikzpicture}}};
\end{tikzpicture}
\caption{The reduced non-kissing complex $\NKred(Q,R)$ of the gentle quiver given in \cref{ex:running example quiver and indecomposable modules}.}
\label{fig:reducednonkisscx}
\end{figure}
\end{example}

\subsection{$g$-vectors for gentle algebras}
\label{ss:gentle-g}

Let $(Q,R)$ be a gentle quiver. We will now explain how to determine the $g$-vector of an indecomposable module or shifted projective for $(Q,R)$, in terms of the maximal string in $(Q^\blossom, R^\blossom)$ corresponding to it under $\Psi$.

Let $\rho = \beta_1 \ldots \beta_k$ be a maximal string in $(Q^\blossom,R^\blossom)$. A \textnew{peak} of $\rho$ is a vertex $v$ such that the lazy path $e_v$ is on top of $\rho$. Similarly, a \textnew{valley} of $\rho$ is a vertex $v$ such that $e_v$ is at the bottom of $\rho$. We denote by $\peak(\rho)$ and $\deep(\rho)$ the respective multisets of peaks and valleys of $\rho$. (These are multisets because a string could pass through the same vertex multiple times; each of those occurrences should be considered separately.)

The $g$-vector of $\rho$ is given by
\begin{equation}
    \label{eq:g-rho}
        \pmb{g}(\rho) = \sum_{v \in \peak(\rho)} \ve_v -\sum_{v \in \deep(\rho)} \ve_v\end{equation}
where $(\ve_q)_{q \in Q_0}$ is the standard basis of $\ZZ^{Q_0}$.

\begin{example} Continuing \cref{ex:running-bigger}, let $\rho$ be the bent maximal path corresponding to the lazy path at $1$. Looking at \cref{fig:max-string}, we see that $\rho$ has a peak at 1 and valleys at 2 and 3, so
$\pmb{g}(\rho) = \ve_1 - \ve_2 - \ve_3 = (1,-1,-1,0)$.
\end{example}

The following theorem combines several results of \cite{BDMTY20,PPP},
and explains how the combinatorics of the non-kissing complex $\NK(Q,R)$ (\cref{ss:nonkissing_gentle}) corresponds to the combinatorics of compatible $g$-vectors (\cref{ss:general-g-vector}).

\begin{theorem}
\label{thm:g-vect-blossom}
Let $(Q,R)$ be a gentle quiver.
\begin{enumerate}
    \item \cite[Proposition 1.43]{PPP}
For $M$ an indecomposable module or shifted projective for $(Q,R)$, then $\pmb{g}_M=\pmb{g}(\Psi(M))$.
\item \cite[Proposition 1.52, Lemma 2.45]{PPP}, \cite[Theorem 4.3]{BDMTY20} An indecomposable module $M$ is $\tau$-rigid if and only if $\Psi(M)$ is non self-kissing.
\item \cite[Proposition 1.52, Lemma 2.45]{PPP} \cite[Theorem 4.3]{BDMTY20} Two $\tau$-rigid modules $M,N$ are compatible if and only if $\Psi(M)$ and $\Psi(N)$ are non-kissing.
\item \cite[Theorem 4.17]{PPP} The $g$-vector fan of $(Q,R)$ realizes the reduced non-kissing complex of $\NKred(Q,R)$ as a fan.
\end{enumerate}
\end{theorem}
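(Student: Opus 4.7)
The plan is to prove each of the four parts using the combinatorics of strings in the blossoming quiver as the unifying tool, together with two standard ingredients from the representation theory of string algebras: the explicit description of indecomposable projectives as string modules, and the Crawley--Boevey classification of morphisms between string modules via ``graph maps''.

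For (1), the first step is to identify each indecomposable projective $P_v$ of $(Q,R)$ with the string module $M(\rho_v)$, where $\rho_v$ is obtained in the blossoming quiver by starting at $v$ and following forward arrows in both directions as far as possible. With this in hand, I would compute the minimal projective presentation of a string module $M(\rho)$ by reading off the ``shape'' of $\tau(\rho)$: each peak of $\tau(\rho)$ contributes an indecomposable summand to the projective cover $P$, and each valley contributes an indecomposable summand to the syzygy $P'$. Once one verifies that $P$ and $P'$ share no common summand (using that $\tau(\rho)$ has no interior vertex serving simultaneously as a peak and valley, which follows from $(Q,R)$ being gentle), the formula $\pmb{g}_M = \pmb{g}(\Psi(M))$ given in \eqref{eq:g-rho} falls out directly. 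The definition of $\pmb{g}(\Psi(P_i[1])) = -\ve_i$ is handled by the special case of the shifted projective, which corresponds to $\rho_{I_i}$ having its only peak at $i$ after cohook extension.

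For (2) and (3), the main tool is the standard description (due to Crawley--Boevey, specialized to gentle quivers in \cite{BR87}) of a basis of $\Hom(M(\rho), M(\sigma))$ in terms of common substrings that appear at the bottom of $\rho$ and on the top of $\sigma$. Applying this with $M(\sigma) = \tau M(\rho)$, i.e.\ $\sigma = \tau(\rho)$, one sees that $\Hom(M(\rho), \tau M(\rho)) = 0$ if and only if there is no substring contained in $Q$ that is simultaneously at the bottom of $\rho$ and at the top of $\tau(\rho)$. Unwinding the cohook construction in \cref{def:add-cohooks}, this last condition translates precisely to $\tau(\rho) = \Psi(M)$ being non self-kissing, proving (2). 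The two-variable case (3) is analogous: the symmetric pair of vanishings $\Hom(M, \tau N) = \Hom(N, \tau M) = 0$ corresponds to neither of the two possible kisses occurring between $\Psi(M)$ and $\Psi(N)$.

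Finally, for (4), parts (1)--(3) together establish a bijection between cones of $\gfan(Q,R)$ and faces of $\NKred(Q,R)$ that is compatible with inclusion. The remaining content is to verify that this is a genuine fan in the geometric sense, namely that for each non-kissing collection the corresponding $g$-vectors are linearly independent and that the resulting cones meet only along common faces. The main obstacle here will be the linear independence step: peaks and valleys of the maximal strings in a non-kissing facet must combine in a way that produces a basis of $\RR^{Q_0}$, and the cleanest route is to argue inductively using ``flips'' in $\NKred(Q,R)$ (which exchange one bent maximal string for another while preserving non-kissing), translating each flip into a mutation of $\tau$-tilting objects and invoking the standard fact that mutations in the $g$-vector fan cross codimension-one walls.
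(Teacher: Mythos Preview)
The paper does not actually prove this theorem: it is stated as a collection of results from the literature, with each part carrying explicit citations to \cite{PPP} and \cite{BDMTY20}, and no proof is given. So there is no ``paper's own proof'' to compare against; the paper treats \cref{thm:g-vect-blossom} as background.

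Your sketch is broadly in line with how the cited references argue. A couple of small imprecisions are worth flagging. First, for the shifted projective $P_i[1]$, the maximal string $\rho_{I_i}$ has its unique \emph{valley} (not peak) at $i$, which is what gives $\pmb{g}(\rho_{I_i}) = -\ve_i$. Second, the $\tau$ appearing in \cref{def:add-cohooks} is the Auslander--Reiten translate in the \emph{blossoming} quiver $(Q^\blossom,R^\blossom)$, not in $(Q,R)$; the paper is explicit about this. So the step ``apply the graph-map basis with $\sigma = \tau(\rho)$'' needs an extra identification relating $\Hom_{(Q,R)}(M,\tau N)$ to the kissing condition on the associated maximal strings in the blossoming quiver. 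This is exactly what \cite[Lemma~2.45]{PPP} and \cite[Theorem~4.3]{BDMTY20} establish, but it is not quite as immediate as your phrasing suggests. Finally, the claim that $P$ and $P'$ share no common summand is only guaranteed for $\tau$-rigid modules; for part~(1) as stated (which covers all indecomposable string modules) one should instead compute the signed multiplicities directly from the peaks and valleys, without invoking disjointness.
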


The previous theorem amounts to saying that, for $(Q,R)$ a gentle algebra, $\gfan(Q,R)$ can be entirely understood in terms of the combinatorics of kissings of maximal strings in $(Q^\blossom,R^\blossom)$.

\begin{example} We continue \cref{ex:g-vector_running_example}.
Consider the bent maximal strings from \cref{ex:running example quiver and indecomposable modules}. Applying \eqref{eq:g-rho} to each of them, we obtain a $g$-vector, which, we can see, agrees with the $g$-vectors assigned to the objects which correspond to them under the bijection of \cref{thm:maxstringandindec}.
\end{example}

\subsection{Brick representations}
\label{ss:brick}

Let $\Lambda$ be an algebra over the field $\KK$. A $\Lambda$-module $M$ is called a \textnew{brick} if $\Hom(M,M)$ is a skew field, i.e., all its non-zero elements are invertible. If $M$ is finite-dimensional over $\KK$ and $\KK$ is algebraically closed, then $M$ is a brick if and only if $\Hom(M,M)\simeq \KK$.

For $\Lambda$ a locally gentle algebra, the representation $M(\rho)$ is a brick provided there is no substring of $\rho$ which occurs in both the top and the bottom of $\rho$, since such substrings give rise to non-invertible endomorphisms.

\begin{theorem}
\label{thm:finite-bricks}
Let $(Q,R)$ be a gentle quiver. The following conditions are equivalent:
\begin{enumerate}

\item Every cycle in $Q$ has at least one relation.
\item $(Q,R)$ has a finite number of indecomposable representations.
\item $(Q,R)$ has a finite number of brick representations.
\end{enumerate}\end{theorem}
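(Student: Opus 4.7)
The plan is to establish the chain $(1) \Rightarrow (2) \Rightarrow (3) \Rightarrow (1)$.

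For $(1) \Rightarrow (2)$, I would invoke the Butler--Ringel/Crawley--Boevey classification (already cited in the excerpt): every indecomposable finite-dimensional representation of $(Q,R)$ is either a string representation or a band representation, where bands arise from cyclic strings. First, the number of strings is finite: since $(Q,R)$ is gentle (so every oriented cycle carries a relation), the algebra $\KK Q/\langle R\rangle$ is finite-dimensional, and the hypotheses imposed at each vertex of a gentle quiver bound the length of any string by a constant depending on $Q$. Hence finitely many string modules up to isomorphism. It remains to show that hypothesis (1) forces the non-existence of cyclic strings, i.e.\ bands. Given a cyclic string $\rho=\beta_1\cdots\beta_m$, the underlying (undirected) sequence of arrows traces a closed walk in $Q$, which contains an embedded cycle $C$. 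By (1), $C$ contains two consecutive arrows whose composition lies in $R$ (regardless of the direction in which we traverse $C$, since $(Q,R)$ gentle forces both $\alpha\beta\in R$ and $\beta^{-1}\alpha^{-1}\in R$ to block a string). This violates the defining condition of a string, a contradiction. Hence there are no bands and therefore only finitely many indecomposables.

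The implication $(2) \Rightarrow (3)$ is immediate, since every brick is in particular indecomposable.

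For $(3) \Rightarrow (1)$, I would argue the contrapositive. Suppose there is a cycle $C$ in $Q$ that contains no relation. Traversing $C$ (choosing for each edge whether to use the arrow or its formal inverse according to the orientation) yields a cyclic string $b$, that is, a band. For each $\lambda\in \KK^*$, the classification yields a band module $M(b,\lambda,1)$; since $\KK$ is algebraically closed, hence infinite, these form an infinite family of pairwise non-isomorphic indecomposables. It is standard (and follows from the description of endomorphisms of band modules as polynomials in a single Jordan block) that $M(b,\lambda,1)$ is a brick: its endomorphism ring is $\KK$. Thus we produce infinitely many bricks, contradicting (3).

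The main obstacle is the precise conversion in $(1)\Rightarrow(2)$, namely promoting a cyclic string to an undirected cycle in $Q$ carrying no relation, and conversely in $(3)\Rightarrow(1)$ promoting a cycle without relation to a genuine cyclic string. The subtlety is that a string may traverse arrows and inverse arrows in any combination, so one must check that the local gentle axioms (no two consecutive arrows in a string can be paired by a relation in either direction) are exactly what is obstructed by the presence of a relation along the cycle. Once this translation is done carefully, the finiteness counts on either side drop out from the classification of indecomposables and the standard brick computation for band modules.
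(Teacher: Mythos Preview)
Your argument for $(1)\Rightarrow(2)$ matches the paper's essentially verbatim: no cyclic strings means no bands, and the remaining string modules are finite in number because strings are length-bounded. The trivial direction $(2)\Rightarrow(3)$ is of course fine.

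The genuine difference is in closing the loop. The paper does not argue $(3)\Rightarrow(1)$ directly; instead it proves $(1)\Leftrightarrow(2)$ by the string/band dichotomy and then invokes the references \cite{Pla,Mou} as a black box for the equivalence $(2)\Leftrightarrow(3)$ (this is a general statement about $\tau$-tilting/brick finiteness, not specific to gentle algebras). Your route is more self-contained: from a relation-free cycle $C$ you manufacture infinitely many bricks $M(b,\lambda,1)$ by hand. This is a legitimate and more elementary alternative in the gentle setting, and it avoids importing the general theory.

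One point to tighten: your assertion that $\operatorname{End} M(b,\lambda,1)\cong\KK$ is ``standard'' is not true for an arbitrary band in a string algebra --- extra endomorphisms appear precisely when the band has a substring occurring both on top and at bottom (the same phenomenon the paper flags for string modules just above the theorem). What saves you is that you may take $C$ to be a \emph{simple} cycle (no repeated vertices); then the band visits each vertex once, the module is one-dimensional at every vertex in its support, and any endomorphism is visibly a scalar. You should say this explicitly rather than appealing to a general Jordan-block description. With that clarification your proof is complete, and the translation issues you flag between ``cycle without relation'' and ``cyclic string'' are handled exactly as the paper handles them.
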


\begin{proof}
(1) implies that $(Q,R)$ has no cyclic strings, so it has no band representations. Further, there is a bound on the maximum possible length of a string, so there are only finitely many strings. This implies (2). Conversely, given (2), there cannot be any band representations, which implies (1).

The equivalence of (2) and (3) was shown in \cite{Pla,Mou}.
\end{proof}

\subsection{Locally gentle quivers}
\label{ss:locally}

We now discuss how to extend the results of \cref{ss:general-g-vector,ss:blossoming_quivers,ss:nonkissing_gentle,ss:gentle-g,ss:brick} to include the case of locally gentle quivers.

Let $(Q,R)$ be a locally gentle quiver. We define $(Q^\blossom,R^\blossom)$ exactly as before. The definition of a maximal string is slightly enhanced, following \cite{PPPlocg}. At each end, rather than necessarily terminating at a blossoming vertex, it is permissible for a string to end by cycling infinitely around an oriented cycle without relations in $(Q,R)$, either with or against the arrows. Two maximal strings $\rho$ and $\mu$ kiss if there is a finite substring in $Q$ which is on the top of $\rho$ and the bottom of $\mu$, or vice versa. Palu--Pilaud--Plamondon \cite{PPPlocg} then define the non-kissing complex $\NK(Q,R)$ and reduced non-kissing complex $\NKred(Q,R)$ just as before.

However, a key point to understand is that, in the locally gentle case, the complexes $\NK(Q,R)$ and $\NKred(Q,R)$ are just defined as simplicial complexes, without any direct connection to representation theory, and are not realized as fans. We will now explain how to make the connection to representation theory, following the work of Aoki and Yurikusa \cite{aokiyurikusa}. This will also realize these complexes as fans.

To establish this connection, we first need to specify what algebra we want to relate to $(Q,R)$ when $(Q,R)$ is a locally gentle quiver. The approach that we will take is to consider the corresponding \textnew{complete gentle algebra}, which we now define. Let $\widehat{\KK Q}$ be the completion of $\KK Q$ with respect to the topology defined by the arrow ideal. The corresponding complete gentle algebra is $\Lambda = \widehat{\KK Q}/ \overline{\langle R\rangle}$, where $\overline {\langle R \rangle}$ is the closure of the ideal generated by the relations in $R$. If $(Q,R)$ is gentle, then $\Lambda$ is just the finite-dimensional gentle algebra $\KK Q/\langle R \rangle$, but otherwise $\Lambda$ is an infinite-dimensional algebra and so, by definition, is not gentle.

Next, we need to consider the $g$-vector fan of a complete gentle algebra.
In \cref{ss:general-g-vector}, we gave the definition of the $g$-vector fan only for finite-dimensional algebras. For general infinite-dimensional algebras, the Auslander--Reiten translation $\tau$ is not defined, so the definition of the $g$-vector fan in \cref{ss:general-g-vector} does not apply.

Nevertheless, Aoki and Yurikusa \cite{aokiyurikusa}
provide an approach to $g$-vector fans in terms of two-term complexes of finitely generated projective modules, which allows them to define the $g$-vector fan of a complete gentle algebra (or somewhat more general algebras).
We will not delve into all the details here. The key point for our purposes is that this $g$-vector fan can be identified with the $g$-vector fan of another algebra, which is finite-dimensional and to which the definition in \cref{ss:general-g-vector} therefore applies.

Define $\mathfrak m$ to be the ideal in $\Lambda$ generated by the sum over all vertices $v$ of the oriented cycles without relations starting and ending at $v$.
Let $\overline\Lambda = \Lambda/\mathfrak m$. The algebra $\overline \Lambda$ is finite-dimensional, so it has a $g$-vector fan in the sense we have already defined. Aoki and Yurikusa then prove the following theorem:

\begin{proposition}[{\cite[Proposition 7.9]{aokiyurikusa}}] \label{prop:ay}
Let $\Lambda$ be the complete gentle algebra associated to a locally gentle quiver $(Q,R)$, and let $\overline \Lambda=\Lambda/\mathfrak m$ be the quotient defined as above. Then $\gfan(\Lambda)=\gfan(\overline \Lambda)$. \end{proposition}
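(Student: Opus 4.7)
The plan is to work within the framework of Aoki--Yurikusa, where the $g$-vector fan of a (possibly infinite-dimensional) basic algebra is defined combinatorially in terms of 2-term presilting complexes of finitely generated projective modules, rather than through the Auslander--Reiten translation. Under this definition, the rays of $\gfan(\Lambda)$ correspond to indecomposable 2-term presilting complexes $P^{-1} \to P^0$, with $g$-vector $[P^0] - [P^{-1}] \in \ZZ^{Q_0}$, and two rays span a common cone precisely when the corresponding complexes are mutually presilting (i.e., the Hom from one to the shift of the other, in the homotopy category, vanishes).

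First I would observe that the indecomposable finitely generated projective $\Lambda$-modules $P_v$, for $v \in Q_0$, are indexed by the vertices of $Q$, with basis given by the paths (modulo $R$) starting at $v$ --- possibly infinite in number when oriented cycles without relations are present. Reducing modulo $\mathfrak{m}$ sends $P_v$ to the indecomposable projective $\overline{P}_v = P_v / \mathfrak{m}P_v$ of $\overline{\Lambda}$, and this gives a bijection on isomorphism classes of indecomposable projectives. Consequently both $\gfan(\Lambda)$ and $\gfan(\overline\Lambda)$ live in the same ambient lattice $\ZZ^{Q_0}$, so the statement to prove is that they have the same rays and the same face structure.

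The core step is to construct a bijection between 2-term presilting complexes of finitely generated projectives over $\Lambda$ and over $\overline{\Lambda}$ that preserves $g$-vectors. In one direction, apply $- \otimes_\Lambda \overline\Lambda$ to a complex $P^{-1} \xrightarrow{f} P^0$ over $\Lambda$: this yields $\overline{P}^{-1} \to \overline{P}^0$ with the same $g$-vector. In the reverse direction, a map $\overline{P}^{-1} \to \overline{P}^0$ between projective $\overline\Lambda$-modules lifts uniquely up to homotopy to a map $P^{-1} \to P^0$ of projective $\Lambda$-modules, because $\Lambda$ is semiperfect, $\mathfrak{m}$ is contained in the radical, and $\Lambda$ is $\mathfrak{m}$-adically complete (so the usual obstruction calculus terminates). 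One then checks that these constructions are mutually inverse on the level of homotopy classes.

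The main obstacle is verifying that the presilting condition transfers across the quotient; that is, $\Hom(T, T[1]) = 0$ in the homotopy category of 2-term complexes of projectives over $\Lambda$ if and only if the analogous vanishing holds over $\overline\Lambda$. The nontrivial implication is the lifting: a morphism $T \to T[1]$ over $\Lambda$ that becomes null-homotopic over $\overline\Lambda$ admits a homotopy only defined modulo $\mathfrak{m}$, and one must iteratively correct it to an honest homotopy over $\Lambda$. Completeness is essential here: the successive corrections live in increasing powers of $\mathfrak{m}$, and their sum converges in $\widehat{\KK Q}$. Granting this lifting lemma, the bijection preserves the full combinatorial structure --- rays, compatibility, and hence cones --- and the identification $\gfan(\Lambda) = \gfan(\overline\Lambda)$ follows. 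Through \cref{thm:g-vect-blossom}, this also identifies both fans with the realization of $\NKred(Q,R)$, providing an alternative route that bypasses presilting by directly comparing both sides with the non-kissing complex.
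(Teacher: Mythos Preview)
The paper does not prove this proposition at all. It is cited as \cite[Proposition 7.9]{aokiyurikusa}, and immediately after stating it the authors write: ``While this is a theorem for Aoki and Yurikusa, because they have a definition of $g$-vector fan which is applicable to (some) infinite-dimensional algebras, including all complete gentle algebras, for our purposes, we will treat it as the definition of $\gfan(\Lambda)$.'' So there is nothing to compare against: the paper imports the result wholesale and uses it as the \emph{definition} of the $g$-vector fan of the infinite-dimensional algebra $\Lambda$.

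Your sketch is a plausible outline of how Aoki--Yurikusa's argument might go, and the ingredients you identify (bijection on indecomposable projectives, reduction via $-\otimes_\Lambda \overline\Lambda$, lifting using $\mathfrak m$-adic completeness and the fact that $\mathfrak m$ lies in the radical) are the right ones. One caveat: your final sentence suggesting an alternative route through \cref{thm:g-vect-blossom} does not work as stated, since that theorem is proved only for gentle (not locally gentle) quivers; indeed, the paper uses \cref{prop:ay} precisely as an input toward establishing the locally gentle analogue (\cref{thm:connection_g-vector_NKred}), so invoking the non-kissing complex here would be circular.
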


While this is a theorem for Aoki and Yurikusa, because they have a definition of $g$-vector fan which is applicable to (some) infinite-dimensional algebras, including all complete gentle algebras, for our purposes, we will treat it as the definition of $\gfan(\Lambda)$.

For our purposes, it turns out to be convenient to work with another algebra $\ool$, which we now define.

\begin{definition} Let $(Q,R)$ be a locally gentle algebra. Define $\ool$ to be $\KK Q/\langle R, C\rangle$, where $C$ is the ideal generated by the set of oriented cycles without relations, starting and ending at vertex $v$ for all $v$.
\end{definition}

$\ool$ is a finite-dimensional algebra, and it is quotient of $\Lambda$ and of $\overline \Lambda$.

We can now state the main result of this subsection:

\begin{theorem}
    \label{thm:connection_g-vector_NKred} Let $(Q,R)$ be a locally gentle quiver, and let $\Lambda=\widehat{\KK Q}/\overline {\langle R\rangle}$. Then the $g$-vector fans of $\Lambda$, $\overline \Lambda$, and $\ool$ coincide, and realize $\NKred(Q,R)$ as a fan.
\end{theorem}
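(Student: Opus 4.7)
The plan is to prove the two equalities $\gfan(\Lambda) = \gfan(\overline\Lambda) = \gfan(\ool)$ as algebra-level identifications, then import the gentle case of Theorem~\ref{thm:g-vect-blossom}(4) via a gentle presentation of $\ool$.

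The first equality $\gfan(\Lambda)=\gfan(\overline\Lambda)$ is Proposition~\ref{prop:ay}, taken as the operational definition of $\gfan(\Lambda)$ in this text. For the second equality, I would identify $\overline\Lambda$ with $\ool$ as $\KK$-algebras. The essential observation is that $\ool$ is already finite-dimensional: a nonzero path $p$ in $\ool$ must avoid both the length-$2$ relations in $R$ and every oriented cycle in $C$, so if $p$ had length at least $|Q_0|$ it would revisit a vertex and hence contain either a relation in $R$ (if the cycle it traces through that vertex contains a relation) or a cycle in $C$ (otherwise). Since $\ool$ is thereby complete, the canonical surjection $\widehat{\KK Q}/\overline{\langle R,C\rangle}\twoheadrightarrow\ool$ is an isomorphism, and this quotient coincides with $\overline\Lambda=\Lambda/\mathfrak{m}$.

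The substantive claim is that $\gfan(\ool)$ realizes $\NKred(Q,R)$. My proposal is to exhibit $\ool$ as an honest gentle algebra $\KK Q/\langle R'\rangle$ where $R'\supseteq R$ consists of length-$2$ relations. For each oriented cycle without relations $\alpha_1\cdots\alpha_k$ of $(Q,R)$, adjoin to $R'$ a single length-$2$ subpath $\alpha_i\alpha_{i+1}$ of the cycle; the locally gentle conditions at each vertex leave enough slack for such choices to be made coherently across cycles, producing a gentle quiver $(Q,R')$ with $\langle R'\rangle=\langle R,C\rangle$. Theorem~\ref{thm:g-vect-blossom}(4) then gives that $\gfan(\ool)=\gfan(Q,R')$ realizes $\NKred(Q,R')$. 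To conclude, I would establish a bijection between the bent maximal strings of $(Q^\blossom,R^\blossom)$, as defined in the locally gentle framework of \cite{PPPlocg} (which permits infinitely cycling ends), and those of $((Q')^\blossom,(R')^\blossom)$: each infinite cycling end of a maximal string in $(Q^\blossom,R^\blossom)$ gets truncated at a new blossom introduced precisely where the adjoined relation $\alpha_i\alpha_{i+1}$ breaks the cycle, yielding a finite maximal string on the $(Q,R')$ side, and I would check that this truncation preserves peaks, valleys, tops, bottoms, straightness, and hence the kissing relation.

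The main obstacle is this final matching of non-kissing complexes: verifying that the truncation is truly a bijection on bent maximal strings, that straight strings correspond to straight strings so that the \emph{reduced} complexes agree, and that kissings are preserved in both directions requires careful combinatorial bookkeeping involving substrings straddling the truncation point. A secondary subtlety is the global construction of $R'$, particularly when distinct oriented cycles share arrows or when the quiver has short cycles such as loops; if a coherent choice of length-$2$ relations is unavailable for pathological $(Q,R)$, one can instead work directly with $\ool$-modules, adapting the string-combinatorial arguments of \cite{BDMTY20,PPP} to the finite-dimensional algebra $\ool$ without passing through an intermediate gentle presentation.
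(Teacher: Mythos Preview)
Your identification $\overline\Lambda\cong\ool$ fails in general. The ideal $\mathfrak m$ is generated by the single element $t=\sum_v\sum_c c_v$, the \emph{sum} of all relation-free oriented cycles based at each vertex, whereas the ideal defining $\ool$ is generated by each cycle individually. If a vertex $v$ lies on two relation-free oriented cycles $c_1,c_2$, then $e_vte_v=(c_1)_v+(c_2)_v\in\mathfrak m$, but neither $(c_1)_v$ nor $(c_2)_v$ is in $\mathfrak m$ alone; the projective at $v$ for $\overline\Lambda$ is then projective-injective with socle $S_v$, strictly larger than the projective for $\ool$. The paper handles this gap not by an isomorphism of algebras but by invoking \cite[Theorem 3.3(2)]{Adachi}: quotienting by the socle of a projective-injective module induces a combinatorial isomorphism of $g$-vector fans, and one then checks that the minimal projective presentations over $\overline\Lambda$ and $\ool$ of a common $\tau$-rigid module yield the same $g$-vector.

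Your gentle presentation $(Q,R')$ also does not present $\ool$. Adjoining a length-$2$ relation $\alpha_i\alpha_{i+1}$ to a relation-free $k$-cycle kills every path of length $\geq 2$ through that pair of arrows, not only the length-$k$ paths going fully around; for instance, in a $3$-cycle $\alpha\beta\gamma$, the path $\alpha\beta$ survives in $\ool$ but dies in $\KK Q/\langle R'\rangle$. Thus $\langle R'\rangle\supsetneq\langle R,C\rangle$, and the string module $M(\alpha\beta)$ is a $\tau$-rigid $\ool$-module with no counterpart over $(Q,R')$. Moreover, the blossoming relations at the vertex where you insert $\alpha_i\alpha_{i+1}$ change: in $(Q^\blossom,R^\blossom)$ the blossom arrows at that vertex compose with each other but not with the cycle arrows, while in $(Q^\blossom,(R')^\blossom)$ it is the reverse. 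So the proposed truncation bijection between maximal strings is not the naive one, and even if repaired it would compare $\NKred(Q,R)$ with $\gfan(Q,R')$, not with $\gfan(\ool)$. The paper instead observes that $\ool$ is a string algebra, computes the $g$-vector of a $\tau$-rigid string directly (\cref{lem:gvect_in_locgen}), and reads off from this formula that the associated maximal walk in $(Q^\blossom,R^\blossom)$ is exactly the PPPlocg walk, spiralling precisely when the would-be cohook completes a full cycle (\cref{cor:walks-blossom-cycle}).
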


We will break the proof of the theorem down into several distinct statements. The fact that the $g$-vector fans of $\Lambda$ and $\overline \Lambda$ is \cref{prop:ay}. We now turn to showing that the $g$-vector fans of $\overline \Lambda$ and $\ool$ coincide.

\begin{proposition} \label{prop:add-a-bar} $\gfan(\ool)=\gfan(\overline \Lambda)$.\end{proposition}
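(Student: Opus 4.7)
The plan is to construct a $g$-vector-preserving bijection between the indecomposable $\tau$-rigid modules over $\overline\Lambda$ and those over $\ool$, and deduce that $\gfan(\overline\Lambda) = \gfan(\ool)$ from this.

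First, I would exhibit the canonical surjection $\pi \colon \overline\Lambda \twoheadrightarrow \ool$. Every generator of $\mathfrak{m}$ is a (formal) sum of oriented cycles without relations, each of which lies in the ideal $\langle C\rangle$; hence $\mathfrak{m}$ is contained in the image of $\langle C\rangle$ in $\Lambda/\langle R\rangle$, producing the surjection. The kernel $I := \ker\pi$ lies in $\mathrm{rad}\,\overline\Lambda$ since the generators of $C$ are paths of positive length. Next, I would verify that every indecomposable $\tau$-rigid $\overline\Lambda$-module is annihilated by $I$. The classification of indecomposables for locally gentle algebras splits them into string and band modules; band modules are never $\tau$-rigid, so every $\tau$-rigid indecomposable is a string module $M_\rho$ associated to a finite non-cyclic string $\rho$. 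Any oriented cycle $\gamma$ without relations acts as zero on $M_\rho$, since otherwise arbitrarily long powers of $\gamma$ would appear as subwalks of $\rho$, forcing $\rho$ to be cyclic. Hence $\langle C\rangle$ and therefore $I$ annihilate $M_\rho$, so $M_\rho$ factors as an $\ool$-module via $\pi$.

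Second, I would match $g$-vectors for each common $\tau$-rigid module $M$: a minimal projective presentation $P' \to P \to M \to 0$ over $\overline\Lambda$ descends via $\pi$ to the presentation $\pi(P') \to \pi(P) \to M \to 0$ over $\ool$, which remains minimal because $I \subseteq \mathrm{rad}\,\overline\Lambda$ preserves the tops of each projective term. Since indecomposable projectives of both $\overline\Lambda$ and $\ool$ are indexed by $Q_0$, the multiplicities recording $\pmb{g}_M$ coincide. The same argument handles shifted projectives $P_i[1]$, and pairwise compatibility of $\tau$-rigid modules is detected combinatorially in the blossoming quiver (\cref{thm:g-vect-blossom} and its locally gentle extension via \cite{PPPlocg}), which is common to both algebras. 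Combining these observations, $\gfan(\overline\Lambda)$ and $\gfan(\ool)$ have the same rays with the same incidence relations, so they coincide as fans.

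The main obstacle is establishing the converse of the annihilation step: every $\tau$-rigid $\ool$-module, viewed as an $\overline\Lambda$-module via $\pi$, must remain $\tau$-rigid. This is not automatic because the Auslander--Reiten translate depends on the ambient algebra. I would circumvent this by appealing to the combinatorial description: indecomposable $\tau$-rigid modules correspond to non-self-kissing maximal strings in $(Q^\blossom, R^\blossom)$, a set that does not depend on whether we work over $\overline\Lambda$ or $\ool$. The formula \eqref{eq:g-rho} then yields the same $g$-vector in both settings, completing the bijection.
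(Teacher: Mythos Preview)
There is a genuine gap. Your argument assumes that every indecomposable $\tau$-rigid $\overline\Lambda$-module is a string module annihilated by $I=\ker\pi$, but $\overline\Lambda$ is not a locally gentle (or even string) algebra, so the string/band dichotomy does not apply to it directly. As the paper's proof explains, when a vertex $v$ lies on \emph{two} oriented cycles without relations, the element $e_vte_v\in\mathfrak m$ is only the \emph{sum} of the two cycles through $v$, not each cycle individually. Consequently the indecomposable projective $P_v$ over $\overline\Lambda$ is a non-uniserial projective-injective module with socle $S_v$; it is $\tau$-rigid (being projective) but is \emph{not} a string module and is \emph{not} annihilated by $I$, since $P_v/IP_v$ is the strictly smaller $\ool$-projective at $v$. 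So your first step fails precisely at the vertices that distinguish $\overline\Lambda$ from $\ool$. The paper handles this by recognising that $\ool$ is obtained from $\overline\Lambda$ by quotienting out socles of projective-injectives and invoking \cite[Theorem~3.3(2)]{Adachi}, which gives a combinatorial isomorphism of $g$-vector fans in exactly that situation.

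Your minimality argument is also incomplete. Knowing $I\subseteq\mathrm{rad}\,\overline\Lambda$ only guarantees that $\ool\otimes P\to M$ is still a projective cover; it does not rule out a summand of $\ool\otimes P'$ mapping to zero under $\ool\otimes f$. The paper's argument uses the extra input that for a $\tau$-rigid module the terms $P$ and $P'$ share no summand: if some $P_i\subseteq P'$ mapped into $IP$, it would land in the socle $S_i$ of a projective-injective summand $P_i$ of $P$, forcing $P_i$ to appear in both terms, a contradiction. Finally, your appeal at the end to the combinatorial kissing description via \cite{PPPlocg} is circular here: \cite{PPPlocg} defines $\NKred(Q,R)$ only as an abstract simplicial complex, and the identification with the $g$-vector fans of $\overline\Lambda$ and $\ool$ is precisely what the surrounding theorem is establishing, using this proposition as an ingredient.
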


\begin{proof}
The algebras $\overline \Lambda$ and $\ool$ are very closely related. Write $t$ for the sum of all the oriented cycles without relations, so that $\mathfrak m$ is generated by $t$. If a vertex $v$ is on only one oriented cycle, then $e_vte_v \in \mathfrak m$: thus $\mathfrak m$ already contains the oriented cycle that starts and ends at $v$. If each vertex lay on at most one oriented cycle, then the elements of the ideal $C$ would already all be in $\mathfrak m$, and so $\ool$ and $\overline \Lambda$ would be isomorphic.

$\ool$ and $\overline \Lambda$ differ if there is a vertex $v$ which lies on two oriented cycles without relations. In this case, $e_vte_v\in\mathfrak m$ is the sum of the two cycles that start and end at $v$. This means that the projective module $P_v$ for $\overline \Lambda$ is projective-injective and non-uniserial, with socle at $v$. This is different from the projective at $v$ in $\ool$, which is isomorphic to $P_v/S_v$. To summarize, $\ool$ is the quotient of $\overline \Lambda$ by the socle of some projective-injective modules.

Now, by \cite[Theorem 3.3(2)]{Adachi}, there is a combinatorial isomorphism between the $g$-vector fans of $\overline \Lambda$ and $\ool$. In fact, as we shall explain, this combinatorial isomorphism is realized by the identity map. We first describe the combinatorial isomorphism. It sends the ray corresponding to the projective at vertex $i$ of $\overline \Lambda$ to the ray corresponding to the projective at vertex $i$ of $\ool$, and similarly for the shifted projectives. Any other ray of $\overline \Lambda$ corresponds to a module $M$ for $\Lambda$ which is also a module over $\ool$, and the ray corresponding to $M$ as a $\overline \Lambda$-module is sent to the ray corresponding to $M$ as a $\ool$-module.

Now we just have to check that the corresponding rays in the two $g$-vector fans have identical $g$-vectors. This is obvious for the projective and shifted projective modules. For another ray of $\gfan(\overline \Lambda)$, corresponding to a module $M$, consider its minimal projective presentation as a $\overline \Lambda$ module, say $P'\xrightarrow{f} P\rightarrow M\rightarrow 0$. Tensoring by $\ool$, we obtain a projective presentation of $M$ as a module over $\ool$. To check that the presentation is still minimal, observe that if it weren't, that would be because there is a summand of $\ool\otimes_{\overline\Lambda} P'$ which is now sent to zero by $\ool\otimes_{\overline\Lambda}f$. In order for that to happen, it would have to be the case that that summand was formerly sent to the socle of some projective-injective $P_i$. In that case, since the socle is $S_i$, the summand would also necessarily be $P_i$. So the minimal projective presentation of $M$ over $\overline \Lambda$ would have $P_i$ present in both factors, which is impossible for a rigid $\overline\Lambda$ module \cite[Proposition 2.5]{adachi-iyama-reiten}. Thus, the $g$-vector of $M$ viewed as a $\overline \Lambda$ module and as a $\ool$ module agree. The two $g$-vector fans therefore coincide.
\end{proof}

$\ool$ is typically not a gentle algebra, because of the generators corresponding to the oriented cycles. Nonetheless, $\ool$ is what is called a \textnew{string algebra}. This implies that, as for gentle algebras, all its indecomposable representations are strings or bands \cite{BR87}.
Since bands are not $\tau$-rigid, they do not appear in the $g$-vector fan. The string representations of $\ool$ can be identified with the strings (in our usual sense) for $\Lambda$ which do not contain a substring going completely around an oriented cycle.

We now analyze the $g$-vectors of $\tau$-rigid strings for $\ool$.

\begin{lemma}
    \label{lem:gvect_in_locgen}
Let $\overline X$ be a $\tau$-rigid string representation of $\ool$, corresponding to the string $\gamma_1\dots\gamma_r$. Let $\alpha$ and $\beta$ be such that $\alpha^{-1}\gamma_1$ and $\gamma_r\beta$ are not in $R$. Then
$$\pmb{g}_{\overline X}\,=\, \sum_{i \in \peak(\overline X)} \ve_i - \sum_{i\in \deep(\overline X)}\ve_i - \delta_\alpha \ve_{t(\alpha)} - \delta_\beta\ve_{t(\beta)} $$
where $\delta_\alpha$ is 1 unless adding $\alpha^{-1}$ to $\gamma_1\dots\gamma_r$ creates a full oriented cycle, in which case it is zero, and similarly for $\delta_\beta$.
\end{lemma}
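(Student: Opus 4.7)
The plan is to compute the minimal projective presentation $P^{-1}\to P^0\to\overline{X}\to 0$ of $\overline{X}$ over the string algebra $\ool$ and read off its $g$-vector from the multiplicities. By \cref{prop:ay,prop:add-a-bar}, the $g$-vector fan of $\ool$ agrees with those of $\Lambda$ and $\overline\Lambda$, so this computation is enough; moreover, $\ool$ is finite-dimensional, and the standard Butler--Ringel combinatorics for string algebras applies.

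First, the top of $\overline X = M(\gamma_1\ldots\gamma_r)$ decomposes as a sum of simples at the peak vertices of $\rho = \gamma_1\ldots\gamma_r$, so $P^0 = \bigoplus_{v\in\peak(\overline X)} P_v^{\ool}$ and the positive part of the $g$-vector is $\sum_{v\in\peak(\overline X)}\ve_v$. To obtain $P^{-1}$, extend $\rho$ on both sides by the cohooks $\h_{(\alpha)} = \nu_\alpha\alpha^{-1}$ and $\h_{(\beta)}^{-1} = \beta\nu_\beta^{-1}$ in the blossoming quiver of $(Q,R)$, as in \cref{def:add-cohooks}. Each interior valley of $\rho$ yields a summand $P_v^{\ool}$ in $P^{-1}$, producing $-\sum_{v \in \deep(\overline X)}\ve_v$. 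In the gentle case, each cohook additionally creates a valley at $t(\alpha)$ (respectively $t(\beta)$) in the extended string, contributing a further summand $P_{t(\alpha)}^{\ool}$ (respectively $P_{t(\beta)}^{\ool}$) to $P^{-1}$ and hence the terms $-\ve_{t(\alpha)}$ and $-\ve_{t(\beta)}$ to the $g$-vector.

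The new feature in the locally gentle setting is that a cohook may wrap around a full oriented cycle of $(Q,R)$ without relations. Since $\ool = \KK Q/\langle R,C\rangle$ explicitly kills such cycles, the cohook-contribution to $P^{-1}$ is modified. Concretely, if $\nu_\alpha\alpha^{-1}$ does not traverse a full oriented cycle (equivalently, prepending $\alpha^{-1}$ to $\rho$ creates no cycle), then the cohook contributes to $P^{-1}$ exactly as in the gentle case, giving $\delta_\alpha = 1$. Otherwise, the generator of $P^{-1}$ responsible for the summand $P_{t(\alpha)}^{\ool}$ lies in the cyclic ideal $C$ and therefore vanishes in $\ool$, so this summand is absent; this is the case $\delta_\alpha=0$. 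The same dichotomy applies to $\beta$, accounting for the factor $-\delta_\beta\ve_{t(\beta)}$. Summing all contributions yields the claimed formula.

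The main obstacle is to make the cycle-wrapping dichotomy above precise. One must identify the explicit generator of $P^{-1}$ responsible for the cohook-summand at $t(\alpha)$ and show that it vanishes in $\ool$ exactly when $\nu_\alpha\alpha^{-1}$ traverses a full oriented cycle. An alternative is to route through $\overline\Lambda$: using the description of its indecomposable projectives as projectives of $\Lambda$ modulo the socles of the projective-injective ones (as in the proof of \cref{prop:add-a-bar}), one can transfer the standard gentle-algebra analysis to $\overline\Lambda$ while carefully tracking which cohook-summands survive the quotient, then use \cref{prop:ay} to conclude.
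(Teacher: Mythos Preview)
Your approach is essentially the same as the paper's: both reduce to the gentle--case computation of the minimal projective presentation via peaks and valleys (as in \cref{thm:g-vect-blossom}(1)), then observe that the cycle relation in $\ool$ kills the endpoint contribution at $t(\alpha)$ (resp.\ $t(\beta)$) exactly in the degenerate case. Your mechanism explanation is also correct: the element of $P^0$ that would generate the $P_{t(\alpha)}^{\ool}$ summand of the kernel is a full oriented cycle, hence lies in the ideal $C$ and vanishes in $\ool$.

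There is, however, one genuine slip. Your parenthetical equivalence is false: the condition ``$\nu_\alpha\alpha^{-1}$ traverses a full oriented cycle'' is \emph{not} the same as ``prepending $\alpha^{-1}$ to $\rho$ creates a full oriented cycle''. The lemma's condition is the latter, and it concerns $\rho$, not the cohook. Concretely, $\delta_\alpha = 0$ occurs when $\rho$ begins with inverse arrows $d_1^{-1}\cdots d_k^{-1}$ going backwards around a cycle and $\alpha$ is the one remaining cycle arrow, so that $d_k\cdots d_1\alpha$ is the full cycle; this is the element of $P_v^{\ool}$ (for $v$ the first peak of $\rho$) that vanishes. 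The cohook $\nu_\alpha\alpha^{-1}$, by contrast, leaves $t(\alpha)$ via the \emph{other} incoming arrow and need not touch this cycle at all. Once you replace the cohook condition by the correct one on $\alpha^{-1}\rho$, your argument goes through and matches the paper's.
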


\begin{proof} The proof is essentially the same as that of \cref{thm:g-vect-blossom}~(1). In the setting of that theorem, we are counting peaks and valleys of the maximal walk associated to the string; this includes contributions for what in the present setting would be $t(\alpha)$ and $t(\beta)$. There is no such contribution if adding $\alpha$ (respectively, $\beta$) creates a full cycle, because of the relation in $\ool$ killing the full cycle.\end{proof}

\begin{corollary}
    \label{cor:walks-blossom-cycle}
    The maximal walk in $(Q^\blossom,R^\blossom)$ corresponding to $\overline X$ is given by adding a cohook at either end of the string for $\overline X$, unless the first arrow of the cohook would complete a full oriented cycle, in which case, instead of adding a cohook, the maximal walk ends by spiralling infinitely around the cycle in the direction of its arrows.
\end{corollary}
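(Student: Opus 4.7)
The plan is to deduce the statement from \cref{lem:gvect_in_locgen} together with the description of maximal walks for locally gentle quivers in \cref{ss:locally}. In \cref{lem:gvect_in_locgen}, the correction term $-\delta_\alpha\ve_{t(\alpha)}$ encodes whether the cohook at the $\alpha$-end is finite (contributing a valley at $t(\alpha)$) or degenerates into an infinite object (contributing nothing). The corollary is the structural counterpart of this dichotomy, so the proof amounts to reading off the geometry of the walk from the $g$-vector formula.

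First I would show that the greedy construction of $\nu_\alpha$ fails to terminate at a blossom exactly when adding $\alpha^{-1}$ to $\gamma_1\dots\gamma_r$ produces a walk containing a full oriented cycle of $Q$. Indeed, $\nu_\alpha$ is built by starting at $t(\alpha)$ and iteratively prepending the unique backward-compatible incoming forward arrow; in $(Q^\blossom,R^\blossom)$ this terminates at a blossom unless the recursion enters a relation-free oriented cycle of $Q$ and spirals around it forever, which is precisely the condition $\delta_\alpha = 0$. Next, I would describe the associated maximal walk: when $\nu_\alpha$ is finite, we prepend the cohook $\h_{(\alpha)} = \nu_\alpha\alpha^{-1}$ as in the gentle case (\cref{def:add-cohooks} and $\Psi$ from \cref{thm:maxstringandindec}); when $\nu_\alpha$ is infinite, following \cref{ss:locally} and \cite{PPPlocg}, we instead let the walk end in the infinite forward spiral around the obstructing cycle, never reaching a blossom. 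The symmetric analysis applies at the $\beta$-end.

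Finally, I would verify consistency by computing the signed peak-minus-valley sum of the resulting walk and comparing it to \cref{lem:gvect_in_locgen}: a finite cohook contributes a single valley at $t(\alpha)$ (its turning vertex) and no other peaks or valleys from $Q_0$, matching $-\ve_{t(\alpha)}$; a spiral contributes $0$, because every vertex strictly inside the spiral is entered and exited by forward arrows of the cycle and is therefore neither a peak nor a valley. I expect the main obstacle to be the careful bookkeeping at the two interface vertices $s(\gamma_1)$ and $t(\gamma_r)$, where the peak/valley status within the full walk depends on both the orientation of $\gamma_1$, $\gamma_r$ and the type (cohook or spiral) of extension attached; a short case analysis reconciles these contributions with the peaks and valleys of $\overline X$ appearing on the right-hand side of \cref{lem:gvect_in_locgen}, and then the identification of $W$ with the maximal walk corresponding to $\overline X$ under the locally gentle analogue of $\Psi$ (provided by \cref{thm:connection_g-vector_NKred}) is immediate.
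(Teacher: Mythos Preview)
Your approach is essentially the same as the paper's: the corollary is presented in the paper without an explicit proof, as an immediate consequence of \cref{lem:gvect_in_locgen}, and your argument unpacks exactly that implication by matching the peaks-minus-valleys count of the described walk to the formula in the lemma.

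There is, however, a genuine circularity in your final step. You invoke \cref{thm:connection_g-vector_NKred} to justify that the walk you have constructed is indeed ``the maximal walk corresponding to $\overline X$''. But the proof of \cref{thm:connection_g-vector_NKred} in the paper explicitly \emph{uses} \cref{cor:walks-blossom-cycle} to identify the rays of the $g$-vector fan of $\ool$ with maximal walks. So you cannot appeal to that theorem here. Fortunately, you do not need to: the correspondence in question is simply the one determined by matching $g$-vectors (extending the map $\Psi$ of \cref{thm:maxstringandindec} to the locally gentle setting), and your peaks/valleys computation already establishes that $\pmb{g}(W) = \pmb{g}_{\overline X}$ for the walk $W$ you describe. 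Drop the reference to \cref{thm:connection_g-vector_NKred} and the argument stands. The interface-vertex bookkeeping you anticipate is real but routine, and your over-elaboration relative to the paper is harmless.
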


\begin{proof} [Proof of \cref{thm:connection_g-vector_NKred}] Thanks to \cref{prop:ay,prop:add-a-bar}, we have established that the three $g$-vector fans coincide. \cref{cor:walks-blossom-cycle} shows that the rays of the $g$-vector fan for $\ool$ are identified with
walks which are allowed either to end at a blossoming vertex, or to spiral infinitely around an oriented cycle in either direction. Further, two rays are compatible if the corresponding walks do not kiss.
This exactly describes the non-kissing complex of \cite{PPPlocg}, so this completes the proof of the theorem.\end{proof}

For future use, we also note that the finite-dimensional bricks of $\ool$ can be identified with bricks of $\Lambda$ whose corresponding string never goes the whole way around an oriented cycle.

\begin{lemma}
    \label{lem:overline-bricks}
    $\ool$ has finitely many bricks if and only if there is at least one relation in any non-oriented cycle.
\end{lemma}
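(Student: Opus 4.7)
The plan is to apply the Butler--Ringel classification of indecomposable modules over string algebras to $\ool$. First I would record that $\ool$ is a finite-dimensional string algebra: killing every oriented cycle of $(Q,R)$ without relations via the ideal $C$ bounds the lengths of nonzero paths. The Butler--Ringel classification then says every indecomposable $\ool$-module is either a string module $M(\rho)$ or a band module $M(b,n,\lambda)$ with $b$ a primitive cyclic string of $\ool$, $n \geq 1$, and $\lambda \in \KK^*$.

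The central reduction is that $\ool$ has finitely many bricks if and only if $\ool$ admits no band modules. In one direction, given any primitive cyclic string $b$ of $\ool$, the band modules $\{M(b,1,\lambda) : \lambda \in \KK^*\}$ are pairwise non-isomorphic bricks, so a single band already yields infinitely many bricks since $\KK$ is algebraically closed. In the other direction, in the absence of bands I will argue that strings of $\ool$ have bounded length, so only finitely many string modules exist and hence only finitely many string bricks.

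Next I would translate the existence of a band of $\ool$ into a quiver-theoretic condition. Suppose $b = \beta_1 \cdots \beta_m$ is a cyclic string of $\ool$ consisting entirely of forward arrows in $Q_1$. Then $b$ is an oriented cycle of $(Q,R)$ without relations, so $b \in C$ and hence $b = 0$ in $\ool$; the defining action of going around $b$ on $M(b,1,\lambda)$ would then force $\lambda = 0$, contradicting $\lambda \in \KK^*$. So any cyclic string of $\ool$ must use at least one inverse arrow, which means its image in the underlying graph of $Q$ is a cycle using arrows in both directions, i.e., a \emph{non-oriented} cycle; moreover, this cycle is relation-free since $b$ is a string. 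Conversely, any relation-free non-oriented cycle of $Q$ can be traversed to yield a cyclic string of $\ool$. Hence bands exist in $\ool$ iff some non-oriented cycle of $Q$ is relation-free.

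Finally, I would establish the length bound on strings of $\ool$ under the lemma's hypothesis. If $\rho = \beta_1 \cdots \beta_m$ is a string of $\ool$ of length $m > |Q_0|$, the pigeonhole principle yields a minimal closed subwalk $\beta_i \cdots \beta_{j-1}$, which is an elementary cycle in the underlying graph of $Q$. If this elementary cycle is non-oriented, by hypothesis it contains a relation of $R$, contradicting that $\rho$ is a string; if it is oriented (entirely forward or entirely inverse arrows), it corresponds to a full oriented cycle of $Q$ that is relation-free (again since $\rho$ is a string), hence lies in $C$, and I would then check that the standard module structure on $M(\rho)$ assigns a nonzero action to this element, contradicting $M(\rho) \in \rep_\KK(\ool)$. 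The main obstacle is this oriented case, where one must carefully unpack the definition of $M(\rho)_\alpha$ for both forward and inverse subwalks and use invariance of $M(\rho)$ under string reversal. Once this dichotomy is settled, the length bound follows, completing the proof.
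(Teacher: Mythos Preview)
The paper states this lemma without proof; it is presented as a routine observation following the discussion of $\ool$ as a string algebra and the remark that its string representations are those strings of $(Q,R)$ which never traverse a full oriented cycle. So there is no paper argument to compare against beyond that setup, and your strategy via the Butler--Ringel classification is exactly the natural one.

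Your argument is correct, with two small comments. First, in the oriented case of your length-bound step you propose to unpack the module action of the cycle on $M(\rho)$; this is unnecessary. By the paper's own description, a string of $\ool$ is precisely a string of $(Q,R)$ containing no full relation-free oriented cycle as a directed subwalk, so the oriented closed subwalk you extract immediately contradicts $\rho$ being an $\ool$-string. Second, in your step identifying bands of $\ool$ with relation-free non-oriented cycles, you implicitly treat a cyclic string as a simple cycle in the underlying graph, which need not be the case. This gap is filled by the same pigeonhole argument you use for ordinary strings: a non-simple cyclic string contains a simple closed subwalk which, under the hypothesis, is forced to be oriented and hence excluded by the $\ool$-relations. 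With these two clarifications your proof is complete.
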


\section{Relationship between DKK triangulations and \except{toc}{\linebreak} $g$-vector fans for acyclic graphs}
\label{s:conections-acyclic}

In this section, we strengthen the bijection between the DKK triangulation of an amply framed DAG $G$ and the $g$-vector fan of a gentle quiver $Q_G$ associated to $G$ building on work in \cite{Kentuckygentle}, by viewing it in the context of \cite{PPP,PPPlocg} and $g$-vector fans.

\subsection{Gentle quiver from an amply framed DAG}
\label{ss:DAG_to_gentle}

The following construction, connecting amply framed DAGs and gentle algebras, is one of the key insights in \cite{Kentuckygentle}. Recall that all our DAGs are full.

\begin{definition}[\cite{Kentuckygentle}]
    \label{def:acyclic-blossoming}
    Let $(G,\coloring)$ be an amply framed DAG,
    define its associated \textnew{blossoming quiver} $\mathnew{(Q^\blossom_G,R^\blossom_G)}$ by \emph{reversing} the \tcb{blue} edges of $G$.
    That is,
    $(Q^\blossom_G)_0 = V(G)$, $(Q^\blossom_G)_1 = E(G)$,
    \begin{equation}
    \label{eq:arrows-G-blossom}
        s(e) = \begin{cases}
            \tail(e) & \text{if } \coloring(e) = \text{\tcr{red}},\\
            \head(e) & \text{if } \coloring(e) = \text{\tcb{blue}},
        \end{cases}
        \quad\text{and}\quad
        t(e) = \begin{cases}
            \head(e) & \text{if } \coloring(e) = \text{\tcr{red}},\\
            \tail(e) & \text{if } \coloring(e) = \text{\tcb{blue}}.
        \end{cases}
    \end{equation}
    The relations in $R^\blossom_G$ are all the length-two paths $e_1 e_2$ in $Q^{\blossom}_G$ such that
    $\coloring(e_1) \neq \coloring(e_2)$.
    In addition, let $\mathnew{({Q}_G,{R}_G)}$ be the restriction of $(Q^\blossom_G,R^\blossom_G)$ to the internal vertices $\internal(G)$.
    See \cref{fig:full_edge_labeling} for an example.
\end{definition}

\begin{figure}[ht]
	\centering
	\begin{subfigure}{.3\linewidth}
    	\centering
        \begin{tikzpicture}[-stealth,scale=.8]
    \node[circle, draw, inner sep=0pt, minimum size=4pt,fill] (d) at (2,1) {};
    \node[circle, draw, inner sep=0pt, minimum size=4pt,fill] (l) at (1,2) {};
    \node[circle, draw, inner sep=0pt, minimum size=4pt,fill] (r) at (3,2) {};
    \node[circle, draw, inner sep=0pt, minimum size=4pt,fill] (u) at (2,3) {};

    \draw[red,dashed,thick] (0,3) -- (l) node[midway,left] {\scriptsize$1$};
    \draw[red,dashed,thick] (l) -- (d) node[midway,left] {\scriptsize$1$};
    \draw[red,dashed,thick] (d) -- (3,0) node[midway,left] {\scriptsize$1$};
    \draw[red,dashed,thick] (1,4) -- (u) node[midway,left] {\scriptsize$1$};
    \draw[red,dashed,thick] (u) -- (r) node[midway,left] {\scriptsize$1$};
    \draw[red,dashed,thick] (r) -- (4,1) node[midway,left] {\scriptsize$1$};

    \draw[blue,thick] (1,0) -- (d) node[midway,left] {\scriptsize$2$};
    \draw[blue,thick] (d) -- (r) node[midway,left] {\scriptsize$2$};
    \draw[blue,thick] (r) -- (4,3) node[midway,left] {\scriptsize$2$};
    \draw[blue,thick] (0,1) -- (l) node[midway,left] {\scriptsize$2$};
    \draw[blue,thick] (l) -- (u) node[midway,left] {\scriptsize$2$};
    \draw[blue,thick] (u) -- (3,4) node[midway,left] {\scriptsize$2$};
\end{tikzpicture}
        \caption{}
    	 \label{subfig:full_edge_labeling1}
	\end{subfigure}
	\hfill
	\begin{subfigure}{.3\linewidth}
		\centering
        \begin{tikzpicture}[-stealth,scale=.8]
    \node[circle, draw, inner sep=0pt, minimum size=4pt,fill] (d) at (2,1) {};
    \node[circle, draw, inner sep=0pt, minimum size=4pt,fill] (l) at (1,2) {};
    \node[circle, draw, inner sep=0pt, minimum size=4pt,fill] (r) at (3,2) {};
    \node[circle, draw, inner sep=0pt, minimum size=4pt,fill] (u) at (2,3) {};

    \draw[thick] (0,3) -- (l);
    \draw[thick] (l) -- (d);
    \draw[thick] (d) -- (3,0);
    \draw[thick] (1,4) -- (u);
    \draw[thick] (u) -- (r);
    \draw[thick] (r) -- (4,1);

    \draw[thick] (d) -- (1,0);
    \draw[thick] (r) -- (d);
    \draw[thick] (4,3) -- (r);
    \draw[thick] (l) -- (0,1);
    \draw[thick] (u) -- (l);
    \draw[thick] (3,4) -- (u);

    \begin{scope}[shift={(2,1)}]
        \draw[thick,red,-,dotted] (-45:.4) arc (-45: 45:.4);
        \draw[thick,red,-,dotted] (135:.4) arc (135:225:.4);
    \end{scope}

    \begin{scope}[shift={(1,2)}]
        \draw[thick,red,-,dotted] (-45:.4) arc (-45: 45:.4);
        \draw[thick,red,-,dotted] (135:.4) arc (135:225:.4);
    \end{scope}

    \begin{scope}[shift={(3,2)}]
        \draw[thick,red,-,dotted] (-45:.4) arc (-45: 45:.4);
        \draw[thick,red,-,dotted] (135:.4) arc (135:225:.4);
    \end{scope}

    \begin{scope}[shift={(2,3)}]
        \draw[thick,red,-,dotted] (-45:.4) arc (-45: 45:.4);
        \draw[thick,red,-,dotted] (135:.4) arc (135:225:.4);
    \end{scope}
    
\end{tikzpicture}
        \caption{}
	\end{subfigure}
	\hfill
	\begin{subfigure}{.2\linewidth}
		\centering
        \begin{tikzpicture}[-stealth,scale=.8]
    \node at (2,.15) {};
    \node[circle, draw, inner sep=0pt, minimum size=4pt,fill] (d) at (2,1) {};
    \node[circle, draw, inner sep=0pt, minimum size=4pt,fill] (l) at (1,2) {};
    \node[circle, draw, inner sep=0pt, minimum size=4pt,fill] (r) at (3,2) {};
    \node[circle, draw, inner sep=0pt, minimum size=4pt,fill] (u) at (2,3) {};

    \draw[thick] (l) -- (d);
    \draw[thick] (u) -- (r);

    \draw[thick] (r) -- (d);
    \draw[thick] (u) -- (l);

    \begin{scope}[shift={(1,2)}]
        \draw[thick,red,-,dotted] (-45:.4) arc (-45: 45:.4);
    \end{scope}

    \begin{scope}[shift={(3,2)}]
        \draw[thick,red,-,dotted] (135:.4) arc (135:225:.4);
    \end{scope}
   
\end{tikzpicture}
        \caption{}
	\end{subfigure}
    \caption{
        (A) A full graph $G$ with an ample framing encoded by a bi-colouring of its edges,
        (B) the associated blossoming quiver $(Q^{\blossom}_G,R_G^{\blossom})$ obtained from $G$ by reversing the edges with colour $\textcolor{blue}{2}$
        and with relations all the non-monochromatic length-two paths (see \cref{def:acyclic-blossoming}),
        (C) the quiver $(Q_G,R_G)$ obtained by restricting $(Q^{\blossom}_G,R_G^{\blossom})$ to the internal vertices of $G$.
        }
    \label{fig:full_edge_labeling}
\end{figure}
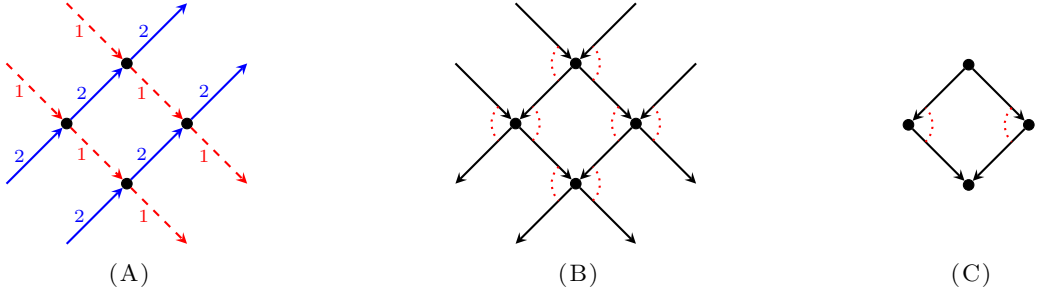

Since our graphs satisfy \cref{convention:graphs no idle edges}, one can verify from the definition of gentle quivers in \cref{ss:gentle-quiver} that both $(Q^\blossom_G,R^\blossom_G)$ and $(Q_G,R_G)$ are gentle (see \cite[Prop. 5.6]{Kentuckygentle}).
The naming above reflects that $(Q^\blossom_G,R^\blossom_G)$ is the \emph{blossoming quiver} of $(Q_G,R_G)$ as defined in \cite{PPP}, see \cref{ss:blossoming_quivers}.

\begin{remark}
We can naturally identify a walk in $G$ with the string of $(Q^\blossom_G,R^\blossom_G)$ that uses the same set of (reversed) arrows.
Explicitly, given a walk $e_1,\dots,e_k$ of $G$, the corresponding string of $(Q^\blossom_G,R^\blossom_G)$ is $e_1^{\epsilon_1} \ldots e_k^{\epsilon_k}$, where
\begin{equation}
    \label{eq:path_to_string}
    \epsilon_i =
    \begin{cases}
        {+}1 & \text{if } \coloring(\alpha_i)=\text{\tcr{red}},\\
        {-}1 & \text{if } \coloring(\alpha_i)=\text{\tcb{blue}}.
    \end{cases}
\end{equation}
In this manner, non-exceptional (resp. exceptional) routes of $G$ correspond to bent (resp. straight) maximal strings of $(Q^\blossom_G,R^\blossom_G)$.
We denote this last bijection by
\begin{equation}
    \label{eq:paths_are_strings}
    \RoutesToStrings :
    \Routes(G) \setminus \cE(G) \to
    \big\{ \text{bent maximal strings of } (Q^\blossom,R^\blossom) \big\}
\end{equation}
\end{remark}

\begin{definition}[\cite{Kentuckygentle}]
    \label{def:KentuckyBij}
    Given an amply framed DAG $(G,\coloring)$, let $\KentuckyBij$ be the composition
    \[
        \mathnew{\KentuckyBij} :=  \Psi^{-1} \circ \RoutesToStrings : \Routes(G) \setminus \cE(G) \to \IndShift(Q_G,R_G),
    \]
    where $\Psi$ is the bijection in \eqref{eq:def-Psi}.
\end{definition}

The following explicit description of $\KentuckyBij$ appears in \cite[Proof of Theorem~5.8]{Kentuckygentle}.
    Given an amply framed DAG $(G,\coloring)$,
    and a non-exceptional route $\route = \alpha_1,\ldots,\alpha_k$ of $G$,
    the module $\KentuckyBij(\route)$ is determined as follows.
    \begin{enumerate}[label=(\roman*)]
        \item If
        $\coloring(\route) = (\tcr{1}^a,\tcb{2}^{k-a})$,
        with $1 \leq a \leq k-1$ (so $\coloring(\alpha_a)=\tcr{1}$ and $\coloring(\alpha_{a+1})=\tcb{2}$),
        then $\KentuckyBij(\route)$ is
        the shifted projective $P_v[1]$, where $v=\head(\alpha_a)=\tail(\alpha_{a+1})$.

        \item If
        $\coloring(\route)=(\tcr{1}^a,\tcb{2},\coloring(\alpha_{a+2}),\ldots,\coloring(\alpha_{j-1}),\tcr{1},\tcb{2}^b)$,
        with $a,b \geq 0$,
        then $\KentuckyBij(\route)$ is
        the module corresponding to the string $\alpha_{a+2}^{\epsilon_{a+2}} \ldots \alpha_{j-1}^{\epsilon_{j-1}}$, with $\epsilon_i$ as in \eqref{eq:path_to_string}.
        When $j=a+2$,
        this is the lazy string at vertex $v = \head(\alpha_{a+1})$,
        which corresponds to the projective module~$P_v$.
    \end{enumerate}

\begin{example}
    \label{ex:bijection_routes_to_modules}
    Let $G = \colorfulXXgraph$ be the amply framed DAG from \cref{ex:master_ex_acyclic_1}.
    The corresponding quiver $(Q_{G},R_{G})$ on its internal vertices is precisely
    the type $A_2$ quiver in \cref{subfig:bijection_routes_to_modules1}, and also in \cref{ex:g-vector_running_example}.
    The result of applying the bijection $\KentuckyBij$ to the five non-exceptional routes of $\colorfulXXgraph$ is:
    \begin{gather*}
        \KentuckyBij(R_1) = P_u = 0 \to \KK, \hspace{.1\linewidth}
        \KentuckyBij(R_2) = P_v = \KK \to \KK, \hspace{.1\linewidth}
        \KentuckyBij(R_3) = \KK \to 0, \\[10pt]
        \KentuckyBij(R_4) = P_u[1], \hspace{.1\linewidth}
        \KentuckyBij(R_5) = P_v[1].
    \end{gather*}
    This is further illustrated in \cref{subfig:bijection_routes_to_modules2}.
\end{example}
\begin{figure}[ht]
	\centering
	\begin{subfigure}{\textwidth}
		\centering
        \[G = \;\; \begin{gathered}\begin{tikzpicture}[-stealth,scale=.75]
    \node[circle, draw, inner sep=0pt, minimum size=4pt,fill] (u) at (1,1) {};
    \node[circle, draw, inner sep=0pt, minimum size=4pt,fill] (v) at (2,2) {};

    \node[left] at (0.8,1) {$u$};
    \node[left] at (1.8,2) {$v$};

    \draw[red,dashed,very thick] (0,2) -- (u);
    \draw[red,dashed,very thick] (u) -- (2,0);
    \draw[red,dashed,very thick] (1,3) -- (v);
    \draw[red,dashed,very thick] (v) -- (3,1);

    \draw[blue,very thick] (0,0) -- (u);
    \draw[blue,very thick] (u) -- (v);
    \draw[blue,very thick] (v) -- (3,3);
\end{tikzpicture}\end{gathered}
\hspace*{.1\linewidth}
(Q_G,R_G) = \begin{gathered}\begin{tikzpicture}[-stealth,scale=.75]
    \node[circle, draw, inner sep=0pt, minimum size=4pt,fill] (u) at (1,1) {};
    \node[circle, draw, inner sep=0pt, minimum size=4pt,fill] (v) at (2,2) {};

    \node[left] at (1,1) {$u$};
    \node[left] at (2,2) {$v$};

    \draw[very thick] (v) -- (u);
\end{tikzpicture}\end{gathered}\]
	    \caption{}
	    \label{subfig:bijection_routes_to_modules1}
	\end{subfigure}
	\begin{subfigure}{\textwidth}
        \centering
        \begin{tikzpicture}[-stealth,scale=.5]

    \begin{scope}
        \node at (0,3) {$R_1$};
        \node[circle, draw, inner sep=0pt, minimum size=4pt,fill] (u) at (1,1) {};
        \node[circle, draw, inner sep=0pt, minimum size=4pt,fill] (v) at (2,2) {};
        \draw[blue,very thick] (0,0) -- (u);
        \draw[red,dashed,very thick] (u) -- (2,0);
        \draw[gray!70!white,dashed] (0,2) -- (u);
        \draw[gray!70!white] (u) -- (v);
        \draw[gray!70!white,dashed] (v) -- (3,1);
        \draw[gray!70!white,dashed] (1,3) -- (v);
        \draw[gray!70!white] (v) -- (3,3);
        \draw[|->] (1.5,-1) -- (1.5,-2);
        \node[inner sep=1pt] (rep_u) at (0.7,-4.3) {$\KK$};
        \node[inner sep=1pt] (rep_v) at (2.3,-2.7) {$0$};
        \draw[thick] (rep_v) -- (rep_u);
    \end{scope}

    \begin{scope}[shift={(06,0)}]
        \node at (0,3) {$R_2$};
        \node[circle, draw, inner sep=0pt, minimum size=4pt,fill] (u) at (1,1) {};
        \node[circle, draw, inner sep=0pt, minimum size=4pt,fill] (v) at (2,2) {};
        \draw[blue,very thick] (0,0) -- (u);
        \draw[gray!70!white,dashed] (u) -- (2,0);
        \draw[gray!70!white,dashed] (0,2) -- (u);
        \draw[blue,very thick] (u) -- (v);
        \draw[red,dashed,very thick] (v) -- (3,1);
        \draw[gray!70!white,dashed] (1,3) -- (v);
        \draw[gray!70!white] (v) -- (3,3);
        \draw[|->] (1.5,-1) -- (1.5,-2);
        \node[inner sep=1pt] (rep_u) at (0.7,-4.3) {$\KK$};
        \node[inner sep=1pt] (rep_v) at (2.3,-2.7) {$\KK$};
        \draw[thick] (rep_v) -- (rep_u);
    \end{scope}

    \begin{scope}[shift={(12,0)}]
        \node at (0,3) {$R_3$};
        \node[circle, draw, inner sep=0pt, minimum size=4pt,fill] (u) at (1,1) {};
        \node[circle, draw, inner sep=0pt, minimum size=4pt,fill] (v) at (2,2) {};
        \draw[gray!70!white] (0,0) -- (u);
        \draw[gray!70!white,dashed] (u) -- (2,0);
        \draw[red,dashed,very thick] (0,2) -- (u);
        \draw[blue,very thick] (u) -- (v);
        \draw[red,dashed,very thick] (v) -- (3,1);
        \draw[gray!70!white,dashed] (1,3) -- (v);
        \draw[gray!70!white] (v) -- (3,3);
        \draw[|->] (1.5,-1) -- (1.5,-2);
        \node[inner sep=1pt] (rep_u) at (0.7,-4.3) {$0$};
        \node[inner sep=1pt] (rep_v) at (2.3,-2.7) {$\KK$};
        \draw[thick] (rep_v) -- (rep_u);
    \end{scope}

    \begin{scope}[shift={(18,0)}]
        \node at (0,3) {$R_4$};
        \node[circle, draw, inner sep=0pt, minimum size=4pt,fill] (u) at (1,1) {};
        \node[circle, draw, inner sep=0pt, minimum size=4pt,fill] (v) at (2,2) {};
        \draw[gray!70!white] (0,0) -- (u);
        \draw[gray!70!white,dashed] (u) -- (2,0);
        \draw[red,dashed,very thick] (0,2) -- (u);
        \draw[blue,very thick] (u) -- (v);
        \draw[gray!70!white,dashed] (v) -- (3,1);
        \draw[gray!70!white,dashed] (1,3) -- (v);
        \draw[blue,very thick] (v) -- (3,3);
        \draw[|->] (1.5,-1) -- (1.5,-2);
        \node at (1.5,-3.5) {$P_u[1]$};
    \end{scope}

    \begin{scope}[shift={(24,0)}]
        \node at (0,3) {$R_5$};
        \node[circle, draw, inner sep=0pt, minimum size=4pt,fill] (u) at (1,1) {};
        \node[circle, draw, inner sep=0pt, minimum size=4pt,fill] (v) at (2,2) {};
        \draw[gray!70!white] (0,0) -- (u);
        \draw[gray!70!white,dashed] (u) -- (2,0);
        \draw[gray!70!white,dashed] (0,2) -- (u);
        \draw[gray!70!white] (u) -- (v);
        \draw[gray!70!white,dashed] (v) -- (3,1);
        \draw[red,dashed,very thick] (1,3) -- (v);
        \draw[blue,very thick] (v) -- (3,3);
        \draw[|->] (1.5,-1) -- (1.5,-2);
        \node at (1.5,-3.5) {$P_v[1]$};
    \end{scope}

\end{tikzpicture}
        \caption{}
        \label{subfig:bijection_routes_to_modules2}
	\end{subfigure}
	\caption{(A) The amply framed DAG $G = \colorfulXXgraph$ and its associated reduced quiver $(Q_G,R_G)$, which is precisely that in \cref{ex:g-vector_running_example}.
	(B) The bijective map $\Gamma : \Routes(G) \setminus \cE(G) \to \IndShift(Q_G,R_G)$ between non-exceptional routes in $G$ and indecomposable modules of the quiver $(Q_G,R_G)$.}
\end{figure}

The following result shows that the map $\Gamma$ is a bijection that sends compatible routes to compatible indecomposable modules. Thus, $\Gamma$ induces a bijection between maximal cliques of routes and maximal collections of compatible modules.

\begin{theorem}[{\cite[Theorem 5.8, Lemma 5.14]{Kentuckygentle}}]
    \label{thm:comb-iso-fans}
    The map $\KentuckyBij$ is a bijection.
    Moreover, two routes $\route,\route' \in \Routes(G) \setminus \cE(G)$ are compatible (in the sense of \cref{def:compatible-routes})
    if and only if the indecomposable modules
    $\KentuckyBij(\route)$ and $\KentuckyBij(\route')$ are compatible (in the sense of \cref{def:compatible-modules}).
\end{theorem}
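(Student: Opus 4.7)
The plan is to factor $\KentuckyBij = \Psi^{-1} \circ \RoutesToStrings$ and leverage the gentle-algebra machinery of \cref{s:gentle}. Bijectivity is immediate since $\RoutesToStrings$ from~\eqref{eq:paths_are_strings} and $\Psi$ from \cref{thm:maxstringandindec} are both bijections. By \cref{thm:g-vect-blossom}~(3), two $\tau$-rigid modules in $\IndShift(Q_G,R_G)$ are compatible if and only if their $\Psi$-images are non-kissing in $(Q_G^\blossom,R_G^\blossom)$. So the theorem reduces to proving, for non-exceptional routes $\route,\route'$, that $\route,\route'$ are DKK-compatible if and only if $\RoutesToStrings(\route)$ and $\RoutesToStrings(\route')$ are non-kissing.

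To establish this equivalence, I would match \cref{cor:coherence_full_graphs_ample_framings} with the kissing definition of \cref{ss:nonkissing_gentle}. Given a maximal common subwalk of $\route,\route'$ from $u$ to $v$ in $\internal(G)$, let $\sigma$ denote the resulting common substring of $\RoutesToStrings(\route),\RoutesToStrings(\route')$; it lies in $Q_G$ since $u,v$ are internal. The key observation is that under~\eqref{eq:path_to_string}, red edges of $G$ become forward arrows of $Q_G^\blossom$ and blue edges become inverse arrows. Hence the arrow of $\RoutesToStrings(\route)$ immediately before $\sigma$ is inverse exactly when the incoming edge of $\route$ at $u$ is blue, and the arrow immediately after $\sigma$ is forward exactly when the outgoing edge of $\route$ at $v$ is red. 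Unpacking the on-top/at-bottom definitions: $\sigma$ is on top of $\RoutesToStrings(\route)$ precisely when the incoming edge at $u$ in $\route$ is blue and the outgoing edge at $v$ in $\route$ is red; $\sigma$ is at the bottom precisely when the incoming is red and the outgoing is blue; and otherwise $\sigma$ is neither.

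Maximality of the common subwalk, together with ampleness, forces the boundary colors in $\route'$ to be opposite those in $\route$ at both $u$ and $v$. Consequently $\sigma$ is on top of one of $\RoutesToStrings(\route),\RoutesToStrings(\route')$ and at the bottom of the other if and only if the colors of the incoming edge at $u$ and outgoing edge at $v$ within $\route$ itself differ --- which by \cref{cor:coherence_full_graphs_ample_framings} is precisely the criterion for DKK-incompatibility of $\route,\route'$ via this subwalk. Running the argument in both directions closes the equivalence: an incompatibility produces a witnessing kissing via \cref{cor:coherence_full_graphs_ample_framings}, and conversely any kissing is witnessed by a finite common substring in $Q_G$ whose associated common subwalk is automatically maximal (its boundary edges in $\route,\route'$ carry different colors, hence are distinct edges of $G$).

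The main obstacle will be the orientation bookkeeping. Since blue edges reverse direction between $G$ and $Q_G^\blossom$, the visual ``shape'' of a common subwalk in $G$ is not the same as that of the corresponding substring in $Q_G^\blossom$, and care is needed when extracting the color conditions from the definitions of ``on top'' and ``at the bottom''. The boundary case $u = v$, where $\sigma$ is a lazy path at a single vertex, will also require a separate check, though the analysis goes through identically. Fortunately, \cref{convention:graphs no idle edges} and the identification of sources/sinks of $G$ with blossoming vertices of $(Q_G^\blossom,R_G^\blossom)$ make the dictionary between internal common subwalks of $G$ and finite common substrings contained in $Q_G$ straightforward.
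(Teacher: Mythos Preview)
The paper does not supply its own proof of this statement; it is quoted verbatim from \cite[Theorem 5.8, Lemma 5.14]{Kentuckygentle} and used as a black box. Your proposal is therefore not a comparison target but a standalone reconstruction, and as such it is correct: factoring $\KentuckyBij = \Psi^{-1}\circ\RoutesToStrings$ and translating DKK-(in)compatibility into (non-)kissing via the colour dictionary of \cref{cor:coherence_full_graphs_ample_framings} is exactly the right argument, and your handling of maximality of the common subwalk/substring is accurate.

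Two small points worth tightening. First, you invoke \cref{thm:g-vect-blossom}(3), which is stated for $\tau$-rigid modules, but $\KentuckyBij(\route)$ may be a shifted projective $P_v[1]$; the extension to all of $\IndShift(Q_G,R_G)$ is covered by \cref{thm:g-vect-blossom}(4), which identifies the full $g$-vector fan (shifted projectives included) with $\NKred(Q_G,R_G)$. Second, to apply parts (2)--(3) at all you need that $\RoutesToStrings(\route)$ is non-self-kissing; this holds because a route in an acyclic $G$ visits each vertex at most once, so the corresponding maximal string has no repeated vertex and cannot self-kiss. With these remarks added, your argument is complete.
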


\begin{example}
Continuing with \cref{ex:bijection_routes_to_modules}, the image of the maximal cliques gives the maximal collection of compatible indecomposable modules in $\IndShift(Q_G,R_G)$:
\[
\{\KentuckyBij(R_1),\KentuckyBij(R_2)\}, \{\KentuckyBij(R_2),\KentuckyBij(R_3)\},\{\KentuckyBij(R_3),\KentuckyBij(R_4)\},\{\KentuckyBij(R_4),\KentuckyBij(R_5)\},\{\KentuckyBij(R_1),\KentuckyBij(R_5)\}.
\]
Compare with \cref{ex:g-vector_running_example}.
\end{example}

\subsection{Linear isomorphism between the reduced DKK fan and the $g$-vector fan}
\label{s:fan-iso}

Another interpretation of \cref{thm:comb-iso-fans} is that $\KentuckyBij$ is a bijection from the rays of $\DKKred(G,F)$ to the rays of $\gfan(Q_G,R_G)$ which induces a combinatorial isomorphism of fans.
In this section, we further strengthen this connection by demonstrating that, in fact, there exists a linear isomorphism between $\DKKred(G,F)$ and $\gfan(Q_G,R_G)$, which agrees with $\KentuckyBij$ at the combinatorial level.

\begin{definition}
    \label{def:phi}
    Let $G = (V,E)$ be an amply framed DAG and $V_0 = \internal(G)$ its collection of internal vertices.
    Define $\phi : \RR^E \to \RR^{V_0}$ to be the composition of the following two linear maps:
    \[
    \pi_1: \RR^E \to \RR^{V} : \ve_e \mapsto \begin{cases}
            \tfrac{1}{2} ( \ve_{\tail(e)} - \ve_{\head(e)} ) & \text{if } \coloring(e) = \text{\tcr{red}},\\
            \tfrac{1}{2} ( \ve_{\head(e)} - \ve_{\tail(e)} ) & \text{if } \coloring(e) = \text{\tcb{blue}};
        \end{cases}
    \]
    and the projection $\pi_2:\RR^{V} \to \RR^{V_0}$ sending $\ve_v$ to zero whenever $v$ is not an internal vertex.
\end{definition}

\begin{remark}
This map was independently found by Berggren in \cite[\S 7.3]{BERGGREN}.
\end{remark}

\begin{theorem}
    \label{prop:linear-iso}
    Let $(G,F)$ be an amply framed DAG and $(Q_G,R_G)$ be the associated reduced gentle quiver.
    The map $\phi$ above induces a linear isomorphism between $\DKKred(G,F)$ and $\gfan(Q_G,R_G)$.
\end{theorem}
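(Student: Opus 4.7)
The plan is to verify that $\phi$ kills the span of exceptional routes and sends each non-exceptional route to the $g$-vector of its image under $\KentuckyBij$; combined with the combinatorial isomorphism of \cref{thm:comb-iso-fans}, this will force $\phi$ to descend to a linear isomorphism on $\F(G)/\spann_\RR(\cE)$. An exceptional route $\rho = \alpha_1\cdots\alpha_k$ is monochromatic: if all its edges are red, running from source $s$ to sink $t$, then
\[
    \pi_1({\bf 1}_\rho) \;=\; \tfrac{1}{2}\sum_i\big(\ve_{\tail(\alpha_i)} - \ve_{\head(\alpha_i)}\big) \;=\; \tfrac{1}{2}(\ve_s - \ve_t)
\]
by telescoping, which $\pi_2$ sends to $0$ since $s, t \notin V_0$; the blue case is symmetric. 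Hence $\phi$ factors through a linear map $\bar\phi : \F(G)/\spann_\RR(\cE) \to \RR^{V_0}$.

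Fix next a non-exceptional route $\rho = \alpha_1\cdots\alpha_k$ and set $v_i = \head(\alpha_i) = \tail(\alpha_{i+1})$. A direct telescoping of $\pi_1({\bf 1}_\rho)$ shows that the only nonzero interior contributions occur at internal vertices $v_i$ where the colour changes: the contribution is $-\ve_{v_i}$ at a red-to-blue transition and $+\ve_{v_i}$ at a blue-to-red transition; the residual source and sink terms are killed by $\pi_2$. On the algebraic side, the walk $\RoutesToStrings(\rho) = \beta_1\cdots\beta_k$ in $(Q_G^\blossom, R_G^\blossom)$ satisfies $\beta_i \in (Q_G^\blossom)_1$ if and only if $\alpha_i$ is red, by \eqref{eq:path_to_string}. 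Thus the peak condition at $v_i$, namely $\beta_i^{-1}, \beta_{i+1} \in (Q_G^\blossom)_1$, corresponds to ``$\alpha_i$ blue and $\alpha_{i+1}$ red'', while the valley condition corresponds to the opposite pattern. Substituting into \eqref{eq:g-rho} and invoking \cref{thm:g-vect-blossom}(1),
\[
    \phi({\bf 1}_\rho) \;=\; \pmb{g}\big(\RoutesToStrings(\rho)\big) \;=\; \pmb{g}_{\KentuckyBij(\rho)}.
\]

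By \cref{thm:comb-iso-fans}, $\KentuckyBij$ is a compatibility-preserving bijection between the rays of $\DKKred(G,F)$ and those of $\gfan(Q_G, R_G)$, so the previous step exhibits $\bar\phi$ as a linear map sending each ray of $\DKKred(G,F)$ to the corresponding ray of $\gfan(Q_G, R_G)$. Since $\gfan(Q_G, R_G)$ already contains the rays $-\ve_v$ for $v \in V_0$ (the $g$-vectors of the shifted projectives $P_v[1]$), $\bar\phi$ is surjective onto $\RR^{V_0}$. The ampleness of $F$ (\cref{def:ample_framing_acyclic}) makes $\DKKred(G,F)$ a complete simplicial fan, and \cref{thm:comb-iso-fans} forces its maximal cones to contain exactly $\#V_0$ rays, so $\dim \F(G)/\spann_\RR(\cE) = \#V_0$; consequently $\bar\phi$ is a linear isomorphism carrying cones of $\DKKred(G,F)$ bijectively to cones of $\gfan(Q_G, R_G)$. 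The main obstacle is the colour-transition/peak-valley dictionary used in the middle paragraph; once that is in place the rest reduces to a short telescoping and a dimension count.
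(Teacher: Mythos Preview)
Your proof is correct and follows essentially the same route as the paper: kill the exceptional routes, identify colour transitions with peaks/valleys to show $\phi({\bf 1}_\rho)=\pmb{g}_{\KentuckyBij(\rho)}$, and then invoke the combinatorial isomorphism of \cref{thm:comb-iso-fans}. The only cosmetic differences are that you phrase the vanishing via a global telescoping rather than a vertex-by-vertex computation, and you add an explicit dimension count to conclude that $\bar\phi$ is an isomorphism where the paper leaves this implicit.
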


\begin{proof}
    Let $\KentuckyBij : \Routes(G)\setminus\cE(G) \to \IndShift(Q_G,R_G)$ be the bijection in \cref{def:KentuckyBij}.
    We will show that
    \begin{enumerate}
    \item $\phi({\bf 1}_\route) = 0$ for all exceptional routes $\route \in \cE(G)$,
    so the map $\phi\big|_{\F(G)} : \F(G) \to \RR^{(Q_G)_0}$
    descends to a linear map from the reduced flow space $\overline{\phi} : \F(G)_{\rm red} \to \RR^{(Q_G)_0}$.

    \item $\phi({\bf 1}_\route) = \pmb{g}_{\KentuckyBij(\route)}$ for all non-exceptional routes $\route \in \Routes(G)\setminus \cE(G)$,
    so the map $\overline{\phi}$ above induces a linear isomorphism between $\DKKred$ and $\gfan(Q)$.
    \end{enumerate}

    Observe that $\phi({\bf 1}_\route)_v = 0$ whenever the route $\route$ does not go through the vertex $v \in \internal(G)$.
    Moreover, the acyclicity of $G$ implies every vertex can be visited at most once by $\route$.

    Let $\route \in \cE(G)$ be an exceptional route.
    Thus, every internal vertex visited by $\route$ is the head and the tail of two edges of $\route$ of the same colour.
    Hence, $\phi({\bf 1}_\route)_v = \pm \tfrac{1}{2} \mp \tfrac{1}{2} = 0$.
    Therefore, $\phi(\spann_\RR(\cE)) = 0$ and $\overline{\phi} : \F(G)_{\rm red} \to \RR^{(Q_G)_0} : [{\bf 1}_\route] \mapsto \phi({\bf 1}_\route)$ is well-defined.

    Now, let $\route \in \Routes(G) \setminus \cE(G)$ be a non-exceptional route,
    which we identify with the corresponding maximal bent string of the blossoming quiver $(Q_G^\blossom, R_G^\blossom)$ via the bijection $\RoutesToStrings$ in \eqref{eq:paths_are_strings}.
    Let $v \in \internal(G)$ be an internal vertex visited by $\route$, and
    $e_1 , e_2 \in E$ be the edges on $\route$ with $\head(e_1) = v = \tail(e_2)$.
    By the definition of $(Q_G^\blossom)_1$ in \eqref{eq:arrows-G-blossom},
    $v$ is a peak (resp. a valley) of $\route$ if and only if
    $\coloring(e_1) = \text{\tcb{blue}}$ (resp. \text{\tcr{red}}) and $\coloring(e_2) = \text{\tcr{red}}$ (resp. \text{\tcb{blue}}).
    Therefore,
    \[
        \phi({\bf 1}_\route)_v = \begin{cases}
            \tfrac{1}{2} + \tfrac{1}{2} = 1 & \text{if $v$ is a peak of $\route$,}\\
            {-}\tfrac{1}{2} {-} \tfrac{1}{2} = {-}1 & \text{if $v$ is a valley of $\route$,}\\
            \pm \tfrac{1}{2} \mp \tfrac{1}{2} = 0 & \text{otherwise.}
        \end{cases}
    \]
    Thus, $\phi({\bf 1}_\route)$ is precisely the $g$-vector of the string $\route$ as defined in \eqref{eq:g-rho},
    and, by part (1) of \cref{thm:g-vect-blossom}, it equals $\pmb{g}_{\KentuckyBij(\route)}$.

    We have concluded that $\overline{\phi}$ sends the rays of $\DKKred$ to the rays of $\gfan(Q)$.
    \cref{thm:comb-iso-fans} implies that this map is bijective and, moreover, it respects the fan structure;
    meaning that a set of rays $\DKKred$ spans a cone of the fan if and only if their images in $\RR^{V_0}$ span a cone of $\gfan(Q)$.
\end{proof}

\begin{example}
    \label{ex:image_DKK_XX}
    Consider the amply framed DAG $G = \colorfulXXgraph$ whose routes appear in \cref{subfig:bijection_routes_to_modules2}.
    For example, along the route $R_1$, the vertex $u$ has an incoming arrow coloured \tcb{blue} and an outgoing arrow coloured $\text{\tcr{red}}$; thus $\phi({\bf 1}_{R_1})_u = \tfrac{1}{2} + \tfrac{1}{2} = 1$. Since this route does not visit $v$, we have $\phi({\bf 1}_{R_1}) = (1,0)$.
    The image under $\phi$ of the indicator vectors of the remaining non-exceptional routes is illustrated below.
    \[\begin{tikzpicture}[>= angle 60,->, scale=1.5]
	   \draw[-,step=1.0,dash pattern={on 10pt off 2pt on 5pt off 2pt}, black, opacity=0.1,line width=.4mm] (-1.5,-1.5) grid (1.5,1.5);
        \draw[-, black,line width=.2mm] (-1.5,0) edge (1.5,0);
        \draw[-, black,line width=.2mm] (0,-1.5) edge (0,1.5);
        \draw[line width=.6mm] (0,0) -- (0,1);
        \draw[line width=.6mm] (0,0) -- (1,0);
        \draw[line width=.6mm] (0,0) -- (0,-1);
        \draw[line width=.6mm] (0,0) -- (-1,0);
        \draw[line width=.6mm] (0,0) -- (-1,1);
        \node at (-0.8,-0.3){$\phi({\bf 1}_{R_4})$};
        \node at (0.45,-1){$\phi({\bf 1}_{R_5})$};
        \node at ( 0.8,-0.3){$\phi({\bf 1}_{R_1})$};
        \node at (0.45,1){$\phi({\bf 1}_{R_2})$};
        \node at (-1.2,1.2){$\phi({\bf 1}_{R_3})$};
        \node at (1.3,0.1){$u$};
        \node at (0.1,1.3){$v$};
    \end{tikzpicture}
    \]
\end{example}

\section{Harder--Narasimhan polytopes for \except{toc}{\linebreak} locally gentle quivers and $g$-vector fans}
\label{s:HN}

In this section, we review the definition of the $\HN$ polytope of a representation $M$ of a gentle quiver $(Q,R)$, and its relationship with $\gfan(Q,R)$.
Notably, in the first part of this section, we extend these results to the case where $(Q,R)$ is merely a locally gentle.

In the second part, we exhibit an intriguing connection with polytopes arising in the study of lattice quotients of the weak order.
Specifically, we show that \emph{shard polytopes} are precisely the $\HN$ polytopes of the quiver in \eqref{eq:quiver-An-shards} below.

\subsection{Definitions and extension to locally gentle algebras}

Fix a quiver $Q$ with vertex set $Q_0 = \{1,\ldots,n\}$.
The Harder--Narasimhan polytope (HN polytope) of a $Q$-representation $M$, first defined by Baumann--Kamnitzer--Tingley in \cite{BKT}, is a polytope that captures information on the dimension of $M$ and of its subrepresentations.
Explicitly, the \textnew{dimension vector} of a $Q$-representation $M$ is
\[ \mathnew{\vdim M} := (\dim M_1,\ldots, \dim M_n) \in \ZZ^n. \]

\begin{definition}[\cite{BKT}]
    The \textnew{Harder--Narasimhan polytope} $\HN(M)$ of the $Q$-representation $M$, is
    \[ \mathnew{\HN(M)} := \conv \set{ \vdim N }{ N \text{ a subrepresentation of } M }. \]
\end{definition}

\begin{example}
    \label{ex:HN-polytope}
    Let $Q$ be the quiver
    \[\begin{tikzpicture}[-stealth]
    \node (A) at (0,0) {$\bullet$};
    \node (B) at (-1.0,0) {$\bullet$};
    \node (C) at ( 1.0,0) {$\bullet$};
    \draw (A) -- (B);
    \draw (A) -- (C);
    \end{tikzpicture},\]
    and let $M$ be the following representation of $Q$:
    \[ \begin{tikzpicture}[-stealth]\node (A) at (0,0) {$\KK$};\node (B) at (-1.0,0) {$\KK$};\node (C) at ( 1.0,0) {$\KK$};\draw (A) -- (B);\draw (A) -- (C);\end{tikzpicture}. \]
    The following table shows all the subrepresentations of $M$ and their dimension vectors.
    \begin{center}
    \begin{tabular}{ c c c } \toprule
    Subrepresentation $N$ & & $\vdim N$ \smallskip \\ \midrule
    \begin{tikzpicture}[-stealth]\node (A) at (0,0) {$0$};\node (B) at (-1.0,0) {$0$};\node (C) at ( 1.0,0) {$0$};\draw (A) -- (B);\draw (A) -- (C);\end{tikzpicture} & & $(0,0,0)$ \\
    \begin{tikzpicture}[-stealth]\node (A) at (0,0) {$0$};\node (B) at (-1.0,0) {$\KK$};\node (C) at ( 1.0,0) {$0$};\draw (A) -- (B);\draw (A) -- (C);\end{tikzpicture} & & $(1,0,0)$ \\
    \begin{tikzpicture}[-stealth]\node (A) at (0,0) {$0$};\node (B) at (-1.0,0) {$0$};\node (C) at ( 1.0,0) {$\KK$};\draw (A) -- (B);\draw (A) -- (C);\end{tikzpicture} & & $(0,0,1)$ \\
    \begin{tikzpicture}[-stealth]\node (A) at (0,0) {$0$};\node (B) at (-1.0,0) {$\KK$};\node (C) at ( 1.0,0) {$\KK$};\draw (A) -- (B);\draw (A) -- (C);\end{tikzpicture} & & $(1,0,1)$ \\
    \begin{tikzpicture}[-stealth]\node (A) at (0,0) {$\KK$};\node (B) at (-1.0,0) {$\KK$};\node (C) at ( 1.0,0) {$\KK$};\draw (A) -- (B);\draw (A) -- (C);\end{tikzpicture} & & $(1,1,1)$ \\ \bottomrule
    \end{tabular}
    \end{center}
    The polytope $\HN(M)$, which is the convex hull of these five vectors, is depicted in \cref{fig:ex_HN_polytope}.
    \begin{figure}[!ht]
    \centering
        \begin{tikzpicture}%
	[x={(-0.768853cm, -0.260038cm)},
	y={(0.639425cm, -0.312731cm)},
	z={(0.000041cm, 0.913553cm)},
	scale=2.5,
	back/.style={loosely dotted, thin},
	edge/.style={color=blue!95!black, thick},
	facet/.style={fill=blue!95!black,fill opacity=.4},
	vertex/.style={inner sep=1pt,circle,draw=green!25!black,fill=green!75!black,thick}]
\coordinate (0, 0, 0) at (0, 0, 0);
\coordinate (0, 0, 1) at (0, 0, 1);
\coordinate (1, 0, 0) at (1, 0, 0);
\coordinate (1, 0, 1) at (1, 0, 1);
\coordinate (1, 1, 1) at (1, 1, 1);
\fill[facet] (1, 1, 1) -- (0, 0, 0) -- (0, 0, 1) -- cycle {};
\fill[facet] (1, 1, 1) -- (0, 0, 0) -- (1, 0, 0) -- cycle {};
\fill[facet] (1, 1, 1) -- (1, 0, 0) -- (1, 0, 1) -- cycle {};
\fill[facet] (1, 1, 1) -- (0, 0, 1) -- (1, 0, 1) -- cycle {};
\draw[edge] (0, 0, 0) -- (0, 0, 1);
\draw[edge] (0, 0, 0) -- (1, 0, 0);
\draw[edge] (0, 0, 0) -- (1, 1, 1);
\draw[edge] (0, 0, 1) -- (1, 0, 1);
\draw[edge] (0, 0, 1) -- (1, 1, 1);
\draw[edge] (1, 0, 0) -- (1, 0, 1);
\draw[edge] (1, 0, 0) -- (1, 1, 1);
\draw[edge] (1, 0, 1) -- (1, 1, 1);
\node[vertex] at (0, 0, 0) {}; \node[right] at (0, 0, 0) {$(0, 0, 0)$};
\node[vertex] at (0, 0, 1) {}; \node[right] at (0, 0, 1) {$(0, 0, 1)$};
\node[vertex] at (1, 0, 0) {}; \node[left]  at (1, 0, 0) {$(1, 0, 0)$};
\node[vertex] at (1, 0, 1) {}; \node[left]  at (1, 0, 1) {$(1, 0, 1)$};
\node[vertex] at (1, 1, 1) {}; \node[right] at (0, 0, .4) {$(1, 1, 1)$};
\end{tikzpicture}
        \caption[]{The Harder--Narasimhan (HN) polytope $\HN(M)$ of the quiver representation $M = {\begin{tikzpicture}[-stealth, baseline=-0.7ex] \node (A) at (0,0) {$\KK$}; \node (B) at (-1.0,0) {$\KK$}; \node (C) at ( 1.0,0) {$\KK$};\draw (A) -- (B);\draw (A) -- (C);
        \end{tikzpicture}}$}
        \label{fig:ex_HN_polytope}
    \end{figure}
\end{example}

Importantly, by work of Fei \cite{JF2} and Aoki--Higashitani--Iyama--Kase--Mizuno \cite{aoki2024fanspolytopestiltingtheory},
the HN polytopes of certain representations of $(Q,R)$ carry enough information
to recover the $g$-vector fan of the corresponding algebra.

\begin{theorem}[\cite{JF2,aoki2024fanspolytopestiltingtheory}]
\label{thm:JFei}
Let $\Lambda$ be a finite-dimensional algebra having finitely many brick representations.
Then, the $g$-vector fan of $\Lambda$ equals the normal fan of the Minkowski sum of all HN polytopes of its brick representations.
That is,
\[
    \gfan(\Lambda) = \Normal \Biggl( \sum_{\substack{\text{brick modules}\\ M \text{ of } \Lambda}} \HN(M) \Biggr).
\]
\end{theorem}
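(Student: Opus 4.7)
The plan is to prove the statement through the lens of \emph{wall-and-chamber structures} and \emph{semistability}, interpreting HN polytopes via King-style stability conditions. For a stability parameter $\theta \in \RR^{Q_0}$ and a representation $N$, write $\theta(N) := \theta \cdot \vdim N$. Then the face of $\HN(M)$ maximizing $\theta$ is the convex hull of the dimension vectors of those subrepresentations $N \subseteq M$ that maximize $\theta(N)$. In particular, the edge from $0$ to $\vdim M$ is a face exactly for those $\theta$ such that $\theta(M) = 0$ and $\theta(N) \leq 0$ for every subrepresentation $N$ of $M$; equivalently, the $\theta$ for which $M$ is $\theta$-semistable. When $M$ is a brick, this is the key relevant face.

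The first step is to compute the normal fan $\Normal(\HN(M))$ explicitly: its codimension-one cones are of two types, (i) the \emph{semistability cone} $D(M) := \{\theta : M \text{ is } \theta\text{-semistable}\}$, which is the normal cone to the edge $[0,\vdim M]$, and (ii) various other walls coming from pairs of subrepresentations differing by more complicated data. The second step is to invoke a theorem from wall-and-chamber theory (Bridgeland, Br\"ustle--Smith--Treffinger) that the semistability cone $D(M)$ is full-codimension-one exactly when $M$ is a brick, and otherwise $D(M)$ is lower-dimensional and contributes nothing to the top wall structure of the Minkowski sum.

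The third step is the structural heart: when $\Lambda$ has finitely many bricks, the collection of semistability walls $\{D(M) : M \text{ brick}\}$ cuts $\RR^{Q_0}$ into a complete fan whose maximal cones are in bijection with support $\tau$-tilting modules and whose rays are the $g$-vectors of indecomposable $\tau$-rigid modules. This is the $g$-vector fan, as established in $\tau$-tilting theory (Adachi--Iyama--Reiten, Demonet--Iyama--Jasso). So it remains to show that the normal fan of $\sum_{M \text{ brick}} \HN(M)$ is the common refinement of all the individual normal fans, hence has codimension-one walls exactly the union of the $D(M)$, and therefore agrees with $\gfan(\Lambda)$. Completeness is automatic because the Minkowski sum is a bounded polytope.

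The main obstacle is the third step: one must rule out the possibility that non-brick faces of some $\HN(M)$ contribute \emph{extra} walls that do not occur in $\gfan(\Lambda)$. The way around this is to show that any such additional wall is subdivided by genuine brick walls, so it already lies in the union of the $D(M)$ and does not refine the common refinement further. Concretely, one uses that on each maximal cone $C$ of $\gfan(\Lambda)$ there is a corresponding torsion class $\mathcal{T}_C$ such that every brick in $\mathcal{T}_C$ has $\theta \geq 0$ on $C$ and every brick not in $\mathcal{T}_C$ has $\theta \leq 0$; this forces the supporting hyperplane of $\sum_M \HN(M)$ at any $\theta$ in the interior of $C$ to be constant along $C$, so $C$ is contained in a single maximal cone of the Minkowski normal fan. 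Matching dimensions via the finiteness of bricks then yields equality of fans.
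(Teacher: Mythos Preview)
The paper does not prove this theorem; it is quoted as a black-box result from Fei \cite{JF2} and Aoki--Higashitani--Iyama--Kase--Mizuno \cite{aoki2024fanspolytopestiltingtheory}, with no argument supplied. So there is nothing in the paper to compare your proposal against.

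That said, your sketch is broadly in the spirit of how those references argue, via stability conditions and wall-and-chamber structures. A few points deserve tightening. First, you conflate two different sources of ``extra walls'': since the sum ranges only over bricks, the issue is not whether non-bricks contribute, but whether the HN polytope of a single brick $M$ has edges \emph{other than} $[0,\vdim M]$ whose normal cones could refine the $g$-vector fan. It does have such edges, and you must show their normal cones are already unions of cones in $\gfan(\Lambda)$. Second, your torsion-class argument is the right idea but is stated loosely: what one actually uses is that for $\theta$ in the interior of a maximal cone $C$ of $\gfan(\Lambda)$, the $\theta$-maximizing vertex of $\HN(M)$ is $\vdim(t_C M)$, where $t_C$ is the torsion radical of the torsion class attached to $C$; this is constant on $C$, so $C$ lies in a single maximal normal cone of the Minkowski sum. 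Conversely, crossing any wall of $\gfan(\Lambda)$ changes the torsion class and hence changes the maximizing vertex for the associated brick, so no two adjacent cones merge. With these clarifications your outline becomes a correct proof.
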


Recall from \cref{ss:gentle-quiver} that a gentle quiver $(Q,R)$ determines a finite-dimensional algebra, and that the gentle algebras with finitely many bricks are characterized in \cref{thm:finite-bricks}.

We will be interested in relaxing the assumption that $(Q,R)$ is finite-dimensional and instead consider locally gentle quivers.

\begin{proposition}
\label{prop:locallygentlegvectorfan}
Let $(Q,R)$ be a locally gentle quiver such that every minimal non-oriented cycle contains a relation, and let $\ool$ be the quotient of the corresponding complete gentle algebra by the ideal generated by the oriented cycles, as defined in \cref{ss:locally}.
Then,
\[
    \gfan(Q,R) = \Normal \Biggl( \sum_{\substack{\text{brick modules} \\ M \text{ of } \ool}} \HN(M) \Biggr).
\]
\end{proposition}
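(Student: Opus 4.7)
The plan is to reduce the statement to an application of \cref{thm:JFei} for the finite-dimensional algebra $\ool$, via the identification of $g$-vector fans from \cref{thm:connection_g-vector_NKred}.

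First, I would invoke \cref{thm:connection_g-vector_NKred} to replace $\gfan(Q,R)$ by $\gfan(\ool)$. This is the crucial step that converts the question about a locally gentle quiver (whose associated complete gentle algebra is generally infinite-dimensional) to one about a \emph{finite-dimensional} algebra $\ool$, for which the standard machinery of $\tau$-tilting and Harder--Narasimhan polytopes applies directly.

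Next, I would verify that $\ool$ has finitely many brick representations, so that \cref{thm:JFei} applies. By \cref{lem:overline-bricks}, this is equivalent to asking that every non-oriented cycle in $Q$ contains a relation. The hypothesis of the proposition asserts this only for \emph{minimal} non-oriented cycles, so one needs the standard combinatorial observation that any non-oriented cycle in the underlying undirected graph of $Q$ contains a minimal non-oriented cycle as a sub-closed-walk. Any pair of consecutive arrows realizing a relation in that minimal subcycle also appears as consecutive arrows in the ambient cycle, so the ambient cycle contains a relation as well. This reduces the proposition's hypothesis to that of \cref{lem:overline-bricks}.

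With these two reductions in hand, \cref{thm:JFei} applied to $\ool$ gives
\[
    \gfan(\ool) = \Normal\Biggl( \sum_{\substack{\text{brick modules}\\ M \text{ of }\ool}} \HN(M) \Biggr),
\]
and combining with the first step yields the claimed equality. The main (mild) obstacle is the combinatorial step that upgrades ``minimal non-oriented cycle contains a relation'' to ``every non-oriented cycle contains a relation''; everything else is a direct quotation of results already established in the paper.
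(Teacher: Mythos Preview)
Your proposal is correct and follows essentially the same approach as the paper: reduce to $\ool$ via \cref{thm:connection_g-vector_NKred} (equivalently the results of \cref{ss:locally}), check finiteness of bricks via \cref{lem:overline-bricks}, and apply \cref{thm:JFei}. The only difference is that you are more explicit about bridging the wording gap between ``every \emph{minimal} non-oriented cycle contains a relation'' (the hypothesis) and ``every non-oriented cycle contains a relation'' (the condition in \cref{lem:overline-bricks}); the paper's proof glosses over this point.
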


\begin{proof}
By \cref{lem:overline-bricks}, the bricks for $\ool$ correspond to non-self-kissing strings in $(Q,R)$ which never go completely around an oriented cycle. By the hypothesis of the theorem, there are only finitely many of these. Thus, \cref{thm:JFei} applies, and the $g$-vector fan of $\ool$ is the outer normal fan of the sum of the bricks of $\ool$. As explained in \cref{ss:locally}, by results of \cite{aokiyurikusa}, the $g$-vector fan of $\ool$ is identified with the $g$-vector fan of $\Lambda$, which is $\gfan(Q,R)$.
\end{proof}

Let $G = (G,\coloring)$ be an amply framed DAG,
and let $(Q^\blossom_G,R^\blossom_G)$ be the associated blossoming quiver, as in \cref{def:acyclic-blossoming}.
Then, $(Q^\blossom_G,R^\blossom_G)$ satisfies the equivalent conditions of \cref{thm:finite-bricks}.
Indeed, consider a cycle, oriented or not, in $Q^\blossom_G$, which never goes through a relation.
Going around the cycle in $Q^\blossom_G$, the direction of the arrows changes exactly when the colour of the arrows changes.
Thus, since $Q_G^\blossom$ is obtained from $G$ by reversing the blue arrows, such a cycle in $Q^\blossom_G$ would correspond to a directed cycle in $G$, but $G$ is acyclic by hypothesis. Thus, every cycle of $Q_G^\blossom$ has a relation, so \cref{thm:finite-bricks} applies, which shows that \cref{thm:JFei} applies to $(Q^\blossom_G,R^\blossom_G)$.

\subsubsection{DKK as a normal fan}
    Given a (oriented) walk $\route$ in $G$, let $P_\route$ be the partial order on the vertices visited by $\route$ defined by the following cover relations:
    \[
        \set{ \head(e) \lessdot \tail(e) }{ e \in \route, \coloring(e) = \text{\tcr{red}}} \,\cup\,
        \set{ \tail(e) \lessdot \head(e) }{ e \in \route, \coloring(e) = \text{\tcb{blue}}}.
    \]
    For a walk $\route$ that only visits internal vertices of $G$, let $\Delta_\route \subseteq \mathbb{R}^{\internal(G)}$ be the convex hull of the indicator vector of all order ideals of $P_\route$.
    This is the \textnew{order polytope} of the poset $P_\route$ \cite{stanley-order-polytopes}.
    
    We identify $\Fred(G,\coloring)$, the ambient space of $\DKKred(G,\coloring)$, with $(\mathbb{R}^{\internal(G)})^*$ using the isomorphism $\overline{\phi} : \Fred(G,\coloring) \to \mathbb{R}^{\internal(G)}$ in the proof of \cref{prop:linear-iso}.
    Explicitly, for $f \in \Fred(G,\coloring)$ and $\vx \in \mathbb{R}^{\internal(G)}$, let $f(\vx) = \langle \overline{\phi}(f) , \vx \rangle$, where $\langle \cdot , \cdot \rangle$ denotes the usual inner product of $\mathbb{R}^{\internal(G)}$.

\begin{corollary}
    \label{cor:DKKred-is-polytopal}
    Let $G = (G,\coloring)$ be an amply framed DAG.
    Then, the reduced fan $\DKKred(G,\coloring)$ is polytopal.
    Explicitly,
    \[
        \DKKred(G,\coloring) = \Normal\Biggl(\sum_\route \Delta_\route\Biggr),
    \]
    where the sum runs over all walks $\route$ of $G$ that only visit internal vertices.
\end{corollary}

\begin{proof}
    The result follows by combining \cref{thm:JFei} with \cref{prop:linear-iso}.
    The order polytope $\Delta_\route$ is precisely the HN polytope of the module $M(\route')$, where $\route'$ is the string of $(Q_G,R_G)$ using the same nodes and (reversed) arrows as $\route$.
\end{proof}

\begin{example}
    The graph $\colorfulXXgraph$ has exactly three walks only visiting internal vertices:
    $u$, $v$, and $\tcb{e}$, where $\tcb{e}$ is the (blue) edge $(u,v)$.
    The corresponding partial orders $P_\route$, order polytopes $\Delta_\route$, and their Minkowski sum $\sum \Delta_\route$ are shown below.
    \[
        \begin{tikzpicture}
            \node[circle,fill,inner sep=1.5pt] (u1) at (0,1.5) {};
            \node[left] at (0,1.5) {$u$};
            \draw[very thick] (-.5,0) --++ (1,0);
            \node[left] at (-.5,0) {\scriptsize ${\bf 1}_{\varnothing}$};
            \node[right] at ( .5,0) {\scriptsize ${\bf 1}_{\{u\}}$};

            \node[circle,fill,inner sep=1.5pt] (v1) at (2,1.9) {};
            \node[left] at (2,1.9) {$v$};
            \draw[very thick] (2,0) --++ (0,1);
            \node[right] at (2,0) {\scriptsize ${\bf 1}_{\varnothing}$};
            \node[right] at (2,1) {\scriptsize ${\bf 1}_{\{v\}}$};

            \node[circle,fill,inner sep=1.5pt] (u2) at (4.2,1.9) {};
            \node[left] at (4.2,1.9) {$v$};
            \node[circle,fill,inner sep=1.5pt] (v2) at (3.8,1.5) {};
            \node[left] at (3.8,1.5) {$u$};
            \draw (u2) -- (v2);
            \draw[very thick,fill=gray!30!white] (3.5,0) --++ (1,0) --++ (0,1) -- cycle;
            \node[left] at (3.5,0) {\scriptsize ${\bf 1}_{\varnothing}$};
            \node[right] at (4.5,0) {\scriptsize ${\bf 1}_{\{u\}}$};
            \node[right] at (4.5,1) {\scriptsize ${\bf 1}_{\{u,v\}}$};

            \draw[very thick,fill=gray!30!white] (8.5,0) --++ (2,0) --++ (0,2) --++ (-1,0) --++ (-1,-1) -- cycle;
        \end{tikzpicture}
    \]
    Compare with the image of $\DKKred(\colorfulXXgraph)$ in \cref{ex:image_DKK_XX}.
\end{example}

\subsection{Connections with shard polytopes}

In this section, we show that the \emph{shard polytopes} of type $A$, introduced by Padrol--Pilaud--Ritter \cite{ppr20shard} in their study of polytopal realizations of certain lattice quotients of the weak order, are in fact Harder--Narasimhan polytopes.
Specifically, we prove that they are the NH polytopes of the indecomposable representations of the bound quiver
\begin{equation}
    \label{eq:quiver-An-shards}
    \begin{gathered}
    \begin{tikzpicture}
        [->,>={Latex[length=6pt]},vertex/.style={circle,draw=blue!50,fill=blue!20,inner sep=1pt, minimum size=3mm}]
        \node (a) at (0,0) [vertex] {1};
        \node (b) at (2,0) [vertex] {2};
        \node at (3.2,0) {$\cdots$};
        \node (c) at (4,0) {};
        \node (d) at (6,0) [vertex] {$n$};
        \path [dotted,-] (b)--(c);
        \path (a) edge [bend left] node [above] {$\alpha_2$} (b);
        \path (b) edge [bend left] node [below]{$\beta_2$} (a);
        \draw[densely dotted,-] (.5,0) ++(225:.2) arc (225:135:.2);
        \draw[densely dotted,-] (1.5,0) ++(45:.2) arc (45:-45:.2);
        \path (c) edge [bend left] node [above] {$\alpha_{n}$} (d);
        \path (d) edge [bend left] node [below]{$\beta_{n}$} (c);
        \draw[densely dotted,-] (4.5,0) ++(225:.2) arc (225:135:.2);
        \draw[densely dotted,-] (5.5,0) ++(45:.2) arc (45:-45:.2);
    \end{tikzpicture}
    \end{gathered}
\end{equation}

\subsubsection{Arcs and shard polytopes}

The choice of a \emph{base region} of a real hyperplane arrangement~$\mathcal{A}$
 induces a partial order on its set of regions, called the \emph{poset of regions};
see \cite{edelman84regions,mandel82thesis}.
When $\mathcal{A}$ is the braid arrangement in $\RR^{n+1}$ (of type $A_n$), this order is, modulo isomorphism, independent of the chosen base region, and it agrees with the usual weak order on the symmetric group.
In his study of lattice properties of (quotients of) these posets, Reading \cite{ReadingShards} describes a decomposition of the hyperplanes of $\mathcal{A}$ into pieces called \emph{shards}, which are in bijection with the join-irreducible elements of the poset of regions.
In the case of the braid arrangement, there is a concrete combinatorial model for shards given by \emph{arcs}, see \cite{ReadingArcs}.

\begin{definition}[\cite{ppr20shard,ReadingArcs}]
    A (type $A_n$) \textnew{arc} is a tuple $\alpha = (a,b,A,B)$ such that $a, b \in \ZZ$ with $1\leq a < b \leq n+1$, and $A,B$ partition the set $[a+1, b-1] = \{a+1 , a+2 , \dots, b-1 \}$.
\end{definition}

The arc $\alpha = (a,b,A,B)$ can be represented by a curve connecting $a$ and $b$, going over the vertices in $A$ and under the vertices in $B$. This is shown in \cref{fig:arcs} below.

\begin{figure}[ht]
    \centering
    \begin{tikzpicture}[scale=.8]
        \node[circle, draw, inner sep=0pt, minimum size=6pt,fill] at (0,0) {};
        \node[circle, draw, inner sep=0pt, minimum size=6pt,fill] at (1,0) {};
        \node[circle, draw, inner sep=0pt, minimum size=6pt,fill] at (2,0) {};
        \node[circle, draw, inner sep=0pt, minimum size=6pt,fill] at (3,0) {};
        \node[circle, draw, inner sep=0pt, minimum size=6pt,fill] at (4,0) {};
        \draw[very thick] (0,0) arc(180:360:.75);
        \draw[very thick] (1.5,0) arc(180:0:.75);
    \end{tikzpicture}
    \hspace{.2\linewidth}
    \begin{tikzpicture}[scale=.8]
        \node[circle, draw, inner sep=0pt, minimum size=6pt,fill] at (0,0) {};
        \node[circle, draw, inner sep=0pt, minimum size=6pt,fill] at (1,0) {};
        \node[circle, draw, inner sep=0pt, minimum size=6pt,fill] at (2,0) {};
        \node[circle, draw, inner sep=0pt, minimum size=6pt,fill] at (3,0) {};
        \node[circle, draw, inner sep=0pt, minimum size=6pt,fill] at (4,0) {};
        \draw[very thick] (0,0) arc(180:360:.75);
        \draw[very thick] (1.5,0) arc(180:0:1.25);
    \end{tikzpicture}
    \caption{Two arcs for $n = 4$.
    The one on the left is $(1,4,\{3\},\{2\})$ and the one on the right is $(1,5,\{3,4\},\{2\})$.}
    \label{fig:arcs}
\end{figure}
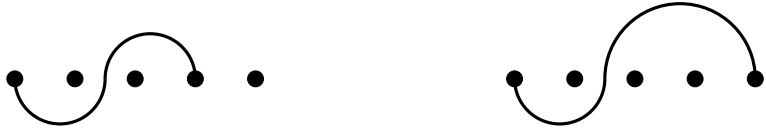

Padrol--Pilaud--Ritter associate to each arc $\alpha$ a polytope $\SP(\alpha)$ that we describe below.

\begin{definition}[\cite{ppr20shard}]
    Fix an arc $\alpha=(a,b,A,B)$. An \textnew{$\alpha$-alternating matching} is a possibly empty sequence
    \[ m =(a_1,b_1,a_2,b_2,\ldots,a_k,b_k) \] satisfying $a\leq a_1 < b_1 < a_2 < \cdots < a_k < b_k \leq b$, with $a_i \in A \cup \{a\}$ and $b_i \in B \cup \{b\}$.
    The \textnew{characteristic vector} of an $\alpha$-matching $m$ is
    \[
        \chi_m := \sum_{i=1}^k (\ve_{a_i}-\ve_{b_i}) \in \RR^{n+1}.
    \]
\end{definition}

\begin{definition}[\cite{ppr20shard}]
    The \textnew{shard polytope} $\SP(\alpha)$ associated to an arc $\alpha$ is the convex hull of the characteristic vectors of all $\alpha$-alternating matchings. That is,
    \[
        \SP(\alpha) := \conv \set{\chi_m }{ m \text{ an } \alpha\text{-alternating matching} }.
    \]
\end{definition}

\begin{example}
    \label{ex:shard-polytope}
    Consider the arc $\alpha = (1 , 4 , \{3\} , \{2\})$ (shown on the left in \cref{fig:arcs} for $n = 4$).
    It has five alternating matchings, including the empty matching, which we list below together with their characteristic vectors.
    \begin{center}\begin{tabular}{ c c c } \toprule
        $\alpha$-alternating matching $m$ & & $\chi_m$ \smallskip \\ \midrule
        $()       $ & & $ 0 $ \\
        $(1,2)    $ & & $ \ve_1 - \ve_2 $ \\
        $(1,4)    $ & & $ \ve_1 - \ve_4 $ \\
        $(3,4)    $ & & $ \ve_3 - \ve_4 $ \\
        $(1,2,3,4)$ & & $ \ve_1 - \ve_2 + \ve_3 - \ve_4 $ \\ \bottomrule
    \end{tabular}\end{center}
    Observe that the sum of characteristic vectors of $()$ and $(1,2,3,4)$ is equal to the sum of characteristic vectors of $(1,2)$ and $(3,4)$, hence the shard polytope $\SP(\alpha)$ is a square pyramid with cone point $ \ve_1 - \ve_4 $.
\end{example}

\subsubsection{The bijection}

Let $(Q,R)$ be the gentle quiver in \eqref{eq:quiver-An-shards}.
Explicitly, $Q$ has vertex set $[n]$, arrows $\alpha_i = (i-1,i)$, $\beta_i = (i,i-1)$ for $i \in [2,n]$,
and the relations in $R$ are $\alpha_i \beta_i$ and $\beta_i \alpha_i$ for all $i$.
The next result shows that, up to a unimodular change of coordinates, HN polytopes coming from the gentle quiver $(Q,R)$ are exactly type A shard polytopes.
An equivalent statement in terms of {\em order polytopes} and type A shard polytopes appeared in \cite[Prop. 5.9]{FMP} (see also \cref{rem:order vs HN vs shards}).

\begin{theorem} \label{thm: HN are shards}
    Up to a global unimodular change of coordinates, the HN polytopes of the indecomposable modules of the gentle quiver $(Q,R)$  in \eqref{eq:quiver-An-shards} are exactly the shard polytopes of type $A_n$.
\end{theorem}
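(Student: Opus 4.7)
The plan is to exhibit an explicit bijection between the indecomposable modules of $(Q,R)$ and type-$A_n$ arcs so that the corresponding HN and shard polytopes coincide under the single unimodular lattice embedding
\[
    \phi : \ZZ^n \longrightarrow \ZZ^{n+1}, \qquad \ve_i \longmapsto \ve_i - \ve_{i+1},
\]
whose image is the type-$A_n$ root lattice in which the shard polytopes naturally live.

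First I would classify the indecomposables of $(Q,R)$. Because the relations $\alpha_i\beta_i, \beta_i\alpha_i \in R$ prevent a string from reversing direction across any double edge and $(Q,R)$ admits no cyclic strings (hence no band modules), every string is monotone in its vertex sequence. Up to reversal, the non-lazy strings from $a$ to $b$ (with $1 \le a < b \le n$) have the form $\rho = \gamma_1 \cdots \gamma_{b-a}$ where each $\gamma_i \in \{\alpha_{a+i}, \beta_{a+i}^{-1}\}$; together with the $n$ lazy strings $e_v$ (giving the simples $S_v$), these exhaust the indecomposables. To such $\rho$ I associate the arc $(a, b+1, A_\rho, B_\rho)$ with
\[
    A_\rho = \{a+i : \gamma_i = \alpha_{a+i}\}, \qquad B_\rho = \{a+i : \gamma_i = \beta_{a+i}^{-1}\},
\]
and to the lazy string at $v$ the arc $(v, v+1, \emptyset, \emptyset)$. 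Since $A_\rho$ and $B_\rho$ partition the interior $[a+1, b]$ of the arc, this rule is a bijection onto the set of type-$A_n$ arcs.

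Next I would describe the subrepresentations of $M(\rho)$. Each $M(\rho)_v$ is at most one-dimensional, so a subrepresentation $N$ is determined by the set $T \subseteq [a,b]$ of vertices at which $N$ is nonzero. Reading off the nonzero structure maps from the definition of the standard string representation, the compatibility constraints become: if $\gamma_i = \alpha_{a+i}$ then $a+i-1 \in T$ forces $a+i \in T$, while if $\gamma_i = \beta_{a+i}^{-1}$ then $a+i \in T$ forces $a+i-1 \in T$. Writing $T = \bigsqcup_j [l_j, r_j]$ as its decomposition into maximal intervals, these translate to the condition that $l_j \in A_\rho \cup \{a\}$ and $r_j + 1 \in B_\rho \cup \{b+1\}$ for every $j$.

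Finally I would apply $\phi$ to $\vdim N = \sum_{t \in T} \ve_t$. The telescoping identity
\[
    \phi(\vdim N) \;=\; \sum_{t \in T}(\ve_t - \ve_{t+1}) \;=\; \sum_{j}(\ve_{l_j} - \ve_{r_j+1})
\]
identifies $\phi(\vdim N)$ with the characteristic vector of the matching $m = (l_1, r_1+1, l_2, r_2+1, \dots)$, and the interval conditions from the previous step say precisely that $m$ is a valid alternating matching for the arc $(a, b+1, A_\rho, B_\rho)$; conversely, every such alternating matching arises uniquely this way. Passing to convex hulls yields $\phi(\HN(M(\rho))) = \SP(a, b+1, A_\rho, B_\rho)$, and the simple case $\phi(\HN(S_v)) = \SP(v, v+1, \emptyset, \emptyset)$ is immediate. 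The most delicate step will be the maximal-interval characterization of subrepresentations of $M(\rho)$; once that is set up, everything else is formal bookkeeping driven by the bijection and by the map $\phi$.
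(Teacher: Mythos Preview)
Your proposal is correct and follows essentially the same approach as the paper's proof: the same unimodular embedding $\ve_i \mapsto \ve_i - \ve_{i+1}$, the same bijection sending the string with endpoints $a,b$ and arrow choices $(A_\rho,B_\rho)$ to the arc $(a,b+1,A_\rho,B_\rho)$, the same characterization of subrepresentations of $M(\rho)$ via the maximal-interval conditions on their supports, and the same telescoping identification of $\phi(\vdim N)$ with the characteristic vector of the corresponding alternating matching. The only cosmetic difference is that you treat the simple modules (lazy strings) as a separate base case, whereas the paper absorbs them into the general argument by allowing $a=b$.
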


\begin{proof}
    A string $\rho$ of $(Q,R)$ is determined by its endpoints and the collection of arrows $\alpha$ and $\beta$ used to go from one endpoint to the other.
    Thus, a string $\rho$ is uniquely encoded by the quadruplet $(c,d,C,D)$ where $c \leq d \in [n]$ are the endpoints of $\rho$, so $\rho = \gamma_{c+1} \dots \gamma_{d}$ for some choice of $\gamma_i \in \{ \alpha_i , \beta^{-1}_i \}$, and the sets $C := \set{i \in [c+1,d]}{\gamma_i = \alpha_i}$ and $D := \set{i \in [c+1,d]}{\gamma_i = \beta^{-1}_i}$ partition $[c+1,d]$.
    The dimension vector of $M_\rho$ consists of a string of $1$s on positions between $c$ and $d$, and $0$s everywhere else.
    Thus, the map $\Theta$ sending a string $\rho$ encoded by the quadruplet $(c,d,C,D)$
    to the arc $\Theta(\rho) := (c,d+1,C,D)$ is a bijection.

    Now, let $\iota : \RR^n \to \RR^{n+1}$ be the embedding given by $\iota(e_i) = e_i - e_{i+1}$. Note that this embedding is unimodular.
    We claim that, for all strings $\rho$,
    \[
        \iota (\HN(M_\rho)) = \SP(\Theta(\rho)).
    \]
    To prove this, we show that there is a correspondence between submodules of $M_\rho$ and $\Theta(\rho)$-matchings, and that moreover $\iota$ sends the dimension vector of a submodule to the characteristic vector of the corresponding matching.

    Let $N$ be a submodule of $M_\rho$ and let $[a_1,b_1],\dots,[a_k,b_k] \subseteq [n]$ be the maximal intervals of indices $j \in [n]$ for which $\dim N_j = 1$, in particular $a_{i+1} \geq b_i + 2$ for all $i$. For example, if
    \[
        \vdim N = (0,1,1,1,0,0,1,0,1,1),
    \]
    then
    \[
        a_1 = 2,\, b_1 = 4,
        \quad
        a_2 = b_2 = 7,
        \quad
        a_3 = 9,\, b_3 = 10.
    \]
    Then, for all $i = 1,\dots,k$, we have
    \begin{itemize}
        \item either $a_i = c$ (and $i = 1$) or the arrow in $\rho$ to the left of $a_i$ points towards $a_i$. In the latter case, the arrow is $\alpha_{a_i}$ and $a_i \in C$;
        \item either $b_i = d$ (and $i = k$) or the arrow in $\rho$ to the right of $b_i$ points towards $b_i$. In the latter case, this arrow is $\beta_{b_i+1}$ and $b_i + 1 \in D$.
    \end{itemize}
    Thus, the sequence $m_N = (a_1,b_1+1,a_2,b_2+1,\dots,a_k,b_k+1)$ is a $\Theta(\rho)$-alternating matching.
    Since the two conditions above characterize all possible dimension vectors of submodules of $M_\rho$, this is a bijective correspondence.
    Now, observe that
    \[
        (e_{a_i} - e_{a_i + 1}) + (e_{a_i + 1} - e_{a_i + 2}) + \dots + (e_{b_i} - e_{b_i + 1}) = e_{a_i} - e_{b_i + 1}.
    \]
    Therefore, $\iota(\vdim N) = \chi(m_N)$ for all submodules $N$ and $\iota (\HN(M_\rho)) = \SP(\Theta(\rho))$.
\end{proof}

\begin{example}
    Consider the string $\rho = \beta^{-1}_2 \alpha_3$ of $(Q,R)$, which is encoded by the quadruplet $(1,3,\{3\},\{2\})$.
    The string module $M_\rho$ is precisely the quiver representation in \cref{ex:HN-polytope} and the arc $\Theta(\rho) = (1,4,\{3\},\{2\})$ is that of \cref{ex:shard-polytope}.
    The correspondence between subrepresentations of $M_\rho$ and alternating matchings of $\Theta(\rho)$ is described in the following table.
 \begin{center}\begin{tabular}{ c c c } \toprule
    Subrepresentations $N$ of $M_\rho$ & & $\Theta(\rho)$-alternating matchings \smallskip \\ \midrule
    \begin{tikzpicture}[-stealth]\node (A) at (0,0) {$0$};\node (B) at (-1.0,0) {$0$};\node (C) at ( 1.0,0) {$0$};\draw (A) -- (B);\draw (A) -- (C);\end{tikzpicture} & & $() $ \\
    \begin{tikzpicture}[-stealth]\node (A) at (0,0) {$0$};\node (B) at (-1.0,0) {$\KK$};\node (C) at ( 1.0,0) {$0$};\draw (A) -- (B);\draw (A) -- (C);\end{tikzpicture} & & $(1,2) $ \\
    \begin{tikzpicture}[-stealth]\node (A) at (0,0) {$0$};\node (B) at (-1.0,0) {$0$};\node (C) at ( 1.0,0) {$\KK$};\draw (A) -- (B);\draw (A) -- (C);\end{tikzpicture} & & $(3,4) $ \\
    \begin{tikzpicture}[-stealth]\node (A) at (0,0) {$0$};\node (B) at (-1.0,0) {$\KK$};\node (C) at ( 1.0,0) {$\KK$};\draw (A) -- (B);\draw (A) -- (C);\end{tikzpicture} & & $(1,2,3,4)$ \\
    \begin{tikzpicture}[-stealth]\node (A) at (0,0) {$\KK$};\node (B) at (-1.0,0) {$\KK$};\node (C) at ( 1.0,0) {$\KK$};\draw (A) -- (B);\draw (A) -- (C);\end{tikzpicture} & & $(1,4) $ \\ \bottomrule
 \end{tabular}\end{center}
\end{example}

\begin{remark}
    Given a locally gentle quiver $(Q,R)$, each $\HN(M_\rho)$ for a string $\rho$ that does not pass the same vertex more than once is linearly isomorphic to a shard polytope.
    However, for an arbitrary locally gentle quiver, the linear isomorphism depends on $\rho$, and there may not be a global linear map that sends every $\HN$ polytope of $(Q,R)$ to a shard polytope.
    In particular, even if all bricks of $(Q,R)$ correspond to strings of this form,
    its $g$-vector fan is not necessarily (isomorphic to) a coarsening of the braid fan.
    For an interesting example of this phenomenon, see \cref{ss:TheDoppel}.
\end{remark}

\begin{remark}
\cref{thm: HN are shards} implies that the shard polytopes and the HN polytopes of indecomposable modules for the quiver from \eqref{eq:quiver-An-shards} are {\em integrally equivalent} (see \cite[\S 4]{PS}), and therefore have the same combinatorial type, same volume, and Ehrhart polynomials.
\end{remark}

\begin{remark}
    \label{rem:order vs HN vs shards}

    By definition, the vertices of the  HN polytopes of the gentle quiver in \eqref{eq:quiver-An-shards} correspond to indicator vectors for order ideals in the poset given by the orientations of the arrows of the subquiver. This poset is called a {\em fence poset} (see \cite{FMP}) and the polytope with these vertices is called the  {\em order polytope} \cite{stanley-order-polytopes} of the associated fence poset. Consequently, by \cref{thm: HN are shards} up to a global unimodular change of coordinates, shard polytopes of type A also correspond to order polytopes of fence posets. The last statement also appeared in \cite[Prop. 5.9]{FMP}.
\end{remark}

\section{Flows on (certain) cyclic graphs}
\label{s:flow-cyclics}

In this section, we relax our assumptions regarding directed graphs and allow them to have directed cycles.
We typically use $\cyclegraph$ to denote (possibly) cyclic graphs, and reserve $G$ to denote acyclic graphs.

\subsection{Minimal cycles, good routes, and bad routes}

We identify an oriented cycle $C$ in $\cyclegraph$ with the set of edges that constitute it.
An oriented cycle $C$ is \textnew{minimal} if no proper subset of edges $C' \subsetneq C$ is a cycle.
We denote the set of all minimal oriented cycles in $\cyclegraph$ as $\mathnew{\Cycles(\cyclegraph)}$.
Unless stated otherwise, a \emph{cycle} of $\cyclegraph$ will refer to a {minimal} oriented cycle.

We now extend the definition of a route in a cyclic graph $\cyclegraph$.

\begin{definition}
    \label{def:route-in-cyclic}
    A \textnew{route} of $\cyclegraph$ is either
    \begin{itemize}
        \item a walk from a source to a sink, or
        \item a ({minimal} oriented) cycle $C \in \Cycles(\cyclegraph)$.
    \end{itemize}
    Moreover, for every cycle $C \in \Cycles(\cyclegraph)$ and edge $e \in C$, we set $\mathnew{\pref(C,e)} := C$.
\end{definition}

In particular, if $\cyclegraph$ is acyclic, routes are just as defined in \cref{ss:paths-and-routes}.

\begin{example}
    \label{ex:routes-Hn}
    The \textnew{blossomed $n$-cycle $\TheBlackCycle{n}$} is the oriented $n$-cycle with a new sink and source attached to each vertex.
    It has exactly one minimal cycle consisting of $n$ edges (coloured \tcr{red} in \cref{fig:n-cycle1}).
\end{example}

\begin{figure}[ht]
    \centering
    \begin{tikzpicture}[-stealth,scale=.8]
    \def\r{2cm}
    \def\e{3}
    \def\steps{7}
    \node[circle, draw, inner sep=0pt, minimum size=4pt,fill] (A) at (90-360/\steps*0:\r) {};
    \node[circle, draw, inner sep=0pt, minimum size=4pt,fill] (B) at (90-360/\steps*1:\r) {};
    \node[circle, draw, inner sep=0pt, minimum size=4pt,fill] (C) at (90-360/\steps*2:\r) {};
    \node[circle, draw, inner sep=0pt, minimum size=4pt,fill] (D) at (90-360/\steps*3:\r) {};
    \node[circle, draw, inner sep=0pt, minimum size=4pt,fill] (N) at (90+360/\steps*1:\r) {};

    \node at (90-360/\steps*0-7:1.15*\r) {$1$};
    \node at (90-360/\steps*1-7:1.15*\r) {$2$};
    \node at (90-360/\steps*2-7:1.15*\r) {$3$};
    \node at (90-360/\steps*3-7:1.15*\r) {$4$};
    \node at (90+360/\steps*1-7:1.15*\r) {$n$};

    \draw[red,dashed,thick] (90+360/\steps*1-\e:\r) arc (90+360/\steps*1-\e:90-360/\steps*0+\e:\r);
    \draw[red,dashed,thick] (90-360/\steps*0-\e:\r) arc (90-360/\steps*0-\e:90-360/\steps*1+\e:\r);
    \draw[red,dashed,thick] (90-360/\steps*1-\e:\r) arc (90-360/\steps*1-\e:90-360/\steps*2+\e:\r);
    \draw[red,dashed,thick] (90-360/\steps*2-\e:\r) arc (90-360/\steps*2-\e:90-360/\steps*3+\e:\r);
    \draw[red,-,dotted]     (90-360/\steps*3-\e:\r) arc (90-360/\steps*3-\e:90-360/\steps*(\steps-1)+\e:\r);

    \draw[blue,thick] (90-360/\steps*0:1.5*\r) -- (A); \draw[blue,thick] (A) -- (90-360/\steps*0:0.5*\r);
    \draw[blue,thick] (90-360/\steps*1:1.5*\r) -- (B); \draw[blue,thick] (B) -- (90-360/\steps*1:0.5*\r);
    \draw[blue,thick] (90-360/\steps*2:1.5*\r) -- (C); \draw[blue,thick] (C) -- (90-360/\steps*2:0.5*\r);
    \draw[blue,thick] (90-360/\steps*3:1.5*\r) -- (D); \draw[blue,thick] (D) -- (90-360/\steps*3:0.5*\r);
    \draw[blue,thick] (90+360/\steps*1:1.5*\r) -- (N); \draw[blue,thick] (N) -- (90+360/\steps*1:0.5*\r);
\end{tikzpicture}
    \caption{
        The blossomed $n$-cycle from \cref{ex:routes-Hn}
        with its cyclic ample framing described in \cref{ex:cyclic-framing-Hn}.
    }
    \label{fig:n-cycle1}
\end{figure}
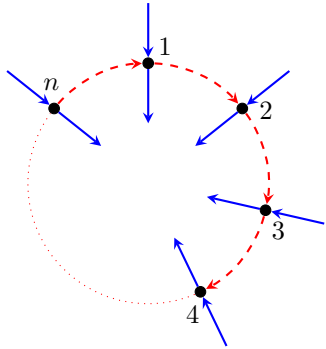

As a walk from a source to a sink might loop an arbitrary number of times around a cycle of $\cyclegraph$,
a cyclic graph has infinitely many routes. We thus distinguish two types of walks from a source to a sink.

\begin{definition}
    Let $\cyclegraph$ be a cyclic graph and $\route$ be a walk
    from a source to a sink.
    The route $\route$ is said to be
    \begin{itemize}
        \item \textnew{good} if it does not contain any cycle $C \in \Cycles(\cyclegraph)$;
        \item \textnew{bad} otherwise.
    \end{itemize}
    Let $\mathnew{\Routes(\cyclegraph)}$ denote the set of \textnew{cycles and good routes} of $\cyclegraph$.
\end{definition}

\

\begin{example}
    \label{ex:good-routes-Hn}
    We continue with the cyclic graph $\cyclegraph = \TheBlackCycle{n}$ from \cref{ex:routes-Hn}.
    Its good routes are in bijection with the set $[n]^2$ as follows.
    Given a pair $(a,b) \in [n]^2$, let $\route_{(a,b)}$ denote the good route that enters the cycle at the vertex $a$,
    traverses the cycle until reaching vertex $b$ for the first time, and immediately exits the cycle.
    The good route of $\TheBlackCycle{6}$ in \cref{fig:ex-bad_route} is $\route_{(6,2)}$.
\end{example}

\begin{figure}[ht]
    \centering
    \begin{tikzpicture}[-stealth,scale=.8]
    \def\r{2cm}
    \def\e{3}
    \def\steps{6} 

    \foreach \x in {1,...,\steps}{
        \draw[gray!70,dashed]
            (90+360/\steps-360/\steps*\x:\r) arc (90+360/\steps-360/\steps*\x:90-360/\steps*\x+\e:\r);
        \node[circle, draw, inner sep=0pt, minimum size=4pt,fill] (N\x) at (90+360/\steps-360/\steps*\x:\r) {};
        \node at (90+360/\steps-360/\steps*\x-7:1.15*\r) {$\x$};
        \draw[gray!70]
            (90+360/\steps-360/\steps*\x:1.5*\r) -- (N\x);
        \draw[gray!70]
            (N\x) -- (90+360/\steps-360/\steps*\x:0.5*\r);
    }

    \draw[blue,very thick] 
        (90+360/\steps-360/\steps*0:1.5*\r) -- (90+360/\steps-360/\steps*0:\r);
        
    \draw[red,dashed,very thick] (90+360/\steps-360/\steps*0-\e:\r) arc (90+360/\steps-360/\steps*0-\e:90-360/\steps*0+\e:\r);

    \draw[red,dashed,very thick] (90+360/\steps-360/\steps*1-\e:\r) arc (90+360/\steps-360/\steps*1-\e:90-360/\steps*1+\e:\r);
    
    \draw[blue,very thick] (90+360/\steps-360/\steps*2:\r) -- (90+360/\steps-360/\steps*2:0.5*\r);
\end{tikzpicture}
    \hspace{.1\linewidth}
    \begin{tikzpicture}[-stealth,scale=.8]
    \def\r{2cm}
    \def\e{3}
    \def\steps{6}

    \foreach \x in {1,...,\steps}{
        \node[circle, draw, inner sep=0pt, minimum size=4pt,fill] (N\x) at (90+360/\steps-360/\steps*\x:\r) {};
        \node at (90+360/\steps-360/\steps*\x-7:1.15*\r) {$\x$};
        \draw[gray!70]
            (90+360/\steps-360/\steps*\x:1.5*\r) -- (N\x);
        \draw[gray!70]
            (N\x) -- (90+360/\steps-360/\steps*\x:0.5*\r);
    }

    \draw[blue,very thick] 
        (90-360/\steps*0:1.5*\r) -- (90-360/\steps*0:1.08*\r);
    \draw[blue, very thick]
        (90-360/\steps*2:.92*\r) -- (90-360/\steps*2:0.5*\r);
    \draw[red,dashed,very thick]
            (90+360/\steps-360/\steps*1-\e:1.08*\r) arc (90+360/\steps-360/\steps*1-\e:90-360/\steps*1+\e:1.06*\r);
    \draw[red,dashed,very thick]
            (90+360/\steps-360/\steps*2-\e:1.06*\r) arc (90+360/\steps-360/\steps*2-\e:90-360/\steps*2+\e:1.04*\r);
    \draw[red,dashed,very thick]
            (90+360/\steps-360/\steps*3-\e:1.04*\r) arc (90+360/\steps-360/\steps*3-\e:90-360/\steps*3+\e:1.02*\r);
    \draw[red,dashed,very thick]
            (90+360/\steps-360/\steps*4-\e:1.02*\r) arc (90+360/\steps-360/\steps*4-\e:90-360/\steps*4+\e:1*\r);
    \draw[red,dashed,very thick]
            (90+360/\steps-360/\steps*5-\e:1*\r) arc (90+360/\steps-360/\steps*5:90-\e-360/\steps*5+1.3*\e:.95*\r);
    \draw[red,dashed,very thick]
            (90+360/\steps-360/\steps*0-\e:.98*\r) arc (90+360/\steps-360/\steps*0-\e:90-360/\steps*0+\e:.96*\r);
    \draw[red,dashed,very thick]
            (90+360/\steps-360/\steps*1-\e:.96*\r) arc (90+360/\steps-360/\steps*1-\e:90-360/\steps*1+\e:.94*\r);
    \draw[red,dashed,very thick]
            (90+360/\steps-360/\steps*2-\e:.94*\r) arc (90+360/\steps-360/\steps*2-\e:90-360/\steps*2+\e:.92*\r);
\end{tikzpicture}
    \caption{
        Left: An example of a good route on the blossomed $6$-cycle, corresponding to the pair $(6,2) \in [6]^2$.
        Right: An example of a bad route on the same graph.
        (The red arrows that seem to be parallel are in fact the same arrow, used twice by the route.)
    }
    \label{fig:ex-bad_route}
\end{figure}
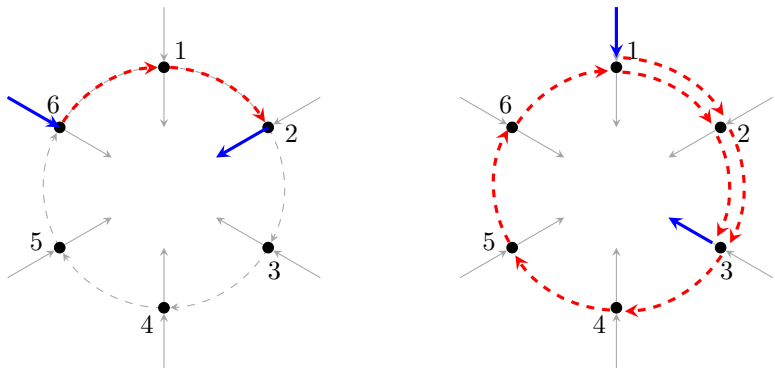

We make the following simple but important observation.

\begin{lemma}
    \label{lem:good-route-vertex}
    A good route cannot visit the same vertex more than once.
\end{lemma}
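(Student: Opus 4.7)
The plan is to argue by contradiction: assume a good route $\route$ visits some vertex twice, and exhibit a minimal oriented cycle contained in $\route$, contradicting its goodness.

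First, I would write $\route$ as a sequence of vertices $v_0, v_1, \ldots, v_m$ (with $v_0 \in \sources(\cyclegraph)$ and $v_m \in \sinks(\cyclegraph)$), and suppose that $v_i = v_j$ for some indices $i < j$. Among all such pairs, I would choose one minimizing $j - i$. Consider then the subwalk $W = v_i, v_{i+1}, \ldots, v_j$. By the minimality of $j - i$, the intermediate vertices $v_i, v_{i+1}, \ldots, v_{j-1}$ are pairwise distinct; in particular, no edge of $W$ is repeated, and each vertex of $W$ other than $v_i = v_j$ has exactly one incoming and one outgoing edge in $W$.

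The next step is to verify that $W$ (viewed as a set of edges) is a minimal oriented cycle; i.e.\ that $W \in \Cycles(\cyclegraph)$. It is certainly an oriented cycle, since it is a closed walk with no repeated edges. For minimality, suppose some proper subset $W' \subsetneq W$ were a cycle. Then removing any edge $e \in W \setminus W'$ from $W$ would destroy the balanced in/out-degree at $\tail(e)$ or $\head(e)$ in $W'$, since each intermediate vertex of $W$ uses exactly one incoming and one outgoing edge of $W$; this contradicts $W'$ being a closed walk.

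Finally, $W$ is an oriented subwalk of $\route$, and hence $\route$ contains a minimal cycle $W \in \Cycles(\cyclegraph)$. This contradicts the definition of a good route, completing the argument. The only mildly subtle step is verifying the minimality of $W$ as a cycle (the second step above), but this reduces immediately to the fact that the internal vertices of $W$ appear only once.
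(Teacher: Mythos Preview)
Your proof is correct and follows essentially the same approach as the paper: both argue by contradiction, extract a closed subwalk between two occurrences of a repeated vertex chosen so that the intermediate vertices are distinct, and conclude this is a minimal cycle contained in $\route$. The paper's version is a single sentence (choosing $i\le j$ minimal with $\tail(e_i)=\head(e_j)$), while you spell out explicitly why a simple closed walk has no proper sub-cycle; this extra care is fine but not a different idea.
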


\begin{proof}
    Otherwise, if $\route = e_1,e_2,\dots,e_k$ visits a vertex more than once,
    the subwalk $C = e_i,\dots,e_j$ where $i \leq j$ are minimal such that $\tail(e_i) = \head(e_j)$
    is a minimal cycle contained in $\route$.
\end{proof}

\begin{corollary}
    A cyclic graph $\cyclegraph$ has only finitely many good routes.
\end{corollary}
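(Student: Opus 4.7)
The proof is essentially immediate from \cref{lem:good-route-vertex}. My plan is as follows.

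By \cref{lem:good-route-vertex}, any good route $\route = e_1, \dots, e_k$ of $\cyclegraph$ visits each vertex of $\cyclegraph$ at most once. In particular, the length $k$ of $\route$ is bounded above by $\#V(\cyclegraph) - 1$, since the sequence of vertices $\tail(e_1), \head(e_1), \head(e_2), \dots, \head(e_k)$ consists of $k+1$ pairwise distinct vertices.

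Since $\cyclegraph$ is assumed to be finite, the set of walks of bounded length in $\cyclegraph$ is finite (for each $k \leq \#V(\cyclegraph)-1$, there are at most $\#E(\cyclegraph)^{k}$ walks of length $k$). In particular, the subset consisting of good routes is finite, which gives the claim.

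The only step that requires any thought is the first one, and it is already handled by the previous lemma; there is no real obstacle here. This short argument is the expected content of the corollary.
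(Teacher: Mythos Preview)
Your proof is correct and follows essentially the same approach as the paper: both rely on \cref{lem:good-route-vertex} to conclude that good routes have bounded length (since they visit each vertex at most once), whence only finitely many exist. The paper's version is just a more terse contrapositive formulation of the same argument.
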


\begin{proof}
    In order for there to be infinitely many good routes, some would have to visit some vertex more than once.
    This is impossible by the previous lemma.
\end{proof}

\subsection{Cyclic ample framings and compatibility}

We will be interested in defining an analogue of an ample framing for cyclic graphs $\cyclegraph$ that generalizes the definition for acyclic graphs.
Thus, following \cref{lem:ample-deg2}, we assume henceforth that $\cyclegraph$ is \textnew{full}, meaning $\indeg(v) = \outdeg(v) = 2$ for all internal vertices $v \in \internal(\cyclegraph)$, and consider certain framings that arise from a bi-colouring $\coloring : E \to \{\text{\tcr{red}},\text{\tcb{blue}}\}$ of the edges of $\cyclegraph$.

\begin{definition}
    \label{def:cycle-ample-framing}
    A bi-colouring (framing) $\coloring$ of the edges of $\cyclegraph$ is a \textnew{cyclic ample framing} if the following conditions hold:
    \begin{enumerate}[label=(\roman*)]
        \item
            \label{def:cycle-ample-framing-cond1}
            every internal vertex $v \in \internal(\cyclegraph)$ has exactly one \tcr{red} and one \tcb{blue} incoming edge,
	        and one \tcr{red} and one \tcb{blue} outgoing edge:
            \[
                \set{ \coloring(e) }{ e \in \inc(v) } =
                \set{ \coloring(e) }{ e \in \out(v) } =
                \{ \text{\tcr{red}} , \text{\tcb{blue}} \};
            \]

        \item
            \label{def:cycle-ample-framing-cond2}
            every {minimal} directed cycle $C \in \Cycles(\cyclegraph)$ is monochromatic (all its edges have the same colour).
    \end{enumerate}
\end{definition}

\begin{remark}
    In \cite{BERGGREN} Berggren makes the complementary assumption that  every minimal directed cycle is non-monochromatic. It would be interesting to see if there is a common generalization of both assumptions.
\end{remark}

We make the following key observations.

\begin{lemma}
    \label{lem:cyclic_three_obs}
    Let $\cyclegraph$ be a cyclic graph that admits a cyclic ample framing.
    Then,
    \begin{enumerate}
        \item
            \label{lem:cyclic_three_obs1}
            any two distinct cycles $C,C' \in \Cycles(\cyclegraph)$ are edge-disjoint,

        \item
            \label{lem:cyclic_three_obs2}
            moreover, any two distinct cycles $C,C' \in \Cycles(\cyclegraph)$ can have at most one common vertex,

        \item
            \label{lem:cyclic_three_obs3}
            any good route $\route$ can intersect (at a common subwalk) a cycle $C \in \Cycles(\cyclegraph)$ at most once.
    \end{enumerate}
\end{lemma}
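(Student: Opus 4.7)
The plan is to prove the three assertions in order, since (2) and (3) both use (1). The recurring ingredients are the monochromaticity of minimal cycles, the uniqueness of the outgoing (and incoming) edge of each colour at every internal vertex, and the fact that a minimal cycle is necessarily simple (otherwise a repeated vertex along it would yield a proper sub-cycle with a strictly smaller edge set).

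For (1), I would suppose $C, C' \in \Cycles(\cyclegraph)$ share an edge $e = (u,v)$; both are then monochromatic in the colour of $e$, say \tcr{red}, and the unique outgoing \tcr{red} edge at $v$ forces $C$ and $C'$ to continue identically at $v$. Iterating this around the cycle yields $C = C'$.

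For (2), I would first note that two distinct minimal cycles sharing a vertex cannot have the same colour, for otherwise they would share the unique outgoing edge of that colour at the shared vertex and contradict (1). Assuming $C$ is \tcr{red}, $C'$ is \tcb{blue}, and they share two vertices $u \ne v$, I would let $P$ be the \tcr{red} arc of $C$ from $u$ to $v$ and $Q'$ the \tcb{blue} arc of $C'$ from $v$ to $u$. Then $P \cdot Q'$ is a closed walk containing some minimal cycle $D$; monochromaticity of $D$ and disjointness of the colours on $P, Q'$ force $D \subseteq P$ or $D \subseteq Q'$, but both are simple paths and contain no cycle, a contradiction.

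For (3), I would suppose toward a contradiction that a good route $\rho$ meets $C$ in two distinct maximal common subwalks, and let $b$ be the last vertex of the first and $a$ the first vertex of the second. By \cref{lem:good-route-vertex}, $\rho$ visits each vertex at most once, so $a \ne b$ and the portion $P$ of $\rho$ from $b$ to $a$ is edge-disjoint from $C$. Taking $Q$ to be the arc of $C$ from $a$ to $b$, the concatenation $Q \cdot P$ is a closed walk, hence contains a minimal cycle $D$. If $D = C$, then since $P$ avoids $C$ every edge of $C$ would lie in $Q$, forcing $Q = C$, which contradicts $Q$ being a proper $a$-to-$b$ arc of the simple cycle $C$. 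If $D \ne C$, then by (1) $D$ is edge-disjoint from $C$, so $D \subseteq P \subseteq \rho$, contradicting the goodness of $\rho$. The main subtlety lies in (3): choosing the correct closed walk and then splitting into the $D = C$ versus $D \ne C$ subcases, both of which depend crucially on the earlier parts.
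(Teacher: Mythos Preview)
Your proof is correct and follows essentially the same strategy as the paper: part~(1) via the uniqueness of the outgoing edge of each colour, and parts~(2)--(3) by forming a closed walk from arcs of the two objects, extracting a minimal cycle, and deriving a contradiction using~(1) and monochromaticity. Your explicit case split $D=C$ versus $D\ne C$ in part~(3) is slightly more careful than the paper's terser version, which tacitly relies on the extracted minimal cycle being distinct from $C$.
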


\begin{proof}
    Claim~(1) follows directly from condition~\ref{def:cycle-ample-framing-cond1}.
    For claim~(2), suppose $C,C' \in \Cycles(\cyclegraph)$ share vertices $v_1 \neq v_2$. Then, the cycle obtained by going from $v_1$ to $v_2$ along $C$ and then back to $v_1$ along $C'$ is minimal and not monochromatic.

    Claim~(3) follows by a similar argument: suppose $\route$ intersects a cycle $C$ at two different times and let $v_0$ be the vertex at which $\route$ leaves $C$ for the first time and $v_1$ be the vertex at which $\route$ enters $C$ for the second time.
    Since $\route$ is a good route, $v_0 \neq v_1$, otherwise $\route$ would have completely traversed a cycle.
    Then, concatenating the walk from $v_0$ to $v_1$ along $\route$ and the walk from $v_1$ to $v_0$ along $C$ yields a non-monochromatic cycle $C'$ that is not minimal by condition~\ref{def:cycle-ample-framing-cond2}. But then, a minimal cycle contained in $C'$ is edge-disjoint with $C$ by claim (1) and thus it is a cycle completely traversed by $\route$, contradicting that $\route$ is a good route.
\end{proof}

\begin{example}
    \label{ex:cyclic-framing-Hn}
    Let $\cyclegraph = \TheBlackCycle{n}$ be the blossomed $n$-cycle in \cref{fig:n-cycle1}.
    Up to exchanging colours, it admits only one cyclic ample framing:
    the edges on the cycle are coloured \tcr{red} and the remaining edges,
    those incident to either a source or a sink, are coloured \text{\tcb{blue}}.
    We denote this framed graph by $\TheCycle{n}$.
\end{example}

\begin{lemma}
    \label{lem:monochromatic-are-good}
    Let $\cyclegraph$ be a graph with a fixed cyclic ample framing and $\route$ be a monochromatic route.
    Then, $\route \in \Routes(\cyclegraph)$; that is, $\route$ is either a minimal cycle or a good route.
\end{lemma}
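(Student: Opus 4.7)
\emph{Plan.} The statement splits along the two cases in the definition of a route. If $\route \in \Cycles(\cyclegraph)$ is a minimal oriented cycle, then $\route \in \Routes(\cyclegraph)$ by definition and there is nothing to prove. The substance of the lemma therefore reduces to showing that a monochromatic walk $\route$ from a source to a sink is automatically a good route, i.e.\ that $\route$ does not contain any minimal cycle of $\cyclegraph$.

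The plan is to exploit the structure of the monochromatic subgraph associated with $c := \coloring(\route)$. Let $\cyclegraph_c \subseteq \cyclegraph$ be the spanning subgraph consisting of all edges of colour $c$. Condition~(i) of \cref{def:cycle-ample-framing} implies that every internal vertex of $\cyclegraph$ has in-degree and out-degree exactly $1$ in $\cyclegraph_c$, while every source has in-degree $0$ and every sink has out-degree $0$ in $\cyclegraph_c$. A standard degree-counting argument then shows that each connected component of $\cyclegraph_c$ is either a simple oriented path from a source to a sink or a simple oriented cycle on internal vertices. Moreover, any cycle component of $\cyclegraph_c$ must be a minimal cycle of $\cyclegraph$: the uniqueness of the in- and out-edges at each of its internal vertices prevents it from strictly containing a shorter oriented cycle, while condition~(ii) guarantees that such a cycle is a genuine element of $\Cycles(\cyclegraph)$.

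With this structural description of $\cyclegraph_c$ in hand, the walk $\route$ lies entirely in $\cyclegraph_c$ and, thanks again to the uniqueness of the in- and out-edges at each internal vertex, must remain within a single connected component. Because $\route$ goes from a source to a sink, that component must be a path component, whence $\route$ coincides with the corresponding simple path. In particular, $\route$ visits each vertex at most once and therefore is a good route. The main point to check carefully is that $\route$ cannot transition from a path component to a cycle component of $\cyclegraph_c$; this is precluded by the unique in-/out-edge property at internal vertices, and is the key structural observation driving the argument.
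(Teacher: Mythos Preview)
Your proposal is correct and follows essentially the same approach as the paper. Both arguments hinge on the observation that condition~(i) of the cyclic ample framing forces each internal vertex to have a unique incoming and a unique outgoing edge of each colour, so a monochromatic walk is uniquely determined by any of its edges and cannot enter a cycle from outside; the paper phrases this as ``any edge is contained in a unique monochromatic route,'' while you equivalently describe the connected components of the monochromatic subgraph $\cyclegraph_c$.
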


\begin{proof}
    Observe that \cref{def:cycle-ample-framing}~\ref{def:cycle-ample-framing-cond1} implies that any edge $e$ of $\cyclegraph$ is contained in a unique monochromatic route.
    Namely, the only monochromatic route containing $e \in E(\cyclegraph)$ is the one obtained by extending the walk $\route = e$ at its head (resp. tail) by choosing the only other edge coloured $\coloring(e)$ incident to its head (resp. tail).
    The result follows since, if this walk contains a cycle, then it necessarily is a cycle.
\end{proof}

We extend the notion of compatibility of routes to graphs with a cyclic ample framing, following the characterization in \cref{cor:coherence_full_graphs_ample_framings}.

\begin{definition}
    \label{def:incompatible}
    Two routes $\route$ and $\route'$ of $\cyclegraph$ are \textnew{incompatible} if they satisfy the following two conditions:
    \begin{enumerate}
        \item both $\route$ and $\route'$ are walks from a source to a sink; and
        \item there is a common subwalk (possibly a single vertex) $S$ such that when we write $\route = P S Q$ and $\route' = P' S Q'$,
        the last edge of $P$ and the first edge of $Q'$ are coloured \text{\tcb{blue}}, and
        the first edge of $Q$ and the last edge of $P'$ are coloured \text{\tcr{red}}.
    \end{enumerate}
    Otherwise, $\route$ and $\route'$ are \textnew{compatible}.
    A route that is compatible with any other route is called \textnew{exceptional}.
\end{definition}

Observe that monochromatic routes, which include all the minimal cycles, are exceptional.
The following result shows that these are the only exceptional routes.

\begin{lemma}
    \label{lem:cyclic-monochrom-equals-excep}
    A route is exceptional if and only if it is monochromatic.
\end{lemma}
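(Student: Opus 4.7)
The forward direction is immediate from the definitions. If $\rho$ is a minimal cycle, condition~(1) of \cref{def:incompatible} excludes $\rho$ from being incompatible with anything (cycles are not walks from source to sink). If $\rho$ is a monochromatic walk from a source to a sink, every edge of $\rho$ has the same colour, so for any common subwalk $S$ with another route the edges of $\rho$ immediately before and after $S$ share a colour, which is inconsistent with the ``last edge of $P$ blue, first edge of $Q$ red'' pattern required in condition~(2). Hence monochromatic routes are exceptional.

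For the converse, I will argue contrapositively: suppose $\route \in \Routes(\cyclegraph)$ is not monochromatic. Since every minimal cycle is monochromatic by \cref{def:cycle-ample-framing}\ref{def:cycle-ample-framing-cond2}, $\route$ must be a good walk from a source $s$ to a sink $t$. Pick an internal vertex $v$ of $\route$ at which the colour of $\route$'s edges changes; without loss of generality the edge $b$ of $\route$ entering $v$ is blue and the edge $r$ of $\route$ leaving $v$ is red. Let $r'$ be the other (red) incoming edge at $v$ and $b'$ the other (blue) outgoing edge at $v$. The plan is to construct a good route $\rho' = P' \cdot Q' \in \Routes(\cyclegraph)$ where $P'$ is a walk from some source to $v$ ending with $r'$ and $Q'$ is a walk from $v$ to some sink beginning with $b'$. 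Then $\rho$ and $\rho'$ share the single-vertex subwalk $S = \{v\}$, the four edges $b, r, r', b'$ are pairwise distinct (by colour), and the colour pattern of \cref{def:incompatible}\,(2) is satisfied, so $\rho$ and $\rho'$ are incompatible.

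The construction of $P'$ (that of $Q'$ is dual) splits according to \cref{lem:monochromatic-are-good}, which says that the maximal red monochromatic walk through $r'$ is either a good red route or a minimal red cycle. In the first (easy) case, $P'$ is simply the prefix of that good red route up to $r'$. In the second case, the red cycle $C_r$ contains $r'$, $v$, and $r$ (since $r$ is the next red edge after $r'$ at $v$). Because $\route$ uses $r \in C_r$ while being a good route, \cref{lem:cyclic_three_obs}\ref{lem:cyclic_three_obs3} forces $\route \cap C_r$ to be a single contiguous subwalk, and since $\route$ enters $v$ via $b \notin C_r$, this subwalk begins at $v$ and leaves $C_r$ at some vertex $v'$ via a blue edge off $C_r$. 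I then take $P'$ to be the concatenation of a walk from a source to $v'$ arriving via the blue incoming edge of $v'$ (off $C_r$) with the ``complementary'' red subwalk of $C_r$ from $v'$ around to $v$ ending with $r'$.

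The main obstacle is to guarantee that, in the cycle case, the walk from a source to $v'$ via the blue edge off $C_r$ can actually be realized as a good walk disjoint from the complementary red portion of $C_r$ one appends. If the blue incoming to $v'$ lies on a good blue route, the prefix of that route gives what is needed; if instead it lies on a blue cycle $C_b$, then by \cref{lem:cyclic_three_obs}\ref{lem:cyclic_three_obs1}, \ref{lem:cyclic_three_obs2} the cycle $C_b$ is edge-disjoint from $C_r$ and meets it in at most one vertex, and one iterates the same construction on $C_b$. Because $\cyclegraph$ is finite and the monochromatic cycles are pairwise edge-disjoint, this inductive unwinding terminates in a genuine source. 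A symmetric argument constructs $Q'$, and a direct check using \cref{lem:cyclic_three_obs}\ref{lem:cyclic_three_obs3} shows that the resulting $\rho'$ does not visit any vertex twice, hence is a good route in $\Routes(\cyclegraph)$, completing the incompatibility and the proof.
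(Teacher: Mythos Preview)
Your forward direction is fine and matches the paper. For the converse you have the same core idea as the paper---pick a colour-change vertex $v$, and build a route through the \emph{other} incoming/outgoing pair at $v$---but you are working much harder than necessary, and the extra work is where the gaps appear.

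The paper's proof simply takes \emph{any} walk $\route'$ from a source to $\tail(r')$ and \emph{any} walk $\route''$ from $\head(b')$ to a sink, and observes that $\route' r' b' \route''$ is a route incompatible with $\route$ at $v$. There is no attempt to make this route good: by \cref{def:incompatible}, ``exceptional'' means compatible with every route (good or bad), so a possibly-bad witness suffices. Your insistence that $\rho' \in \Routes(\cyclegraph)$ be good forces you into the recursive cycle-unwinding, and that recursion is not well specified. In the first step you use $\route$ to pick the exit vertex $v'$ on $C_r$; but when you ``iterate the same construction on $C_b$'' there is no analogue of $\route$ telling you where to leave $C_b$, so the rule is undefined. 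The termination claim (``finite graph, edge-disjoint cycles'') does not by itself rule out revisiting a cycle via its unique intersection vertex with an adjacent cycle. Finally, the assertion that the concatenation $P'\cdot Q'$ visits no vertex twice does not follow from \cref{lem:cyclic_three_obs}\ref{lem:cyclic_three_obs3} alone once $P'$ and $Q'$ each contain edges of both colours.

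All of this evaporates if you drop the goodness requirement: pick any directed walk from a source to $\tail(r')$ (one exists by an easy degree/connectedness count) and any directed walk from $\head(b')$ to a sink, concatenate through $r', b'$, and you are done in one line, exactly as in the paper.
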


\begin{proof}
    It follows from the definition that any monochromatic route is exceptional.

    If $\route$ is not monochromatic, in particular not a cycle, write $\route = e_1,e_2,\dots,e_k$
    and let $e_i,e_{i+1}$ be the first edges along $\route$ with different colours and $v_0 = \head(e_i) = \tail(e_{i+1})$.
    Let $e' \in \inc(v_0) \setminus \{e_i\}$ and $e'' \in \out(v_0) \setminus \{e_{i+1}\}$
    be the two other edges incident to $v_0$.
    Let $\route'$ (resp. $\route''$) be any walk from a source to $\tail(e')$ (resp. from $\head(e'')$ to a sink).
    Then, the route $\route'e'e''\route''$ is incompatible with~$\route$ and thus $\route$ is not exceptional.
\end{proof}

\begin{example}
    Recall the labelling of good routes of the blossomed $n$-cycle $\TheCycle{n}$ in \cref{ex:good-routes-Hn}.
    Two good routes $\route_{(a,b)}$ and $\route_{(c,d)}$ compatible if either:
    \begin{enumerate}[label=(\roman*)]
        \item the cyclic intervals $[a,b]$ and $[c,d]$ are disjoint; or
        \item $a \preceq c \preceq d \preceq b$ in the order $a \prec a+1 \prec \dots \prec n \prec 1 \prec \dots \prec a-1$; or
        \item $c \preceq a \preceq b \preceq d$ in the order $c \prec c+1 \prec \dots \prec n \prec 1 \prec \dots \prec c-1$.
    \end{enumerate}
    The $n+1$ exceptional routes of $\TheCycle{n}$ are the \tcr{red} cycle and the good routes $\route_{(a,a)}$ for $a \in [n]$.
\end{example}

\begin{lemma}
    A bad route is not compatible with itself.
\end{lemma}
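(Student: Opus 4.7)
The plan is to pinpoint a minimal cycle completely traversed inside the bad route, use the monochromaticity enforced by \cref{def:cycle-ample-framing}~\ref{def:cycle-ample-framing-cond2} to pin down the colours of the surrounding edges, and then exhibit two distinct decompositions of $\route$ along a common vertex which together satisfy the incompatibility condition of \cref{def:incompatible}.

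\textbf{Step 1: locate the cycle.} Write $\route = e_1, \ldots, e_k$. Since $\route$ is bad, it contains some $C \in \Cycles(\cyclegraph)$ as a consecutive subsequence: there exist indices $1 \leq i \leq j \leq k$ with $(e_i, \ldots, e_j) = C$. Put $v := \tail(e_i) = \head(e_j)$. Because $v$ lies on a cycle, it is internal, hence $i > 1$ and $j < k$, so $e_{i-1}$ and $e_{j+1}$ both exist and are incident to $v$.

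\textbf{Step 2: identify colours at $v$.} By \cref{def:cycle-ample-framing}~\ref{def:cycle-ample-framing-cond2}, $C$ is monochromatic; assume it is \tcr{red} (the \tcb{blue} case is symmetric). Then the unique red incoming and outgoing edges at $v$ are $e_j$ and $e_i$, both lying in $C$. Consequently, by \cref{def:cycle-ample-framing}~\ref{def:cycle-ample-framing-cond1}, the remaining incoming edge $e_{i-1}$ and outgoing edge $e_{j+1}$ at $v$ must both be \tcb{blue}.

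\textbf{Step 3: produce two decompositions and verify.} Let $S$ be the lazy subwalk at $v$. The vertex $v$ appears in $\route$ at two different positions (once as $\tail(e_i)$ and once as $\head(e_j)$), yielding two decompositions
\[
    \route = \underbrace{(e_1,\ldots,e_{i-1})}_{P} \,\cdot\, S \,\cdot\, \underbrace{(e_i,\ldots,e_k)}_{Q} = \underbrace{(e_1,\ldots,e_j)}_{P'} \,\cdot\, S \,\cdot\, \underbrace{(e_{j+1},\ldots,e_k)}_{Q'}.
\]
The last edge of $P$ is $e_{i-1}$ (\tcb{blue}), the first edge of $Q$ is $e_i$ (\tcr{red}), the last edge of $P'$ is $e_j$ (\tcr{red}), and the first edge of $Q'$ is $e_{j+1}$ (\tcb{blue}). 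Taking $\route' := \route$ with these two decompositions, the colour pattern matches exactly the conditions of \cref{def:incompatible}, so $\route$ is incompatible with itself.

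There is no real obstacle here; the only thing to note is that the monochromaticity hypothesis on minimal cycles forces both off-cycle edges of $\route$ at $v$ to take the opposite colour to $C$, which is precisely what engineers the colour pattern required for self-incompatibility.
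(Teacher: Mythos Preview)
Your Step 2 has a gap: you assert that $e_{i-1}$ and $e_{j+1}$ are the \tcb{blue} edges at $v$, but this tacitly assumes they differ, as edges of $\cyclegraph$, from the cycle edges $e_j$ and $e_i$. If $\route$ remains on $C$ immediately after position $j$ (or was already on $C$ just before position $i$), then $e_{j+1}$ coincides with $e_i$ (respectively $e_{i-1}$ with $e_j$) as a graph edge, and is \tcr{red}. Concretely, on the blossomed $3$-cycle with \tcr{red} cycle edges $c_1,c_2,c_3$, take the bad route $\route = f,c_1,c_2,c_3,c_1,g$, which enters $C$ at $\tail(c_1)$ but exits at $\head(c_1)$. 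The only full cyclic subsequences are $(c_1,c_2,c_3)$ at positions $2$--$4$, for which $e_{j+1}=c_1$ is \tcr{red}, and $(c_2,c_3,c_1)$ at positions $3$--$5$, for which $e_{i-1}=c_1$ is \tcr{red}. No choice of $(i,j)$ makes your argument go through.

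The paper repairs this by allowing $S$ to be a longer subwalk. Let $v_{\rm in}$ and $v_{\rm out}$ be the vertices at which $\route$ enters and leaves $C$, and take $S$ to be the walk along $C$ from $v_{\rm in}$ to $v_{\rm out}$ using no edge twice. Writing $\route=PSQ$ with $P$ ending at the \emph{first} visit to $v_{\rm in}$, and $\route=P'SQ'$ with $Q'$ starting at the \emph{last} visit to $v_{\rm out}$, the last edge of $P$ and first edge of $Q'$ are off $C$ (hence \tcb{blue}), while the first edge of $Q$ and last edge of $P'$ lie on $C$ (hence \tcr{red}). Your argument is precisely the special case $v_{\rm in}=v_{\rm out}$ with a single traversal, where $S$ degenerates to the lazy walk.
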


\begin{proof}
    Let $\route$ be a bad route. That is, $\route$ goes completely around some directed cycle $C \in \Cycles(\cyclegraph)$, say of colour \text{\tcr{red}}. Let $v_{\rm in}$ and $v_{\rm out}$ be the vertices of $C$ where $\route$ enters and exits $C$, respectively, and let $S$ be the walk along $C$ going from $v_{\rm in}$ to $v_{\rm out}$ without using any edge more than once. Thus, $S$ is a proper subwalk of $C$, possibly a single vertex if $v_{\rm in} = v_{\rm out}$. Since $\route$ goes completely around $C$, it visits both $v_{\rm in}$ and $v_{\rm out}$ at least twice. We can then write
    $\route = P S Q$ and $\route = P' S Q'$ where $P$ is the subwalk of $\route$ that reaches $v_{\rm in}$ for the first time and $Q'$ is the subwalk of $\route$ leaving from $v_{\rm out}$ for the last time.
    In particular,
    the last edge of $P$ and the first edge of $Q'$ are coloured \text{\tcb{blue}}.
    Since $P S$ is the prefix of $\route$ that first reaches $v_{\rm out}$, the first edge of $Q$ is in $C$ and is therefore coloured \text{\tcr{red}}.
    Similarly, the last edge of $P'$ is also in $C$ and \text{\tcr{red}}. This shows that $\route$ is incompatible with itself.
\end{proof}

\begin{lemma}
    A good route is compatible with itself.
\end{lemma}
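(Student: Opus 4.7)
The plan is to leverage \cref{lem:good-route-vertex}, which tells us that a good route $\route$ cannot visit the same vertex more than once. The incompatibility condition in \cref{def:incompatible} requires \emph{two} decompositions $\route = PSQ$ and $\route = P'SQ'$ of the same route sharing a common subwalk $S$, but with different surrounding colour patterns. The strategy is to argue that, for a good route, any such subwalk $S$ admits only one decomposition, forcing $P = P'$ and $Q = Q'$, which in turn contradicts the colour conditions.

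More concretely, I would proceed as follows. Suppose for contradiction that a good route $\route$ is incompatible with itself. Then there exist decompositions $\route = PSQ$ and $\route = P'SQ'$ with $S$ a common subwalk such that the last edge of $P$ and first edge of $Q'$ are \text{\tcb{blue}}, while the first edge of $Q$ and the last edge of $P'$ are \text{\tcr{red}}. Let $v = \tail(S)$ be the starting vertex of $S$ (taking $v$ to be the single vertex of $S$ if $S$ happens to be a lazy subwalk). Then both $P$ and $P'$ are prefixes of $\route$ ending at $v$. By \cref{lem:good-route-vertex}, $v$ is visited only once by $\route$, so $P = P'$ and, symmetrically, $Q = Q'$.

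But then the last edge of $P$ equals the last edge of $P'$, which is simultaneously \text{\tcb{blue}} and \text{\tcr{red}}, a contradiction. Hence no such pair of decompositions exists, and $\route$ is compatible with itself.

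I do not anticipate any significant obstacle: the only subtle point is handling the edge case where $S$ is a single vertex (a "lazy" common subwalk), which is why the appeal to \cref{lem:good-route-vertex} at a single vertex, rather than along an edge, is important. Once one observes that a good route cannot revisit vertices, the entire argument reduces to a direct contradiction of the colour requirements.
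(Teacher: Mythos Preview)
Your argument is correct and is exactly the paper's approach, just written out in more detail: the paper's proof is the one-line observation that because a good route cannot revisit any vertex (\cref{lem:good-route-vertex}), there is nothing to check in \cref{def:incompatible}. Your contradiction argument unpacks precisely why this is so.
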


\begin{proof}
    Because a good route cannot revisit the same vertex twice, there is nothing to check in the definition of compatibility.
\end{proof}

Just as in the acyclic case, we call \textnew{cliques} the sets of pairwise compatible routes,
and denote by $\mathnew{\Cliques(\cyclegraph)} = \Cliques(\cyclegraph,\coloring)$ (resp. $\mathnew{\MaxCliques(\cyclegraph)}$) the set of (resp. maximal) cliques of the cyclic amply framed graph $(\cyclegraph,\coloring)$.

\subsection{Graphs admitting a cyclic ample framing}

In this section, we characterize when a cyclic graph with a chosen set of routes
arises from a cyclic ample framing.
This provides a generalization of \cite[Theorem 3.9]{Kentuckygentle} to the context of cyclic ample framings.

\begin{definition}[{\cite{Kentuckygentle}}]
    Let $\cyclegraph$ be a directed graph and $X \subseteq \Routes(\cyclegraph)$ be a collection of cycles and good routes of $\cyclegraph$.
    The \textnew{adjacency graph of $X$}, denoted $\mathnew{\Adj(\cyclegraph, X)}$, is the graph with vertex set $X$ and an edge $\{ \route,\route' \} \in \binom{X}{2}$ for every pair of distinct routes passing though a common internal vertex.
\end{definition}

\begin{theorem}
    Let $\cyclegraph$ be a full directed graph
    and $X \subseteq \Routes(\cyclegraph)$ be a collection of cycles and good routes of $\cyclegraph$.
    Then, there exists a cyclic ample framing $\coloring$ of $\cyclegraph$ such that $X = \cE(\cyclegraph,\coloring)$ if and only if
    \begin{enumerate}[label=(\roman*)]
	\item
	    $\Cycles(\cyclegraph) \subseteq X \subseteq \Routes(\cyclegraph)$,
        \label{cond:one}

	\item
	    every edge of $\cyclegraph$ is contained
	    in exactly one route of $X$, and
        \label{cond:two}

	\item
	    $\Adj(\cyclegraph,X)$ is bipartite.
        \label{cond:three}
    \end{enumerate}
\end{theorem}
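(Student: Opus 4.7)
The plan is to prove both implications separately, relying on the characterization that exceptional routes coincide with monochromatic routes (\cref{lem:cyclic-monochrom-equals-excep}) and that every monochromatic route is either a good route or a minimal cycle (\cref{lem:monochromatic-are-good}).

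For the forward direction, assume $\coloring$ is a cyclic ample framing with $X = \cE(\cyclegraph,\coloring)$. Condition \ref{cond:one} follows because every minimal cycle is monochromatic by \cref{def:cycle-ample-framing}~\ref{def:cycle-ample-framing-cond2} and hence exceptional, while conversely every exceptional route lies in $\Routes(\cyclegraph)$ by \cref{lem:monochromatic-are-good}. For \ref{cond:two}, any edge $e$ of colour $c$ determines a unique monochromatic route, obtained by extending $e$ in both directions along edges of colour $c$; uniqueness holds because \cref{def:cycle-ample-framing}~\ref{def:cycle-ample-framing-cond1} provides a unique incoming (resp. outgoing) edge of colour $c$ at each internal vertex. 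For \ref{cond:three}, I would 2-colour the vertices of $\Adj(\cyclegraph,X)$ by assigning each exceptional route its monochromatic colour; two routes of $X$ sharing an internal vertex $v$ must use distinct incoming edges at $v$ by \ref{cond:two}, and since $v$ admits a single incoming edge of each colour, they receive different colours.

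For the converse direction, suppose \ref{cond:one}--\ref{cond:three} hold. I would fix a proper 2-colouring of $\Adj(\cyclegraph,X)$ using the palette $\{\tcr{red},\tcb{blue}\}$, then colour each edge by the colour of its unique containing route in $X$ (well-defined by \ref{cond:two}). The key step is the internal-vertex analysis: every route of $X$ passing through an internal vertex $v$ uses exactly one incoming and one outgoing edge at $v$, because good routes and minimal cycles visit each vertex at most once (\cref{lem:good-route-vertex}). Combined with \ref{cond:two}, this forces the four edges at $v$ to be partitioned by exactly two routes of $X$, which are adjacent in $\Adj(\cyclegraph,X)$ and hence receive distinct colours; this yields \cref{def:cycle-ample-framing}~\ref{def:cycle-ample-framing-cond1}. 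Condition \ref{def:cycle-ample-framing-cond2} is immediate from \ref{cond:one}: each minimal cycle lies in $X$, so all its edges inherit a single colour. The identity $X = \cE(\cyclegraph,\coloring)$ then follows from \cref{lem:cyclic-monochrom-equals-excep}: every element of $X$ is monochromatic by construction, and conversely any monochromatic route $R$ must equal the unique route $R' \in X$ containing any fixed edge of $R$ (both are monochromatic of the same colour, and the monochromatic extension of an edge is unique).

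The main obstacle I anticipate is the internal-vertex bookkeeping in the converse direction, where one must simultaneously verify that exactly two routes of $X$ pass through each internal vertex (ensuring a correct pairing of the four incident edges), that the two routes are necessarily adjacent in $\Adj(\cyclegraph,X)$, and that the resulting pairing is compatible with the chosen 2-colouring. Once this structural step is carried out, monochromaticity of minimal cycles and the final identification $X = \cE(\cyclegraph,\coloring)$ follow quickly from the uniqueness of monochromatic extensions.
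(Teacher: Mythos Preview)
Your proposal is correct and follows essentially the same approach as the paper's proof: both directions rely on \cref{lem:monochromatic-are-good} and \cref{lem:cyclic-monochrom-equals-excep} to identify exceptional routes with monochromatic ones, and in the converse direction both construct $\coloring$ by transferring a proper 2-colouring of $\Adj(\cyclegraph,X)$ along condition~\ref{cond:two}. Your internal-vertex bookkeeping is in fact slightly more explicit than the paper's (which glosses over why exactly two routes of $X$ meet at each internal vertex), but the underlying argument is the same.
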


Observe that whenever $\cyclegraph$ is acyclic, condition~\ref{cond:one} holds trivially and this characterization agrees with that in \cite[Theorem 3.9]{Kentuckygentle}.

\begin{proof}
    First, suppose $\coloring$ is a cyclic ample framing on $\cyclegraph$ and $X = \cE(\cyclegraph,\coloring)$ is its set of exceptional routes.
    Then, \cref{lem:monochromatic-are-good,lem:cyclic-monochrom-equals-excep} imply that $\Cycles(\cyclegraph) \subseteq X \subseteq \Routes(\cyclegraph)$.
    Moreover, the proof of \cref{lem:monochromatic-are-good} also shows that any edge is in exactly one monochromatic route,
    and the first condition on \cref{def:cycle-ample-framing} implies that two distinct monochromatic routes of the same colour cannot intersect, so $\Adj(\cyclegraph,X)$ is bipartite.

Conversely, suppose the collection of routes $X$ satisfies the three conditions above.
Let $X = X_1 \sqcup X_2$ be the partition such that $\Adj(\cyclegraph,X)$ contains no edges within $X_i$ for $i=1,2$.
This partition is unique since $\cyclegraph$, and therefore $\Adj(\cyclegraph,X)$, is connected.
Now, let $\coloring$ be the bi-colouring of the edges of $\cyclegraph$ defined by
\[
    \coloring(e) := \begin{cases}
    \text{\tcr{red}} & \text{if $e$ is in a route in $X_1$},\\
    \text{\tcb{blue}} & \text{if $e$ is in a route in $X_2$}.
    \end{cases}
\]
Observe that $\coloring$ is well-defined by condition~\ref{cond:two}.
Moreover, condition~\ref{cond:one} implies that any cycle is monochromatic and, together with condition~\ref{cond:three},
that the two incoming (resp. outgoing) edges at any internal vertex have distinct colours (since $X$ consists of only good routes).
Thus, $\coloring$ is a cyclic ample framing,
the routes in $X$ are monochromatic (equivalently, by \cref{lem:cyclic-monochrom-equals-excep}, exceptional) with respect to $\coloring$,
and no other route is exceptional by the proof of \cref{lem:monochromatic-are-good}.
\end{proof}

However, giving a complete characterization of those cyclic graphs admitting a cyclic ample framing seems to be more difficult than in the acyclic case, as the following examples illustrate.

\begin{example}
    \label{ex:no-cyclic-ample-framing}
    Consider the two cyclic graphs in \cref{fig:cex-cyclicampfram}. Neither of them admits a cyclic ample framing.

    \begin{itemize}
        \item For the graph on the left, the bottom-most arrow implies that both cycles have the same colour, while the top-most arrows imply that both cycles have different colours.

        \item For the graph on the right, the two edges leaving the cycle should have the same colour (since they leave a monochromatic cycle), but cannot have the same colour since they share the same head.
    \end{itemize}
\end{example}

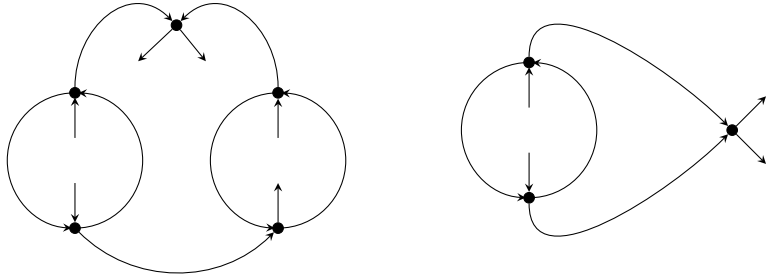
\begin{figure}[ht]
    \centering
    \begin{tikzpicture}[-stealth,scale=.6]
	\def\r{1.5cm}
	
	\node[circle, draw, inner sep=0pt, minimum size=4pt,fill] (1) at (0,\r) {};
	\node[circle, draw, inner sep=0pt, minimum size=4pt,fill] (2) at (0,-\r) {};
	\node[circle, draw, inner sep=0pt, minimum size=4pt,fill] (3) at (3*\r,\r) {};
	\node[circle, draw, inner sep=0pt, minimum size=4pt,fill] (4) at (3*\r,-\r) {};
	\node[circle, draw, inner sep=0pt, minimum size=4pt,fill] (5) at (1.5*\r,2*\r) {};
	
	\draw
        (0,\r) arc (90:270-3:\r);
	\draw
        (0,-\r) arc (-90:90-3:\r);
	\draw
        (3) arc (90:270-3:\r);
	\draw
        (4) arc (-90:90-3:\r);
	
	\draw
        (0,0.5) -- (0,\r-3);
	\draw
        (3*\r,0.5) -- (3);
	\draw
        (0,-0.5) -- (2);
	\draw
        (4) -- (3*\r,-0.5);
	\draw
        (5) -- (1.4,2.2);
	\draw
        (5) -- (2.9,2.2);
	
	\draw
        (2) edge [bend right=45] (4);
	\draw
        (1) edge [in=135, out=90,looseness=1.3] (5);
	\draw
        (3) edge [in=45, out=90,looseness=1.3] (5);
\end{tikzpicture}
\qquad\qquad
\begin{tikzpicture}[-stealth,scale=.6]
	\def\r{1.5cm}
	
	\node[circle, draw, inner sep=0pt, minimum size=4pt,fill] (1) at (0,\r) {};
	\node[circle, draw, inner sep=0pt, minimum size=4pt,fill] (2) at (0,-\r) {};
	\node[circle, draw, inner sep=0pt, minimum size=4pt,fill] (3) at (3*\r,0) {};
	
	\draw
        (1) arc (90:270-3:\r);
	\draw
        (2) arc (-90:90-3:\r);
	\draw
        (0,0.5) -- (0,\r-3);
	\draw
        (1) edge [in=135, out=90,looseness=1] (3);
	\draw
        (2) edge [in=-135, out=-90,looseness=1] (3);
	\draw
        (0,-0.5) -- (2);
	\draw
        (3) -- (3.5*\r,0.5*\r);
	\draw
        (3) -- (3.5*\r,-0.5*\r);
\end{tikzpicture}
    \caption{
        Two examples of directed graphs that do not admit a cyclic ample framing.
        See \cref{ex:no-cyclic-ample-framing}.
    }
    \label{fig:cex-cyclicampfram}
\end{figure}

\begin{problem}
    Give a complete characterization of the graphs admitting a cyclic ample framing.
\end{problem}

\subsection{A DKK-like triangulation}

Flows $f : E(\cyclegraph) \to \RR$, the flow space $\F(\cyclegraph) \subseteq \RR^E$, and the flow cone $\F^+ = \F^+(\cyclegraph)$ of a cyclic graph $\cyclegraph = (V,E)$ are defined in the same manner as for acyclic graphs; see \cref{ss:flows-defs}.

In this section, we show that the flow cone $\F^+(\cyclegraph)$ of a cyclic graph $\cyclegraph$ with a cyclic ample framing admits a DKK-like unimodular triangulation.

\begin{theorem}
    \label{thm:olny-ex}
    Let $\cyclegraph$ be a graph with a cyclic ample framing, and let $f \in \F^+(\cyclegraph)$ be a nonnegative flow on $\cyclegraph$.
    Then $f$ can be uniquely written as a sum of flows along a set of compatible routes.
    Moreover, if $f$ is an integer flow, the coefficients of this sum are integers.
\end{theorem}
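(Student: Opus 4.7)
The plan is to adapt the Danilov--Karzanov--Koshevoy construction (\cref{thm:DKK}) from acyclic graphs to our cyclic setting. The key tool is a canonical ``non-crossing'' local routing at each internal vertex, which will yield existence, compatibility, and uniqueness simultaneously, with integrality falling out for free.

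For existence I would define, at each internal vertex $v$, the $2 \times 2$ \emph{routing matrix} $M_v$ whose row and column sums are $(f_r^{\mathrm{in}},f_b^{\mathrm{in}})$ and $(f_r^{\mathrm{out}},f_b^{\mathrm{out}})$ respectively, and whose diagonal entries are
\[
M_v(\textcolor{red}{r},\textcolor{red}{r}) \,=\, \min(f_r^{\mathrm{in}},f_r^{\mathrm{out}}), \qquad M_v(\textcolor{blue}{b},\textcolor{blue}{b}) \,=\, \min(f_b^{\mathrm{in}},f_b^{\mathrm{out}}).
\]
Balance then forces at most one of the off-diagonal entries $M_v(\textcolor{red}{r},\textcolor{blue}{b})$, $M_v(\textcolor{blue}{b},\textcolor{red}{r})$ to be nonzero. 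Picturing each edge $e$ as a strip of width $f(e)$ and gluing these strips at every vertex in the unique planar non-crossing way realizing $M_v$, we obtain a ``ribbon graph'' whose connected components (``strands'') partition the total flow. Since each strand is a simple planar curve, it uses every edge of $\cyclegraph$ at most once; closed-loop strands therefore traverse a minimal cycle exactly once, while source-to-sink strands are walks using each edge at most once. Since $\min$, $\max$, and differences of integers are integers, each $M_v$ and hence each strand weight is integral whenever $f$ is.

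The main obstacle is showing that no source-to-sink strand is a \emph{bad} route. Following such a hypothetical strand, it must enter some monochromatic, say \tcr{red}, minimal cycle $C$ at a vertex $v$ coming from a \tcb{blue} incoming edge. By the non-crossing rule at $v$, this requires $f_r^{\mathrm{out}}(v) > f_r^{\mathrm{in}}(v)$ and places the strand at some ``height'' $h \in (f_r^{\mathrm{in}}(v), f_r^{\mathrm{out}}(v)]$ in the \tcr{red}-out strip at $v$. Heights are preserved around $C$, so at each subsequent cycle vertex $w$ the strand continues along $C$ as long as $h < f_r^{\mathrm{out}}(w)$ and exits via the \tcb{blue}-out edge at $w$ otherwise. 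The strand must therefore exit $C$ at the first such vertex $w$, yielding a \emph{good} route along an arc of $C$: it cannot return to $v$ since that would require $h < f_r^{\mathrm{in}}(v)$, contradicting our lower bound on $h$. Pairwise compatibility of the good routes so produced then follows tautologically from \cref{def:incompatible}, since an incompatible pair would force both off-diagonal entries of some $M_v$ to be simultaneously nonzero.

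For uniqueness, any compatible decomposition $f = \sum_{\rho \in K} c_\rho\, {\bf 1}_\rho$ with positive coefficients induces at each internal vertex $v$ a matrix which must coincide with the canonical $M_v$ above: cycles in $K$ contribute only to the diagonal entries, while compatibility of the good routes in $K$ rules out simultaneous positive contributions to $(\textcolor{red}{r},\textcolor{blue}{b})$ and $(\textcolor{blue}{b},\textcolor{red}{r})$. Since the ribbon-graph tracing produces a unique decomposition from the collection $\{M_v\}_v$, and each $M_v$ is determined by $f$ alone, any two compatible decompositions of $f$ must coincide; integrality of the coefficients when $f$ is integral is then a direct consequence.
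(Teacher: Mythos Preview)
Your ribbon-graph approach is an appealing alternative to the paper's strategy, but the compatibility step contains a genuine error. You claim that ``an incompatible pair would force both off-diagonal entries of some $M_v$ to be simultaneously nonzero.'' This is false: incompatibility is witnessed by a maximal common \emph{subwalk}, not by a single vertex. Concretely, on $\TheBlackCycle{4}$ with red-cycle flows $(a_1,a_2,a_3,a_4)=(2,1,2,1)$ and blue flows $b_1=b_3=c_2=c_4=1$, every $M_v$ has exactly one off-diagonal entry nonzero, yet the decomposition $f=\mathbf{1}_{\rho_{(1,4)}}+\mathbf{1}_{\rho_{(3,2)}}$ uses two incompatible routes while the compatible decomposition is $f=\mathbf{1}_C+\mathbf{1}_{\rho_{(1,2)}}+\mathbf{1}_{\rho_{(3,4)}}$. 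Both are consistent with the same collection $\{M_v\}$, so knowing the local matrices cannot by itself certify compatibility, and your uniqueness argument (``each $M_v$ is determined by $f$ alone, hence any two compatible decompositions coincide'') has the same gap. What you are missing is a \emph{global} choice of cyclic order of the four strips at each vertex (i.e.\ the framing) that makes the strand tracing pick out $\rho_{(1,2)},\rho_{(3,4)}$ rather than $\rho_{(1,4)},\rho_{(3,2)}$; the phrase ``the unique planar non-crossing way'' hides this choice. Relatedly, simplicity of a strand as a curve in the ribbon graph does not prevent it from projecting to a walk in $\cyclegraph$ that reuses an edge, so ``uses every edge at most once'' needs a separate argument.

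By contrast, the paper's proof sidesteps all of this by first subtracting $m_C=\min_{e\in C} f(e)$ along every cycle $C$ (cycles are pairwise edge-disjoint, so this stays nonnegative), obtaining a flow $f_0$ that vanishes on at least one edge of each cycle; deleting those edges yields an acyclic graph to which the original DKK theorem applies directly. Uniqueness then reduces to showing that no clique of good routes can cover \emph{every} edge of a cycle, a short pigeonhole on where routes leave the cycle. Your ribbon method could likely be repaired, but it requires carefully tying the non-crossing lamination to the DKK framing and proving that compatibility of routes is equivalent to non-crossing of their strands---essentially re-deriving the acyclic DKK theory rather than invoking it.
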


\begin{proof}
For each cycle $C \in \Cycles(\cyclegraph)$, let $m_C = \min\set{f(e) }{ e \in C }$ be the minimum flow along its edges,
and let
\[
    f_0 := f \; - \quad \smashoperator{\sum_{C \in \Cycles(\cyclegraph)}} \quad m_C {\bf 1}_C.
\]
Since distinct cycles in $\Cycles(\cyclegraph)$ are {edge-disjoint}, $f_0$ is still a nonnegative flow on $\cyclegraph$.
Moreover, the flow $f_0$ has zero flow on at least one edge in each cycle of $\cyclegraph$.
Let $G$ be the graph obtained from $\cyclegraph$ by removing all the edges on a cycle in $\Cycles(\cyclegraph)$ on which $f_0$ has zero flow.
Observe that $G$ is acyclic and the routes of $G$ are good routes of $\cyclegraph$.
Since the cyclic ample framing on $\cyclegraph$ restricts to a (not necessarily ample) framing of $G$ and $f_0 \in \F^+(G)$,
\cite[Theorem 1]{DKK} shows that there is a unique clique $K \in \Cliques(G)$ and coefficients $a_\route \in \RR_{> 0}$ for $\route \in K$ such that $f_0 = \sum_{\route \in K} a_\route {\bf 1}_\route$.
Thus,
\begin{equation}
    \label{eq:cyclicDKK-decomposition}
    f = \sum_{C \in \Cycles(\cyclegraph)} m_C {\bf 1}_C + \sum_{\route \in K} a_\route {\bf 1}_\route
\end{equation}
is a sum of flows along the set of compatible routes $K \cup \Cycles(\cyclegraph)$.
Moreover, if $f$ is an integer flow, then $m_C \in \ZZ$, so $f_0$ is again an integer flow, and $a_\route \in \ZZ$ by \cite[Theorem 1]{DKK}.

We now prove the uniqueness of the decomposition in \eqref{eq:cyclicDKK-decomposition}.
Suppose we have some other expression for $f$ as in the statement of the theorem.
It follows that there is some cycle $C \in \Cycles(\cyclegraph)$
and a set of good routes $K$ such that $\sum_{\route \in K} {\bf 1}_\route$ has positive flow in all the edges of $C$.
We now show that $K$ cannot possibly be a clique.
By choosing $K$ minimal, we may assume that for any pair $\route,\route' \in K$, we do not have $\route' \cap C \subseteq \route \cap C$, since then we could remove $\route'$ from $K$.
There must be some successive pair of edges of $C$, say $e_1$ and $e_2$, such that some $\route \in K$ contains $e_1$ but not $e_2$, while another $\route' \in K$ contains $e_2$.
Then, $\route$ and $\route'$ are incompatible along their intersection (which may be the single vertex between $e_1$ and $e_2$, but may also include $e_1$ and some previous edges).
\end{proof}

\begin{example}
    Below we show the decomposition described in \cref{thm:olny-ex} for a flow on $\TheCycle{4}$.
    \[
        \begin{gathered}
\begin{tikzpicture}[-stealth,scale= .5]
    \def\r{2cm}
    \def\e{3}
    \def\steps{4}
    \node[circle, draw, inner sep =0pt, minimum size=4pt,fill] (A) at (135-360/\steps*0:\r) {};
    \node[circle, draw, inner sep =0pt, minimum size=4pt,fill] (B) at (135-360/\steps*1:\r) {};
    \node[circle, draw, inner sep =0pt, minimum size=4pt,fill] (C) at (135-360/\steps*2:\r) {};
    \node[circle, draw, inner sep =0pt, minimum size=4pt,fill] (D) at (135-360/\steps*3:\r) {};

    \draw[thick] (135+360/\steps*1-\e:\r) arc (135+360/\steps*1-\e:135-360/\steps*0+\e:\r) node [red, midway, fill = white, inner sep = .8pt] {\scriptsize $2$};
    \draw[thick] (135-360/\steps*0-\e:\r) arc (135-360/\steps*0-\e:135-360/\steps*1+\e:\r) node [red, midway, fill = white, inner sep = .6pt] {\scriptsize $3$};
    \draw[thick] (135-360/\steps*1-\e:\r) arc (135-360/\steps*1-\e:135-360/\steps*2+\e:\r) node [red, midway, fill = white, inner sep = .8pt] {\scriptsize $4$};
    \draw[thick] (135-360/\steps*2-\e:\r) arc (135-360/\steps*2-\e:135-360/\steps*3+\e:\r) node [red, midway, fill = white, inner sep = .6pt] {\scriptsize $2$};

    \draw[thick] (135-360/\steps*0:1.5*\r) -- (A) node [red, midway, fill = white, inner sep = .3pt] {\scriptsize $1$};
    \draw[thick] (A) -- (135-360/\steps*0:0.5*\r) node [red, midway, fill = white, inner sep = .3pt] {\scriptsize $0$};
    \draw[thick] (135-360/\steps*1:1.5*\r) -- (B) node [red, midway, fill = white, inner sep = .3pt] {\scriptsize $1$};
    \draw[thick] (B) -- (135-360/\steps*1:0.5*\r) node [red, midway, fill = white, inner sep = .3pt] {\scriptsize $0$};
    \draw[thick] (135-360/\steps*2:1.5*\r) -- (C) node [red, midway, fill = white, inner sep = .3pt] {\scriptsize $0$};
    \draw[thick] (C) -- (135-360/\steps*2:0.5*\r) node [red, midway, fill = white, inner sep = .3pt] {\scriptsize $2$};
    \draw[thick] (135-360/\steps*3:1.5*\r) -- (D) node [red, midway, fill = white, inner sep = .3pt] {\scriptsize $0$};
    \draw[thick] (D) -- (135-360/\steps*3:0.5*\r) node [red, midway, fill = white, inner sep = .3pt] {\scriptsize $0$};
\end{tikzpicture}
\end{gathered}
\quad = \quad \scalebox{1.2}{2} \;
\begin{gathered}
\begin{tikzpicture}[-stealth,scale= .5]
    \def\r{2cm}
    \def\e{3}
    \def\steps{4}

    \draw[red, dashed, very thick] (135+360/\steps*1-\e:\r) arc (135+360/\steps*1-\e:135-360/\steps*0+\e:\r);
    \draw[red, dashed, very thick] (135-360/\steps*0-\e:\r) arc (135-360/\steps*0-\e:135-360/\steps*1+\e:\r);
    \draw[red, dashed, very thick] (135-360/\steps*1-\e:\r) arc (135-360/\steps*1-\e:135-360/\steps*2+\e:\r);
    \draw[red, dashed, very thick] (135-360/\steps*2-\e:\r) arc (135-360/\steps*2-\e:135-360/\steps*3+\e:\r);

    \node[circle, draw, inner sep =0pt, minimum size=4pt,fill] (A) at (135-360/\steps*0:\r) {};
    \node[circle, draw, inner sep =0pt, minimum size=4pt,fill] (B) at (135-360/\steps*1:\r) {};
    \node[circle, draw, inner sep =0pt, minimum size=4pt,fill] (C) at (135-360/\steps*2:\r) {};
    \node[circle, draw, inner sep =0pt, minimum size=4pt,fill] (D) at (135-360/\steps*3:\r) {};

    \draw[gray!70!white] (135-360/\steps*0:1.5*\r) -- (A);
    \draw[gray!70!white] (A) -- (135-360/\steps*0:0.5*\r);
    \draw[gray!70!white] (135-360/\steps*1:1.5*\r) -- (B);
    \draw[gray!70!white] (B) -- (135-360/\steps*1:0.5*\r);
    \draw[gray!70!white] (135-360/\steps*2:1.5*\r) -- (C);
    \draw[gray!70!white] (C) -- (135-360/\steps*2:0.5*\r);
    \draw[gray!70!white] (135-360/\steps*3:1.5*\r) -- (D);
    \draw[gray!70!white] (D) -- (135-360/\steps*3:0.5*\r);
\end{tikzpicture}
\end{gathered}
\quad + \quad \scalebox{1.2}{1} \;
\begin{gathered}
\begin{tikzpicture}[-stealth,scale= .5]
    \def\r{2cm}
    \def\e{3}
    \def\steps{4}

    \draw[gray!70!white,dashed] (135+360/\steps*1-\e:\r) arc (135+360/\steps*1-\e:135-360/\steps*0+\e:\r);
    \draw[gray!70!white,dashed] (135-360/\steps*2-\e:\r) arc (135-360/\steps*2-\e:135-360/\steps*3+\e:\r);

    \draw[red, dashed, very thick] (135-360/\steps*0-\e:\r) arc (135-360/\steps*0-\e:135-360/\steps*1+\e:\r);
    \draw[red, dashed, very thick] (135-360/\steps*1-\e:\r) arc (135-360/\steps*1-\e:135-360/\steps*2+\e:\r);

    \node[circle, draw, inner sep =0pt, minimum size=4pt,fill] (A) at (135-360/\steps*0:\r) {};
    \node[circle, draw, inner sep =0pt, minimum size=4pt,fill] (B) at (135-360/\steps*1:\r) {};
    \node[circle, draw, inner sep =0pt, minimum size=4pt,fill] (C) at (135-360/\steps*2:\r) {};
    \node[circle, draw, inner sep =0pt, minimum size=4pt,fill] (D) at (135-360/\steps*3:\r) {};

    \draw[blue, very thick] (135-360/\steps*0:1.5*\r) -- (A);
    \draw[gray!70!white] (A) -- (135-360/\steps*0:0.5*\r);
    \draw[gray!70!white] (135-360/\steps*1:1.5*\r) -- (B);
    \draw[gray!70!white] (B) -- (135-360/\steps*1:0.5*\r);
    \draw[gray!70!white] (135-360/\steps*2:1.5*\r) -- (C);
    \draw[blue, very thick] (C) -- (135-360/\steps*2:0.5*\r);
    \draw[gray!70!white] (135-360/\steps*3:1.5*\r) -- (D);
    \draw[gray!70!white] (D) -- (135-360/\steps*3:0.5*\r);
\end{tikzpicture}
\end{gathered}
\quad + \quad \scalebox{1.2}{1} \;
\begin{gathered}
\begin{tikzpicture}[-stealth,scale= .5]
    \def\r{2cm}
    \def\e{3}
    \def\steps{4}

    \draw[gray!70!white,dashed] (135+360/\steps*1-\e:\r) arc (135+360/\steps*1-\e:135-360/\steps*0+\e:\r);
    \draw[gray!70!white,dashed] (135-360/\steps*0-\e:\r) arc (135-360/\steps*0-\e:135-360/\steps*1+\e:\r);
    \draw[gray!70!white,dashed] (135-360/\steps*2-\e:\r) arc (135-360/\steps*2-\e:135-360/\steps*3+\e:\r);

    \draw[red, dashed, very thick] (135-360/\steps*1-\e:\r) arc (135-360/\steps*1-\e:135-360/\steps*2+\e:\r);

    \node[circle, draw, inner sep =0pt, minimum size=4pt,fill] (A) at (135-360/\steps*0:\r) {};
    \node[circle, draw, inner sep =0pt, minimum size=4pt,fill] (B) at (135-360/\steps*1:\r) {};
    \node[circle, draw, inner sep =0pt, minimum size=4pt,fill] (C) at (135-360/\steps*2:\r) {};
    \node[circle, draw, inner sep =0pt, minimum size=4pt,fill] (D) at (135-360/\steps*3:\r) {};

    \draw[gray!70!white] (135-360/\steps*0:1.5*\r) -- (A);
    \draw[gray!70!white] (A) -- (135-360/\steps*0:0.5*\r);
    \draw[blue, very thick] (135-360/\steps*1:1.5*\r) -- (B);
    \draw[gray!70!white] (B) -- (135-360/\steps*1:0.5*\r);
    \draw[gray!70!white] (135-360/\steps*2:1.5*\r) -- (C);
    \draw[blue, very thick] (C) -- (135-360/\steps*2:0.5*\r);
    \draw[gray!70!white] (135-360/\steps*3:1.5*\r) -- (D);
    \draw[gray!70!white] (D) -- (135-360/\steps*3:0.5*\r);
\end{tikzpicture}
\end{gathered}
    \]
\end{example}

As a consequence, we obtain the following explicit description of a unimodular triangulation of the flow cone of $\cyclegraph$.
Let $\DKK(\cyclegraph)$ denote the following collection of cones in the flow space of $\cyclegraph$,
\begin{equation}
    \label{eq:DKK-cyclic}
    \mathnew{\DKK(\cyclegraph)} := \bigcup_{S}
    \set{ \sigma + \RR_{\geq 0}\Cycles(\cyclegraph) }{ \sigma \in \DKK(\cyclegraph \setminus S) },
\end{equation}
where the union is over all possible choices $S \in \prod_{C \in \Cycles(\cyclegraph)} \set{e}{e \in C}$ of one edge in each {minimal} cycle of $\cyclegraph$.
The following key result is an immediate consequence of \cref{thm:olny-ex}.

\begin{theorem}
    \label{thm:cyclic-DKK}
    Let $\cyclegraph$ be a graph with a cyclic ample framing.
    Then, $\DKK(\cyclegraph)$ is a unimodular triangulation of the flow cone of $\cyclegraph$.
\end{theorem}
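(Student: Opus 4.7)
The plan is to derive the theorem essentially as a corollary of \cref{thm:olny-ex}, which provides both existence and uniqueness (including the integer-valued version) of decompositions of nonnegative flows along compatible routes. A unimodular triangulation requires three things: covering, the face intersection property, and unimodularity of each cone, and each of these corresponds to one part of \cref{thm:olny-ex}.

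First, I would verify covering. Given any $f \in \F^+(\cyclegraph)$, the construction in the proof of \cref{thm:olny-ex} produces the residual $f_0 := f - \sum_{C \in \Cycles(\cyclegraph)} m_C {\bf 1}_C$, which has flow zero on at least one edge of each minimal cycle. Choosing $S$ to consist of one such zero-valued edge per cycle yields an acyclic subgraph $G = \cyclegraph \setminus S$, and $f_0$ lies in some cone $\sigma \in \DKK(G)$ by \cref{thm:DKK}. Thus $f \in \sigma + \RR_{\geq 0}\Cycles(\cyclegraph)$, so the cones of $\DKK(\cyclegraph)$ cover $\F^+(\cyclegraph)$.

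Second, I would address simpliciality and the face intersection property. Each cone $\sigma + \RR_{\geq 0}\Cycles(\cyclegraph)$, with $\sigma = \sigma_K$ simplicial from \cref{thm:DKK}, is itself simplicial: the good-route generators in $K$ are supported in $\cyclegraph \setminus S$, while each cycle indicator ${\bf 1}_C$ has a nonzero coordinate at its distinguished edge in $S$, and by \cref{lem:cyclic_three_obs}(1) the cycles are pairwise edge-disjoint; so the new rays are linearly independent from those of $\sigma$ and from each other. For the face intersection property, consider two cones $\tau_1, \tau_2 \in \DKK(\cyclegraph)$ coming from cliques $K_1, K_2$ and choices $S_1, S_2$. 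If a flow $f$ lies in both, its unique decomposition from \cref{thm:olny-ex} only uses routes in $K_1 \cap K_2 \cup \Cycles(\cyclegraph)$, so $\tau_1 \cap \tau_2$ equals the cone spanned by this common subclique together with all cycles, which is a face of each $\tau_i$.

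Third, unimodularity follows from the integrality statement in \cref{thm:olny-ex}. For a simplicial cone generated by primitive integer vectors, unimodularity is equivalent to saying that every integer lattice point in the cone is a nonnegative integer combination of the generators (as can be seen by counting lattice points in the fundamental parallelepiped). The ray generators ${\bf 1}_\rho$ and ${\bf 1}_C$ are $0/1$-vectors and hence primitive, and the integer version of the decomposition in \cref{thm:olny-ex} asserts exactly that every integer flow in $\sigma_K + \RR_{\geq 0}\Cycles(\cyclegraph)$ decomposes as a nonnegative integer combination of these generators.

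I expect the face intersection property to be the most delicate point, since the indexing of $\DKK(\cyclegraph)$ is through pairs $(S, \sigma \in \DKK(\cyclegraph \setminus S))$, and different choices of $S$ can yield overlapping cones; the key is that the simultaneous \emph{existence and uniqueness} in \cref{thm:olny-ex} reindexes these cones canonically by maximal cliques of compatible routes in $\cyclegraph$ containing all of $\Cycles(\cyclegraph)$, from which the face property becomes transparent.
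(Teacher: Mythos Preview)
Your argument is correct and follows exactly the route the paper takes: the paper simply states that the theorem is an immediate consequence of \cref{thm:olny-ex}, and you have faithfully unpacked that one line into its three ingredients (covering from existence, face intersection from uniqueness, unimodularity from the integrality statement). Your observation that the indexing by pairs $(S,\sigma)$ could a priori produce badly overlapping cones, and that the uniqueness clause is precisely what rules this out, is the right way to see why the argument works.
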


We use this result in the next section to express the number of maximal cones in $\DKK(\cyclegraph)$ as a sum of the number of maximal cones in (many) graphs without cycles.

\begin{remark}
    In the acyclic case, DKK triangulations are defined for any framed graph, not only for graphs with ample framings.
    In the cyclic case, we can relax the cyclic ample framing conditions by demanding that all minimal cycles of $\cyclegraph$ are exceptional (by having the edges on any cycle either all be first at both their head and tail, or all be last at both their head and tail).
    A route in a graph $\cyclegraph$ with only exceptional cycles is \textnew{good} if it is self-compatible. This is equivalent to requiring that the route does not completely
    traverse any cycle before leaving.
    In this case, the flow cone admits a triangulation $\DKK$ as in \cref{thm:cyclic-DKK},
    where the indicator vector of the cycles and of the good routes form the rays of the triangulation.
\end{remark}

\subsection{Cyclic volume integer flows}

In this section, we define and give a correspondence between certain integer flows on $H$ and the maximal cliques in $\DKK(H)$ using and extending the correspondence from \cite{MMSt} in the acyclic case. Let $\cyclegraph$ be a graph with a fixed cyclic ample framing.

\begin{definition}
    \label{def:cycintfl}
    An integer flow $f : E(\cyclegraph) \to \NN$ is a \textnew{cyclic integer flow} if
    \begin{enumerate}[label=(\roman*)]
        \item
            \label{def:cycintfl1}
            every cycle $C \in \Cycles(\cyclegraph)$ has exactly one edge $e_C \in C$ with $f(e_C) = 0$,

        \item
            \label{def:cycintfl2}
            the netflow at every internal vertex is $1$, except for the tails of the edges $e_C$ where the netflow is $0$.
    \end{enumerate}
    We let $\mathnew{\cyclicflows(\cyclegraph)}$ denote the collection of cyclic integer flows on $\cyclegraph$.

    More generally, given ${\bf c} = (c_s)_{s} \in \NN^{\sources(\cyclegraph)}$, we let
\[
    \mathnew{\cyclicflows_{\bf c}(\cyclegraph)} := \set{ f \in \cyclicflows(\cyclegraph) }{ \nf_f(s) = c_s \text{ for all } s \in \sources(\cyclegraph) }.
\]

    Elements of $\cyclicflows_{\bf 0}(\cyclegraph)$ are called \textnew{cyclic volume integer flows} for reasons that will be clear soon (see \cref{thm:normVolCyclicIntegerFlow}).
\end{definition}

Define a map $\Phi_\cyclegraph : \MaxCliques(\cyclegraph) \to \NN^E$ by
\[
    \Phi_\cyclegraph(K)(e) = n_K(e) - 1,
\]
where $n_K(e)$ is the number of different prefixes until the edge $e$ for routes in $K$.
Recall from \cref{def:route-in-cyclic} that $\pref(C,e) = C$ for all $e \in C$.
We illustrate this map in \cref{ex:cyclicClique} below.
When $\cyclegraph$ is acyclic, this is the same map as in \cref{cor:bijection_clique_to_integer_flows}.

\begin{theorem}
    \label{prop:cliques-to-cyclic-funny}
    The map $\Phi_\cyclegraph$ is a bijection between the maximal cliques and the cyclic volume integer flows of $\cyclegraph$.
\end{theorem}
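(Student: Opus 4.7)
The plan is to reduce to the acyclic case already handled by \cref{cor:bijection_clique_to_integer_flows}, exploiting the description of $\DKK(\cyclegraph)$ in \cref{thm:cyclic-DKK}. The first step is to establish the following canonical structure of maximal cliques of $\cyclegraph$: every maximal clique $K$ decomposes as $K = K_0 \sqcup \Cycles(\cyclegraph)$, where $\Cycles(\cyclegraph) \subseteq K$ because cycles are exceptional (\cref{lem:cyclic-monochrom-equals-excep}), and $K_0$ is a maximal clique of the acyclic graph $\cyclegraph \setminus S$ for a transversal $S = \{e_C : C \in \Cycles(\cyclegraph)\}$ of $\Cycles(\cyclegraph)$. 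Existence of such an $S$ follows from the description in \eqref{eq:DKK-cyclic}, and its uniqueness given $K$ follows since, for each cycle $C$, the edge $e_C$ must be the unique edge of $C$ not used by any good route of $K_0$ (combining \cref{prop:max-cliques-use-all-edges} applied to the acyclic subgraph $\cyclegraph \setminus S$ with \cref{lem:cyclic_three_obs}(1)).

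Next I would decompose $\Phi_\cyclegraph(K)$ in terms of the acyclic bijection. Because each edge of $\cyclegraph$ lies in at most one cycle (\cref{lem:cyclic_three_obs}(1)), and because $\pref(C,e) = C$ for every $e \in C$, one obtains
\[
    n_K(e) \;=\; n_{K_0}(e) \,+\, \mathbf{1}\!\left[e \in \textstyle\bigcup_{C} C\right],
\]
where $n_{K_0}(e)$ counts prefixes of routes in $K_0$ (viewed as a clique in $\cyclegraph \setminus S$). Consequently $\Phi_\cyclegraph(K)(e_C) = 0$ for each $e_C \in S$, and $\Phi_\cyclegraph(K)$ equals $\Phi_{\cyclegraph \setminus S}(K_0)$ on edges outside $\bigcup_C C$ and $\Phi_{\cyclegraph \setminus S}(K_0) + 1$ on edges in $C \setminus \{e_C\}$. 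From this decomposition, condition \ref{def:cycintfl1} of \cref{def:cycintfl} is immediate; condition \ref{def:cycintfl2} follows from \cref{cor:bijection_clique_to_integer_flows} applied to $\cyclegraph \setminus S$, together with the observation that adding a uniform $+1$ along each $C \setminus \{e_C\}$ preserves netflows at every internal vertex except at $u_C := \tail(e_C)$, where the missing outgoing edge $e_C$ produces a net decrease of $1$, converting netflow $1$ into $0$ exactly as required. Sources are unaffected since they lie on no cycle.

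For the inverse map, given $f \in \cyclicflows_{\mathbf{0}}(\cyclegraph)$ let $e_C$ be the unique zero-flow edge of each cycle $C$, set $S = \{e_C\}$, and define $f_0(e) := f(e) - \mathbf{1}[e \in \bigcup_C C]$ on the edge set of $\cyclegraph \setminus S$. A direct netflow computation, reversing the one above, shows that $f_0$ is a volume integer flow on the acyclic graph $\cyclegraph \setminus S$; then $\Phi_{\cyclegraph \setminus S}^{-1}(f_0)$ yields a maximal clique $K_0$ of good routes, and $K := K_0 \sqcup \Cycles(\cyclegraph)$ recovers a maximal clique of $\cyclegraph$ with $\Phi_\cyclegraph(K) = f$. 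By construction, this inverts the forward map.

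The main obstacle will be making the structural Step 1 airtight: one must verify that the data $(S, K_0)$ extracted from $K$ is indeed canonical, and conversely that every pair $(S, K_0)$ with $K_0 \in \MaxCliques(\cyclegraph \setminus S)$ gives rise to a bona fide maximal clique $K_0 \sqcup \Cycles(\cyclegraph)$ in $\cyclegraph$ (rather than a merely compatible collection that could be enlarged). Both directions depend on \cref{thm:cyclic-DKK} together with careful bookkeeping of the compatibility relation between good routes and cycles, using \cref{lem:cyclic_three_obs} to guarantee that cycles interact with good routes only along isolated arcs. Once this structural correspondence is established, everything else is a routine combinatorial verification.
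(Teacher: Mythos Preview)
Your proposal is correct and follows essentially the same approach as the paper: both arguments reduce to the acyclic bijection of \cref{cor:bijection_clique_to_integer_flows} on $\cyclegraph\setminus S$ after extracting a canonical transversal $S=\{e_C\}_C$ of the cycles, then compare $\Phi_\cyclegraph(K)$ with $\Phi_{\cyclegraph\setminus S}(K_0)$ via the relation $n_K(e)=n_{K_0}(e)+\mathbf{1}[e\in\bigcup_C C]$. The only cosmetic difference is that the paper establishes the existence and uniqueness of $e_C$ directly (via a dimension count and a compatibility argument showing that if two good routes of $K$ jointly cover all of $C$ they must be incompatible), whereas you deduce it from the already-proved decomposition \eqref{eq:DKK-cyclic} of $\DKK(\cyclegraph)$; both routes are equally valid.
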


\begin{proof}
    Let $K$ be a maximal clique of $\cyclegraph$.
    We first show that for each cycle $C \in \Cycles(\cyclegraph)$, there is exactly one edge $e$ of $C$ which is not used by any route in $K \setminus \{C\}$,
    and thus $\Phi_\cyclegraph(K)$ satisfies condition~\ref{def:cycintfl1} in \cref{def:cycintfl}.
    Observe that such an edge can be used only by $C$ or by a good route, since minimal cycles are edge-disjoint.

    Fix a cycle $C \in \Cycles(\cyclegraph)$.
    By the proof of \cref{prop:max-cliques-use-all-edges},
    every edge of $\cyclegraph$ is used in at least one route in $K$.
    Suppose two edges $e,e'$ in a cycle $C$ are only used by $C$ itself.
    Then, $K' = K \setminus \{C\}$ is a clique of $\cyclegraph' = \cyclegraph \setminus \{e,e'\}$ with size lager than $\dim \F^+(\cyclegraph') = \dim \F^+(\cyclegraph) - 2 = |K| -2$, which is impossible.
    Thus, at most one edge of $C$ is not used by any good route of $K$.
    Among the good routes in $K$, let $\route$ be one that uses a maximal collection of edges in $C$.
    Since $\route$ is good, it intersects $C$ at most once and cannot use all of its edges.
    Let $e \in C$ be the edge along the cycle whose tail is the vertex at which $\route$ leaves $C$.
    If $e$ is used in a different good route $\route' \in K$, then the maximality of $\route$ inside $C$ implies that $\route$ and $\route'$ are incompatible.
    Thus, $e$ is the only edge in $C$ not used by any good route in $K$.

    We now verify that $\Phi_\cyclegraph(K)$ satisfies condition~\ref{def:cycintfl2} in \cref{def:cycintfl}.
    For each cycle $C$, let $e_C$ be the edge in $C$ that is not used by any other route in $K$ and let $G = \cyclegraph \setminus \set{e_C}{C \in \Cycles(\cyclegraph)}$.
    Observe that
    \[
        \indeg_\cyclegraph(v) = \begin{cases}
        \indeg_G(v) + 1 & \text{if $v = \head(e_C)$ for some $C \in \Cycles(\cyclegraph)$},\\
        \indeg_G(v) & \text{otherwise}.
        \end{cases}
    \]
    Consider the maximal clique $K' := K \setminus \Cycles(\cyclegraph)$ of $G$
    and the corresponding (acyclic) volume integer flow $\Phi_G(K')$.
    We compare the netflow of $\Phi_G(K')$ and $\Phi_\cyclegraph(K)$ at each internal vertex of $\cyclegraph$.
    Since, by definition, a cycle $C$ is a prefix of each of its edges, we have
    \begin{equation}
        \label{eq:flow-restriction}
        \Phi_\cyclegraph(K)(e) = \begin{cases}
            \Phi_G(K')(e) & \text{if $e$ is not in any cycle of }\cyclegraph, \\
            \Phi_G(K')(e) + 1 & \text{if $e \in C \setminus \{e_C\}$ for some cycle $C$ of }\cyclegraph, \\
            0 & \text{if $e = e_C$ for some cycle $C$ of }\cyclegraph.
        \end{cases}
    \end{equation}
    Therefore, if $v$ is not in any cycle, the flows at its incident edges do not change. Similarly, if $v$ is in a cycle $C$ but is not incident to $e_C$, then there is an increase of $1$ in the flow of one of the incoming and one of the outgoing edges; those belonging to $C$.
    In both cases,
    \[
        \nf_{\Phi_\cyclegraph(K)}(v) = \nf_{\Phi_G(K')}(v) = \indeg_G(v) - 1 = \indeg_H(v) - 1 = 1.
    \]
    On the other hand, if $v = \head(e_C)$ for some $C$, then
    \[
        \nf_{\Phi_\cyclegraph(K)}(v) = \nf_{\Phi_G(K')}(v) + 1 = \indeg_G(v) - 1 + 1= \indeg_H(v) - 1 = 1.
    \]
    Lastly, if $v = \tail(e_C)$ for some $C$, then
    \[
        \nf_{\Phi_\cyclegraph(K)}(v) = \nf_{\Phi_G(K')}(v) - 1 = \indeg_G(v) - 1 - 1= \indeg_H(v) - 2 = 0.
    \]
    This completes the proof that $\Phi_\cyclegraph$ is a well-defined map.
    Moreover, this also shows that $\Phi_\cyclegraph$ is injective,
    as any maximal clique uniquely determines the set of edges $e_C$, one for each cycle $C$, not used by a good route of $K$ and $\Phi_G$ is injective.

    Lastly, given a cyclic volume integer flow $f$ of $\cyclegraph$, let $\{e_C\}_C$ be the collection of edges in cycles with $f(e_C) = 0$ and let $G = \cyclegraph \setminus \{e_C\}_C$. Let $f'$ be the integer flow on $G$ defined from $f$ by reversing \eqref{eq:flow-restriction}.
    The same computations above show that $f'$ is a volume integer flow of $G$, and therefore there is a clique $K'$ of $G$ with $\Phi_G(K') = f'$. Let $K := K' \cup \Cycles(\cyclegraph)$.
    Then, since $K$ is a clique of $\cyclegraph$ and $\Phi_\cyclegraph(K) = f$, we conclude that $\Phi_\cyclegraph$ is bijective.
\end{proof}

\begin{example}
    \label{ex:cyclicClique}
    Consider the following clique (exceptional routes omitted) on the blossomed $3$-cycle~$\TheCycle{3}$ and the corresponding cyclic volume integer flow:
    \[
        \begin{gathered}\begin{tikzpicture}[-stealth,scale=.4]
    \def\r{2cm}
    \def\e{3}
    \def\steps{3}
    \foreach \x in {1,...,\steps}{
        \draw[gray!60!white,dashed]
            (90+360/\steps-360/\steps*\x:\r) arc (90+360/\steps-360/\steps*\x:90-360/\steps*\x+\e:\r);
        \node[circle, draw, inner sep=0pt, minimum size=4pt,fill] (N\x) at (90+360/\steps-360/\steps*\x:\r) {};
        \draw[gray!60!white]
            (90+360/\steps-360/\steps*\x:1.5*\r) -- (N\x);
        \draw[gray!60!white]
            (N\x) -- (90+360/\steps-360/\steps*\x:0.3*\r);
    }
        \draw[blue,very thick]
            (90+360/\steps-360/\steps*3:1.5*\r) -- (N3);
        \draw[red,dashed,very thick]
            (90+360/\steps-360/\steps*3:\r) arc (90+360/\steps-360/\steps*3:90-360/\steps*3+\e:\r);
        \draw[red,dashed,very thick]
            (90+360/\steps-360/\steps*1:\r) arc (90+360/\steps-360/\steps*1:90-360/\steps*1+\e:\r);
        \draw[blue,very thick]
            (N2) -- (90+360/\steps-360/\steps*2:0.3*\r);
    
    \foreach \x in {1,...,\steps}{
        \node[circle, draw, inner sep=0pt, minimum size=4pt,fill] (N\x) at (90+360/\steps-360/\steps*\x:\r) {};
    }    
\end{tikzpicture}
\hspace*{.02\linewidth}
\begin{tikzpicture}[-stealth,scale=.4]
    \def\r{2cm}
    \def\e{3}
    \def\steps{3} 

    \foreach \x in {1,...,\steps}{
        \draw[gray!60!white,dashed]
            (90+360/\steps-360/\steps*\x:\r) arc (90+360/\steps-360/\steps*\x:90-360/\steps*\x+\e:\r);
        \node[circle, draw, inner sep=0pt, minimum size=4pt,fill] (N\x) at (90+360/\steps-360/\steps*\x:\r) {};
        \draw[gray!60!white]
            (90+360/\steps-360/\steps*\x:1.5*\r) -- (N\x);
        \draw[gray!60!white]
            (N\x) -- (90+360/\steps-360/\steps*\x:0.3*\r);
    }
        \draw[blue,very thick]
            (90+360/\steps-360/\steps*1:1.5*\r) -- (N1);
        \draw[red,dashed,very thick]
            (90+360/\steps-360/\steps*1:\r) arc (90+360/\steps-360/\steps*1:90-360/\steps*1+\e:\r);
        \draw[blue,very thick]
            (N2) -- (90+360/\steps-360/\steps*2:0.3*\r);

    \foreach \x in {1,...,\steps}{
        \node[circle, draw, inner sep=0pt, minimum size=4pt,fill] (N\x) at (90+360/\steps-360/\steps*\x:\r) {};
    }    
\end{tikzpicture}\end{gathered}
    \qquad\longleftrightarrow\qquad
\begin{gathered}\begin{tikzpicture}[-stealth,scale=.4]
    \def\r{2cm}
    \def\e{3}
    \def\steps{3}

    \draw[thick] (90+360/\steps*1-\e:\r) arc (90+360/\steps*1-\e:90-360/\steps*0+\e:\r) node [red, midway, fill = white, inner sep = .8pt] {\scriptsize $1$};
    \draw[thick] (90-360/\steps*0-\e:\r) arc (90-360/\steps*0-\e:90-360/\steps*1+\e:\r) node [red, midway, fill = white, inner sep = .8pt] {\scriptsize $2$};
    \draw[gray!60!white] (90-360/\steps*1-\e:\r) arc (90-360/\steps*1-\e:90-360/\steps*2+\e:\r);

    \node[circle, draw, inner sep =0pt, minimum size=4pt,fill] (A) at (90-360/\steps*0:\r) {};
    \node[draw, inner sep =0pt, minimum size=4pt,fill] (B) at (90-360/\steps*1:\r) {};
    \node[circle, draw, inner sep =0pt, minimum size=4pt,fill] (C) at (90-360/\steps*2:\r) {};

    \draw[gray!60!white] (90-360/\steps*0:1.5*\r) -- (A);
    \draw[gray!60!white] (A) -- (90-360/\steps*0:0.3*\r);
    \draw[gray!60!white] (90-360/\steps*1:1.5*\r) -- (B);
    \draw[thick] (B) -- (90-360/\steps*1:0.3*\r) node [red, midway, fill = white, inner sep = .8pt] {\scriptsize $2$};
    \draw[gray!60!white] (90-360/\steps*2:1.5*\r) -- (C);
    \draw[gray!60!white] (C) -- (90-360/\steps*2:0.3*\r);
\end{tikzpicture}\end{gathered}
    \]
    Up to rotational symmetry, there is only one more cyclic volume integer flow on $\TheCycle{3}$, namely:
    \[
        \begin{tikzpicture}[-stealth,scale=.4]
    \def\r{2cm}
    \def\e{3}
    \def\steps{3}

    \draw[thick] (90+360/\steps*1-\e:\r) arc (90+360/\steps*1-\e:90-360/\steps*0+\e:\r) node [red, midway, fill = white, inner sep = .8pt] {\scriptsize $1$};
    \draw[thick] (90-360/\steps*0-\e:\r) arc (90-360/\steps*0-\e:90-360/\steps*1+\e:\r) node [red, midway, fill = white, inner sep = .8pt] {\scriptsize $1$};
    \draw[gray!60!white] (90-360/\steps*1-\e:\r) arc (90-360/\steps*1-\e:90-360/\steps*2+\e:\r);

    \node[circle, draw, inner sep =0pt, minimum size=4pt,fill] (A) at (90-360/\steps*0:\r) {};
    \node[draw, inner sep =0pt, minimum size=4pt,fill] (B) at (90-360/\steps*1:\r) {};
    \node[circle, draw, inner sep =0pt, minimum size=4pt,fill] (C) at (90-360/\steps*2:\r) {};

    \draw[gray!60!white] (90-360/\steps*0:1.5*\r) -- (A);
    \draw[thick] (A) -- (90-360/\steps*0:0.3*\r) node [red, midway, fill = white, inner sep = .8pt] {\scriptsize $1$};
    \draw[gray!60!white] (90-360/\steps*1:1.5*\r) -- (B);
    \draw[thick] (B) -- (90-360/\steps*1:0.3*\r) node [red, midway, fill = white, inner sep = .8pt] {\scriptsize $1$};
    \draw[gray!60!white] (90-360/\steps*2:1.5*\r) -- (C);
    \draw[gray!60!white] (C) -- (90-360/\steps*2:0.3*\r);
\end{tikzpicture}
    \]
    In both representations of cyclic volume integer flows, the square vertices are those with netflow $0$.
\end{example}

\subsection{A polyhedral complex}

Recall from \cref{thm:innerflowsindeg} that, for an acyclic graph $G$, the volume of the flow polytope $\F_1(G)$ is given by the number of volume integer flows on $G$. However, when the graph $\cyclegraph$ has at least one directed cycle $C$, the defining inequalities of $\F_1(\cyclegraph)$ yield an unbounded polyhedron. Indeed, if ${\bf 1}_C$ is the unit flow on the cycle $C \in \Cycles(\cyclegraph)$, then $\lambda {\bf 1}_C \in \F_1(\cyclegraph)$ for all~$\lambda \geq 0$.

This motivates the definition of the following polyhedral complex.

\begin{definition}
    \label{def:unbounded_object}
    For a graph $\cyclegraph$ admitting a cyclic ample framing, its \textnew{flow complex} is
    \[
        \mathnew{\bdd(\cyclegraph)} := \bigset{ f \in \F^+(\cyclegraph) }{ \smashoperator[r]{\sum_{ \substack{v \in \sources(\cyclegraph) \\ e \in \out(v)} }} \quad f(e) + \smashoperator[r]{\sum_{C \in \Cycles(\cyclegraph)}} \quad \min\set{ f(e) }{ e \in C } = 1 }.
    \]
\end{definition}
Observe that if $\cyclegraph$ is acyclic, then $\bdd(\cyclegraph) = \F_1(\cyclegraph)$.

\begin{example}
    In general, the object $\bdd(\cyclegraph)$ is not convex.
    For instance, consider the blossomed $6$-cycle $\TheBlackCycle{6}$ from \cref{ex:cyclic-framing-Hn} and let $\route_1,\route_2$ be the following two routes:
    \[
        \begin{tikzpicture}[-stealth,scale=.7]
    \def\r{2cm}
    \def\e{3}
    \def\steps{6}

    \foreach \x in {1,...,\steps}{
        \draw[gray!70,dashed]
            (90+360/\steps-360/\steps*\x:\r) arc (90+360/\steps-360/\steps*\x:90-360/\steps*\x+\e:\r);
        \node[circle, draw, inner sep=0pt, minimum size=4pt,fill] (N\x) at (90+360/\steps-360/\steps*\x:\r) {};
        \draw[gray!70]
            (90+360/\steps-360/\steps*\x:1.5*\r) -- (N\x);
        \draw[gray!70]
            (N\x) -- (90+360/\steps-360/\steps*\x:0.5*\r);
    }

    \draw[blue,very thick]
            (90+360/\steps-360/\steps*1:1.5*\r) -- (N1);
    \draw[red,very thick,dashed]
            (90+360/\steps-360/\steps*1-\e:\r) arc (90+360/\steps-360/\steps*1-\e:90-360/\steps*1+\e:\r);
    \draw[red,very thick,dashed]
            (90+360/\steps-360/\steps*2-\e:\r) arc (90+360/\steps-360/\steps*2-\e:90-360/\steps*2+\e:\r);
    \draw[red,very thick,dashed]
            (90+360/\steps-360/\steps*3-\e:\r) arc (90+360/\steps-360/\steps*3-\e:90-360/\steps*3+\e:\r);
    \draw[blue,very thick]
            (N4) -- (90+360/\steps-360/\steps*4:0.5*\r);
\end{tikzpicture}
\hspace{.1\linewidth}
\begin{tikzpicture}[-stealth,scale=.7]
    \def\r{2cm}
    \def\e{3}
    \def\steps{6}
    
   \foreach \x in {1,...,\steps}{
        \draw[gray!70,dashed]
            (90+360/\steps-360/\steps*\x:\r) arc (90+360/\steps-360/\steps*\x:90-360/\steps*\x+\e:\r);
        \node[circle, draw, inner sep=0pt, minimum size=4pt,fill] (N\x) at (90+360/\steps-360/\steps*\x:\r) {};
        \draw[gray!70]
            (90+360/\steps-360/\steps*\x:1.5*\r) -- (N\x);
        \draw[gray!70]
            (N\x) -- (90+360/\steps-360/\steps*\x:0.5*\r);
    }

    \draw[blue,very thick]
            (90+360/\steps-360/\steps*4:1.5*\r) -- (N4);
    \draw[red,very thick,dashed]
            (90+360/\steps-360/\steps*4-\e:\r) arc (90+360/\steps-360/\steps*4-\e:90-360/\steps*4+\e:\r);
    \draw[red,very thick,dashed]
            (90+360/\steps-360/\steps*5-\e:\r) arc (90+360/\steps-360/\steps*5-\e:90-360/\steps*5+\e:\r);
    \draw[red,very thick,dashed]
            (90+360/\steps-360/\steps*0-\e:\r) arc (90+360/\steps-360/\steps*0-\e:90-360/\steps*0+\e:\r);
    \draw[blue,very thick]
            (N1) -- (90+360/\steps-360/\steps*1:0.5*\r);
\end{tikzpicture}
\hspace{.1\linewidth}
\begin{tikzpicture}[-stealth,scale=.7]
    \def\r{2cm}
    \def\e{3}
    \def\steps{6}

    \foreach \x in {1,...,\steps}{
        \draw[gray!70,dashed]
            (90+360/\steps-360/\steps*\x:\r) arc (90+360/\steps-360/\steps*\x:90-360/\steps*\x+\e:\r);
        \node[circle, draw, inner sep=0pt, minimum size=4pt,fill] (N\x) at (90+360/\steps-360/\steps*\x:\r) {};
        \draw[gray!70]
            (90+360/\steps-360/\steps*\x:1.5*\r) -- (N\x);
        \draw[gray!70]
            (N\x) -- (90+360/\steps-360/\steps*\x:0.5*\r);
    }

    \foreach \x in {1,...,\steps}{
        \draw[thick] (90+360/\steps-360/\steps*\x:\r) arc (90+360/\steps-360/\steps*\x:90-360/\steps*\x+\e:\r) node [red, midway, fill = white, inner sep = .8pt] {\scriptsize $.5$} ;
    }

    \draw[thick]
            (90+360/\steps-360/\steps*1:1.5*\r) -- (N1) node [red, midway, fill = white, inner sep = .8pt] {\scriptsize $.5$};
    \draw[thick]
            (N1) -- (90+360/\steps-360/\steps*1:0.5*\r) node [red, midway, fill = white, inner sep = .8pt] {\scriptsize $.5$};
    \draw[thick]
            (90+360/\steps-360/\steps*4:1.5*\r) -- (N4) node [red, midway, fill = white, inner sep = .8pt] {\scriptsize $.5$};
    \draw[thick]
            (N4) -- (90+360/\steps-360/\steps*4:0.5*\r) node [red, midway, fill = white, inner sep = .8pt] {\scriptsize $.5$};

\end{tikzpicture}
    \]
    Then, ${\bf 1}_{\route_1},{\bf 1}_{\route_2} \in \bdd(\TheBlackCycle{6})$, but $f = \frac{{\bf 1}_{\route_1} + {\bf 1}_{\route_2}}{2} \notin \bdd(\TheBlackCycle{6})$ (depicted on the right) since
    \[
        \smashoperator[r]{\sum_{ \substack{v \in \sources(\TheBlackCycle{6}) \\ e \in \out(v)} }} \quad f(e) +
        \smashoperator[r]{\sum_{C \in \Cycles(\TheBlackCycle{6})}} \quad \min
        \set{ f(e) }{ e \in C }
        = .5 + .5 + .5 \neq 1.
    \]
\end{example}

The following result is a cyclic analogue of the formula by Baldoni--Vergne and Postnikov--Stanley
(see \cref{thm:innerflowsindeg}) to compute the volume of flow polytopes.

\begin{theorem}
    \label{thm:normVolCyclicIntegerFlow}
    Let $\cyclegraph$ be a directed graph with a cyclic ample framing, then
    \[
        \vol\,\bdd(\cyclegraph) = \#\funny(\cyclegraph).
    \]
\end{theorem}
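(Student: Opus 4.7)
The plan is to show that $\DKK(\cyclegraph)$ restricts to a unimodular triangulation of $\bdd(\cyclegraph)$ whose maximal simplices are indexed by $\MaxCliques(\cyclegraph)$, and then invoke \cref{prop:cliques-to-cyclic-funny} to identify the latter with $\funny(\cyclegraph)$.

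First I would analyze, for a fixed maximal clique $K \in \MaxCliques(\cyclegraph)$, what the intersection of the cone $\sigma_K \in \DKK(\cyclegraph)$ with $\bdd(\cyclegraph)$ looks like. A point of $\sigma_K$ is a nonnegative combination $f = \sum_{\route \in K} a_\route {\bf 1}_\route$. Since $K$ contains all exceptional routes, and in particular all minimal cycles $C \in \Cycles(\cyclegraph)$, we can split this sum into the cycle contribution and the good-route contribution. I would then evaluate the two pieces appearing in the normalization defining $\bdd(\cyclegraph)$:
\begin{itemize}
\item Each good route $\route \in K$ starts at a unique source and contributes $a_\route$ to exactly one edge outgoing from a source (by \cref{convention:graphs no idle edges}), while cycles contribute nothing there. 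Hence $\sum_{v \in \sources, e \in \out(v)} f(e) = \sum_{\text{good } \route \in K} a_\route$.
\item By the argument in the proof of \cref{prop:cliques-to-cyclic-funny}, for every $C \in \Cycles(\cyclegraph)$ there is exactly one edge $e_C \in C$ used by no good route of $K$, so $f(e_C) = a_C$, while $f(e) \geq a_C$ for every other $e \in C$ (using \cref{lem:cyclic_three_obs}\ref{lem:cyclic_three_obs1} to rule out double-counting between distinct cycles). Consequently $\min\{f(e): e \in C\} = a_C$, so $\sum_{C \in \Cycles(\cyclegraph)} \min\{f(e): e \in C\} = \sum_{C} a_C$.
\end{itemize}
Summing these two equalities shows that $f \in \bdd(\cyclegraph)$ if and only if $\sum_{\route \in K} a_\route = 1$, i.e. $\sigma_K \cap \bdd(\cyclegraph) = \Delta_K := \conv\{{\bf 1}_\route : \route \in K\}$.

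By \cref{thm:cyclic-DKK}, the cones $\sigma_K$ form a unimodular triangulation of $\F^+(\cyclegraph)$; intersecting each with $\bdd(\cyclegraph)$ therefore produces a polyhedral decomposition of $\bdd(\cyclegraph)$ whose maximal faces are the unimodular simplices $\Delta_K$, each of normalized volume $1$. Summing yields
\[
    \vol\,\bdd(\cyclegraph) \;=\; \#\MaxCliques(\cyclegraph),
\]
and then \cref{prop:cliques-to-cyclic-funny} identifies the right-hand side with $\#\funny(\cyclegraph)$.

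The main subtlety to verify carefully is the computation of $\min\{f(e): e \in C\}$ for each cycle $C$: one must ensure that the existence of a unique ``empty'' edge $e_C$ of $C$ (edge not used by a good route in $K$) really forces the minimum to be $a_C$ rather than a strictly smaller value. This requires the edge-disjointness of minimal cycles from \cref{lem:cyclic_three_obs}\ref{lem:cyclic_three_obs1}, so that $a_{C'}$ for $C' \neq C$ cannot contaminate the flow on edges of $C$, together with the observation from the proof of \cref{prop:cliques-to-cyclic-funny} that good routes contribute nonnegatively on every edge of $C$. Once this is secured, the rest of the argument is bookkeeping via the already-established unimodular triangulation and the bijection $\Phi_\cyclegraph$.
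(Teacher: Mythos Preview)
Your proposal is correct and follows essentially the same strategy as the paper: show that intersecting the triangulation $\DKK(\cyclegraph)$ with $\bdd(\cyclegraph)$ yields unimodular simplices $\Delta_K = \conv\{{\bf 1}_\route : \route \in K\}$ indexed by maximal cliques, and then apply \cref{prop:cliques-to-cyclic-funny}. The paper's version is terser---it simply checks that each ${\bf 1}_\route$ lies in $\bdd(\cyclegraph)$ and asserts the resulting triangulation---whereas you work out explicitly why the normalization functional is linear on each $\sigma_K$ (via the unique unused edge $e_C$ per cycle), which is a welcome clarification of the same argument.
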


\begin{proof}
    First notice that ${\bf 1}_\route \in \bdd(\cyclegraph)$ for every $\route \in \Routes(\cyclegraph)$.
    Indeed, for a good route $\route$, ${\bf 1}_\route$ has flow $1$ at exactly one edge incident to a source and has flow $0$ at least one edge of every cycle;
    and for a cycle $C$, ${\bf 1}_C$ has $0$ flow at every edge incident to a source (since those edges cannot be part of a cycle) and at every edge of a cycle $C' \neq C$.
    Since ${\bf 1}_\route$ is the smallest nonzero integer point in $\RR_{\geq 0} \{ {\bf 1}_\route \}$, the intersection of $\DKK(\cyclegraph)$ with $\bdd(\cyclegraph)$,
    \[
        \bigset{ \conv\set{{\bf 1}_\route}{\route \in K} }{ K \in \Cliques(\cyclegraph) },
    \]
    is a unimodular triangulation of $\bdd(\cyclegraph)$.
    It follows that $\vol\,\bdd(\cyclegraph)$ equals the number of maximal cliques of $\cyclegraph$
    and \cref{prop:cliques-to-cyclic-funny} implies the result.
\end{proof}

\begin{example}
    The smallest example we can consider is for the graph $\TheBlackCycle{2}$, in which case $\bdd(\TheBlackCycle{2}) \subseteq \RR^6$ has dimension $6 - 2 - 1 = 3$.
    The graph $\TheBlackCycle{2}$ has two cyclic volume integer flows:
    \[
        \begin{tikzpicture}[-stealth,scale=.4]
    \def\r{2cm}
    \def\e{3}
    \def\steps{2}

    \draw[thick] (90+360/\steps*1-\e:\r) arc (90+360/\steps*1-\e:90-360/\steps*0+\e:\r) node [red, midway, fill = white, inner sep = 1pt] {\scriptsize $1$};
    \draw[gray!70!white] (90-360/\steps*0-\e:\r) arc (90-360/\steps*0-\e:90-360/\steps*1+\e:\r);

    \node[draw, inner sep =0pt, minimum size=4pt,fill] (A) at (90-360/\steps*0:\r) {};
    \node[circle, draw, inner sep =0pt, minimum size=4pt,fill] (B) at (90-360/\steps*1:\r) {};

    \draw[gray!70!white] (90-360/\steps*0:1.5*\r) -- (A);
    \draw[thick] (A) -- (90-360/\steps*0:0.3*\r) node [red, midway, fill = white, inner sep = 1pt] {\scriptsize $1$};
    \draw[gray!70!white] (90-360/\steps*1:1.5*\r) -- (B);
    \draw[gray!70!white] (B) -- (90-360/\steps*1:0.3*\r);
\end{tikzpicture}
\hspace{.2\linewidth}
\begin{tikzpicture}[-stealth,scale=.4]
    \def\r{2cm}
    \def\e{3}
    \def\steps{2}

    \draw[gray!70!white] (90+360/\steps*1-\e:\r) arc (90+360/\steps*1-\e:90-360/\steps*0+\e:\r);
    \draw[thick] (90-360/\steps*0-\e:\r) arc (90-360/\steps*0-\e:90-360/\steps*1+\e:\r) node [red, midway, fill = white, inner sep = 1pt] {\scriptsize $1$};

    \node[circle, draw, inner sep =0pt, minimum size=4pt,fill] (A) at (90-360/\steps*0:\r) {};
    \node[draw, inner sep =0pt, minimum size=4pt,fill] (B) at (90-360/\steps*1:\r) {};

    \draw[gray!70!white] (90-360/\steps*0:1.5*\r) -- (A);
    \draw[gray!70!white] (A) -- (90-360/\steps*0:0.3*\r);
    \draw[gray!70!white] (90-360/\steps*1:1.5*\r) -- (B);
    \draw[thick] (B) -- (90-360/\steps*1:0.3*\r) node [red, midway, fill = white, inner sep = 1pt] {\scriptsize $1$};
\end{tikzpicture}
    \]
    and therefore $\vol \bdd(\TheBlackCycle{2}) = 2$.
    Concretely, $\bdd(\TheBlackCycle{2})$ is the union of two unimodular tetrahedra sharing the common facet $\Delta_\cE = \conv\set{{\bf 1}_\route}{\route \in \cE(\TheCycle{2})}$.
    However, the two tetrahedra do not lie in the same $3$-dimensional affine subspace of $\RR^6$.
    Under a suitable projection, in which we collapse $\Delta_\cE$ to a single point, this object looks like the union of two segments (shown in cyan below) sharing a common vertex (shown in orange):
    \[
        \begin{gathered}\begin{tikzpicture}[scale = 1.5]
        \draw[very thick,cyan] (0,0) -- (0,1);
        \draw[very thick,cyan] (0,0) -- (1,0);
        \draw[dashed] (0,1) -- (1,0);
        \node[circle, draw, inner sep=0pt, minimum size=4pt,fill] at (0,1) {};
        \node[circle, draw, inner sep=0pt, minimum size=4pt,fill] at (1,0) {};
        \node[circle, draw, inner sep=0pt, minimum size=3pt,fill] at (.5,.5) {};
        \node[right] at (1,0) {${\bf 1}_{\route_1}$};
        \node[above] at (0,1) {${\bf 1}_{\route_2}$};
        \node[above right] at (.5,.5) {$\tfrac{{\bf 1}_{\route_1} + {\bf 1}_{\route_2}}{2}$};
        \node[orange,circle, draw, inner sep=0pt, minimum size=4pt,fill] at (0,0) {};
\end{tikzpicture}\end{gathered}
\hspace{.15\linewidth}
\route_1 = \begin{gathered}\begin{tikzpicture}[-stealth,scale=.8]
    \def\r{1cm}
    \def\e{3}
    \def\steps{2}

    \foreach \x in {1,...,\steps}{
        \draw[gray!70!white,dashed]
            (90+360/\steps-360/\steps*\x:\r) arc (90+360/\steps-360/\steps*\x:90-360/\steps*\x+\e:\r);
        \node[circle, draw, inner sep=0pt, minimum size=4pt,fill] (N\x) at (90+360/\steps-360/\steps*\x:\r) {};
        \draw[gray!70!white]
            (90+360/\steps-360/\steps*\x:1.5*\r) -- (N\x);
        \draw[gray!70!white]
            (N\x) -- (90+360/\steps-360/\steps*\x:0.5*\r);
    }
        \draw[blue,very thick]
            (90+360/\steps-360/\steps*2:1.5*\r) -- (N2);
        \draw[red,dashed,very thick]
            (90+360/\steps-360/\steps*2:\r) arc (90+360/\steps-360/\steps*2:90-360/\steps*2+\e:\r);
        \draw[blue,very thick]
            (N1) -- (90+360/\steps-360/\steps*1:0.5*\r);            
    \foreach \x in {1,...,\steps}{
        \node[circle, draw, inner sep=0pt, minimum size=4pt,fill] (N\x) at (90+360/\steps-360/\steps*\x:\r) {};
    }    
\end{tikzpicture}\end{gathered}
\hspace{.1\linewidth}
\route_2 = \begin{gathered}\begin{tikzpicture}[-stealth,scale=.8]
    \def\r{1cm}
    \def\e{3}
    \def\steps{2}

    \foreach \x in {1,...,\steps}{
        \draw[gray!70!white,dashed]
            (90+360/\steps-360/\steps*\x:\r) arc (90+360/\steps-360/\steps*\x:90-360/\steps*\x+\e:\r);
        \node[circle, draw, inner sep=0pt, minimum size=4pt,fill] (N\x) at (90+360/\steps-360/\steps*\x:\r) {};
        \draw[gray!70!white]
            (90+360/\steps-360/\steps*\x:1.5*\r) -- (N\x);
        \draw[gray!70!white]
            (N\x) -- (90+360/\steps-360/\steps*\x:0.5*\r);
    }
        \draw[blue,very thick]
            (90+360/\steps-360/\steps*1:1.5*\r) -- (N1);
        \draw[red,dashed,very thick]
            (90+360/\steps-360/\steps*1:\r) arc (90+360/\steps-360/\steps*1:90-360/\steps*1+\e:\r);
        \draw[blue,very thick]
            (N2) -- (90+360/\steps-360/\steps*2:0.5*\r);

    \foreach \x in {1,...,\steps}{
        \node[circle, draw, inner sep=0pt, minimum size=4pt,fill] (N\x) at (90+360/\steps-360/\steps*\x:\r) {};
    }    
\end{tikzpicture}\end{gathered}
    \]
\end{example}

\section{Connection to complete gentle algebras}
\label{s:completegentlealg}

In the previous section, we defined a triangulation of the flow cone for a graph $\cyclegraph$ with a cyclic ample framing. In this section, we use the results in  \cref{ss:locally} to relate this triangulation to the complete gentle algebra arising from the locally gentle quiver $(Q_\cyclegraph,R_\cyclegraph)$ extending the work done by \cite{Kentuckygentle} for the acyclic case.
We obtain a linear isomorphism from the reduced fan $\DKKred(\cyclegraph,\coloring)$ to a slice of a $g$-vector fan.
This, in turn, gives a realization of $\DKKred(\cyclegraph,\coloring)$ as the normal fan of a polytope.

\subsection{A slice of the $g$-vector fan}

Let $\Fred$ denote the quotient of $\F$ by the subspace $\exc$ spanned by the indicator vectors of exceptional routes, and let $\DKKred$ be the fan in $\Fred$ obtained by projecting the cones in $\DKK$.
The arguments below follow those for the acyclic case in \cite{DKK}.

\begin{proposition}
    \label{prop:dkk_cyclic_complete}
    Let $\cyclegraph$ be a graph with a cyclic ample framing.
    Then, the fan $\DKKred$ is complete in the space $\Fred$.
\end{proposition}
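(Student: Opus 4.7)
The plan is to reduce completeness of $\DKKred$ to the assertion that $\F(\cyclegraph) = \F^+(\cyclegraph) + \exc$. By \cref{thm:cyclic-DKK}, the union of cones of $\DKK(\cyclegraph)$ is exactly the flow cone $\F^+(\cyclegraph)$, so the union of cones of $\DKKred$ equals $\pi_\cE(\F^+(\cyclegraph))$. Thus $\DKKred$ is complete in $\Fred$ if and only if every $f \in \F(\cyclegraph)$ can be translated into $\F^+(\cyclegraph)$ by adding a real linear combination of indicator vectors of exceptional routes.

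The key input is the observation that, under a cyclic ample framing, every edge $e \in E(\cyclegraph)$ belongs to exactly one monochromatic route. Starting from $e$ of colour $c$, the conditions \ref{def:cycle-ample-framing-cond1}--\ref{def:cycle-ample-framing-cond2} of \cref{def:cycle-ample-framing} guarantee that at each internal vertex there is a unique choice of incoming and outgoing edge of colour $c$, so extending $e$ forward and backward produces an unambiguous monochromatic walk which, by \cref{lem:monochromatic-are-good}, is either a good route from a source to a sink or a minimal cycle; in either case, by \cref{lem:cyclic-monochrom-equals-excep}, it is an exceptional route of $(\cyclegraph,\coloring)$.

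Using this, I would conclude as follows. Given $f \in \F(\cyclegraph)$, pick any $M > \max_{e \in E} |f(e)|$ and set
\[
    g := f + M \sum_{\route \in \cE(\cyclegraph,\coloring)} {\bf 1}_\route.
\]
Since each edge lies in exactly one exceptional route, $g(e) = f(e) + M > 0$ for every $e \in E$; and since each ${\bf 1}_\route$ is a balanced flow (routes and cycles preserve netflow at internal vertices), $g \in \F^+(\cyclegraph)$. Hence $\pi_\cE(f) = \pi_\cE(g) \in \pi_\cE(\F^+(\cyclegraph))$, which gives $\Fred = \pi_\cE(\F^+(\cyclegraph))$ and thus completeness of $\DKKred$.

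The argument is essentially formal once the ``unique exceptional route through each edge'' statement is in hand; that statement is the only nontrivial step, and it follows directly from the definition of a cyclic ample framing together with the two characterisation lemmas already proved, so I do not expect a real obstacle.
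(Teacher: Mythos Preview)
Your argument is correct and is essentially the same as the paper's: both hinge on the fact that every edge lies on an exceptional (monochromatic) route, so translating by a large multiple of $\sum_{\route \in \cE} {\bf 1}_\route$ pushes any balanced flow into $\F^+$, whence $\pi_\cE(\F^+) = \Fred$. The paper phrases this more tersely via facets (``$\F^+ \cap \exc$ is not contained in any facet of $\F^+$''), but the content is identical.
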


\begin{proof}
    The facets of the cone $\F^+ \subseteq \F$ are given by inequalities of the form $f(e) \geq 0$ as $e$ runs over the edges of $\cyclegraph$.
    Since every edge of $\cyclegraph$ appears on an exceptional route, the intersection $\F^+ \cap \exc$ is not contained in any facet of $\F^+$ and therefore $\F^+ / \exc= \F / \exc = \Fred$.
    It follows that the fan $\DKKred$ is complete.
\end{proof}

Given a cyclic amply framed graph $\cyclegraph = (\cyclegraph,\coloring)$, let $(Q^\blossom_\cyclegraph,R^\blossom_\cyclegraph)$ be the blossoming quiver obtained by reversing the arrows of $\cyclegraph$ coloured \tcb{blue} and with relations $e_1 e_2$ whenever $\coloring(e_1) \neq \coloring(e_2)$, just as in \cref{def:acyclic-blossoming} for acyclic graphs.
Also let $(Q_\cyclegraph,R_\cyclegraph)$ be the restriction of $(Q^\blossom_\cyclegraph,R^\blossom_\cyclegraph)$ to the internal vertices of $\cyclegraph$.
Observe that vertices in any cycle of $\cyclegraph$ are in $Q_\cyclegraph$, as they are neither sources nor sinks of $\cyclegraph$.

The key difference with \cref{ss:DAG_to_gentle} is that now the quivers $(Q^\blossom_\cyclegraph,R^\blossom_\cyclegraph)$ and $(Q_\cyclegraph,R_\cyclegraph)$ are not necessarily gentle, but only locally gentle.

\begin{example}
    When $(\cyclegraph,\coloring)$ is the blossomed $n$-cycle $\TheCycle{n}$ in \cref{fig:n-cycle1},
    the corresponding locally gentle quivers are:
        \[
           (Q^\blossom_{\TheCycle{n}},R^\blossom_{\TheCycle{n}}) =
           \begin{gathered}\begin{tikzpicture}[-stealth,scale=.8]
    \def\r{1.5cm}
    \def\e{3}
    \def\steps{7}
    \node[circle, draw, inner sep=0pt, minimum size=4pt,fill] (A) at (90-360/\steps*0:\r) {};
    \node[circle, draw, inner sep=0pt, minimum size=4pt,fill] (B) at (90-360/\steps*1:\r) {};
    \node[circle, draw, inner sep=0pt, minimum size=4pt,fill] (C) at (90-360/\steps*2:\r) {};
    \node[circle, draw, inner sep=0pt, minimum size=4pt,fill] (D) at (90-360/\steps*3:\r) {};
    \node[circle, draw, inner sep=0pt, minimum size=4pt,fill] (N) at (90+360/\steps*1:\r) {};

    \node at (90-360/\steps*0-8:1.2*\r) {$1$};
    \node at (90-360/\steps*1-8:1.2*\r) {$2$};
    \node at (90-360/\steps*2-8:1.2*\r) {$3$};
    \node at (90-360/\steps*3-8:1.2*\r) {$4$};
    \node at (90+360/\steps*1-8:1.2*\r) {$n$};

    \draw[] (90+360/\steps*1-\e:\r) arc (90+360/\steps*1-\e:90-360/\steps*0+\e:\r);
    \draw[] (90-360/\steps*0-\e:\r) arc (90-360/\steps*0-\e:90-360/\steps*1+\e:\r);
    \draw[] (90-360/\steps*1-\e:\r) arc (90-360/\steps*1-\e:90-360/\steps*2+\e:\r);
    \draw[] (90-360/\steps*2-\e:\r) arc (90-360/\steps*2-\e:90-360/\steps*3+\e:\r);
    \draw[-,dotted]     (90-360/\steps*3-\e:\r) arc (90-360/\steps*3-\e:90-360/\steps*(\steps-1)+\e:\r);

    \draw[stealth-] (90-360/\steps*0:1.5*\r) -- (A); \draw[stealth-] (A) -- (90-360/\steps*0:0.5*\r);
    \draw[stealth-] (90-360/\steps*1:1.5*\r) -- (B); \draw[stealth-] (B) -- (90-360/\steps*1:0.5*\r);
    \draw[stealth-] (90-360/\steps*2:1.5*\r) -- (C); \draw[stealth-] (C) -- (90-360/\steps*2:0.5*\r);
    \draw[stealth-] (90-360/\steps*3:1.5*\r) -- (D); \draw[stealth-] (D) -- (90-360/\steps*3:0.5*\r);
    \draw[stealth-] (90+360/\steps*1:1.5*\r) -- (N); \draw[stealth-] (N) -- (90+360/\steps*1:0.5*\r);

    \draw[red,thick,dotted,-] (90+360/\steps*1:.8*\r) arc (-90+360/\steps*1:-90+360/\steps*1+90:.2*\r);
    \draw[red,thick,dotted,-] (90-360/\steps*0:.8*\r) arc (-90-360/\steps*0:-90-360/\steps*0+90:.2*\r);
    \draw[red,thick,dotted,-] (90-360/\steps*1:.8*\r) arc (-90-360/\steps*1:-90-360/\steps*1+90:.2*\r);
    \draw[red,thick,dotted,-] (90-360/\steps*2:.8*\r) arc (-90-360/\steps*2:-90-360/\steps*2+90:.2*\r);

    \draw[red,thick,dotted,-] (90-360/\steps*0:1.2*\r) arc (+90-360/\steps*0:+90-360/\steps*0+90:.2*\r);
    \draw[red,thick,dotted,-] (90-360/\steps*1:1.2*\r) arc (+90-360/\steps*1:+90-360/\steps*1+90:.2*\r);
    \draw[red,thick,dotted,-] (90-360/\steps*2:1.2*\r) arc (+90-360/\steps*2:+90-360/\steps*2+90:.2*\r);
    \draw[red,thick,dotted,-] (90-360/\steps*3:1.2*\r) arc (+90-360/\steps*3:+90-360/\steps*3+90:.2*\r);
\end{tikzpicture}\end{gathered}
           \qquad\text{and}\qquad
           (Q_{\TheCycle{n}},R_{\TheCycle{n}}) =
           \begin{gathered}\begin{tikzpicture}[-stealth,scale=.8]
    \def\r{1.5cm}
    \def\e{3}
    \def\steps{7}
    \node[circle, draw, inner sep=0pt, minimum size=4pt,fill] (A) at (90-360/\steps*0:\r) {};
    \node[circle, draw, inner sep=0pt, minimum size=4pt,fill] (B) at (90-360/\steps*1:\r) {};
    \node[circle, draw, inner sep=0pt, minimum size=4pt,fill] (C) at (90-360/\steps*2:\r) {};
    \node[circle, draw, inner sep=0pt, minimum size=4pt,fill] (D) at (90-360/\steps*3:\r) {};
    \node[circle, draw, inner sep=0pt, minimum size=4pt,fill] (N) at (90+360/\steps*1:\r) {};

    \node at (90-360/\steps*0-8:1.2*\r) {$1$};
    \node at (90-360/\steps*1-8:1.2*\r) {$2$};
    \node at (90-360/\steps*2-8:1.2*\r) {$3$};
    \node at (90-360/\steps*3-8:1.2*\r) {$4$};
    \node at (90+360/\steps*1-8:1.2*\r) {$n$};

    \draw[] (90+360/\steps*1-\e:\r) arc (90+360/\steps*1-\e:90-360/\steps*0+\e:\r);
    \draw[] (90-360/\steps*0-\e:\r) arc (90-360/\steps*0-\e:90-360/\steps*1+\e:\r);
    \draw[] (90-360/\steps*1-\e:\r) arc (90-360/\steps*1-\e:90-360/\steps*2+\e:\r);
    \draw[] (90-360/\steps*2-\e:\r) arc (90-360/\steps*2-\e:90-360/\steps*3+\e:\r);
    \draw[-,dotted]     (90-360/\steps*3-\e:\r) arc (90-360/\steps*3-\e:90-360/\steps*(\steps-1)+\e:\r);
\end{tikzpicture}\end{gathered}.
        \]
\end{example}

In the acyclic case, one can use the fact that the $g$-vector fan $\gfan(Q_G,R_G)$ is complete to deduce that $\DKKred(G)$ is also complete.
Surprisingly, the following result in the cyclic case allows us to deduce the completeness of a certain subfan of $\gfan(Q_\cyclegraph,R_\cyclegraph)$ using the fact that $\DKKred$ is complete. Given a cyclic graph $H$, define the subspace
\[
        \mathnew{W_\cyclegraph} := \bigset{ \vx \in \RR^{(Q_\cyclegraph)_0} }{ \sum_{v \in V(C)} \vx_v = 0 \text{ for all } C \in \Cycles(\cyclegraph) }.
    \]

\begin{theorem}
    \label{thm:quotient-subfan}
    Let $\cyclegraph$ be a cyclic graph with a fixed cyclic ample framing.
    The collection of cones of $\gfan(Q_\cyclegraph,R_\cyclegraph)$ lying in $W_H$
    forms a complete fan inside~$W_\cyclegraph$.
    Moreover, this fan is linearly isomorphic to $\DKKred(\cyclegraph)$.
\end{theorem}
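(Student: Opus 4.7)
The strategy is to extend the linear map $\phi$ of \cref{def:phi} to the cyclic setting, mimicking the proof of \cref{prop:linear-iso}, and then identify its image with the desired subfan of $\gfan(Q_\cyclegraph, R_\cyclegraph)$. Define $\phi$ by the same formula; exactly as in the acyclic case, $\phi({\bf 1}_\route)=0$ for every exceptional route because, by \cref{lem:cyclic-monochrom-equals-excep}, these are the monochromatic routes (good or cyclic), and at each internal vertex visited the $\pm\tfrac{1}{2}$ contributions cancel. Hence $\phi$ descends to $\overline\phi\colon\Fred\to\RR^{V_0}$. For a non-exceptional good route $\route$ and a cycle $C\in\Cycles(\cyclegraph)$, part~(3) of \cref{lem:cyclic_three_obs} ensures that $\route$ meets $C$ in at most one subwalk; if so, it enters $C$ via an opposite-colour edge (a peak, contribution $+1$) and leaves via an opposite-colour edge (a valley, contribution $-1$), while interior vertices of the shared subwalk contribute $0$. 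Thus $\sum_{v\in V(C)}\phi({\bf 1}_\route)(v)=0$, so $\overline\phi(\Fred)\subseteq W_\cyclegraph$.

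Next I would establish that $\overline\phi$ is a linear isomorphism. By \cref{lem:monochromatic-are-good}, every edge of $\cyclegraph$ lies in a unique exceptional route, so their indicator vectors have disjoint supports; a short degree-counting argument then gives $\dim\Fred=\#\internal(\cyclegraph)-\#\Cycles(\cyclegraph)$. Part~(2) of \cref{lem:cyclic_three_obs} (distinct cycles share at most one vertex) implies that the cycle-sum constraints defining $W_\cyclegraph$ are linearly independent, so $\dim W_\cyclegraph$ has the same value. Injectivity is verified by showing $\ker\phi\cap\F=\exc$: combining $\phi(f)_v=0$ with the balance condition $\nf_f(v)=0$ at each internal vertex $v$ forces the flow along the red edges to be conserved at $v$, and similarly for blue; iterating along each monochromatic route shows that $f$ is constant along it, so $f\in\exc$.

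The crux is identifying $\overline\phi(\DKKred(\cyclegraph))$ with the subfan of $\gfan(Q_\cyclegraph,R_\cyclegraph)$ lying in $W_\cyclegraph$. By \cref{thm:connection_g-vector_NKred}, the rays of $\gfan(Q_\cyclegraph, R_\cyclegraph)$ correspond to non-self-kissing maximal strings in $(Q_\cyclegraph^\blossom, R_\cyclegraph^\blossom)$, splitting into bent strings ending at blossoming vertices, strings spiralling around an oriented cycle at one or both ends, and the shifted projectives $P_i[1]$. The bent strings are in bijection via $\RoutesToStrings$ with non-exceptional good routes, and combining \eqref{eq:g-rho} with \cref{lem:gvect_in_locgen} identifies $\overline\phi({\bf 1}_\route)$ with the $g$-vector of the associated module. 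I would then show that the remaining rays have $g$-vectors \emph{outside} $W_\cyclegraph$: using \cref{cor:walks-blossom-cycle}, a spiralling string around $C$ loses the $\delta\ve_{t(\alpha)}$-correction term of \cref{lem:gvect_in_locgen} at the spiral end, giving $\sum_{v\in V(C)}\pmb{g}_M(v)=\pm 1$; similarly $P_i[1]$ with $i$ on a cycle $C$ satisfies $\sum_{v\in V(C)}\pmb{g}_{P_i[1]}(v)=-1$. Finally, adapting \cref{cor:coherence_full_graphs_ample_framings} and \cref{thm:comb-iso-fans} to the locally gentle setting shows that compatibility of non-exceptional good routes (\cref{def:incompatible}) matches non-kissing of their bent maximal strings; hence $\overline\phi$ sends maximal cliques of $\DKK(\cyclegraph)$ to maximal cones of $\gfan(Q_\cyclegraph,R_\cyclegraph)$ contained in $W_\cyclegraph$. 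By \cref{prop:dkk_cyclic_complete}, the image $\overline\phi(\DKKred(\cyclegraph))$ is a complete fan in $W_\cyclegraph$; any cone of $\gfan$ inside $W_\cyclegraph$ is then a face of some maximal cone of $\overline\phi(\DKKred(\cyclegraph))$, forcing equality of the two subfans. The main technical hurdle is this spiralling/$W_\cyclegraph$ dichotomy, which requires careful use of \cref{cor:walks-blossom-cycle} to track the cohook behaviour at each end of a maximal string.
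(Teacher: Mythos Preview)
Your approach is correct and follows the same overall strategy as the paper: define the same linear map $\phi$, show it kills exceptional routes and lands in $W_\cyclegraph$, identify $\phi({\bf 1}_\route)$ with a $g$-vector via \cref{lem:gvect_in_locgen}, and match dimensions. Two points are worth noting.

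First, your direct injectivity argument (combining $\phi(f)_v=0$ with balance to force monochromatic conservation) is correct and is a nice alternative to the paper's route: the paper instead observes that $\DKKred$ is complete (\cref{prop:dkk_cyclic_complete}), so the image of $\overline\phi$ is a full subspace, and then matches dimensions to conclude $\overline\phi$ is an isomorphism onto $W_\cyclegraph$.

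Second, your explicit analysis that spiralling strings and $P_i[1]$ for $i$ on a cycle have $g$-vectors with cycle-sum $\pm 1$ is correct, but it is strictly more than you need. Once you know (i) $\overline\phi\colon\Fred\to W_\cyclegraph$ is a linear isomorphism, (ii) it carries rays of $\DKKred$ to rays of $\gfan$, and (iii) compatibility matches non-kissing, the image of $\DKKred$ is already a complete simplicial fan inside $W_\cyclegraph$ built from cones of $\gfan$. Any cone of $\gfan$ lying in $W_\cyclegraph$ must then be a face of one of these maximal image-cones (since $\gfan$ is a fan and the image covers $W_\cyclegraph$), so the two subfans coincide without ever inspecting the spiralling rays directly. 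The paper takes exactly this shortcut; its sentence ``any $g$-vector is of this form'' is admittedly terse, but the intended content is the completeness-plus-dimension argument rather than a case analysis. Your case analysis buys a more self-contained identification of which rays of $\gfan$ lie in $W_\cyclegraph$, at the cost of extra bookkeeping you do not actually need for the theorem. One small wording issue: your claim that ``bent strings are in bijection with non-exceptional good routes'' should read \emph{finite} bent maximal strings; the infinite (spiralling) ones are precisely the extras you handle separately.
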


\begin{example}
    For the blossomed $n$-cycle $\TheCycle{n}$, the subspace $W_{\TheBlackCycle{n}} \subseteq \RR^n$ is the hyperplane where the sum of all the coordinates is zero.
    In this case, the first statement of \cref{thm:quotient-subfan} is equivalent to the fact that a projective module and a shifted projective module are never in the same cone of the $g$-vector fan.
    Indeed, any projective $P_v$ of $(Q_\cyclegraph,R_\cyclegraph)$ is supported in all the vertices of $Q_\cyclegraph$.
\end{example}

\begin{proof}[{Proof of \cref{thm:quotient-subfan}}]
    The linear isomorphism is again realized by the map
    $\overline{\phi} : \Fred \to \RR^{(Q_\cyclegraph)_0}$ induced by the composition
    $\phi$ of
    \[
    \RR^E \to \RR^{V} : \ve_e \mapsto \begin{cases}
        \tfrac{1}{2} ( \ve_{\tail(e)} - \ve_{\head(e)} ) & \text{if } \coloring(e) = \text{\tcr{red}}, \\
        \tfrac{1}{2} ( \ve_{\head(e)} - \ve_{\tail(e)} ) & \text{if } \coloring(e) = \text{\tcb{blue}},
        \end{cases}
    \]
    and the projection $\RR^{V} \to \RR^{(Q_\cyclegraph)_0}$, as in \cref{s:fan-iso}.

    By \cref{lem:cyclic-monochrom-equals-excep}, exceptional routes are monochromatic,
    and therefore $\phi({\bf 1}_\route) = 0$ for all exceptional routes $\route \in \cE(\cyclegraph,\coloring)$. Thus, the map $\overline{\phi} : \Fred \to \RR^{(Q_\cyclegraph)_0}$ is well-defined.

    We now show that for any good non-exceptional route $\route \in \Routes(\cyclegraph) \setminus \cE(\cyclegraph)$, the image of its indicator vector $\phi({\bf 1}_\route)$ is a $g$-vector lying in the subspace $W_\cyclegraph$ and therefore the whole image of $\phi$ lies inside~$W_\cyclegraph$.

    Let $\route$ be a good route of $\cyclegraph$ and $C \in \Cycles(\cyclegraph)$.
    (Recall that $\route$ can enter $C$ at most once by \cref{lem:cyclic_three_obs}\ref{lem:cyclic_three_obs3}.)
    If $\route$ and $C$ do not share any vertices, then $\phi({\bf 1}_\route)_v = 0$ for all $v \in V(C)$.
    Otherwise, let $v_0$ and $v_1$ be the vertices of $C$ at which $\route$ enters and exits $C$, respectively.
    If $v_0 = v_1$, then the arrows of $\route$ incident to $v_0$ have the same colour; and we have $\phi({\bf 1}_\route)_v = 0$ for each vertex $v$ of~$C$.
    On the other hand, if $v_0 \neq v_1$, then $\route$ has a peak and a valley at $v_0$ and $v_1$ (the order, peak-valley or valley-peak, depends on the colour of the cycle), so
    $
        \phi({\bf 1}_\route)_{v_1} = - \phi({\bf 1}_\route)_{v_0}
    $
    and $\phi({\bf 1}_\route)_v = 0$ for all other vertices $v$ of $C$.
    In either case,
    \[
        \sum_{v \in C} \phi({\bf 1}_\route)_v = 0,
    \]
    and $\phi({\bf 1}_\route) \in W_\cyclegraph$.
    Now, let $\route^*$ be the string of $(Q_\cyclegraph,R_\cyclegraph)$ obtained by removing hooks from $\route$. Then, by \cref{lem:gvect_in_locgen}, $\phi({\bf 1}_\route) = \pmb{g}(\route^*)$.
    Thus, $\phi({\bf 1}_\route)$ is a $g$-vector lying in the space $W_\cyclegraph$ as claimed.

    Moreover, by \cref{cor:walks-blossom-cycle}, any $g$-vector of $(Q_\cyclegraph,R_\cyclegraph)$ is of this form.
    Thus, $\phi$ is a bijection to the union of all the cones in $(Q_\cyclegraph,R_\cyclegraph)$ whose rays lie inside $W_\cyclegraph$.
    Since $\DKKred(\cyclegraph)$ is a complete fan by \cref{prop:dkk_cyclic_complete} and $\phi$ is a linear map, its image is the subspace of $\RR^{(Q_\cyclegraph)_0}$ spanned by the rays of $(Q_\cyclegraph,R_\cyclegraph)$ inside $W_\cyclegraph$.
    We show that this space is all of $W_\cyclegraph$  by a dimension argument.
    Note that $\dim(W_\cyclegraph) = \# V_0 - \# \Cycles(\cyclegraph)$.
    On the other hand, since an exceptional route of $\cyclegraph$ is either a cycle or the unique monochromatic route leaving from a source, we have
    \[
        \dim(\Fred) = \#E - \# V_0 - (\# \sources(\cyclegraph) + \# \Cycles(\cyclegraph)) =  \dim(W_\cyclegraph).
    \]
    The last equality follows since, by a standard double-counting argument,
    \[
        2 \#E = \#\sources(\cyclegraph) + 4 \# V_0 + \#\sinks(\cyclegraph) \text{ and } \#\sources(\cyclegraph) = \#\sinks(\cyclegraph).
        \qedhere
    \]
\end{proof}

\begin{example}
    \label{ex:DKKred_H3}
    Recall from \cref{ex:routes-Hn} that good routes of
    the blossomed $n$-cycle $\TheCycle{n}$
    are in bijection with ordered pairs in $[n]^2$, and that routes $\route_{(i,i)}$ are exceptional. In the case $n = 3$, the space $W_{\TheBlackCycle{3}}$ has dimension $3-1 = 2$ and the fan $\DKKred$ is shown in \cref{fig:ex:DKKred_H3} below.
    \begin{figure}[ht]
        \centering
        \hspace{.2\linewidth}
    \begin{tikzpicture}[-stealth,scale=.6]
    \def\r{2cm}
    \foreach \x in {1,...,6}{
        \draw[] (0,0) -- (\x*60:\r);
    }
    \node[orange,circle, draw, inner sep=0pt, minimum size=4pt,fill] at (0,0) {};
    \node[] at ( 0:1.3*\r) {\scriptsize $\route_{(2,1)}$};
    \node[] at ( 60:1.2*\r) {\scriptsize $\route_{(3,1)}$};
    \node[] at (120:1.2*\r) {\scriptsize $\route_{(3,2)}$};
    \node[] at (180:1.3*\r) {\scriptsize $\route_{(1,2)}$};
    \node[] at (240:1.2*\r) {\scriptsize $\route_{(1,3)}$};
    \node[] at (300:1.2*\r) {\scriptsize $\route_{(2,3)}$};
    \end{tikzpicture} \hspace{.1\linewidth}
    \begin{tikzpicture}[-stealth,scale=.4]
    \draw (0,0) --++ (210:1);
    \draw (0,0) --++ (-30:1);
    \draw (0,0) --++ ( 90:1);
    \node[] at (210:1.5) {\scriptsize $100$};
    \node[] at (-30:1.5) {\scriptsize $010$};
    \node[] at ( 90:1.4) {\scriptsize $001$};
    \end{tikzpicture}
        \caption{The fan $\DKKred$ for the blossomed $3$-cycle.
        The ray labelled by a route $\route_{a,b}$ is spanned by the projection to $W_\cyclegraph$ of the point $e_a - e_b$. For reference, we show on the right the projections of the vectors $e_1,e_2,e_3$ to $W_\cyclegraph$.}
        \label{fig:ex:DKKred_H3}
    \end{figure}
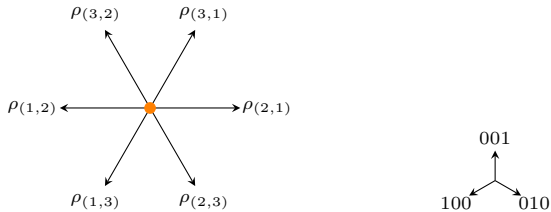
\end{example}

The \textnew{cyclohedron $\cyclohedron_n$} is a polytope that encodes the centrally symmetric triangulations of a regular $2n$-gon.
Explicitly, the $1$-skeleton of $\cyclohedron_n$ is the flip graph of such triangulations.
The facets of $\cyclohedron_n$, equivalently the rays of its normal fan (meaning the normal fan of any realization of $\cyclohedron_n$ as a full-dimensional polytope), correspond to pairs of centrally symmetric diagonals, possibly with both diagonals being equal when they join opposite vertices. For example, for $n=3$, the facets of the cyclohedron correspond to the following diagonals:
\[
\begin{tikzpicture}[scale=.7]
    \foreach \x in {1,...,6}{\draw[] (\x*60-60:1) -- (\x*60:1);}
    \foreach \x in {1,...,3}{\node at (240-\x*60:1.2) {\small $\x$};\node at (60-\x*60:1.2) {\small $\x$};}
    \draw[red,thick] (60:1) -- (60+120:1);
    \draw[red,thick] (60+180:1) -- (60+180+120:1);
    \node[red] at (-90:1.7) {\small $(1,3)$};
\end{tikzpicture}\quad
\begin{tikzpicture}[scale=.7]
    \foreach \x in {1,...,6}{\draw[] (\x*60-60:1) -- (\x*60:1);}
    \foreach \x in {1,...,3}{\node at (240-\x*60:1.2) {\small $\x$};\node at (60-\x*60:1.2) {\small $\x$};}
    \draw[red,thick] (60:1) -- (60+180:1);
    \node[red] at (-90:1.7) {\small $(3,3)$};
\end{tikzpicture}\quad
\begin{tikzpicture}[scale=.7]
    \foreach \x in {1,...,6}{\draw[] (\x*60-60:1) -- (\x*60:1);}
    \foreach \x in {1,...,3}{\node at (240-\x*60:1.2) {\small $\x$};\node at (60-\x*60:1.2) {\small $\x$};}
    \draw[red,thick] (120:1) -- (120+120:1);
    \draw[red,thick] (120+180:1) -- (120+180+120:1);
    \node[red] at (-90:1.7) {\small $(3,2)$};
\end{tikzpicture}\quad
\begin{tikzpicture}[scale=.7]
    \foreach \x in {1,...,6}{\draw[] (\x*60-60:1) -- (\x*60:1);}
    \foreach \x in {1,...,3}{\node at (240-\x*60:1.2) {\small $\x$};\node at (60-\x*60:1.2) {\small $\x$};}
    \draw[red,thick] (120:1) -- (120+180:1);
    \node[red] at (-90:1.7) {\small $(2,2)$};
\end{tikzpicture}\quad
\begin{tikzpicture}[scale=.7]
    \foreach \x in {1,...,6}{\draw[] (\x*60-60:1) -- (\x*60:1);}
    \foreach \x in {1,...,3}{\node at (240-\x*60:1.2) {\small $\x$};\node at (60-\x*60:1.2) {\small $\x$};}
    \draw[red,thick] (0:1) -- (0+120:1);
    \draw[red,thick] (0+180:1) -- (180+120:1);
    \node[red] at (-90:1.7) {\small $(2,1)$};
\end{tikzpicture}\quad
\begin{tikzpicture}[scale=.7]
    \foreach \x in {1,...,6}{\draw[] (\x*60-60:1) -- (\x*60:1);}
    \foreach \x in {1,...,3}{\node at (240-\x*60:1.2) {\small $\x$};\node at (60-\x*60:1.2) {\small $\x$};}
    \draw[red,thick] (0:1) -- (0+180:1);
    \node[red] at (-90:1.7) {\small $(1,1)$};
\end{tikzpicture}
\]
A set of rays of the normal fan of $\cyclohedron_n$ spans a cone of the fan exactly when the corresponding diagonals do not cross in the interior of the polygon.

\begin{proposition}
    \label{prop:DKKfan-cyclo}
    For the blossomed $n$-cycle $\TheCycle{n}$, the fan $\DKKred$ is combinatorially isomorphic to the normal fan of the cyclohedron.
\end{proposition}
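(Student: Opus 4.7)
The plan is to exhibit an explicit combinatorial isomorphism between $\DKKred(\TheCycle{n})$ and the normal fan $\Normal(\cyclohedron_n)$ by matching their rays and verifying that the simplicial structures coincide. By \cref{thm:quotient-subfan}, the rays of $\DKKred(\TheCycle{n})$ are in bijection with the non-exceptional good routes of $\TheCycle{n}$, which, by \cref{ex:good-routes-Hn}, are the routes $\route_{(a,b)}$ for $(a,b) \in [n]^2$ with $a \neq b$. On the cyclohedron side, I would use the standard realisation of $\cyclohedron_n$ as the graph associahedron of the cycle graph $C_n$: its facets are in bijection with the \emph{tubes} of $C_n$, that is, the non-empty proper cyclic subintervals of $\ZZ/n\ZZ$, and two tubes are compatible (spanning a common face of $\cyclohedron_n$) iff they are either nested or disjoint with no edge of $C_n$ joining them. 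Define
\[
\Theta\colon \route_{(a,b)} \longmapsto T_{(a,b)} := \{a,\,a+1,\,\ldots,\,b-1\} \pmod{n},
\]
the cyclic interval of length $b-a \pmod n$ starting at $a$; this is evidently a bijection between two sets of cardinality $n(n-1)$.

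Because the simplicial structure on both fans is determined by pairwise compatibility, it suffices to show that $\Theta$ turns route compatibility into tube compatibility. Using the explicit criterion stated just after \cref{lem:cyclic-monochrom-equals-excep}, the verification splits into four cases according to the relative position of $T_{(a,b)}$ and $T_{(c,d)}$. When the tubes are nested, the inclusion $T_{(c,d)} \subseteq T_{(a,b)}$ translates directly to $a \preceq c \preceq d \preceq b$ in the cyclic order starting at $a$, which is precisely condition (ii) of route compatibility; the reverse inclusion gives condition (iii). When the tubes are disjoint and not $C_n$-adjacent, the conditions $b \neq c$ and $a \neq d$ are equivalent to the vertex sets $[a,b]$ and $[c,d]$ visited by the two routes being disjoint, which is condition (i).

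The main obstacle lies in the remaining two cases, where one must argue directly that the routes are incompatible by producing a common subwalk whose flanking colours match the pattern of \cref{def:incompatible}. The crucial observation is that $T_{(a,b)}$ is exactly the set of internal vertices at which $\route_{(a,b)}$ traverses the outgoing red cycle-edge; in particular, at any internal vertex $v$ visited by the route, the incoming edge is red iff $v-1 \in T_{(a,b)}$ and the outgoing edge is red iff $v \in T_{(a,b)}$. When the tubes are $C_n$-adjacent (say $b=c$), the two routes meet at the single vertex $v=b$, where $\route_{(a,b)}$ arrives red and leaves blue while $\route_{(c,d)}$ arrives blue and leaves red, immediately triggering the forbidden colour pattern. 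When the tubes overlap without nesting, the maximal common subwalk runs between two internal vertices on the cycle, and the observation above forces the flanking edges of this subwalk in the two routes to carry opposite colours in exactly the pattern demanded by \cref{def:incompatible}. Once these four cases are handled, $\Theta$ extends uniquely to the desired combinatorial isomorphism $\DKKred(\TheCycle{n}) \cong \Normal(\cyclohedron_n)$.
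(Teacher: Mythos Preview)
Your argument is correct and essentially complete. The one point that deserves a sentence of justification is the claim that ``the simplicial structure on both fans is determined by pairwise compatibility'': this is because both complexes are \emph{flag} (clique) complexes by construction---the faces of $\DKKred$ are defined as sets of pairwise compatible routes, and tubings of $C_n$ are by definition sets of pairwise compatible tubes. Once that is said, your case analysis goes through, including the crucial observation that $T_{(a,b)}$ is exactly the set of vertices at which $\route_{(a,b)}$ leaves along the red cycle-edge.

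Your route differs from the paper's in the model chosen for the cyclohedron. The paper realises $\cyclohedron_n$ via centrally symmetric diagonals of a $2n$-gon (labelling the vertices $1,\dots,n,1,\dots,n$) and sends $\route_{(a,b)}$ to the diagonal pair indexed by $(a,b+1)$; compatibility then reduces directly to non-crossing of diagonals, with no further case analysis beyond invoking the route-compatibility description already recorded in the paper. You instead use the graph-associahedron model of $\cyclohedron_n$ with tubes of $C_n$, sending $\route_{(a,b)}$ to the cyclic interval $\{a,\dots,b-1\}$. Both bijections are natural and both proofs rely on the same route-compatibility criterion; the diagonal model lets the paper finish in one sentence, while your tube model requires the explicit four-case check but has the advantage of making the combinatorial content (nesting vs.\ overlapping intervals) completely transparent. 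Neither approach is deeper than the other---they are two standard presentations of the same polytope.
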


\begin{proof}
    Label the vertices of the $2n$-gon clockwise by $1,2,\dots,n,1,2,\dots,n$, so opposite vertices receive the same label. Pairs of centrally symmetric diagonals correspond to pairs $(a,b)$ with $b \neq a+1$, as follows. The pair $(a,b)$ corresponds to the diagonal joining each of the two vertices labelled $a$ with the first vertex labelled $b$ that you encounter following the boundary of the $2n$-gon in the clockwise direction. For example, when $n=4$, the diagonals $(1,3)$ and $(3,1)$ are shown below
    \[
    \begin{tikzpicture}[scale=.7]
        \foreach \x in {1,...,8}{\draw[] (\x*45-45:1) -- (\x*45:1);}
        \foreach \x in {1,...,4}{\node at (215-\x*45:1.2) {\small $\x$};\node at (45-\x*45:1.2) {\small $\x$};}
        \draw[red,thick] (180:1) -- (90:1);
        \draw[red,thick] (0:1) -- (-90:1);
        \node[red] at (-90:1.7) {\small $(1,3)$};
    \end{tikzpicture}\qquad
    \begin{tikzpicture}[scale=.7]
        \foreach \x in {1,...,8}{\draw[] (\x*45-45:1) -- (\x*45:1);}
        \foreach \x in {1,...,4}{\node at (215-\x*45:1.2) {\small $\x$};\node at (45-\x*45:1.2) {\small $\x$};}
        \draw[red,thick] (90:1) -- (0:1);
        \draw[red,thick] (180:1) -- (-90:1);
        \node[red] at (-90:1.7) {\small $(3,1)$};
    \end{tikzpicture}
    \]
    In this manner, pairs $(a,a)$ correspond to long diagonals (diameters) going through the centre of the $2n$-gon.

    Send the ray corresponding to the route $\route_{a,b}$ to the (pair of) diagonal(s) corresponding to the pair $(a,b+1)$.
    Since the route $\route_{a,b}$ is not exceptional precisely when $a \neq b$,
    this gives a bijection between the rays of both fans.
    The result now follows from the compatibility of routes described in \cref{ex:routes-Hn}.
\end{proof}

\subsection{DKK as a normal fan}

We will now realize the fan $\DKKred$ as the normal fan of a polytope. We will use the following result.

\begin{lemma}[{\cite[Lemma 7.11]{zieglerlectures}}]
    \label{lem:normal-fan-subspace}
    Let $\F$ be the normal fan of a polytope $P \subseteq \RR^n$ and $V \subseteq \RR^n$ a linear subspace. The normal fan of $\pi_V(P)$, the orthogonal projection of $P$ to $V$, is the fan
    \[
        \F|_V := \set{ C \cap V }{ C \in \F }.
    \]
\end{lemma}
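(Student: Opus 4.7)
The plan is to deduce the equality of the two fans from a single compatibility between orthogonal projection and inner products. The key observation I would exploit is that for every $\ell \in V$ and every $x \in \RR^n$, one has $\langle \ell, x \rangle = \langle \ell, \pi_V(x) \rangle$, since $x - \pi_V(x) \in V^\perp$ is orthogonal to $\ell$. Consequently, a linear functional $\ell \in V$ attains its maximum over $P$ at a point $x$ if and only if it attains its maximum over $\pi_V(P)$ at $\pi_V(x)$, so the face of $P$ maximizing $\ell$ projects exactly onto the face of $\pi_V(P)$ maximizing $\ell$.

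Using this, I would show that for every face $F$ of $P$, the intersection $C_F \cap V$ (where $C_F \in \F$ denotes the normal cone at $F$) coincides with the normal cone of $\pi_V(F)$ in the normal fan of $\pi_V(P)$: indeed, $\ell \in V$ lies in $C_F$ exactly when $F$ is contained in the face of $P$ maximizing $\ell$, and by the key observation, this occurs exactly when $\pi_V(F)$ is contained in the face of $\pi_V(P)$ maximizing $\ell$. Next, I would note that every face of $\pi_V(P)$ arises as $\pi_V(F)$ for some face $F$ of $P$ (one takes the unique maximal face of $P$ projecting into it), so the cones of $\F|_V$ are precisely the normal cones of $\pi_V(P)$, establishing the claim.

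The main subtlety I expect to address is that distinct faces $F_1 \neq F_2$ of $P$ may satisfy $\pi_V(F_1) = \pi_V(F_2)$, forcing $C_{F_1} \cap V = C_{F_2} \cap V$; this redundancy is harmless because $\F|_V$ is understood as a set of cones, so duplicates collapse automatically. Once the cones of $\F|_V$ are identified with the normal cones of $\pi_V(P)$, no further verification of the fan axioms is required.
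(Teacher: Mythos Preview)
Your argument is correct and is essentially the standard proof of this fact. Note, however, that the paper does not supply its own proof of this lemma: it is quoted as \cite[Lemma 7.11]{zieglerlectures} and used as a black box. So there is no paper proof to compare against; your sketch simply fills in the classical justification that the authors cite rather than reproduce.
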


\begin{theorem}
    \label{thm:polytope-dual-to-cyclic-gfan}
    Let $\cyclegraph$ be a cyclic graph with a fixed cyclic ample framing. Let $\pi_W$ be the orthogonal projection $\RR^{(Q_\cyclegraph)_0} \to W_\cyclegraph$.
    Then $\DKKred$ is (linearly isomorphic to) the normal fan of the polytope $\pi_W( \sum_M \HN(M) )$, where the sum is over the brick modules $M$ of $(Q_\cyclegraph , R_\cyclegraph)$.
\end{theorem}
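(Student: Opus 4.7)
The plan is to assemble three earlier results in the paper into a short chain of identifications. First, Theorem~\ref{thm:quotient-subfan} provides a linear isomorphism $\overline{\phi}$ from $\DKKred(\cyclegraph)$ onto the subfan of $\gfan(Q_\cyclegraph,R_\cyclegraph)$ consisting of those cones contained in $W_\cyclegraph$. Second, once its hypothesis is verified, Proposition~\ref{prop:locallygentlegvectorfan} realizes $\gfan(Q_\cyclegraph,R_\cyclegraph)$ as $\Normal(P)$, where $P := \sum_M \HN(M)$ is the Minkowski sum over the brick modules $M$ of $(Q_\cyclegraph,R_\cyclegraph)$. Third, Lemma~\ref{lem:normal-fan-subspace} identifies the subfan of $\Normal(P)$ cut out by the linear subspace $W_\cyclegraph$ with the normal fan of the orthogonal projection $\pi_W(P)$. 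Chaining these three identifications will yield $\DKKred(\cyclegraph) \cong \Normal(\pi_W(P))$, which is the desired conclusion.

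The only non-routine step is verifying the hypothesis of Proposition~\ref{prop:locallygentlegvectorfan}, namely that every non-oriented cycle of $(Q_\cyclegraph,R_\cyclegraph)$ contains a relation. I would argue this as follows. By Definition~\ref{def:acyclic-blossoming}, adapted to the cyclic case, each arrow of $Q_\cyclegraph$ inherits a colour from the cyclic ample framing of $\cyclegraph$, and the relations in $R_\cyclegraph$ are precisely the length-two paths whose two arrows carry different colours. Consequently, any closed walk in the underlying undirected graph of $Q_\cyclegraph$ that avoids all relations must be monochromatic in this induced colouring. By Definition~\ref{def:cycle-ample-framing}~\ref{def:cycle-ample-framing-cond1}, each internal vertex of $\cyclegraph$ has exactly one incoming and one outgoing edge of each colour, so each monochromatic subgraph of $\cyclegraph$ decomposes as a disjoint union of directed cycles together with open chains running between sources and sinks. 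A closed monochromatic walk can therefore only traverse a single such directed cycle of $\cyclegraph$, which is an \emph{oriented} cycle in $Q_\cyclegraph$. Hence every non-oriented cycle of $Q_\cyclegraph$ contains a relation, and Proposition~\ref{prop:locallygentlegvectorfan} applies.

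With the hypothesis in hand, the three identifications above combine: the linear isomorphism $\overline{\phi}$ of Theorem~\ref{thm:quotient-subfan} transports $\DKKred(\cyclegraph)$ onto $\gfan(Q_\cyclegraph,R_\cyclegraph)|_{W_\cyclegraph} = \Normal(P)|_{W_\cyclegraph} = \Normal(\pi_W(P))$. This completes the proof.
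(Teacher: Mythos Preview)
Your proposal follows exactly the same three-step assembly as the paper's proof: verify the hypothesis of Proposition~\ref{prop:locallygentlegvectorfan}, apply it together with Lemma~\ref{lem:normal-fan-subspace}, and then invoke Theorem~\ref{thm:quotient-subfan}.

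There is, however, a gap in your verification that every non-oriented cycle of $(Q_\cyclegraph, R_\cyclegraph)$ contains a relation. You assert that a closed walk in the underlying undirected graph of $Q_\cyclegraph$ which avoids all relations must be monochromatic, on the grounds that the relations are exactly the colour-changing length-two paths. But relations only constrain two consecutive arrows traversed in the \emph{same} direction; at a peak or valley of the walk (where the direction reverses) no relation applies, so your argument does not rule out a colour change there. The paper closes this gap by observing that in $Q_\cyclegraph$ the two arrows entering (respectively leaving) any vertex always have different colours, so at every direction change the colour \emph{must} flip. Combined with the relation-free condition (same direction $\Rightarrow$ same colour), this yields: the walk changes direction in $Q_\cyclegraph$ exactly when it changes colour. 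Translating back to $\cyclegraph$ (where blue arrows are reversed) shows the corresponding walk in $\cyclegraph$ is directed; the cyclic ample framing then forces it to be monochromatic, hence oriented back in $Q_\cyclegraph$. Your conclusion is correct, but the argument needs this direction-change analysis to be complete.
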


\begin{proof}
Let $\gamma$ be a cycle (not necessarily oriented) of $Q_H$. If $\gamma$ passes through no relation of $R_H$, then it changes direction exactly when it changes colour. That being the case, the corresponding walk in $H$ is directed, and therefore, by assumption, monochromatic and oriented. It follows that every non-oriented cycle in $Q_H$ passes through a relation.

    Therefore, $(Q_\cyclegraph, R_\cyclegraph)$ satisfies the hypotheses of \cref{prop:locallygentlegvectorfan},
    and we deduce that $\gfan(Q_\cyclegraph, R_\cyclegraph)$ is the normal fan of $\sum_M \HN(M)$.
    Then, \cref{lem:normal-fan-subspace} shows that the normal fan of $\pi_W( \sum_M \HN(M) )$ is $\gfan(Q_\cyclegraph,R_\cyclegraph)|_W$, which by \cref{thm:quotient-subfan} is (linearly isomorphic to) $\DKKred$.
\end{proof}

This result allows us to give another realization of the cyclohedron as a projection of the type D associahedron.
A construction of the cyclohedron as a slice of the type $D$ associahedron is already known \cite[Proposition 6.4]{AHL}. It would be interesting to determine how these realizations are related.

\begin{definition}
    Let $\cyclegraph = \TheCycle{n}$ be the blossomed $n$-cycle from \cref{ex:routes-Hn}.
    For any pair $(a,b) \in [n]^2$, let
    \[
        {\color{defcolor}{\Delta_{(a,b)}}} := \conv \{ e_a + e_{a+1} + \dots + e_b ,\; e_{a+1} + \dots + e_b ,\; e_{b-1} + e_b ,\; e_b ,\; 0 \}
    \]
    be the $\HN$ polytope of the representation of $(Q^\blossom_{\TheCycle{n}}, R^\blossom_{\TheCycle{n}})$ corresponding to the route $\route_{(a,b)}$.
\end{definition}

\begin{example}
    For example, if $n = 4$, then
    \[
        \Delta_{(4,2)} = \conv\{ (1,1,0,1) ,\; (1,1,0,0) ,\; (0,1,0,0) ,\; (0,0,0,0) \}.
    \]
\end{example}

The $n$-cycle is a quiver mutation equivalent to $D_n$. The corresponding cluster-tilted algebra is the path algebra of the $n$-cycle modulo the relations that the composition of any $n-1$ consecutive arrows is zero. Consequently, the HN polytopes of the indecomposable representations of the cluster-tilted algebra are the $\Delta_{(a,b)}$ with $a\ne b+1 \mod n$. By \cite{BM20,PPPP23}, the Minkowski sum of these HN polytopes is therefore a type $D_n$ associahedron.

    By the previous theorem and \cref{prop:DKKfan-cyclo}, we obtain a realization of the cyclohedron as a Minkowski sum of projections of HN polytopes.

   \begin{corollary} \label{cor:cyclo as sum of HN}
    The polytope
    \[
        C_n := \sum_{\substack{(a,b) \in [n]^2 \\ a \neq b+1 \mod n}} \pi_W\big( \Delta_{(a,b)} \big)
    \]
    is a realization of the $(n-1)$-dimensional cyclohedron.
\end{corollary}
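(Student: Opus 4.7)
The plan is to invoke Theorem~\ref{thm:polytope-dual-to-cyclic-gfan} for $\cyclegraph = \TheCycle{n}$ and then argue that the Minkowski summands indexed by the excluded pairs are redundant under the projection~$\pi_W$. The locally gentle quiver $(Q_{\TheCycle{n}}, R_{\TheCycle{n}})$ is the oriented $n$-cycle with no relations, so its only cycle is oriented and the hypothesis of Proposition~\ref{prop:locallygentlegvectorfan} holds vacuously. The bricks of the finite-dimensional quotient~$\ool$ are exactly the string modules $M_{(a,b)}$ supported on the directed paths $\alpha_a \alpha_{a+1} \cdots \alpha_{b-1}$ of length $(b-a) \bmod n \in \{0,1,\ldots,n-1\}$, indexed by $(a,b) \in [n]^2$; a direct submodule computation identifies $\HN(M_{(a,b)})$ with $\Delta_{(a,b)}$.

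The crux of the argument is the following redundancy observation. For each excluded pair $(a,b)$ with $b \equiv a-1 \pmod n$, the extra vertex $\ve_a + \ve_{a+1} + \cdots + \ve_{a-1} = \mathbf{1}$ of $\Delta_{(a,a-1)}$ lies in $W^\perp = \spann_\RR(\mathbf{1})$ and therefore projects to~$0$ under~$\pi_W$. Consequently $\pi_W(\Delta_{(a,a-1)})$ and $\pi_W(\Delta_{(a+1,a-1)})$ have the same set of projected vertices, so the two projected polytopes coincide. Since $(a+1, a-1)$ satisfies $a+1 \not\equiv (a-1)+1 \pmod n$ (as $2 \not\equiv 1 \pmod n$ for $n \geq 2$), the polytope $\pi_W(\Delta_{(a+1,a-1)})$ already appears as a summand of~$C_n$.

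Combining these ingredients, Theorem~\ref{thm:polytope-dual-to-cyclic-gfan} together with Proposition~\ref{prop:DKKfan-cyclo} implies that the full sum $\sum_{(a,b) \in [n]^2} \pi_W(\Delta_{(a,b)})$ is a realization of~$\cyclohedron_n$. Removing the $n$ excluded summands yields exactly~$C_n$; since each removed summand duplicates a summand still present in $C_n$, the removal merely rescales certain Minkowski summands by a factor of~$2$, which does not affect the normal fan. Therefore $C_n$ realizes $\cyclohedron_n$, whose dimension matches $\dim W = n-1$. The principal obstacle is the redundancy claim: it reduces cleanly to the identity $\pi_W(\mathbf{1}) = 0$, but one must verify that dropping the projected vertex $\mathbf{1}$ from $\pi_W(\Delta_{(a,a-1)})$ does not introduce an extreme point absent from $\pi_W(\Delta_{(a+1,a-1)})$.
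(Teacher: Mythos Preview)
Your proposal is correct and follows essentially the same approach as the paper: both arguments hinge on the identity $\pi_W(\mathbf{1})=0$ to show $\pi_W(\Delta_{(a,a-1)})=\pi_W(\Delta_{(a+1,a-1)})$, then invoke Theorem~\ref{thm:polytope-dual-to-cyclic-gfan} and Proposition~\ref{prop:DKKfan-cyclo}. Two minor remarks: your parenthetical justification should read $1\not\equiv 0\pmod n$ rather than $2\not\equiv 1$, and the ``principal obstacle'' you flag is not really an obstacle---since $\Delta_{(a,a-1)}$ and $\Delta_{(a+1,a-1)}$ differ only by the single generator $\mathbf{1}$, which projects to the already-present generator $0$, the projected convex hulls coincide on the nose.
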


\begin{proof}
    Observe that, since $\pi_W((1,1,1,\dots,1)) = \pi_W((0,0,0,\dots,0))$,
    \[
        \pi_W\big( \Delta_{(b+1,b)} \big) = \pi_W\big( \Delta_{(b+2,b)} \big),
    \]
    where the indices are taken modulo $n$.
    Thus, the polytopes $C_n$ and
    $$
        \sum_{(a,b) \in [n]^2} \pi_W\big( \Delta_{(a,b)} \big)
    $$
    have the same normal fan.
    The result now follows by applying \cref{thm:polytope-dual-to-cyclic-gfan} to the graph $\TheCycle{n}$ and using \cref{prop:DKKfan-cyclo}.
\end{proof}

\begin{example}
    We specialize to the case $n = 3$.
    Observe that although the projections of polytopes $\Delta_{(1,3)}$ and $\Delta_{(1,2)}$ are different, they are just translations of one another:
    \[
        \pi_W(\Delta_{(1,3)}) = \pi_W(\Delta_{(2,3)}) = \begin{gathered}\begin{tikzpicture}
    \draw[fill = gray!30!white] (0,0) node[below] {\scriptsize $000 = 111$}
    --++ ( 90:1) node[above] {\scriptsize $001$}
    --++ (-30:1) node[right] {\scriptsize $011$} -- cycle;
    \end{tikzpicture}\end{gathered}
        \qquad
        \pi_W(\Delta_{(3,2)}) = \pi_W(\Delta_{(1,2)}) = \begin{gathered}\begin{tikzpicture}
    \draw[fill = gray!30!white] (0,0) node[above] {\scriptsize $000 = 111$}
    --++ (-30:1) node[right] {\scriptsize $010$}
    --++ (210:1) node[below] {\scriptsize $110$} -- cycle;
    \end{tikzpicture}\end{gathered}
    \]
    In fact, for $n = 3$, all $\Delta_{(a,b)}$ with $a \neq b$, project to the same triangle (modulo translation). When $a = b$, the polytopes we obtain are:
    \[
        \pi_W(\Delta_{(1,1)}) = \begin{gathered}\begin{tikzpicture} \draw[fill = gray!30!white] (0,0) node[above] {\scriptsize $000$} --++ (210:1) node[below] {\scriptsize $100$}; \end{tikzpicture}\end{gathered}
        \qquad
        \pi_W(\Delta_{(2,2)}) = \begin{gathered}\begin{tikzpicture} \draw[fill = gray!30!white] (0,0) node[above] {\scriptsize $000$} --++ (-30:1) node[below] {\scriptsize $010$}; \end{tikzpicture}\end{gathered}
        \qquad
        \pi_W(\Delta_{(3,3)}) = \begin{gathered}\begin{tikzpicture} \draw[fill = gray!30!white] (0,0) node[below] {\scriptsize $000$} --++ ( 90:1) node[above] {\scriptsize $001$}; \end{tikzpicture}\end{gathered}.
    \]
    Their Minkowski sum is:
    \[
        C_3 = \begin{gathered}\begin{tikzpicture}[scale = .3] \draw[fill = gray!30!white] (0,0) --++ ( 90:6) --++ ( 30:1) --++ (-30:6) --++ (-90:1) --++ (210:6) -- cycle; \end{tikzpicture}\end{gathered},
    \]
    whose normal fan appears in \cref{fig:ex:DKKred_H3}.
\end{example}

\section{The family of nested cycles $\cyclegraph_{c,p}$}
\label{s:HCP}

In this section we focus on a two-parameter family of graphs with cyclic ample framings that generalizes the family of blossomed $n$-cycles $\TheCycle{n}$ in \cref{ex:cyclic-framing-Hn}.
We will describe the exceptional routes of graphs in this family, and show that their number of cyclic volume integer flows is given by a multinomial coefficient. For a subfamily of this family of graphs, we give results illustrating a strong link with the permutohedron.

\begin{definition}
    Let $\cyclegraph_{c,p}$ denote the graph consisting of $c$-many \emph{nested} monochromatic $p$-cycles.
    The example for $c = 3$ and $p = 2$ is shown in \cref{fig:H32} below.
    Note that $\cyclegraph_{1,n}$ is the blossomed $n$-cycle $\TheBlackCycle{n}$ from \cref{ex:routes-Hn}.
    Formally, $\cyclegraph_{c,p}$ has
    vertex set $[0,c+1] \times [p]$, edges $(a,b) \to (a+1,b)$ for all $(a,b) \in [0,c] \times [p]$, and $(a,b) \to (a,b+1 \pmod{p})$ for $(a,b) \in  [1,c] \times [p]$.
\end{definition}

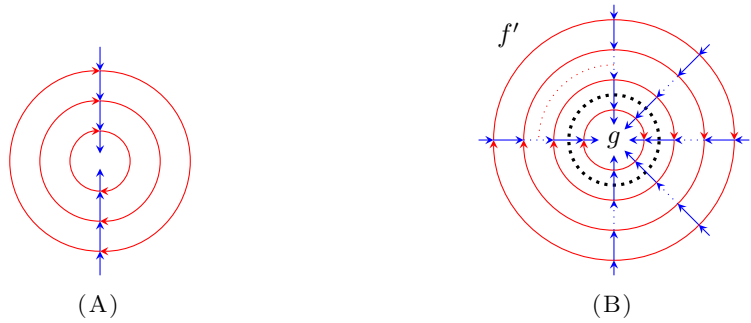
\begin{figure}[ht]
    \centering
    \begin{subfigure}{.4\linewidth}
        \centering
         \begin{tikzpicture}[-stealth,scale=.4,rotate=-90]\draw[red] (-1,0) arc(180:  0:1);
\draw[red] (-2,0) arc(180:  0:2);
\draw[red] (-3,0) arc(180:  0:3);
\draw[red] ( 1,0) arc(360:180:1);
\draw[red] ( 2,0) arc(360:180:2);
\draw[red] ( 3,0) arc(360:180:3);
\draw[blue] (-3.8,0) -- ( -3,0);
\draw[blue] (  -3,0) -- ( -2,0);
\draw[blue] (  -2,0) -- ( -1,0);
\draw[blue] (  -1,0) -- (-.25,0);
\draw[blue] ( 3.8,0) -- (  3,0);
\draw[blue] (   3,0) -- (  2,0);
\draw[blue] (   2,0) -- (  1,0);
\draw[blue] (   1,0) -- ( .25,0);\end{tikzpicture}
         \caption{}
         \label{fig:H32}
    \end{subfigure}
    \begin{subfigure}{.5\linewidth}
        \centering
        \begin{tikzpicture}[-stealth,scale=.4]\draw[dotted, very thick] (0,0) circle (1.5cm);
\draw[red] (-1,0) arc(180:  0:1);
\draw[red] (-2,0) arc(180:  0:2);
\draw[red] (-3,0) arc(180:  0:3);
\draw[red] (-4,0) arc(180:  0:4);
\draw[red] ( 1,0) arc(360:180:1);
\draw[red] ( 2,0) arc(360:180:2);
\draw[red] ( 3,0) arc(360:180:3);
\draw[red] ( 4,0) arc(360:180:4);
\draw[red, dotted, -] (0,2.5) arc[start angle=90, end angle=180, radius=2.5cm];
\draw[blue] (-4.5,0) -- ( -4,0);
\draw[blue] (  -4,0) -- ( -3,0);
\draw[blue,dotted] (  -3,0) -- ( -2,0);
\draw[blue] (  -2,0) -- ( -1,0);
\draw[blue] (  -1,0) -- (-.5,0);
\draw[blue] (4.5,0) -- (4,0);
\draw[blue] ( 4,0) -- (  3,0);
\draw[blue,dotted] (   3,0) -- (  2,0);
\draw[blue] (   2,0) -- (  1,0);
\draw[blue] (   1,0) -- ( .5,0);
\draw[blue] (3.18,3.18) -- (2.82,2.82);
\draw[blue] (2.82,2.82) -- (2.12,2.12);
\draw[blue,dotted]  (2.12,2.12) -- (1.41,1.41 );
\draw[blue] (   1.41,1.41) -- (0.7,0.7);
\draw[blue] (   0.7,0.7) -- (0.35,0.35);
\draw[blue] (3.18,-3.18) -- (2.82,-2.82);
\draw[blue] (2.82,-2.82) -- (2.12,-2.12);
\draw[blue,dotted]  (2.12,-2.12) -- (1.41,-1.41 );
\draw[blue] (   1.41,-1.41) -- (0.7,-0.7);
\draw[blue] (   0.7,-0.7) -- (0.35,-0.35);
\draw[blue] (0,-4.5) -- (0, -4);
\draw[blue] (0,-4) -- (0, -3);
\draw[blue,dotted] (0, -3) -- (0, -2);
\draw[blue] (0,  -2) -- (0, -1);
\draw[blue] (0,  -1) -- (0,-.5);
\draw[blue] (0, 4.5) -- (0,  4);
\draw[blue] (0, 4) -- (0,  3);
\draw[blue,dotted] (0,  3) -- (0,  2);
\draw[blue] (0,   2) -- (0,  1);
\draw[blue] (0,   1) -- (0, .5);
\node at (0,0) {$g$};
\node at (-3.5,3.5) {$f'$};\end{tikzpicture}
        \caption{}
        \label{fig:breaking-flows-Hcp}
    \end{subfigure}
    \caption{
        (A) The graph $\cyclegraph_{3,2}$ of three nested $2$-cycles
        with its unique (up to swapping colours) cyclic ample framing and (B) the decomposition of cyclic volume integer flow $f$ in $\cyclegraph_{c,p}$ into the flows $(f',g)$.
    }
    \label{fig:Hcp}
\end{figure}

\subsection{Exceptional routes}

Similar to $\TheBlackCycle{n}$, the graph $\cyclegraph_{c,p}$ admits only one cyclic ample framing (up to colour switching): the edges of the cycles are coloured \tcr{red}, and the remaining edges are coloured \tcb{blue} (see \cref{fig:Hcp}).
We fix this framing for the remainder of the section.
We see that $\cyclegraph_{c,p}$ has $c + p$ exceptional routes: $c$ \tcr{red} cycles, and $p$ \tcb{blue} routes that cut through all the cycles without using any edge within a cycle. The following result counts the number of non-exceptional routes of~$\cyclegraph_{c,p}$.

\begin{proposition}
    \label{prop:good_routes_Hcp}
    The graph $\cyclegraph_{c,p}$ has $p^{c+1}-p$ good non-exceptional routes.
\end{proposition}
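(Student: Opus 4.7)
The plan is to set up a bijection between good routes of $\cyclegraph_{c,p}$ and the set $[p] \times \{0,1,\dots,p-1\}^c$, and then identify which of these routes are exceptional.

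First, I would analyze the local structure. Since the only edges connecting level $a-1$ to level $a$ are the radial (blue) edges $(a-1,b) \to (a,b)$ preserving the angular coordinate, any walk from a source $(0,b_0)$ to a sink $(c+1, b_{c+1})$ visits each level exactly once, entering at some vertex $(a, b_a^{\mathrm{in}})$ and exiting at some vertex $(a, b_a^{\mathrm{out}})$ with $b_a^{\mathrm{in}} = b_{a-1}^{\mathrm{out}}$ (interpreting $b_0^{\mathrm{out}}:=b_0$). In between, the walk travels along the oriented red cycle at level $a$ going from $b_a^{\mathrm{in}}$ to $b_a^{\mathrm{out}}$ using some number $k_a$ of consecutive red edges.

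Next, I would pin down the range of $k_a$. Since a good route cannot contain the minimal red $p$-cycle at level $a$ and, by \cref{lem:good-route-vertex}, cannot visit the same vertex twice, it must use fewer than $p$ edges at level $a$, i.e.\ $k_a \in \{0,1,\dots,p-1\}$. Conversely, any choice of $b_0 \in [p]$ together with shifts $(k_1,\dots,k_c) \in \{0,1,\dots,p-1\}^c$ yields a well-defined good route by setting $b_a := b_{a-1} + k_a \pmod p$ and concatenating the appropriate radial and cycle segments. This establishes a bijection and gives $p \cdot p^c = p^{c+1}$ good routes.

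Finally, I would identify the exceptional routes among these. By \cref{lem:cyclic-monochrom-equals-excep}, exceptional routes are exactly the monochromatic ones. Since sources and sinks are only incident to blue edges, no monochromatic route among the good routes can be red; the monochromatic blue routes are precisely those using no cycle edges, corresponding to $k_1 = \dots = k_c = 0$. There are exactly $p$ such routes (one per choice of $b_0$). Subtracting gives $p^{c+1} - p$ good non-exceptional routes.

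The main obstacle I anticipate is just being careful about the bookkeeping in the bijection, specifically verifying that the condition ``contains no minimal cycle'' on a walk translates cleanly to the bound $k_a \leq p-1$ at each level, independently of what happens at other levels; this follows immediately from the observation (\cref{lem:cyclic_three_obs}\ref{lem:cyclic_three_obs3}) that a good route visits each cycle in at most one contiguous arc, so the choices at different levels are genuinely independent.
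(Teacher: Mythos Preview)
Your proposal is correct and takes essentially the same approach as the paper: both count good routes by observing that such a route is determined by its $c+1$ blue edges (equivalently, by its source and the $c$ angular shifts $k_a$), giving $p^{c+1}$ in total, and then subtract the $p$ monochromatic blue routes. Your parameterization via $(b_0,k_1,\dots,k_c)$ is a trivial reindexing of the paper's parameterization via the sequence of blue edges, and your justification that $k_a \leq p-1$ using \cref{lem:good-route-vertex} is, if anything, slightly more explicit than the paper's one-line assertion.
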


\begin{proof}
    Every good route of $\cyclegraph_{c,p}$ uses exactly $c + 1$ blue edges: one between each pair of adjacent cycles, plus one entering the first cycle and one leaving the last cycle. Moreover, any arbitrary choice of such a collection of edges can be uniquely extended to a good or exceptional route. The result follows since exactly $p$ out of the $p^{c+1}$ possible choices correspond to an exceptional route.
\end{proof}

\subsection{Cyclic volume integer flows}

We have a closed formula for the normalized volume of the associated flow complex $\bdd(\cyclegraph_{c,p})$ from \cref{def:unbounded_object}.

\begin{theorem}
    \label{thm:hkr-vol}
    For the graph $\cyclegraph_{c,p}$, we have that
    \[
        \vol\bdd(\cyclegraph_{c,p}) = \#\funny(\cyclegraph_{c,p}) = \binom{(c+1)(p-1)}{p-1,\ldots,p-1}.
    \]
In particular, $\vol\bdd(\TheBlackCycle{p})=\#\funny(\TheBlackCycle{p})=\binom{2p-2}{p-1}$ and $\vol\bdd(\cyclegraph_{c,2})=\#\funny(\cyclegraph_{c,2})=(c+1)!$.
\end{theorem}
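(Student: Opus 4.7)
The first equality $\vol\bdd(\cyclegraph_{c,p}) = \#\funny(\cyclegraph_{c,p})$ is immediate from \cref{thm:normVolCyclicIntegerFlow}, so the task reduces to computing the number of cyclic volume integer flows. My plan is to proceed by induction on $c$, driven by the multinomial identity
\begin{equation*}
    \binom{(c+1)(p-1)}{p-1,\ldots,p-1} \;=\; \binom{(c+1)(p-1)}{p-1}\cdot\binom{c(p-1)}{p-1,\ldots,p-1}.
\end{equation*}
The base case $c = 0$ is trivial: $\cyclegraph_{0,p}$ is $p$ disjoint source-to-sink edges, whose zero netflow at the sources forces the zero flow and matches $\binom{p-1}{p-1}=1$.

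For the inductive step, I decompose each $f \in \funny(\cyclegraph_{c,p})$ into the pair $(f',g)$ pictured in \cref{fig:breaking-flows-Hcp}: an \emph{outer} flow $f'$ on the subgraph obtained by deleting the innermost cycle together with its blue edges to the sinks, and an \emph{inner} flow $g$ on the innermost cycle and its edges $(c,b)\to(c+1,b)$. The vertices $(c,b)$ become sinks of the smaller graph, absorbing the flow $u_b := x_{c-1,b}$, so $f'$ is a cyclic volume integer flow on $\cyclegraph_{c-1,p}$; by the inductive hypothesis there are $\binom{c(p-1)}{p-1,\ldots,p-1}$ choices. Conservation of flow forces $\sum_b u_b = (c-1)(p-1)$.

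The key technical step, and the main obstacle, is to show that for \emph{every} input vector $(u_b)_b \in \NN^p$ with $\sum_b u_b = (c-1)(p-1)$, the number of completions $g$---equivalently, cyclic integer flows on $\TheBlackCycle{p}$ with source netflow $(u_b)_b$, unit internal netflow (and $0$ at the tail of the zero edge), and exactly one vanishing cycle edge---equals $\binom{(c+1)(p-1)}{p-1}$ \emph{independently of the distribution of the $u_b$}. Fixing the zero-edge position $b_c \in [p]$ and iterating the balance equation $y_{c,b} = 1 + u_b + y_{c,b-1} - x_{c,b}$ around the cycle reduces the count to a nested-sum lattice-path enumeration whose step caps are set by the $u_b$'s. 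For each individual $b_c$ the count does depend on the shuffle of the input, but summing over the $p$ positions of $b_c$ should collapse this dependence, either via the Cycle Lemma (exploiting the rotational symmetry of $\TheBlackCycle{p}$) or via a direct generating-function calculation using the Kostant-type formula \eqref{gf: kpf} applied to the acyclic graph obtained by cutting the cycle at $b_c$.

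Small cases lend strong support to this distribution-invariance---for instance, $p=3$ with inputs $(2,0,0)$ and $(1,1,0)$ both yield exactly $15 = \binom{6}{2}$ completions by direct enumeration---but a clean proof will require one of the combinatorial arguments just outlined. Once the invariance is secured, the multinomial recursion above closes the induction, and the two corollary formulas follow at once: setting $c=1$ recovers $\binom{2(p-1)}{p-1}$ for $\TheBlackCycle{p}$, and setting $p=2$ collapses the multinomial to $(c+1)!$.
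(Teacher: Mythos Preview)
Your overall architecture is exactly the paper's: reduce to counting $\funny(\cyclegraph_{c,p})$ via \cref{thm:normVolCyclicIntegerFlow}, induct on $c$ by peeling off the innermost cycle into the pair $(f',g)$ of \cref{fig:breaking-flows-Hcp}, and close the recursion with the multinomial identity. The paper even remarks explicitly (\cref{rem:surprising_invariance_H_cp}) on the very distribution-invariance phenomenon you isolate as the crux.

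The gap is that you have not proved this invariance. The paper does not leave it as a heuristic: it states and proves it as a standalone lemma (\cref{lem:cyclicVolIntFlowH1p}), showing that the number of cyclic integer flows on $\cyclegraph_{1,p}$ with source netflow $\mathbf{a}=(a_1,\dots,a_p)$ is $\binom{a_1+\cdots+a_p+2p-2}{p-1}$, depending only on $\sum a_i$. Your instinct to use the Cycle Lemma is correct and is essentially what the paper does, but the execution is concrete: encode the data as a barred word $1^{a_1+1}2^{a_2+1}\cdots p^{a_p+1}$ with $p-1$ bars inserted (with repetition) among the $\sum a_i + p$ gaps, build a lattice path with down-steps at last occurrences of each letter and up-steps at bars, cyclically rotate to put the first global minimum at the end, and read off the cycle flows and the location of the unique zero edge from the rotated word. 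The count $\binom{\sum a_i + 2p-2}{p-1}$ is then just multichoose of bar positions, manifestly independent of the individual $a_i$.

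So your proposal is a correct outline with the right diagnosis of where the work lies, but it stops short of the bijection that actually discharges the obstacle. The generating-function route you mention would also work in principle, but the paper's barred-word argument is cleaner and gives the invariance for free rather than after a sum over the $p$ cut positions.
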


By \cref{thm:normVolCyclicIntegerFlow}, the volume computation is equivalent to counting cyclic volume integer flows on $\cyclegraph_{c,p}$. In order to do this count recursively, we will need the following enumeration of cyclic integer flows on $\cyclegraph_{1,p}$ with a fixed netflow at the sources.

\begin{lemma}
    \label{lem:cyclicVolIntFlowH1p}
    The number of cyclic integer flows on $\cyclegraph_{1,p}$ with netflow ${\bf a}=(a_1,\dots,a_p)$ in $\ZZ_{\geq 0}^p$ at the sources is $$
    \#\cyclicflows_{\bf a}(\cyclegraph_{1,p}) =
    \binom{a_1+\dots+a_p+2p-2}{p-1}.$$
\end{lemma}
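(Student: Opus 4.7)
My plan is to establish a bijection between $\cyclicflows_{\bf a}(\cyclegraph_{1,p})$ and the set of nonnegative integer compositions of $A + p - 1$ into $p$ parts, where $A := a_1 + \dots + a_p$. The latter set has cardinality $\binom{(A+p-1)+(p-1)}{p-1} = \binom{A + 2p - 2}{p-1}$ by stars and bars, matching the claimed formula exactly.

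First, I will label the cycle edges as $e_i : v_i \to v_{i+1}$ (indices modulo $p$) with flows $x_i := f(e_i)$ and sink flows $y_i := f(v_i \to t_i)$. Unpacking \cref{def:cycintfl}, a cyclic integer flow on $\cyclegraph_{1,p}$ is specified by an index $k \in [p]$ with $x_k = 0$, integers $x_i \geq 1$ for $i \neq k$, netflow $0$ at $v_k$, and netflow $1$ at all other internal vertices. Summing these $p$ netflow equations causes the $x$-differences to telescope and forces the necessary identity $y_1 + \dots + y_p = A + p - 1$.

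The crux is the converse: fix $(a_i)$ and a nonnegative tuple $(y_i)$ with $\sum_i y_i = A + p - 1$; from these I will build a unique cyclic integer flow. Setting $\delta_i := 1 + a_i - y_i \in \ZZ$, the netflow equation at $v_i$ for $i \neq k$ rearranges to the first-order recurrence $x_i - x_{i-1} = \delta_i$, so once $k$ is chosen the $x_i$ are precisely the cyclic partial sums of $(\delta_i)$ starting just after position $k$. The remaining constraint at $v_k$ reads $x_{k-1} = y_k - a_k$, which is automatic from the identity $\sum_i \delta_i = p + A - \sum_i y_i = 1$. Therefore the only question is: for how many choices of $k$ are the partial sums $x_{k+1}, x_{k+2}, \dots, x_{k+p-1}$ simultaneously $\geq 1$?

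This is precisely the content of the Cycle Lemma of Dvoretzky and Motzkin: for any integer sequence whose total sum equals $1$, there is a \emph{unique} cyclic rotation with all partial sums strictly positive. Applying the lemma to $(\delta_1, \dots, \delta_p)$ produces the unique valid $k$, completing the bijection $f \mapsto (y_1, \dots, y_p)$. I expect the main subtlety to lie in the modular indexing when deriving $x_i - x_{i-1} = \delta_i$ from the netflow convention, and in verifying that the ``tail of zero edge has netflow $0$'' condition is equivalent to positivity of the full length-$(p-1)$ partial sum; once these are confirmed, the Cycle Lemma delivers the count in one stroke.
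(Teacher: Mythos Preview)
Your argument is correct. Both your approach and the paper's hinge on the same cycle-lemma idea to locate the unique zero edge on the cycle, but the parametrizations differ. The paper builds an explicit bijection with barred words: it writes $1^{a_1+1}\cdots p^{a_p+1}$, inserts $p-1$ bars with repetition among the $A+p$ gaps (giving the binomial count directly), constructs a lattice path from bar positions and last-letter occurrences, and reads off the zero edge from the first global minimum of that path before decoding the flow values from a cyclically rotated word. You instead parametrize flows by their sink values $(y_1,\dots,y_p)$ with $\sum_i y_i = A+p-1$, and invoke the Cycle Lemma on the increments $\delta_i = 1+a_i-y_i$ to pin down the unique admissible rotation. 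Your route is more conceptual and bypasses the intermediate word combinatorics; the paper's version is fully explicit and self-contained, effectively proving the cycle-lemma step by hand via the path construction rather than citing it. One minor point worth making precise in your write-up: the Cycle Lemma is often stated with the side hypothesis $\delta_i \le 1$, which fails here since $\delta_i = 1 + a_i - y_i$ can be large; the unrestricted $k=1$ version you need (a unique rotation with all partial sums $\ge 1$ for any integer sequence summing to $1$) does hold and follows from the ``last index achieving the minimum partial sum'' argument, but you should state and justify that form rather than leave it implicit.
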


\begin{proof}
	Consider the word $1^{a_1+1}2^{a_2+1}\ldots p^{a_p+1}$ with  $a_1+1$ occurrences of the letter 1, followed by $a_2+1$ occurrences of the letter 2, and so on.
	Among the $a_1+\dots+a_p+p$ positions between successive letters including after the last letter, choose $p-1$ with repetition in which to insert a bar.
	Denote that resulting barred word by $W$. The number of choices of bars in a word is given by $\binom{a_1+\cdots + a_p +2p-2}{p-1}$. Next, we give a bijection between the choices of positions of bars and cyclic integer flows in $\cyclegraph_{1,p}$ with netflow ${\bf a}$ at the sources (see \cref{ex:bijection W to cyclic int flow}).

	Read $W$ and construct a path $P$ which takes an up step each time you cross one of the bars, and takes a down step each time the last occurrence of any one of the letters is reached.
	Each down step can be labelled by $i=1,\ldots,p$.

	Consider the first time $P$ hits its lowest point and let $j$ be the label of the down step heading to that lowest point.

	Let $W'$ be the word obtained by cutting $W$ after the last occurrence of $j$ and inverting the two parts; i.e. if $W=UV$ where $U$ finishes with the last occurrence of $j$, then $W'=VU$.

	We define the cyclic integer flow $f$ on $\cyclegraph_{1,p}$ as follows.
	Let the flow of the blue edge $\textcolor{blue}{(0,i)\to (1,i)}$ be $a_i$ for $i=1,\ldots,p$.
	The red edge of the cycle with zero flow will be the edge  $\textcolor{red}{(1,j) \to (1,j+1)}$.
    Next, label the bars of $W'$ by $j+1,j+2,\dots, j-1$ modulo $p$.
	The bar $j+1$ occurs before the last occurrence of $j+1$ in $W'$, as otherwise, this would contradict hitting the lowest point of $P$ right after the last occurrence of $j$.
	The number of letters before the bar numbered $j+1$ (which are all $j+1$s) determines the flow along the blue edge $\textcolor{blue}{(1,j+1)\to (2,j+1)}$.
	The number of letters between the bar $j+1$ and the last occurrence of $j+1$ corresponds to the flow along the red edge  $\textcolor{red}{(1,j+1) \to (1,j+2)}$.
	The bar numbered $j+2$ occurs before the final occurrence of the letter $j+2$ in $W'$.
	Note that this bar may occur either before or after the final occurrence of $j+1$.
	The number of letters between the bar numbered $j+1$ and the bar numbered $j+2$ corresponds to the flow along the blue edge $\textcolor{blue}{(1,j+2)\to (2,j+2)}$.
	The number of letters between the bar numbered $j+2$ and the last occurrence of $j+2$ corresponds to the flow along the red edge from $\textcolor{red}{(1,j+2) \to (1,j+3)}$.
	Continue in this way around the cycle up until $j-1$.
	The number of letters between the bar numbered $j-1$ and the last occurrence of $j-1$ determines the flow on the red edge  $\textcolor{red}{(1,j-1) \to (1,j)}$.
	Finally, the number of letters between the bar $j-1$ and the final occurrence of the letter $j$, minus 1, determines the flow on the blue edge $\textcolor{blue}{(1,j)\to (2,j)}$.

	We claim that $f$ is a cyclic integer flow on $\cyclegraph_{1,p}$ with netflow ${\bf a}$ at the sources. Indeed, in the red cycle, the edge $\textcolor{red}{(1,j)\to (1,j+1)}$ has zero flow and
    for $i\ne j$, the edge $\textcolor{red}{(1,i)\to (1,i+1)}$ has positive flow since the $i$-th bar occurs before the last occurrence of $i$.
    Also, the netflow at vertex $(1,i)$ for $i\neq j,j+1$ equals
    \begin{multline*}
        f(\textcolor{red}{(1,i)\to (1,i+1)}) + f(\textcolor{blue}{(1,i)\to (2,i)})
        - f(\textcolor{red}{(1,i-1)\to (1,i)}) - f(\textcolor{blue}{(0,i)\to (1,i)}) \\
        = \textcolor{red}{\textrm{\#\{$j+1,j+2\dots,i$ after bar $i$\}}}
        + \textcolor{blue}{\textrm{\#\{letters between bars $i-1$ and $i$\}}} \\
        - \textcolor{red}{\textrm{\#\{$j+1,j+2,\dots,i-1$ after bar $i-1$\}}}
        - \big(\textcolor{blue}{\textrm{\#\{$i$'s in $W'$\}}-1} \big) = 1,
    \end{multline*}
    while the netflow at vertex $(1,j)$ equals
    \begin{multline*}
        f(\textcolor{red}{(1,j)\to (1,j+1)}) + f(\textcolor{blue}{(1,j)\to (2,j)})
        - f(\textcolor{red}{(1,j-1)\to (1,j)}) - f(\textcolor{blue}{(0,j)\to (1,j)}) \\
        = \textcolor{red}{0}
        + \textcolor{blue}{\textrm{\#\{letters between bar $j-1$ and final letter $j$\}}-1} \\
        - \textcolor{red}{\textrm{\#\{letters other than $j$ after bar $j-1$\}}}
        - \big(\textcolor{blue}{\textrm{\#\{$j$'s in $W'$\}}-1} \big)= 0,
        \end{multline*}
    and the netflow at vertex $(1,j+1)$ equals
    \begin{multline*}
        f(\textcolor{red}{(1,j+1)\to (1,j+2)}) + f(\textcolor{blue}{(1,j+1)\to (2,j+1)}) \\
        - f(\textcolor{red}{(1,j)\to (1,j+1)}) - f(\textcolor{blue}{(0,j+1)\to (1,j+1)}) \\
        = \textcolor{red}{\textrm{\#\{$j+1$'s after bar $j+1$\}}}
        + \textcolor{blue}{\textrm{\#\{letters before bar $j+1$\}}} 
        - \textcolor{red}{0}
        - \big(\textcolor{blue}{\textrm{\#\{$j+1$'s in $W'$\}}-1} \big)= 1.
    \end{multline*}
    Lastly, this map $W \mapsto f$ is bijective since $f$ completely determines the word $W'$ with $p-1$ bars. The word $W'$ can be cut as $VU$ after the last occurrence of $p$ and $W=UV$ by inverting the two parts.
\end{proof}

\begin{example} \label{ex:bijection W to cyclic int flow} In \cref{fig:H1pBars}, we demonstrate the results of executing the algorithm in the proof of \cref{lem:cyclicVolIntFlowH1p} for $p=6$, ${\bf a}=(3,2,1,3,0,2)$, and the barred word $$W=11112|22334|44|456||66.$$ From $W$ we obtain the barred word $$W'=4|_4\,44|_5\,456|_6\,|_1\,6611112|_2\,2233$$ by cutting $W$ after the last occurrence of $j=3$ and swapping the resulting two factors.
\end{example}

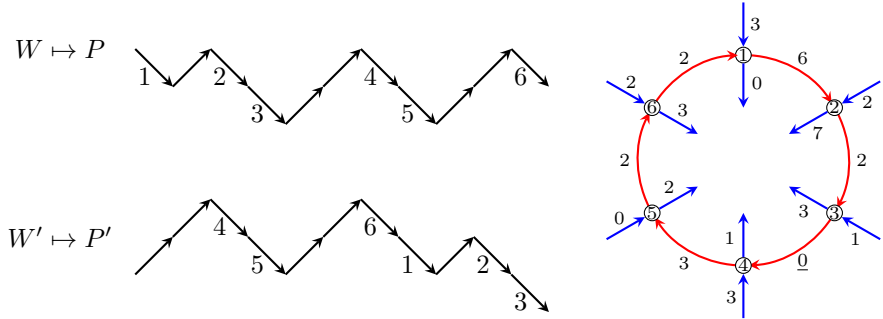
\begin{figure}[ht]
    \centering
    \begin{tikzpicture}[-stealth,scale=.5]
        \node at(-2,6) {$W\mapsto P$};
        \draw[thick] (0,6) -- (1,5) node[near start, below] {1};
        \draw[thick] (1,5) -- (2,6);
        \draw[thick] (2,6) -- (3,5) node[near start, below] {2};
        \draw[thick] (3,5) -- (4,4) node[near start, below] {3};
        \draw[thick] (4,4) -- (5,5);
        \draw[thick] (5,5) -- (6,6);
        \draw[thick] (6,6) -- (7,5) node[near start, below] {4};
        \draw[thick] (7,5) -- (8,4) node[near start, below] {5};
        \draw[thick] (8,4) -- (9,5);
        \draw[thick] (9,5) -- (10,6);
        \draw[thick] (10,6) -- (11,5) node[near start, below] {6};

        \node at(-2,1) {$W'\mapsto P'$};
        \draw[thick] (0,0) -- (1,1);
        \draw[thick] (1,1) -- (2,2);
        \draw[thick] (2,2) -- (3,1) node[near start, below] {4};
        \draw[thick] (3,1) -- (4,0) node[near start, below] {5};
        \draw[thick] (4,0) -- (5,1);
        \draw[thick] (5,1) -- (6,2);
        \draw[thick] (6,2) -- (7,1) node[near start, below] {6};
        \draw[thick] (7,1) -- (8,0) node[near start, below] {1};
        \draw[thick] (8,0) -- (9,1);
        \draw[thick] (9,1) -- (10,0) node[near start, below] {2};
        \draw[thick] (10,0) -- (11,-1) node[near start, below] {3};
    \end{tikzpicture}
    \hspace{.5cm}
    \begin{tikzpicture}[-stealth,scale=.7]
        \def\r{2cm}
        \def\e{3}
        \def\steps{6}

        \foreach \x in {1,...,\steps}{
            \draw[thick,red] (90+360/\steps-360/\steps*\x:\r) arc (90+360/\steps-360/\steps*\x:90-360/\steps*\x+\e:\r);
            \node[circle, draw, inner sep=0pt, minimum size=4pt,fill=white] (N\x) at (90+360/\steps-360/\steps*\x:\r) {\scriptsize \x};
            \draw[thick,blue] (90+360/\steps-360/\steps*\x:1.5*\r) -- (N\x);
            \draw[thick,blue] (N\x) -- (90+360/\steps-360/\steps*\x:0.5*\r);

        }
        \node at (85+360/6-360/6*1:2.6) {\scriptsize $3$};
        \node at (85+360/6-360/6*2:2.6) {\scriptsize $2$};
        \node at (85+360/6-360/6*3:2.6) {\scriptsize $1$};
        \node at (85+360/6-360/6*4:2.6) {\scriptsize $3$};
        \node at (85+360/6-360/6*5:2.6) {\scriptsize $0$};
        \node at (85+360/6-360/6*6:2.6) {\scriptsize $2$};

        \node at (80+360/6-360/6*1:1.5) {\scriptsize $0$};
        \node at (80+360/6-360/6*2:1.5) {\scriptsize $7$};
        \node at (80+360/6-360/6*3:1.5) {\scriptsize $3$};
        \node at (80+360/6-360/6*4:1.5) {\scriptsize $1$};
        \node at (80+360/6-360/6*5:1.5) {\scriptsize $2$};
        \node at (80+360/6-360/6*6:1.5) {\scriptsize $3$};

        \node at (60+360/6-360/6*1:2.25) {\scriptsize $6$};
        \node at (60+360/6-360/6*2:2.25) {\scriptsize $2$};
        \node at (60+360/6-360/6*3:2.25) {\scriptsize $\underline{0}$};
        \node at (60+360/6-360/6*4:2.25) {\scriptsize $3$};
        \node at (60+360/6-360/6*5:2.25) {\scriptsize $2$};
        \node at (60+360/6-360/6*6:2.25) {\scriptsize $2$};
    \end{tikzpicture}
    \caption{Example of the algorithm in the proof of \cref{lem:cyclicVolIntFlowH1p}.
    The two paths built from $W$ and $W'$ are on the left and the cyclic integer flow associated to $W$ is on the right.}
    \label{fig:H1pBars}
\end{figure}

\begin{remark}
\label{rem:surprising_invariance_H_cp}
Note that, surprisingly, the enumeration in \cref{lem:cyclicVolIntFlowH1p} only depends on the total inflow, not how it is divided up.
\end{remark}

\begin{proof}[Proof of \cref{thm:hkr-vol}]
By \cref{thm:normVolCyclicIntegerFlow}, we have that $$
\vol\bdd(\cyclegraph_{c,p}) = \#\funny(\cyclegraph_{c,p}).$$
We proceed by induction on $c$. For $c=1$, by \cref{lem:cyclicVolIntFlowH1p} we have that
\[
    \#\funny(\cyclegraph_{1,p}) = \binom{0+2p-2}{p-1},
\]
as desired.

Note that in a cyclic volume integer flow on $\cyclegraph_{c,p}$ we know that $(p-1)i$ units of flow go from the $i$-th cycle to the $(i+1)$st.
We view each cyclic volume integer flow $f$ in $\cyclegraph_{c,p}$ as a pair $(f',g)$ where $f'$ is a cyclic volume integer flow on the subgraph $\cyclegraph_{c-1,p}$ of the first $c-1$ cycles and $g$ is a cyclic integer flow on the subgraph $\cyclegraph_{1,p}$ of the innermost cycle. In such a pair, denote by ${\bf a}=(a_1,\ldots,a_p)$ the inflow between the $(c-1)$th and $c$th cycle, i.e. $a_i$ is the flow on the edge $(c-1,i)\to (c,i)$. See \cref{fig:breaking-flows-Hcp}. By \cref{lem:cyclicVolIntFlowH1p}, the number of such flows $g$ only depends on the total incoming flow $a_1+\cdots+a_p=(p-1)(c-1)$. From these observations and induction on $c$ we have that
\begin{align*}
   \#\funny(\cyclegraph_{c,p}) &= \#\funny(\cyclegraph_{c-1,p}) \cdot \binom{(c-1)(p-1)+2p-2}{p-1}\\
   &= \binom{c(p-1)}{p-1,\ldots,p-1} \cdot \binom{(c+1)(p-1)}{p-1},
\end{align*}
and the result follows from the definition of the multinomial coefficient.
\end{proof}

\section{The mutoperhedron}
    \label{ss:TheDoppel}

We now restrict our attention to the family of graphs $\cyclegraph_{c,2}$, consisting of $c$-many nested $2$-cycles; see \cref{fig:H32} for a picture of $\cyclegraph_{3,2}$.
In this case, \cref{prop:good_routes_Hcp,thm:hkr-vol} say that $\DKKred(\cyclegraph_{c,2})$ has $2^{c+1} - 2$ rays and $(c+1)!$ maximal cones.
The reader might recognize that these are also the number of rays and maximal cones, respectively, of the (essentialized) \textnew{braid arrangement}, which is the normal fan of the $c$-dimensional \textnew{permutohedron $\Pi_{c+1}$}.
Recall that the $h$-vector (and therefore the $f$-vector) of a simple polytope of dimension $d \leq 5$ is completely determined by its number of vertices and facets.
For example, the $h$-vector of a $5$-dimensional simple polytope $P$ is of the form $(1,a,b,b,a,1)$, where $2+2a+2b$ is the number of vertices of $P$ and $b+d = b+5$ is its number of facets.
Since both the permutohedron $\Pi_{c+1}$ and the polytope whose normal fan is $\DKKred(\cyclegraph_{c,2})$ are simple, they necessarily have the same $f$-vector for $c \leq 5$.

\begin{definition}
    \label{def:doppel}
    The $c$-dimensional \textnew{mutoperhedron $\doppel_c$} is the polytope whose normal fan is $\DKKred(\cyclegraph_{c,2})$.
    One explicit realization of $\doppel_c$ is given by the Minkowski sum in \cref{thm:polytope-dual-to-cyclic-gfan}.
\end{definition}

See \cref{fig:doppel3} for a picture of the $3$-dimensional mutoperhedron.
We will devote the remainder of this section to proving the following result, which explains the choice of the name mutoperhedron.

\begin{figure}[ht]
    \centering
    \begin{tikzpicture}%
	[x={(0.570104cm, -0.405562cm)},
	y={(0.821573cm, 0.281466cm)},
	z={(-0.000038cm, 0.869653cm)},
	scale=.8,
	back/.style={loosely dotted, thin},
	edge/.style={color=blue!95!black, thick},
	facet/.style={fill=blue!95!black,fill opacity=.4},
	vertex/.style={inner sep=1pt,circle,draw=green!25!black,fill=green!75!black,thick}]
\coordinate (8, 10, 8) at (8, 10, 8);
\coordinate (8, 10, 11) at (8, 10, 11);
\coordinate (8, 11, 8) at (8, 11, 8);
\coordinate (8, 11, 12) at (8, 11, 12);
\coordinate (8, 14, 11) at (8, 14, 11);
\coordinate (8, 14, 12) at (8, 14, 12);
\coordinate (9, 12, 13) at (9, 12, 13);
\coordinate (9, 15, 13) at (9, 15, 13);
\coordinate (10, 10, 11) at (10, 10, 11);
\coordinate (10, 12, 13) at (10, 12, 13);
\coordinate (10, 16, 11) at (10, 16, 11);
\coordinate (10, 16, 13) at (10, 16, 13);
\coordinate (11, 10, 8) at (11, 10, 8);
\coordinate (11, 10, 10) at (11, 10, 10);
\coordinate (11, 14, 8) at (11, 14, 8);
\coordinate (11, 16, 10) at (11, 16, 10);
\coordinate (12, 11, 8) at (12, 11, 8);
\coordinate (12, 14, 8) at (12, 14, 8);
\coordinate (13, 12, 9) at (13, 12, 9);
\coordinate (13, 12, 10) at (13, 12, 10);
\coordinate (13, 15, 9) at (13, 15, 9);
\coordinate (13, 15, 13) at (13, 15, 13);
\coordinate (13, 16, 10) at (13, 16, 10);
\coordinate (13, 16, 13) at (13, 16, 13);
\draw[edge,back] (8, 10, 8) -- (8, 11, 8);
\draw[edge,back] (8, 11, 8) -- (8, 14, 11);
\draw[edge,back] (8, 11, 8) -- (11, 14, 8);
\draw[edge,back] (8, 11, 12) -- (8, 14, 12);
\draw[edge,back] (8, 14, 11) -- (8, 14, 12);
\draw[edge,back] (8, 14, 11) -- (10, 16, 11);
\draw[edge,back] (8, 14, 12) -- (9, 15, 13);
\draw[edge,back] (10, 16, 11) -- (10, 16, 13);
\draw[edge,back] (10, 16, 11) -- (11, 16, 10);
\draw[edge,back] (11, 14, 8) -- (11, 16, 10);
\draw[edge,back] (11, 14, 8) -- (12, 14, 8);
\draw[edge,back] (11, 16, 10) -- (13, 16, 10);
\node[vertex] at (10, 16, 11)     {};
\node[vertex] at (11, 16, 10)     {};
\node[vertex] at (8, 14, 11)     {};
\node[vertex] at (8, 14, 12)     {};
\node[vertex] at (8, 11, 8)     {};
\node[vertex] at (11, 14, 8)     {};
\fill[facet] (13, 16, 13) -- (13, 15, 13) -- (13, 12, 10) -- (13, 12, 9) -- (13, 15, 9) -- (13, 16, 10) -- cycle {};
\fill[facet] (13, 16, 13) -- (10, 16, 13) -- (9, 15, 13) -- (9, 12, 13) -- (10, 12, 13) -- (13, 15, 13) -- cycle {};
\fill[facet] (10, 12, 13) -- (9, 12, 13) -- (8, 11, 12) -- (8, 10, 11) -- (10, 10, 11) -- cycle {};
\fill[facet] (11, 10, 10) -- (10, 10, 11) -- (8, 10, 11) -- (8, 10, 8) -- (11, 10, 8) -- cycle {};
\fill[facet] (13, 15, 9) -- (12, 14, 8) -- (12, 11, 8) -- (13, 12, 9) -- cycle {};
\fill[facet] (13, 12, 10) -- (11, 10, 10) -- (11, 10, 8) -- (12, 11, 8) -- (13, 12, 9) -- cycle {};
\fill[facet] (13, 15, 13) -- (10, 12, 13) -- (10, 10, 11) -- (11, 10, 10) -- (13, 12, 10) -- cycle {};
\draw[edge] (8, 10, 8) -- (8, 10, 11);
\draw[edge] (8, 10, 8) -- (11, 10, 8);
\draw[edge] (8, 10, 11) -- (8, 11, 12);
\draw[edge] (8, 10, 11) -- (10, 10, 11);
\draw[edge] (8, 11, 12) -- (9, 12, 13);
\draw[edge] (9, 12, 13) -- (9, 15, 13);
\draw[edge] (9, 12, 13) -- (10, 12, 13);
\draw[edge] (9, 15, 13) -- (10, 16, 13);
\draw[edge] (10, 10, 11) -- (10, 12, 13);
\draw[edge] (10, 10, 11) -- (11, 10, 10);
\draw[edge] (10, 12, 13) -- (13, 15, 13);
\draw[edge] (10, 16, 13) -- (13, 16, 13);
\draw[edge] (11, 10, 8) -- (11, 10, 10);
\draw[edge] (11, 10, 8) -- (12, 11, 8);
\draw[edge] (11, 10, 10) -- (13, 12, 10);
\draw[edge] (12, 11, 8) -- (12, 14, 8);
\draw[edge] (12, 11, 8) -- (13, 12, 9);
\draw[edge] (12, 14, 8) -- (13, 15, 9);
\draw[edge] (13, 12, 9) -- (13, 12, 10);
\draw[edge] (13, 12, 9) -- (13, 15, 9);
\draw[edge] (13, 12, 10) -- (13, 15, 13);
\draw[edge] (13, 15, 9) -- (13, 16, 10);
\draw[edge] (13, 15, 13) -- (13, 16, 13);
\draw[edge] (13, 16, 10) -- (13, 16, 13);
\node[vertex] at ( 8, 10,  8) {};
\node[vertex] at ( 8, 10, 11) {};
\node[vertex] at ( 8, 11, 12) {};
\node[vertex] at ( 9, 12, 13) {};
\node[vertex] at ( 9, 15, 13) {};
\node[vertex] at (10, 10, 11) {};
\node[vertex] at (10, 12, 13) {};
\node[vertex] at (10, 16, 13) {};
\node[vertex] at (11, 10,  8) {};
\node[vertex] at (11, 10, 10) {};
\node[vertex] at (12, 11,  8) {};
\node[vertex] at (12, 14,  8) {};
\node[vertex] at (13, 12,  9) {};
\node[vertex] at (13, 12, 10) {};
\node[vertex] at (13, 15,  9) {};
\node[vertex] at (13, 15, 13) {};
\node[vertex] at (13, 16, 10) {};
\node[vertex] at (13, 16, 13) {};
\node at (11.5, 12.0, 11.5) {$(12,34)$};
\node at (11.0, 14.0, 13.5) {$(1,234)$};
\node at (13.0, 14.5, 10.5) {$(123,4)$};
\node at (12.0, 10.5,  9.0) {$(2,134)$};
\node at ( 9.3, 10.0,  9.5) {$(24,13)$};
\node at ( 9.3, 10.5, 12.0) {$(124,3)$};
\end{tikzpicture}
    \caption{
        The mutoperhedron $\doppel_3$.
        The picture shows its 24 vertices, 36 edges, and 14 facets.
        The facets corresponding to the route $\route_{(12,34)}$ and its compatible routes are labelled as explained in \cref{ssec:facets}.
        }
    \label{fig:doppel3}
\end{figure}

\begin{theorem}
    \label{thm:f-vect_doppel}
    The mutoperhedron $\doppel_c$ and the permutohedron $\Pi_{c+1}$ have the same $f$-vector.
    That is, for all $c \geq 1$ and $0 \leq i \leq c$,
    \[
        f_i(\doppel_c) = f_i(\Pi_{c+1}).
    \]
    However, for $c \geq 3$, these polytopes are {\em not} combinatorially isomorphic.
\end{theorem}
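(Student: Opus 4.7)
We handle the two claims separately: matching $h$-vectors to establish $f$-vector equality, and exhibiting a combinatorial invariant of the $1$-skeleton to prove non-isomorphism for $c \geq 3$.

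\textbf{$f$-vector equality.} Since both $\doppel_c$ and $\Pi_{c+1}$ are simple $c$-polytopes, matching $h$-vectors suffices, and for $\Pi_{c+1}$ the $h$-vector is the Eulerian distribution $h_i(\Pi_{c+1}) = A(c+1, i)$. To compute $h_i(\doppel_c)$, we will use the standard shelling interpretation: for a generic linear functional $\phi$ on the ambient space of $\DKKred(\cyclegraph_{c,2})$, $h_i$ equals the number of vertices of $\doppel_c$ with exactly $i$ down-edges (edges along which $\phi$ decreases). The vertices of $\doppel_c$ correspond via \cref{prop:cliques-to-cyclic-funny} to cyclic volume integer flows of $\cyclegraph_{c,2}$, and the recursive decomposition from the proof of \cref{thm:hkr-vol} (combined with \cref{lem:cyclicVolIntFlowH1p}) encodes each such flow as a sequence $(x_1, \ldots, x_c)$ with $x_i \in [i+1]$, giving a bijection with $S_{c+1}$ via a Lehmer-code insertion procedure. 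Using the linear isomorphism of \cref{thm:quotient-subfan}, we will pull back a generic staircase functional from the permutahedron's ambient space to the reduced flow space $\Fred(\cyclegraph_{c,2})$ and verify that down-edges at each vertex of $\doppel_c$ correspond to descents of the associated permutation, yielding $h_i(\doppel_c) = A(c+1, i)$.

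\textbf{Non-isomorphism for $c \geq 3$.} The $1$-skeleton of $\Pi_{c+1}$ is the Cayley graph of $S_{c+1}$ with adjacent-transposition generators; since adjacent transpositions change permutation parity, this graph is bipartite, and hence every $2$-face of $\Pi_{c+1}$ has an even number of edges (in particular, all facets of $\Pi_4$ are squares or hexagons). For $c = 3$, we will show that $\doppel_3$ has a facet with an odd number of edges, breaking bipartiteness. This reduces to a finite enumeration: for each non-exceptional good route $\route$ of $\cyclegraph_{3,2}$, the facet of $\doppel_3$ dual to $\route$ has vertices given by maximal cliques containing $\route$ and its edges determined by the compatibility condition of \cref{def:incompatible}, and some facet will turn out to be an odd polygon. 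For general $c \geq 3$, we will reduce to the case $c = 3$ by showing that $\doppel_3$ appears as an iterated face of $\doppel_c$ (by restricting to cliques whose non-exceptional routes only touch a fixed consecutive block of three cycles), so an odd cycle in the $1$-skeleton of $\doppel_3$ persists in $\doppel_c$.

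\textbf{Main obstacle.} The most delicate step in the $f$-vector argument is the precise coordination between the bijection $\funny(\cyclegraph_{c,2}) \to S_{c+1}$ and the functional $\phi$ so that down-edges are translated exactly to descents; tracking this through the linear isomorphism of \cref{thm:quotient-subfan} requires careful bookkeeping. For the non-isomorphism, the principal task is to explicitly identify an odd-gon facet of $\doppel_3$, which amounts to a careful enumeration of the links of routes in $\NKred(Q_{\cyclegraph_{3,2}}, R_{\cyclegraph_{3,2}})$.
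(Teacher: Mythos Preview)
Your non-isomorphism argument matches the paper's: \cref{ex:Hc2} exhibits a pentagonal facet of $\doppel_3$ (dual to $\route_{(12,34)}$), and \cref{lemma:facet iso to smaller doppel} shows $\doppel_{c-1}$ is a facet of $\doppel_c$, so the odd $2$-face propagates to all $c\ge 3$.

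For the $f$-vector equality your route is genuinely different. The paper does not compute $h$-vectors or use any linear functional. Instead it builds a direct bijection between the codimension-$k$ faces of the two polytopes through a common combinatorial object, \emph{consistent multisets of arcs} on $[0,c+1]$. A bipartition (facet of $\Pi_{c+1}$) and a good route (facet of $\doppel_c$) are both encoded as an arc diagram; a nested family of $k$ bipartitions becomes $k$ \emph{stacked} arc diagrams, while a compatible family of $k$ routes becomes $k$ pairwise non-crossing, non-interfering arc diagrams. \cref{prop:stacked-bij} and \cref{prop:route-bij} show, by two different greedy unpacking procedures, that both kinds of $k$-tuples are in bijection with the same set of rank-$k$ consistent multisets. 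This yields $f_i(\doppel_c)=f_i(\Pi_{c+1})$ for every $i$ at once, with no shelling.

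Your approach has a real gap at exactly the point you flag. The edges of $\doppel_c$ are mutations in the reduced non-kissing complex of $(Q_{\cyclegraph_{c,2}},R_{\cyclegraph_{c,2}})$, and nothing in the recursive Lehmer-type encoding of $\funny(\cyclegraph_{c,2})$ tells you how a single mutation moves that code; there is no reason to expect down-edges for your chosen functional to correspond to descents. Moreover, \cref{thm:quotient-subfan} identifies $\DKKred(\cyclegraph_{c,2})$ with a slice of $\gfan(Q_{\cyclegraph_{c,2}},R_{\cyclegraph_{c,2}})$ inside $W_{\cyclegraph_{c,2}}\subset\RR^{2c}$, not with the ambient space of $\Pi_{c+1}$, so ``pulling back a staircase functional from the permutahedron'' is not a well-defined operation here. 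Making your strategy work would require an explicit description of the flip graph of $\doppel_c$---which is essentially what the paper's arc-multiset bijection supplies, more cheaply and with a stronger conclusion (a face-by-face bijection rather than only numerical equality).
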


We can equivalently rewrite the previous result in terms of $h$-vectors as follows.

\begin{corollary}
    For all $c \geq 1$ and $0 \leq i \leq c$,
    \[
        h_i(\doppel_c) = A_{c+1,i},
    \]
    where $A_{n,k} := \# \set{ w \in \mathcal{S}_n }{ {\rm des}(w) = k }$ denotes the Eulerian numbers.
\end{corollary}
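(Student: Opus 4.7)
The plan is to derive this corollary immediately from Theorem~\ref{thm:f-vect_doppel}, which asserts that $\doppel_c$ and $\Pi_{c+1}$ have identical $h$-vectors, combined with the classical identification of the $h$-vector of the permutahedron with the Eulerian distribution. In other words, the task reduces entirely to verifying the auxiliary identity $h_i(\Pi_{c+1}) = A_{c+1,i}$.

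To establish $h_i(\Pi_{c+1}) = A_{c+1,i}$, I would proceed as follows. The permutahedron $\Pi_{c+1}$ is a simple $c$-dimensional polytope whose normal fan is the (essentialized) braid arrangement, whose top-dimensional cones are in bijection with permutations of $\mathcal{S}_{c+1}$. Choosing a generic linear functional on $\RR^{c+1}$ induces a line shelling of the boundary of the simplicial polytope dual to $\Pi_{c+1}$, and by the standard interpretation of $h$-vectors via shellings, $h_i$ equals the number of facets whose restriction in the shelling has cardinality $i$. A standard calculation, recorded in many references on simple polytopes and on the Coxeter complex of type $A$, shows that under the linear extension of the weak order induced by such a generic functional, this restriction cardinality coincides with the classical descent statistic on $\mathcal{S}_{c+1}$. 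Hence
\[
    h_i(\Pi_{c+1}) \;=\; \#\set{w \in \mathcal{S}_{c+1}}{\mathrm{des}(w) = i} \;=\; A_{c+1,i}.
\]

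Combining the previous display with Theorem~\ref{thm:f-vect_doppel} yields $h_i(\doppel_c) = A_{c+1,i}$ for all $0 \leq i \leq c$, as desired.

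No substantial obstacle is expected: the corollary is a bookkeeping consequence of Theorem~\ref{thm:f-vect_doppel}, and the identification of $h_i(\Pi_{c+1})$ with the Eulerian numbers is classical and well-documented. The only care needed is to pin down a precise reference (or, if preferred, include a short self-contained recap of the shelling computation) for the type-$A$ Coxeter complex statement.
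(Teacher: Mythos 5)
Your proposal is correct and matches the paper's (implicit) argument: the corollary is just Theorem~\ref{thm:f-vect_doppel} combined with the classical fact that the $h$-vector of $\Pi_{c+1}$ is given by the Eulerian numbers, the passage from equal $f$-vectors to equal $h$-vectors being automatic since the $h$-vector is determined by the $f$-vector. Your shelling/descent sketch for $h_i(\Pi_{c+1}) = A_{c+1,i}$ is standard and fine; a citation would suffice.
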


We will prove \cref{thm:f-vect_doppel} by providing a bijection between multisets of $k$ intersecting facets of $\doppel_c$ and multisets of $k$ intersecting facets of $\Pi_{c+1}$.

\subsection{The facets of the mutoperhedron}\label{ssec:facets}

Since the normal fan of the mutoperhedron $\doppel_c$ is $\DKKred(\cyclegraph_{c,2})$,
its facets are in bijection with the good non-exceptional routes of $\cyclegraph_{c,2}$,
and a collection of facets intersects if and only if the corresponding routes are compatible.

Recall that $\cyclegraph_{c,2}$ is coloured so that edges along cycles are \tcr{red}, and the remaining edges are \tcb{blue}. We label the \tcb{blue} edges of $\cyclegraph_{c,2}$ in such a way that
\[
    u_1 u_2 \ldots u_{c+1}
    \qquad\text{and}\qquad
    d_1 d_2 \ldots d_{c+1}
\]
are its two exceptional \tcb{blue} routes; \cref{fig:H2c_route} shows this labelling when $c=3$.
In the pictures, the arrows $u_i$ and $d_i$ are on the top and bottom, respectively.

\begin{figure}[ht]
    \begin{subfigure}{.3\linewidth}
        \centering
        \begin{tikzpicture}[-stealth,scale=.65,rotate=-90]

\draw[blue,very thick] (-3.8,0) -- ( -3,0);
\draw[blue,very thick] (  -3,0) -- ( -2,0);
\draw[red,dashed,very thick] (-2,0) arc(180:  0:2);
\draw[blue,very thick] (   2,0) -- (  1,0);
\draw[blue,very thick] (   1,0) -- ( .25,0);

\draw[gray!70!white,dashed] (-1,0) arc(180:  0:1);
\draw[gray!70!white,dashed] (-3,0) arc(180:  0:3);
\draw[gray!70!white,dashed] ( 1,0) arc(360:180:1);
\draw[gray!70!white,dashed] ( 2,0) arc(360:180:2);
\draw[gray!70!white,dashed] ( 3,0) arc(360:180:3);

\draw[gray!70!white] ( 3.8,0) -- (  3,0);
\draw[gray!70!white] (   3,0) -- (  2,0);
\draw[gray!70!white] (  -2,0) -- ( -1,0);
\draw[gray!70!white] (  -1,0) -- (-.25,0);

\node[blue] at (-3.5,.45) {\small $u_1$};
\node[blue] at (- 2.5,.45) {\small $u_2$};
\node[gray!70!white] at (- 1.5,.45) {\small $u_3$};
\node[gray!70!white] at (-  .5,.45) {\small $u_4$};
\node[gray!70!white] at ( 3.5,-.45) {\small $d_1$};
\node[gray!70!white] at (  2.5,-.45) {\small $d_2$};
\node[blue] at (  1.5,-.45) {\small $d_3$};
\node[blue] at (   .5,-.45) {\small $d_4$};
            \node at (4.5, 0) {$(12,34)$};
        \end{tikzpicture}
        \caption{}
        \label{fig:H2c_route}
    \end{subfigure}
    \hfill
    \begin{subfigure}{.65\linewidth}
        \centering
        \begin{tikzpicture}[-stealth,scale=.35,rotate=-90]
            \draw[blue,very thick] (-3.8,0) -- ( -3,0);
\draw[blue,very thick] (   3,0) -- (  2,0);
\draw[blue,very thick] (   2,0) -- (  1,0);
\draw[blue,very thick] (   1,0) -- ( .25,0);
\draw[red,dashed,very thick] (-3,0) arc(180:  0:3);

\draw[gray!70!white,dashed] (-1,0) arc(180:  0:1);
\draw[gray!70!white,dashed] (-2,0) arc(180:  0:2);

\draw[gray!70!white,dashed] ( 1,0) arc(360:180:1);
\draw[gray!70!white,dashed] ( 2,0) arc(360:180:2);
\draw[gray!70!white,dashed] ( 3,0) arc(360:180:3);

\draw[gray!70!white] (  -3,0) -- ( -2,0);
\draw[gray!70!white] (  -2,0) -- ( -1,0);
\draw[gray!70!white] (  -1,0) -- (-.25,0);
\draw[gray!70!white] ( 3.8,0) -- (  3,0);
            \node at (4.5, 0) {$(1,234)$};
        \end{tikzpicture}
        \qquad\quad
        \begin{tikzpicture}[-stealth,scale=.35,rotate=-90]
            \draw[blue,very thick] (-3.8,0) -- ( -3,0);
\draw[blue,very thick] (  -3,0) -- ( -2,0);
\draw[blue,very thick] (  -2,0) -- ( -1,0);
\draw[red,dashed,very thick] (-1,0) arc(180:  0:1);
\draw[blue,very thick] (   1,0) -- ( .25,0);

\draw[gray!70!white,dashed] (-2,0) arc(180:  0:2);
\draw[gray!70!white,dashed] (-3,0) arc(180:  0:3);
\draw[gray!70!white,dashed] ( 1,0) arc(360:180:1);
\draw[gray!70!white,dashed] ( 2,0) arc(360:180:2);
\draw[gray!70!white,dashed] ( 3,0) arc(360:180:3);

\draw[gray!70!white] (  -1,0) -- (-.25,0);
\draw[gray!70!white] ( 3.8,0) -- (  3,0);
\draw[gray!70!white] (   3,0) -- (  2,0);
\draw[gray!70!white] (   2,0) -- (  1,0);
            \node at (4.5, 0) {$(123,4)$};
        \end{tikzpicture} \\[-10pt]
        \begin{tikzpicture}[-stealth,scale=.35,rotate=-90]

\draw[blue,very thick] (-3.8,0) -- ( -3,0);
\draw[blue,very thick] (  -3,0) -- ( -2,0);
\draw[red,dashed,very thick] (-2,0) arc(180:  0:2);
\draw[blue,very thick] (   2,0) -- (  1,0);
\draw[red,dashed,very thick] ( 1,0) arc(360:180:1);
\draw[blue,very thick] (  -1,0) -- (-.25,0);

\draw[gray!70!white,dashed] (-1,0) arc(180:  0:1);
\draw[gray!70!white,dashed] (-3,0) arc(180:  0:3);
\draw[gray!70!white,dashed] ( 2,0) arc(360:180:2);
\draw[gray!70!white,dashed] ( 3,0) arc(360:180:3);

\draw[gray!70!white] (  -2,0) -- ( -1,0);
\draw[gray!70!white] ( 3.8,0) -- (  3,0);
\draw[gray!70!white] (   3,0) -- (  2,0);
\draw[gray!70!white] (   1,0) -- ( .25,0);
            \node at (4.5, 0) {$(124,3)$};
        \end{tikzpicture}
        \hfill
        \begin{tikzpicture}[-stealth,scale=.35,rotate=-90]

\draw[blue,very thick] ( 3.8,0) -- (  3,0);
\draw[red,dashed,very thick]  ( 3,0) arc(360:180:3);
\draw[blue,very thick] (  -3,0) -- ( -2,0);
\draw[red,dashed,very thick]  (-2,0) arc(180:  0:2);
\draw[red,dashed,very thick] ( 1,0) arc(360:180:1);
\draw[blue,very thick] (   2,0) -- (  1,0);
\draw[blue,very thick] (  -1,0) -- (-.25,0);

\draw[gray!70!white,dashed] (-1,0) arc(180:  0:1);
\draw[gray!70!white,dashed] (-3,0) arc(180:  0:3);
\draw[gray!70!white,dashed]  ( 2,0) arc(360:180:2);
\draw[gray!70!white] (-3.8,0) -- ( -3,0);
\draw[gray!70!white] (  -2,0) -- ( -1,0);
\draw[gray!70!white] (   3,0) -- (  2,0);
\draw[gray!70!white] (   1,0) -- ( .25,0);
            \node at (4.5, 0) {$(24,13)$};
        \end{tikzpicture}
        \hfill
        \begin{tikzpicture}[-stealth,scale=.35,rotate=-90]

\draw[blue,very thick] ( 3.8,0) -- (  3,0);
\draw[red,dashed,very thick] ( 3,0) arc(360:180:3);
\draw[blue,very thick] (  -3,0) -- ( -2,0);
\draw[red,dashed,very thick] (-2,0) arc(180:  0:2);
\draw[blue,very thick] (   2,0) -- (  1,0);
\draw[blue,very thick] (   1,0) -- ( .25,0);

\draw[gray!70!white, dashed](-1,0) arc(180:  0:1);
\draw[gray!70!white, dashed](-3,0) arc(180:  0:3);
\draw[gray!70!white, dashed]( 1,0) arc(360:180:1);
\draw[gray!70!white, dashed]( 2,0) arc(360:180:2);

\draw[gray!70!white](-3.8,0) -- ( -3,0);
\draw[gray!70!white] (   3,0) -- (  2,0);
\draw[gray!70!white] (  -2,0) -- ( -1,0);
\draw[gray!70!white] (  -1,0) -- (-.25,0);
            \node at (4.5, 0) {$(2,134)$};
        \end{tikzpicture}
        \caption{}
        \label{fig:H2c_pentagon}
    \end{subfigure}
    \caption{
        (A) The labels of the \tcb{blue} edges and the route $\route_{(12,34)} \in \Routes(\cyclegraph_{3,2})$.
        (B) The five routes in $\Routes(\cyclegraph_{3,2})$ compatible with $\route_{(12,34)}$.
    }
    \label{fig:compatibility12_34}
\end{figure}

A good route of $\cyclegraph_{c,2}$ is uniquely determined by the collection of \tcb{blue} edges it uses (\cref{prop:good_routes_Hcp}).
Therefore, the good non-exceptional routes of $\cyclegraph_{c,2}$, and consequently also the facets of $\doppel_c$, are in bijection with ordered partitions of $[c+1]$ into exactly two parts, henceforth referred to as \textnew{bipartitions}.
Explicitly, the route $\route_{(A,B)}$ corresponding to a bipartition $(A,B)$ of $[c+1]$ is the unique good route using \tcb{blue} edges $u_i$ for $i \in A$ and $d_j$ for $j \in B$.
See \cref{fig:compatibility12_34} for an example.
Note that, to simplify the notation, we drop the brackets and the commas from each individual part of an ordered partition. So, for example, we simply write $(12,34)$ instead of $(\{1,2\},\{3,4\})$.

\begin{example}
    \label{ex:Hc2}
    Consider the mutoperhedron $\doppel_3$ depicted in \cref{fig:doppel3}.
    The route $\route_{(12,34)} \in \Routes(\cyclegraph_{3,2})$, shown in \cref{fig:H2c_route},
    is compatible with exactly five other good non-exceptional routes, shown in \cref{fig:H2c_pentagon}.
    Thus, the facet of $\doppel_3$ dual to the ray of $\DKKred(\cyclegraph_{3,2})$ corresponding to $\route_{(12,34)}$ intersects exactly five other facets at a face of codimension two.
    Since $\doppel_3$ is $3$-dimensional, this facet is a pentagon.
\end{example}

The two-dimensional faces of a permutohedron are always either squares or hexagons.
Thus, even though the polytopes $\doppel_3$ and $\Pi_4$ have the same $f$-vector, the previous example shows that they are not combinatorially isomorphic. This obstruction propagates to higher dimensions. This proves the last claim of \cref{thm:f-vect_doppel}.

\begin{lemma} \label{lemma:facet iso to smaller doppel}
The facet of $\doppel_c$ indexed by the bipartition $P_\circ=(\{1\},\{2,3,\dots,c+1\})$ is combinatorially isomorphic to $\doppel_{c-1}$.
\end{lemma}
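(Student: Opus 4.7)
The plan is to produce a compatibility-preserving bijection between the non-exceptional good routes of $\cyclegraph_{c,2}$ compatible with $\route_{P_\circ}$ (other than $\route_{P_\circ}$ itself) and the non-exceptional good routes of $\cyclegraph_{c-1,2}$. Since the face lattice of the facet dual to the ray $\route_{P_\circ}$ is encoded by the cliques of $\cyclegraph_{c,2}$ containing $\route_{P_\circ}$, such a bijection immediately yields the desired combinatorial isomorphism.

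First, encode each good route of $\cyclegraph_{c,2}$ by its pattern $(x_1,\ldots,x_{c+1}) \in \{T,B\}^{c+1}$, where $x_i = T$ if the route uses $u_i$ and $x_i = B$ if it uses $d_i$; thus $\route_{P_\circ}$ has pattern $(T,B,B,\ldots,B)$. A case analysis of \cref{def:incompatible} should show that a non-exceptional route with pattern $(x_1,\ldots,x_{c+1})$ is incompatible with $\route_{P_\circ}$ if and only if $x_1 = B$: if $x_1 = B$ and $k \ge 2$ is the smallest index with $x_k = T$, then either the single vertex $(1,1)$ (when $k = 2$) or the subwalk $d_2 \cdots d_{k-1}$ (when $k \ge 3$) serves as a common subwalk with matching colour pattern; conversely, if $x_1 = T$, any common subwalk involving vertex $(1,1)$, vertex $(1,2)$, or the red edge $r_1 = (1,1) \to (1,2)$ must by maximality extend back to the source $s_1$ (since both routes reach these points via the same edges $u_1$ and $r_1$), and no other common subwalk yields the incompatibility pattern.

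The bijection is then given by sending the route with pattern $(T,x_2,\ldots,x_{c+1})$ to the route of $\cyclegraph_{c-1,2}$ with pattern $(x_2,\ldots,x_{c+1})$, under the natural level shift $(i,j) \mapsto (i-1,j)$. To verify compatibility preservation, take two routes $R_1 = (T,x)$ and $R_2 = (T,y)$ in $\cyclegraph_{c,2}$. Any common subwalk of $R_1, R_2$ that touches $(1,1)$, $(1,2)$, or $r_1$ extends back to the source $s_1$ by the maximality argument above, yielding an empty $P = P' = \emptyset$ and precluding incompatibility. All other common subwalks lie in the subgraph of $\cyclegraph_{c,2}$ induced by vertices $(i,j)$ with $i \ge 2$, which is isomorphic as a coloured graph to $\cyclegraph_{c-1,2}$ minus its two sources; under this isomorphism they correspond bijectively, with colour patterns preserved, to common subwalks of $x$ and $y$ in $\cyclegraph_{c-1,2}$ avoiding the sources. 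Since common subwalks of $x, y$ touching a source of $\cyclegraph_{c-1,2}$ likewise cause no incompatibility, the equivalence of compatibility for $(R_1,R_2)$ and $(x,y)$ follows.

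The hard part will be this compatibility-preservation step, hinging on the subtle observation that the common subwalks of $R_1, R_2$ potentially involving cycle 1 structure (which has no counterpart in $\cyclegraph_{c-1,2}$) are forced by maximality to include the source and thus trivialize. Once this is established, cliques of $\cyclegraph_{c,2}$ containing $\route_{P_\circ}$ correspond bijectively to cliques of $\cyclegraph_{c-1,2}$, preserving maximality, giving the isomorphism of face lattices and concluding the proof.
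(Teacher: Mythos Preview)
Your proposal is correct and follows the same strategy as the paper: identify the good non-exceptional routes compatible with $\route_{P_\circ}$ as precisely those with $x_1=T$ (equivalently, those using $u_1$), and then strip the first coordinate to obtain a bijection with the good non-exceptional routes of $\cyclegraph_{c-1,2}$. The paper's own proof is considerably terser---it argues only that routes using $d_1$ are incompatible with $\route_\circ$, states the bijection $(A,B)\mapsto(\{1\}\cup(A+1),B+1)$, and stops there, leaving both the converse (``$u_1$ implies compatible'') and the preservation of pairwise compatibility under the bijection to the reader. Your write-up supplies both of these, and your key observation---that any maximal common subwalk of two $u_1$-routes touching the first cycle must extend back to the source and hence has empty $P,P'$---is exactly the mechanism that makes the compatibility-preservation work.
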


\begin{proof}
    Let $\route_\circ \in \Routes(\cyclegraph_{c,2})$ be the good route using blue edges $u_1,d_2,d_3,\dots,d_{c+1}$.
    Observe that any non-exceptional good route using edge $d_1$ will necessarily be incompatible with $\route_\circ$, as the common subwalk $d_2,\dots,d_k$ witnesses.
    Thus, good routes $\route \in \Routes(\cyclegraph_{c-1,2})$ are in bijection with routes $\route \in \Routes(\cyclegraph_{c,2})$ compatible with $\route_\circ$.
    Explicitly, using the corresponding bipartition, the bijection is simply $(A,B) \to (\{1\}\cup\set{i+1}{i \in A},\set{i+1}{i \in B})$.
\end{proof}

\subsection{Compatibility}
For the permutohedron $\Pi_{c+1}$, two facets indexed by bipartitions $(A_1,B_1)$ and $(A_2,B_2)$ of $[c+1]$ intersect if and only if the sets $A_1$ and $A_2$ are {\em nested}: either $A_1 \subseteq A_2$ or $A_2 \subseteq A_1$. We will now study some alternative ways to encode the combinatorics of nested bipartitions.

\subsubsection{Arc diagrams}

We consider subgraphs of the graph with vertex set $[0,c+1] = \{0,1,\dots,c+1\}$ and two edges between each pair $i < j$,
one on {\em top} $(i,j)^+$ and one at the {\em bottom} $(i,j)^-$; we call these edges~\textnew{arcs}.
An \textnew{arc diagram} is a set of arcs of the following form:
\[
    (0,i_1)^{\sigma_1} (i_1,i_2)^{\sigma_2} \dots (i_{\ell-2},i_{\ell-1})^{\sigma_{\ell-1}} (i_{\ell-1},c+1)^{\sigma_\ell},
\]
such that the signs $\sigma_1,\sigma_2,\dots,\sigma_\ell$ are alternating.

Bipartitions and arc diagrams are in bijection as follows.
Label the gap between vertices $i-1$ and $i$ by $i$.
Given a bipartition $(A,B)$, there is exactly one arc diagram that goes over the gaps with labels in $A$ and under the gaps with labels in $B$.
Namely, this arc diagram consists of edges $(i-1,j)^+$ for each maximal interval $[i,j]$ contained in $A$, and edges $(i-1,j)^-$ for each maximal interval $[i,j]$ contained in $B$.
See \cref{fig:arcdiag} for an example with $c = 8$.

\begin{figure}[ht]
    \centering
    \begin{tikzpicture}
        \foreach \x in {1,2,...,10}{
            \pgfmathsetmacro\y{int(\x-1)}
            \node [draw,circle] (\x) at (\x,0) {\y};
        }
		 \draw (1) .. controls +(up:1cm) and +(up:1cm) .. (3) {};
		 \draw (3) .. controls +(down:1cm) and +(down:1cm) .. (5) {};
		 \draw (5) .. controls +(up:1cm) and +(up:1cm) .. (9) {};
		 \draw (9) .. controls +(down:1cm) and +(down:1cm) .. (10) {};
	\end{tikzpicture}
    \caption{The arc diagram $\{(0,2)^+,(2,4)^-,(4,8)^+,(8,9)^-\}$ corresponding to the bipartition $(12 \, 5678 \,,\, 34 \, 9)$.}
    \label{fig:arcdiag}
\end{figure}
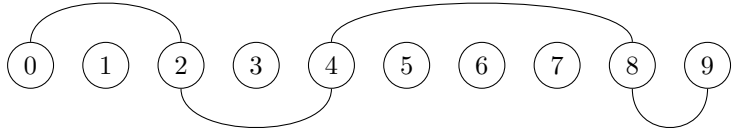

  \begin{definition}\label{def:cross}
        Two arcs of the same sign $(i,j)^\sigma$ and $(k,l)^\sigma$ with $i < k$, are said to
        \textnew{cross} if  $i < k \leq j < l$.
        We say that two diagrams \textnew{cross} if an arc of one crosses an arc of the other.
    \end{definition}

Note that we can and do fix a way of drawing the arcs $(i,j)^\pm$ in the plane so that arcs which do not cross in the above sense, do not cross in the literal sense as curves in the plane (excluding their endpoints).

    Observe that no two arcs of a single arc diagram can cross, since they are either disjoint or they alternate in sign.

We say that a collection of arc diagrams is \textnew{stacked} if the arc diagrams can be ordered $\gamma_1,\dots,\gamma_k$ so that the arcs of $\gamma_1$ lie weakly above the arcs of $\gamma_2$, which lie weakly above the arcs of $\gamma_3$, and so on.
See \cref{fig:stacked} for an example of a stacked collection of arc diagrams for $c=14$.

The following lemma is obvious.

\begin{lemma} A collection of bipartitions is (pairwise) nested if and only if the corresponding collection of arc diagrams is stacked.\end{lemma}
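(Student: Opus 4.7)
The plan is to establish the lemma by proving a pairwise version first: for any two bipartitions $(A_1, B_1)$ and $(A_2, B_2)$ of $[c+1]$ with corresponding arc diagrams $\gamma_1, \gamma_2$, the diagrams can be drawn so that $\gamma_1$ lies weakly above $\gamma_2$ if and only if $A_2 \subseteq A_1$. The forward direction reduces to a pointwise argument: since each diagram has exactly one arc passing over every $x$-coordinate in $(0, c+1)$, the condition ``$\gamma_1$ weakly above $\gamma_2$'' amounts to a pointwise inequality of $y$-coordinates. Because $+$ arcs lie in the upper half-plane and $-$ arcs in the lower half-plane, this comparison forbids the combination ``$\gamma_1$ is $-$ and $\gamma_2$ is $+$ at gap $i$'', which translates exactly to $A_2 \subseteq A_1$.

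For the converse, I would exhibit an explicit drawing. Assuming $A_2 \subseteq A_1$, each maximal interval of $A_2$ (a $+$ arc of $\gamma_2$) is contained in a maximal interval of $A_1$ (a $+$ arc of $\gamma_1$), so each $+$ arc of $\gamma_2$ can be drawn strictly nested inside the enclosing $+$ arc of $\gamma_1$. Dually, $B_1 \subseteq B_2$ forces each $-$ arc of $\gamma_1$ to be contained in a $-$ arc of $\gamma_2$, and I nest accordingly. In the upper half-plane the outer arc has larger $y$-value, while in the lower half-plane the inner (smaller) arc is closer to the $x$-axis and hence has larger $y$-value; under both conventions $\gamma_1$ lies pointwise above $\gamma_2$.

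With the pairwise equivalence in hand, the lemma follows by chaining. A pairwise nested collection of bipartitions has its $A$-parts forming a totally ordered chain $A_{\pi(1)} \supseteq A_{\pi(2)} \supseteq \cdots \supseteq A_{\pi(k)}$, and ordering the diagrams by reverse inclusion of $A$-parts realizes each consecutive pair in the required way. Conversely, a stacking $\gamma_1, \ldots, \gamma_k$ yields $A_1 \supseteq A_2 \supseteq \cdots \supseteq A_k$, so any two bipartitions in the collection are nested. The main subtlety I anticipate is ensuring that these pairwise drawings assemble into a single global drawing of the whole collection; this is where the transitive nesting of $+$ arcs (respectively, of $-$ arcs) along the chain gives a coherent hierarchy, allowing all diagrams to be drawn simultaneously without introducing crossings.
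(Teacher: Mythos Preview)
Your argument is correct. The paper itself declares this lemma ``obvious'' and gives no proof, so there is nothing to compare against; your pairwise reduction (showing $\gamma_1$ lies weakly above $\gamma_2$ if and only if $A_2 \subseteq A_1$ via the sign of the arc over each gap) followed by the chain argument is exactly the kind of verification the authors are leaving to the reader, and your handling of the global drawing via the transitive nesting of $+$ arcs and of $-$ arcs along the chain is the right way to resolve the one genuine subtlety.
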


If a collection of arc diagrams is stacked, observe that none of them cross.

\subsubsection{Multiarcs}

A collection of arc diagrams naturally gives rise to a multiset of arcs.

\begin{definition}
    A multiset $\mathcal{A}$ of arcs is \textnew{consistent} if
    no pair of arcs in $\mathcal{A}$ cross, and
    any $x \in [c]$ has the same number of incoming arcs and outgoing arcs.

    Moreover, we say that $\mathcal{A}$ has \textnew{rank $k$} if there are $k$ arcs incident to vertex $0$ (equivalently, to vertex $c+1$).
\end{definition}

A collection of $k$ stacked arc diagrams gives rise to a consistent multiset of arcs of rank $k$. In fact, more is true.

\begin{proposition}
    \label{prop:stacked-bij}
    Multisets of $k$ stacked arc diagrams are in bijection with consistent rank $k$ multisets of arcs.
\end{proposition}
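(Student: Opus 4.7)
The plan is to construct an explicit inverse to the natural "union" map, and then verify the two maps are mutually inverse. The forward direction sends a stacked collection $\gamma_1,\ldots,\gamma_k$ to the multiset union of their arcs. Consistency of the image is easy to check: non-crossing within each $\gamma_i$ comes from the alternating-sign structure of an arc diagram, non-crossing between distinct $\gamma_i$ and $\gamma_j$ comes from the stacking property, balance at each internal vertex follows because every diagram passing through $v$ contributes exactly one incoming and one outgoing arc, and rank $k$ is clear from the fact that each diagram contributes exactly one arc incident to $0$.

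For the inverse, I would first establish a key local structural lemma: at every internal vertex $v$ of a consistent multiset, either every incoming arc at $v$ has sign $+$ and every outgoing arc has sign $-$ (call $v$ \emph{over-under}), or every incoming arc is $-$ and every outgoing arc is $+$ (\emph{under-over}). This is forced by non-crossing, since an incoming $(i,v)^+$ together with an outgoing $(v,j)^+$ meets the crossing condition $i<v\le v<j$ of \cref{def:cross}, and the analogous statement holds for the $--$ case. Combined with the in-degree $=$ out-degree hypothesis, this ensures that signs automatically alternate along any path through $v$. I then define the inverse by the following canonical matching at each internal vertex: list the incoming arcs by their tail vertex and the outgoing arcs by their head vertex, both in increasing order, and pair the $s$-th incoming with the $s$-th outgoing. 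Starting from each of the $k$ arcs incident to $0$ and repeatedly following this matching produces a sequence of arcs of alternating signs terminating at $c+1$, i.e.\ a valid arc diagram; the $k$ diagrams so obtained partition $\mathcal{A}$ as a multiset.

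The main obstacle is showing that this collection of $k$ arc diagrams is \emph{stacked}. The intuition is that the canonical matching is precisely the one realizing "topmost meets topmost" in any planar drawing: among incoming $+$ arcs at $v$, the outermost (smallest tail) is drawn topmost, while among outgoing $-$ arcs the innermost (smallest head) is drawn topmost, and similarly for under-over vertices. I would order the $k$ diagrams by the relative height of their arcs incident to $0$ and prove that any two diagrams $\gamma$ and $\gamma'$ produced by the procedure are globally comparable in height on every unit interval $[x,x+1]$ where they both have an arc passing overhead or underneath; the non-crossing condition is the key ingredient here, ruling out any local inversion of relative heights. This yields the total order witnessing the stacking. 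Since the forward map manifestly forgets this order and the inverse reconstructs the unique canonical one, the two constructions are mutually inverse, completing the bijection.
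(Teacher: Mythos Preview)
Your approach is essentially the paper's, though you package the inverse as a simultaneous local matching at each internal vertex rather than the paper's sequential greedy peeling (repeatedly strip off the topmost arc diagram). Your sign-uniformity lemma at internal vertices is correct and makes the alternation transparent; the paper leaves this implicit. Your matching---smallest-tail incoming paired with smallest-head outgoing---is indeed height-order-preserving at both over-under and under-over vertices, so the resulting paths are stacked as you claim. In fact your decomposition coincides with the paper's: the topmost of your paths is exactly the paper's $\gamma_1$, since both take the topmost outgoing arc at every vertex visited.

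The one genuine gap is your last sentence. Writing that ``the inverse reconstructs the unique canonical one'' presupposes that the stacked decomposition of a consistent multiset is unique, which is precisely the injectivity of the forward map that remains to be proved. The paper handles this by a direct contradiction argument: any stacked collection with multiset $\mathcal{A}$ must contain the greedy top diagram $\gamma_1$ (otherwise two of its diagrams fail to be stacked), and then induction on rank finishes. In your framework the missing step is: in \emph{any} stacked collection $\{\delta_1,\ldots,\delta_k\}$, the induced pairing at each internal vertex is forced to be your canonical top-with-top matching, because the global height order of the $\delta_i$ restricts to the height order on both their incoming and their outgoing arcs at $v$ (non-crossing curves cannot swap relative heights except at a shared vertex, and stackedness forbids a swap there). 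Once you supply this, your inverse applied to the multiset of any stacked collection returns that same collection, and the bijection is complete.
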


\begin{proof} We have already seen that a collection of $k$ stacked arcs gives rise to a consistent multiset of rank $k$. To go in the reverse direction, starting from $\mathcal A$, a consistent multiset of arcs of rank $k$, form an arc diagram by taking the top arc starting from 0, say $(0,i_1)^\sigma$, then the top arc starting from $i_1$, say $(i_1,i_2)^{-\sigma}$, and continuing in this way to $c+1$.  (Note that, by the condition that there are no crossings, the signs of the arcs will necessarily alternate.) Call the resulting arc diagram $\gamma_1$. Remove its edges from $\mathcal A$, and repeat the same process $k-1$ more times. The result is a collection $\{\gamma_1,\dots, \gamma_k\}$ of $k$ arc diagrams, which is obviously stacked: the arcs of $\gamma_1$ lie weakly above all the others, and by induction the remaining arc diagrams are stacked.

We now check that $\{\gamma_1,\dots, \gamma_k\}$ is the only collection of stacked arc diagrams that is sent to $\mathcal A$. By induction, it is enough to show that any collection of stacked arc diagrams $D$ sent to $\mathcal A$ must contain $\gamma_1$. Suppose that this were false. Then $D$ must contain some arc diagram $\delta$ which uses the topmost arc for the first $r-1$ steps, but on the $r$-th step, does not take the topmost arc $(i,j)^\sigma$. Choose such an arc diagram $\delta$ maximizing $r$.  Now, $D$ contains some other arc diagram $\eta$ which uses $(i,j)^\sigma$. By the maximality of $r$, we know that $\eta$ uses at least one edge prior to $(i,j)^\sigma$ which does not appear in $\delta$, and the rightmost such edge lies below the corresponding edges of $\delta$. This shows that $\delta$ and $\eta$ are not stacked, and thus $D$ is not stacked.
\end{proof}

\subsubsection{Compatible routes}
We now give a similar construction showing that collections of $k$ compatible good routes on $\cyclegraph_{c,2}$ also correspond to consistent multisets of arcs of rank $k$.

Using the bijections between good routes of $\cyclegraph_{c,2}$, bipartitions of $[c+1]$, and arc diagrams on vertex set $[0,c+1]$, we write $\route_D \in \Routes(\cyclegraph_{c,2})$ for the good route associated to an arc diagram $D$.

\begin{definition}
    \label{def:interference}
    Two arc diagrams \textnew{interfere} if they contain arcs
    \begin{eqnarray*}
        (a,i_0)^{\sigma_0} \hspace{2pt} \underbrace{(i_0,i_1)^{\sigma_1} \dots (i_{k-1},i_k)^{\sigma_k}}_{\text{common subpath}} \hspace{2pt} (i_k,b)^{\sigma_{k+1}}\hspace{3pt} & \text{and} \\
        (a',i_0)^{\sigma_0} \overbrace{(i_0,i_1)^{\sigma_1} \dots (i_{k-1},i_k)^{\sigma_k}} (i_k,b')^{\sigma_{k+1}},
    \end{eqnarray*}
    with $a < a'$ and $b < b'$, for some $k \geq 0$.
\end{definition}

\begin{example}
    Below, we show two pairs of interfering diagrams.
    In the first case $k = 0$, so the common {\em subpath} consists of a single node, and in the second $k = 1$.
    \[
    	\begin{tikzpicture}[scale=0.85]
            \foreach \x in {1,2,...,5}{
                \pgfmathsetmacro\y{int(\x-1)}
                \node [draw,circle] (\x) at (\x,0) {\y};
            }
            \draw[dashed,thick,teal] (1) .. controls +(up:1cm) and +(up:1cm) .. (3) {};
            \draw[dashed,thick,teal] (3) .. controls +(down:0.8cm) and +(down:0.8cm) .. (4) {};
            \draw[dashed,thick,teal] (4) .. controls +(up:0.8cm) and +(up:0.8cm) .. (5) {};
            \draw[thick,orange] (1) .. controls +(down:0.8cm) and +(down:0.8cm) .. (2) {};
            \draw[thick,orange] (2) .. controls +(up:0.8cm) and +(up:0.8cm) .. (3) {};
            \draw[thick,orange] (3) .. controls +(down:1cm) and +(down:1cm) .. (5) {};
	    \end{tikzpicture}
    	\hspace*{.1\linewidth}
    	\begin{tikzpicture}[scale=0.85]
            \foreach \x in {1,2,...,6}{
                \pgfmathsetmacro\y{int(\x-1)}
                \node [draw,circle] (\x) at (\x,0) {\y};
            }
            \draw[dashed,thick,teal] (1) .. controls +( 0.0cm, 1.0cm) and +( 0.0cm, 1.0cm) .. (3) {};
            \draw[dashed,thick,teal] (3) .. controls +( 0.0cm,-0.8cm) and +( 0.0cm,-0.8cm) .. (4) {};
            \draw[dashed,thick,teal] (4) .. controls +( 0.1cm, 0.8cm) and +( 0.0cm, 0.8cm) .. (5) {};
            \draw[dashed,thick,teal] (5) .. controls +( 0.0cm,-0.8cm) and +( 0.0cm,-0.8cm) .. (6) {};
            \draw[thick,orange]      (1) .. controls +( 0.0cm,-0.8cm) and +( 0.0cm,-0.8cm) .. (2) {};
            \draw[thick,orange]      (2) .. controls +( 0.0cm, 0.8cm) and +(-0.1cm, 0.8cm) .. (3) {};
            \draw[thick,orange]      (3) .. controls +(-0.1cm,-0.9cm) and +( 0.1cm,-0.9cm) .. (4) {};
            \draw[thick,orange]      (4) .. controls +( 0.0cm, 1.0cm) and +( 0.0cm, 1.0cm) .. (6) {};
	    \end{tikzpicture}
    \]
\end{example}

\begin{lemma} A collection of good routes in $\cyclegraph_{c,2}$ is compatible if and only if the corresponding arc diagrams neither cross nor interfere. \end{lemma}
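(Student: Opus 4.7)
The plan is to encode each good route $\route_D$ on $\cyclegraph_{c,2}$ by its sign sequence $(s_1(D),\ldots,s_{c+1}(D)) \in \{+,-\}^{c+1}$, where $s_k(D) = +$ (resp.\ $-$) iff $\route_D$ uses the blue edge $u_k$ (resp.\ $d_k$); the bipartition encoding of the previous subsection records exactly this sequence together with its sign-change pattern. Since $\route_D$ uses the unique red edge at cycle $k$ precisely when $s_k(D) \neq s_{k+1}(D)$, any maximal common subwalk of $\route_D$ and $\route_{D'}$ is of one of two types: either an \emph{edge type}, corresponding to a maximal interval $[a,b] \subseteq [1,c+1]$ on which the two sign sequences agree (the subwalk consisting of the shared blue edges $b_a,\ldots,b_b$ together with the intermediate shared red turns), or a \emph{vertex type}, consisting of a single isolated shared vertex $(k,\beta)$ with $(s_k(D),s_{k+1}(D)) = (\beta,-\beta)$ and $(s_k(D'),s_{k+1}(D')) = (-\beta,\beta)$. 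A routine observation shows that only maximal witnesses can detect incompatibility, since any edge interior to a common subwalk has the same colour in both routes.

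Next I would compute the four boundary colours $\alpha_D,\beta_D,\alpha_{D'},\beta_{D'}$ of \cref{def:incompatible} at each witness. For an edge-type witness $[a,b]$, $\alpha_D$ is blue iff $s_{a-1}(D) = s_a(D)$, equivalently iff the arc of $D$ covering gap $a$ starts strictly before vertex $a-1$; and red iff that arc starts at $a-1$. Analogous dichotomies hold at the other three corners. Write $\gamma_0^D$ and $\gamma_s^D$ for the arcs of $D$ covering gaps $a$ and $b$ (possibly equal), and similarly for $D'$; because the sign patterns agree on $[a,b]$, the arcs of $D$ and $D'$ strictly interior to this range coincide. The incompatibility condition then translates to: $\gamma_0^D$ extends strictly to the left of $a-1$, $\gamma_s^D$ ends at $b$, while $\gamma_0^{D'}$ starts at $a-1$ and $\gamma_s^{D'}$ extends strictly beyond $b$ (or the colour-swapped statement). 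When $\gamma_0^D = \gamma_s^D$ (constant signs on $[a,b]$), the arcs $\gamma_0^D$ and $\gamma_0^{D'}$ satisfy $i < a-1 \leq b < l$ and therefore cross in the sense of \cref{def:cross}; otherwise, the shared intermediate arcs form the common subpath of \cref{def:interference} and the distinct outer arcs furnish the inequalities $a < a'$ and $b < b'$, so $D$ and $D'$ interfere. A vertex-type witness forces an arc of sign $\beta$ in $D$ to end at vertex $k$ while an arc of sign $\beta$ in $D'$ starts at vertex $k$, which is a crossing (using that $i < k \leq j < l$ permits the equality $k = j$).

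For the converse I would simply reverse this dictionary. A crossing pair $(i,j)^\sigma \in D$ and $(k,l)^\sigma \in D'$ with $i < k \leq j < l$ produces sign agreement on $[k+1,j]$ and disagreement at $k$ and $j+1$; the boundary-colour calculation recovers exactly the (blue,red) versus (red,blue) configuration, and when $k = j$ the witness degenerates to the shared vertex $(j,\sigma)$. An interference configuration similarly yields sign agreement on $[a'+1,b]$ with the required colour pattern at the boundary. The main obstacle will be careful bookkeeping of the boundary cases: if a maximal interval of agreement touches a source ($a=1$) or a sink ($b=c+1$) one of the boundary colours is undefined, so we must verify both that such intervals cannot by themselves witness incompatibility and that a genuine incompatibility always supplies a witness with $a \geq 2$ and $b \leq c$; additionally, the single-vertex witness must be handled separately since it does \emph{not} correspond to any interval of sign agreement.
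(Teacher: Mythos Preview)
Your approach is correct and is essentially the same as the paper's: both translate the incompatibility condition of \cref{def:incompatible} into the language of arc diagrams via the sign-sequence encoding, and check that the forbidden colour pattern at a maximal common subwalk matches precisely a crossing or an interference. The paper's proof is deliberately terse (``One checks that crosses and interferences between the arc diagrams correspond exactly to the possible configurations which make routes incompatible''), sketching only the interference-to-incompatibility direction; you are supplying the full dictionary in both directions, including the separation into edge-type witnesses (maximal intervals of sign agreement) and vertex-type witnesses (isolated shared vertices where the sign sequences swap), the identification of the constant-sign edge case with crossings and the non-constant case with interferences, and the boundary analysis at $a=1$ and $b=c+1$. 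All of this is sound; your observation that the crossing definition allows $k=j$ is exactly what makes the vertex-type witness correspond to a crossing.
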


\begin{proof} One checks that crosses and interferences between the arc diagrams correspond exactly to the possible configurations which make routes incompatible.
Explicitly, if $D$ and $D'$ are as in \cref{def:interference}, then $\route_D$ and $\route_{D'}$ have a maximal common subwalk $S$ going from the $a'$th cycle to the $b$th cycle; the last edge of $\route_D$ before $S$ is \tcb{blue} and its first edge after $S$ is \tcr{red}.
\end{proof}

From this lemma, it follows that a collection of $k$ compatible good routes corresponds to a consistent rank $k$ multiset. We now prove the analogue of \cref{prop:stacked-bij} for compatible collections of good routes.

\begin{proposition}
    \label{prop:route-bij}
    Multisets of $k$ compatible good routes are in bijection with consistent rank $k$ multisets of arcs.
\end{proposition}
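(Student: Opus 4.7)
The plan is to send a collection of $k$ compatible good routes to the multiset of all arcs of their corresponding arc diagrams, and to show this yields a bijection onto consistent multisets of rank $k$. First I would verify the image is consistent of rank $k$: by the preceding lemma, the arc diagrams are pairwise non-crossing and non-interfering, so the multiset contains no crossing pair; each arc diagram is a path from $0$ to $c+1$ through internal vertices and thus contributes equal in- and out-degree at every internal vertex and exactly one arc incident to $0$ (and to $c+1$), so summing over the $k$ diagrams gives a consistent multiset of rank $k$.

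For the inverse, given a consistent multiset $\mathcal{A}$ of rank $k$, I would decompose $\mathcal{A}$ into $k$ non-crossing, non-interfering arc diagrams via a reverse-nested pairing at each internal vertex $v$. Listing the incoming sign-$\sigma$ arcs in increasing order of starting vertex as $(a_1,v)^\sigma, \ldots, (a_r,v)^\sigma$ and the outgoing sign-$(-\sigma)$ arcs in increasing order of endpoint as $(v,b_1)^{-\sigma}, \ldots, (v,b_r)^{-\sigma}$ (equal counts by consistency and sign alternation), the rule pairs the $i$-th incoming with the $(r+1-i)$-th outgoing. Tracing these local pairings from vertex $0$ partitions $\mathcal{A}$ into $k$ arc diagrams, which correspond to compatible routes by the preceding lemma.

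The crux is verifying that this decomposition has no interfering pair of arc diagrams. Suppose two of them $D$ and $D'$ interfered, sharing a common subpath $(i_0,i_1)^{\sigma_1}\cdots (i_{m-1},i_m)^{\sigma_m}$ bounded by $(a,i_0)^{\sigma_0}, (i_m,b)^{\sigma_{m+1}}$ in $D$ and $(a',i_0)^{\sigma_0}, (i_m,b')^{\sigma_{m+1}}$ in $D'$ with $a<a'$ and $b<b'$. The reverse-nested pairing at $i_0$ sends the outermost incoming arc $(a,i_0)^{\sigma_0}$ to the outgoing $\sigma_1$-arc of largest endpoint, and propagating this outermost-with-outermost rule through the common subpath forces the arc diagram containing $(a,i_0)^{\sigma_0}$ to exit $i_m$ via the outgoing $\sigma_{m+1}$-arc of largest endpoint; but the presence of $(i_m,b')^{\sigma_{m+1}}$ with $b'>b$ shows $(i_m,b)^{\sigma_{m+1}}$ is not largest, a contradiction. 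The main obstacle will be a careful handling of ties (multiple copies of the same arc along the common subpath) to ensure the propagation is well-defined; resolving ties so as to preserve the outermost-inner order inherited along the subpath yields both non-interference and uniqueness of the decomposition, from which it follows that the forward and inverse maps are mutually inverse.
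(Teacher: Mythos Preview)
Your approach is essentially the paper's: build the inverse by pairing, at each internal vertex, the $p$-th outermost incoming arc with the $p$-th outermost outgoing arc (your ``reverse-nested'' rule is exactly this, once one observes that smallest start and largest endpoint both mean ``outermost''), then argue non-interference by propagating relative order along the common subpath, and finally check uniqueness.

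One point to tighten: in your non-interference argument you call $(a,i_0)^{\sigma_0}$ ``the outermost incoming arc'' and conclude it is matched to the outgoing arc of \emph{largest} endpoint. That is not what you know --- there may be other incoming arcs $(a'',i_0)^{\sigma_0}$ with $a''<a$ in the multiset. What the pairing actually guarantees is that \emph{relative} outside-to-inside position is preserved: if $(a,i_0)$ is at position $p$ and $(a',i_0)$ at position $q>p$ in the global outside-in ordering, then their continuations are at positions $p$ and $q$ as well, and this persists along the common subpath; at $i_m$ the diagram through $(a,i_0)$ therefore leaves on a strictly longer arc than the one through $(a',i_0)$, contradicting $b<b'$. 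The paper argues exactly this way. You should also spell out the uniqueness step (any other non-interfering decomposition of the same multiset must agree with yours): the paper does this by locating the first vertex of disagreement and exhibiting an interference there, which is the natural complement to your construction.
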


\begin{proof} The proof is similar to that of Proposition \cref{prop:stacked-bij}. Suppose that we have a consistent multiset of arcs $\mathcal A$ of rank $k$. Draw all the arcs of $\mathcal A$, in such a way that arcs which do not cross in the sense of \cref{def:cross}, do not cross as curves in the plane. If $\mathcal A$ contains multiple copies of the same arc, draw the appropriate number of copies of that arc, in a non-crossing fashion.
This drawing of the arcs of $\mathcal A$ puts a total ordering on  the incoming arcs at each vertex $i\in [c]$,
and separately on the outgoing arcs.
Because $\mathcal A$ is consistent, for $j \in [c]$, all arcs enter $j$ on the same side, and all arcs leave $j$ on the same side.
Thus, we can unambiguously number the incoming arcs and the outgoing arcs, counting from the outside in (i.e., starting from the lowest bottom arc or highest top arc).

Now define a collection $D$ of arc diagrams using the multiset of arcs $\mathcal A$, as follows. Choose an arc of $\mathcal A$ leaving 0. If it is the $k$-th arc to arrive at vertex $i_1$, then continue the arc diagram by using the $k$-th arc to leave vertex $i_1$. Proceed similarly when this arc reaches its end vertex, and continue on until we reach vertex $c+1$. Doing this, starting from each arc that leaves 0, leads to a collection $D$ of arc diagrams using the multiset $\mathcal A$.

We now show that the collection $D$ is pairwise non-interfering. Suppose that we have two arc diagrams $\gamma_1,\gamma_2$ in $D$. In order for them to interfere, they must have some common subpath, say from vertex $i$ to vertex $j$, such that $\gamma_1$ and $\gamma_2$ arrive at $i$ using distinct arcs, and leave $j$ using distinct arcs. Say that $\gamma_1$ arrives at $i$ on the arc numbered $n_1$, and $\gamma_2$ arrives on the arc numbered $n_2$, with $n_1<n_2$, so that $\gamma_1$ is on the arc closer to the outside. This means that $\gamma_1$ arrives along a longer arc than $\gamma_2$.
Then, by the definition of $D$, $\gamma_1$ leaves $i$ on the arc numbered $n_1$, and $\gamma_2$ leaves $i$ on the arc numbered $n_2$, so that $\gamma_1$ takes the arc that is closer to the outside. This continues until we reach $j$. At this point, $\gamma_1$ leaves $j$ on the arc that is closer to the outside. As a result, $\gamma_1$ leaves on the longer arc. Thus, $\gamma_1$ and $\gamma_2$ do not interfere.

We now check that the collection $D$ of arc diagrams constructed above is the only pairwise non-interfering collection of arc diagrams using the multiset of arcs $\mathcal A$.
Let $D'$ be some other collection of arc diagrams using $\mathcal A$. Suppose that up to some vertex $j$, $D'$ agrees with $D$, but that once we consider also how the arc diagrams leave vertex $j$, it does not. Let $\gamma_1,\dots,\gamma_r$ be the arc diagrams of $D$ that arrive at node $j$,
ordered from outside to inside as they arrive at $j$,
as in our construction.
Number the arc diagrams of $D'$ incident with node $j$ as $\gamma_1',\dots,\gamma_r'$, so that $\gamma_i'$ agrees with $\gamma_i$ up to node $j$. (There may be more than one way to do this.) Consider the set $I$ of pairs $(m,n)$ with $1\leq m<n\leq r$, such that when leaving $j$, $\gamma_n'$ takes a longer arc than $\gamma_m'$. This set is non-empty because $D'$ disagrees with $D$.
If the only pairs $(m,n)$ in $I$ were such that the arc diagrams $\gamma_m$ and $\gamma_n$ agree when restricted to the part up to vertex $j$, then it would be possible to renumber the arc diagrams of $D'$ so that $\gamma_i'$ agrees with $\gamma_i$ for all $i$ past vertex $j$, contrary to our hypothesis. Thus, there exists a pair $(m,n)\in I$, such that the restrictions up to vertex $j$ of $\gamma_m$ and $\gamma_n$ are distinct. By the construction of $D$, because $\gamma_m$ is closer to the outside than $\gamma_n$ as it enters $j$, the same is true for the entire common segment ending at $j$, and also at the starting point of the common segment. Thus, $\gamma_m$ joins the common segment closer to the outside (i.e., with a longer segment) than $\gamma_n$. By our numbering of the arcs of $D'$, the same is true for $\gamma_m'$ and $\gamma'_n$. By our choice of $(m,n)$, though, $\gamma_m'$ leaves $j$ on a shorter arc than does $\gamma_n'$. This shows that $\gamma_m'$ and $\gamma_n'$ interfere.
\end{proof}

\cref{prop:route-bij} is illustrated in \cref{fig:compatiblediagrams} below.

\begin{figure}[ht]
	\centering
	\begin{tikzpicture}[scale=.9]
        \foreach \x in {1,2,...,16}{
            \pgfmathsetmacro\y{int(\x-1)}
            \node [draw,circle] (\x) at (\x,0) {\y};
        }

		\draw[thick,violet] (1) .. controls +(up:1cm) and +(up:1cm) .. (3) {};
		\draw[thick,orange] (1) .. controls +(up:1.1cm) and +(up:1.1cm) .. (3) {};
		\draw[thick,cyan] (1) .. controls +(up:1.2cm) and +(up:1.2cm) .. (3) {};
		\draw[thick,red] (1) .. controls +(up:1.3cm) and +(up:1.3cm) .. (3) {};

		\draw[thick,green] (1) .. controls +(down:1.4cm) and +(down:1.4cm) .. (6) {};

		\draw[thick,red] (3) .. controls +(down:.7cm) and +(down:.7cm) .. (4) {};
		\draw[thick,cyan] (3) .. controls +(down:.8cm) and +(down:.8cm) .. (4) {};

		\draw[thick,orange] (3) .. controls +(down:1cm) and +(down:1cm) .. (5) {};
		\draw[thick,violet] (3) .. controls +(down:1.1cm) and +(down:1.1cm) .. (5) {};

		\draw[thick,cyan] (4) .. controls +(up:1.3cm) and +(up:1.3cm) .. (7) {};
		\draw[thick,red] (4) .. controls +(up:1.6cm) and +(up:1.6cm) .. (9) {};

		\draw[thick,violet] (5) .. controls +(up:1cm) and +(up:1cm) .. (7) {};
		\draw[thick,orange] (5) .. controls +(up:1.1cm) and +(up:1.1cm) .. (7) {};

		\draw[thick,green] (6) .. controls +(up:.7cm) and +(up:.7cm) .. (7) {};

		\draw[thick,cyan] (7) .. controls +(down:1cm) and +(down:1cm) .. (10) {};
		\draw[thick,orange] (7) .. controls +(down:1.1cm) and +(down:1.1cm) .. (10) {};
		\draw[thick,violet] (7) .. controls +(down:1.2cm) and +(down:1.2cm) .. (10) {};
		\draw[thick,green] (7) .. controls +(down:1.3cm) and +(down:1.3cm) .. (10) {};

		\draw[thick,red] (9) .. controls +(down:.7cm) and +(down:.7cm) .. (10) {};

		\draw[thick,green] (10) .. controls +(up:.7cm) and +(up:.7cm) .. (11) {};
		\draw[thick,violet] (10) .. controls +(up:.8cm) and +(up:.8cm) .. (11) {};
		\draw[thick,orange] (10) .. controls +(up:.9cm) and +(up:.9cm) .. (11) {};
		\draw[thick,cyan] (10) .. controls +(up:1cm) and +(up:1cm) .. (11) {};

		\draw[thick,red] (10) .. controls +(up:1.6cm) and +(up:1.6cm) .. (14) {};

		\draw[thick,cyan] (11) .. controls +(down:.7cm) and +(down:.7cm) .. (12) {};

		\draw[thick,orange] (11) .. controls +(down:1cm) and +(down:1cm) .. (13) {};

		\draw[thick,violet] (11) .. controls +(down:1.4cm) and +(down:1.4cm) .. (15) {};

		\draw[thick,green] (11) .. controls +(down:1.6cm) and +(down:1.6cm) .. (16) {};

		\draw[thick,cyan] (12) .. controls +(up:1cm) and +(up:1cm) .. (14) {};

		\draw[thick,orange] (13) .. controls +(up:.7cm) and +(up:.7cm) .. (14) {};

		\draw[thick,red] (14) .. controls +(down:.7cm) and +(down:.7cm) .. (15) {};
		\draw[thick,cyan] (14) .. controls +(down:.8cm) and +(down:.8cm) .. (15) {};
		\draw[thick,orange] (14) .. controls +(down:.9cm) and +(down:.9cm) .. (15) {};

		\draw[thick,violet] (15) .. controls +(up:.7cm) and +(up:.7cm) .. (16) {};
		\draw[thick,orange] (15) .. controls +(up:.8cm) and +(up:.8cm) .. (16) {};
		\draw[thick,cyan] (15) .. controls +(up:.9cm) and +(up:.9cm) .. (16) {};
		\draw[thick,red] (15) .. controls +(up:1cm) and +(up:1cm) .. (16) {};
	\end{tikzpicture}
    \caption{
        A collection of five stacked arc diagrams.
        Each arc diagram has a different colour.
    }
	\label{fig:stacked}
\end{figure}
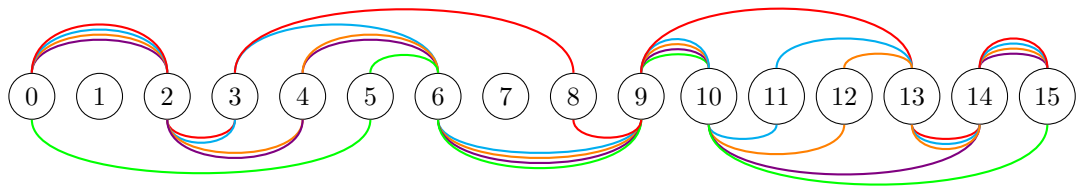

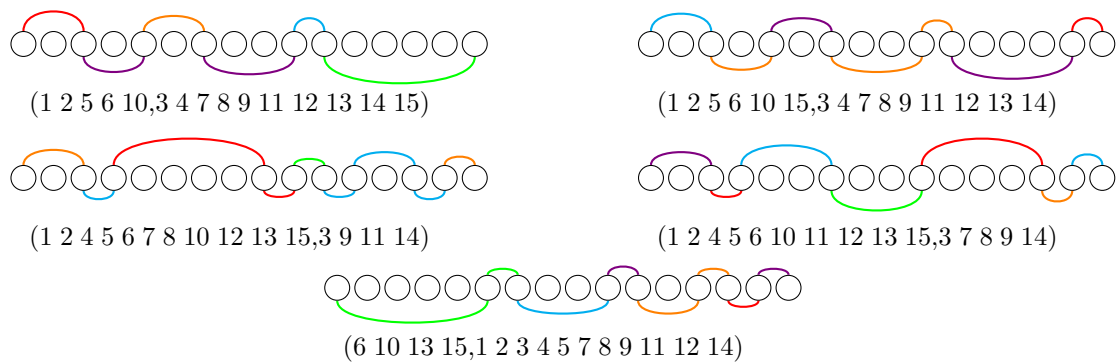
\begin{figure}[ht]
	\centering
	\begin{tikzpicture}[scale=.4]
		\draw (0,0) node foreach \x in {1,2,...,16} [draw,circle] (\x) at (\x,0) {};

		\draw[thick,red] (1) .. controls +(up:1.3cm) and +(up:1.3cm) .. (3) {};
		\draw[thick,violet] (3) .. controls +(down:1.1cm) and +(down:1.1cm) .. (5) {};
		\draw[thick,orange] (5) .. controls +(up:1.1cm) and +(up:1.1cm) .. (7) {};
		\draw[thick,violet] (7) .. controls +(down:1.2cm) and +(down:1.2cm) .. (10) {};
		\draw[thick,cyan] (10) .. controls +(up:1cm) and +(up:1cm) .. (11) {};
		\draw[thick,green] (11) .. controls +(down:1.6cm) and +(down:1.6cm) .. (16) {};
        \node [align=flush center] at (8,-2)
        {
            (1\;2\;5\;6\;10,3\;4\;7\;8\;9\;11\;12\;13\;14\;15)
        };
	\end{tikzpicture}
	\hfill
	\begin{tikzpicture}[scale=.4]
		\draw (0,0) node foreach \x in {1,2,...,16} [draw,circle] (\x) at (\x,0) {};

		\draw[thick,cyan] (1) .. controls +(up:1.2cm) and +(up:1.2cm) .. (3) {};
		\draw[thick,orange] (3) .. controls +(down:1cm) and +(down:1cm) .. (5) {};
		\draw[thick,violet] (5) .. controls +(up:1cm) and +(up:1cm) .. (7) {};
		\draw[thick,orange] (7) .. controls +(down:1.1cm) and +(down:1.1cm) .. (10) {};
		\draw[thick,orange] (10) .. controls +(up:.9cm) and +(up:.9cm) .. (11) {};
		\draw[thick,violet] (11) .. controls +(down:1.4cm) and +(down:1.4cm) .. (15) {};
		\draw[thick,red] (15) .. controls +(up:1cm) and +(up:1cm) .. (16) {};
        \node [align=flush center] at (8,-2)
        {
            (1\;2\;5\;6\;10\;15,3\;4\;7\;8\;9\;11\;12\;13\;14)
        };
	\end{tikzpicture}

	\begin{tikzpicture}[scale=.4]
		\draw (0,0) node foreach \x in {1,2,...,16} [draw,circle] (\x) at (\x,0) {};

		\draw[thick,orange] (1) .. controls +(up:1.1cm) and +(up:1.1cm) .. (3) {};
		\draw[thick,cyan] (3) .. controls +(down:.8cm) and +(down:.8cm) .. (4) {};
		\draw[thick,red] (4) .. controls +(up:1.6cm) and +(up:1.6cm) .. (9) {};
		\draw[thick,red] (9) .. controls +(down:.7cm) and +(down:.7cm) .. (10) {};
		\draw[thick,green] (10) .. controls +(up:.7cm) and +(up:.7cm) .. (11) {};
		\draw[thick,cyan] (11) .. controls +(down:.7cm) and +(down:.7cm) .. (12) {};
		\draw[thick,cyan] (12) .. controls +(up:1cm) and +(up:1cm) .. (14) {};
		\draw[thick,cyan] (14) .. controls +(down:.8cm) and +(down:.8cm) .. (15) {};
		\draw[thick,orange] (15) .. controls +(up:.8cm) and +(up:.8cm) .. (16) {};
        \node [align=flush center] at (8,-2)
        {
            (1\;2\;4\;5\;6\;7\;8\;10\;12\;13\;15,3\;9\;11\;14)
        };
	\end{tikzpicture}
	\hfill
	\begin{tikzpicture}[scale=.4]
		\draw (0,0) node foreach \x in {1,2,...,16} [draw,circle] (\x) at (\x,0) {};

		\draw[thick,violet] (1) .. controls +(up:1cm) and +(up:1cm) .. (3) {};
		\draw[thick,red] (3) .. controls +(down:.7cm) and +(down:.7cm) .. (4) {};
		\draw[thick,cyan] (4) .. controls +(up:1.3cm) and +(up:1.3cm) .. (7) {};
		\draw[thick,green] (7) .. controls +(down:1.3cm) and +(down:1.3cm) .. (10) {};
		\draw[thick,red] (10) .. controls +(up:1.6cm) and +(up:1.6cm) .. (14) {};
		\draw[thick,orange] (14) .. controls +(down:.9cm) and +(down:.9cm) .. (15) {};
		\draw[thick,cyan] (15) .. controls +(up:.9cm) and +(up:.9cm) .. (16) {};
        \node [align=flush center] at (8,-2)
        {
            (1\;2\;4\;5\;6\;10\;11\;12\;13\;15,3\;7\;8\;9\;14)
        };
	\end{tikzpicture}

	\begin{tikzpicture}[scale=.4]
		\draw (0,0) node foreach \x in {1,2,...,16} [draw,circle] (\x) at (\x,0) {};

		\draw[thick,green] (1) .. controls +(down:1.4cm) and +(down:1.4cm) .. (6) {};
		\draw[thick,green] (6) .. controls +(up:.7cm) and +(up:.7cm) .. (7) {};
		\draw[thick,cyan] (7) .. controls +(down:1cm) and +(down:1cm) .. (10) {};
		\draw[thick,violet] (10) .. controls +(up:.8cm) and +(up:.8cm) .. (11) {};
		\draw[thick,orange] (11) .. controls +(down:1cm) and +(down:1cm) .. (13) {};
		\draw[thick,orange] (13) .. controls +(up:.7cm) and +(up:.7cm) .. (14) {};
		\draw[thick,red] (14) .. controls +(down:.7cm) and +(down:.7cm) .. (15) {};
		\draw[thick,violet] (15) .. controls +(up:.7cm) and +(up:.7cm) .. (16) {};
        \node [align=flush center] at (8,-2)
        {
            (6\;10\;13\;15,1\;2\;3\;4\;5\;7\;8\;9\;11\;12\;14)
        };
	\end{tikzpicture}
	\caption{The unique set of compatible routes obtained from the arcs of the set of diagrams in \cref{fig:stacked} with their respective bipartitions.
    }
	\label{fig:compatiblediagrams}
\end{figure}

\begin{proof}[Proof of \cref{thm:f-vect_doppel}]
    We prove that $f_i(\doppel_c) = f_i(\Pi_{c+1})$ by induction on $k = c-i$.
    Recall that faces of codimension $k$ of $\Pi_{c+1}$ (resp. of $\doppel_c$) are in bijection with sets of $k$ stacked (resp. noncrossing and noninterfering) arc diagrams.
    The case $c-i=1$ follows by setting $k=1$ in \cref{prop:stacked-bij,prop:route-bij}, as multisets of cardinality $1$ are simply sets.
    Now, by grouping multisets of cardinality $c-i = k$ by their underlying set, we get that the number of multisets of $k$ stacked arc diagrams and the number of $k$ noncrossing and noninterfering arc diagrams are
    \[
        \sum_{j = 1}^{k} f_{c-j}(\doppel_c) \binom{k-1}{k-j}
        \quad\text{and}\quad
        \sum_{j = 1}^{k} f_{c-j}(\Pi_{c+1}) \binom{k-1}{k-j},
    \]
    respectively.
    Since these quantities are equal by \cref{prop:stacked-bij,prop:route-bij}, and, by induction, $f_{c-j}(\doppel_c) = f_{c-j}(\Pi_{c+1})$ for all $1 \leq j < k$, we deduce that $f_{c-k}(\doppel_c) = f_{c-k}(\Pi_{c+1})$.
\end{proof}

\begin{remark}
    The bijection between 
    multisets of nested arc diagrams and multisets of noninterfering arc diagrams
    is reminiscent of the bijection between noncrossing and nonnesting tableaux via the edge multiset of a Gelfand--Tsetlin pattern from \cite{pps}. It would be interesting to provide a theory that incorporates both. This seems all the more relevant since \cite{pps} was one of the motivations of~\cite{DKK}.
\end{remark}

\begin{example}
The construction above does not directly yield a bijection between the faces of $\doppel_c$ and the faces of $\Pi_{c+1}$.
For instance, starting with the nested set of bipartitions $(2,134),(12,34),(124,3)$ (that corresponds to a vertex of $\Pi_4$),
one obtains the multiset of arc diagrams corresponding to $\{\mskip-5mu\{ (12,34),(12,34),(24,13) \}\mskip-5mu\}$, which does not correspond uniquely to a vertex of $\doppel_3$.
This is illustrated in \cref{fig:non-bijection-of-faces} below.
\begin{figure}[h]
    \centering
    \begin{tikzpicture}[scale=0.85]
        \foreach \x in {1,2,...,5}{
            \pgfmathsetmacro\y{int(\x-1)}
            \node [draw,circle] (\x) at (\x,0) {\y};
        }
        \draw[thick,green]       (1) .. controls +(-0.1cm, 1.1cm) and +( 0.1cm, 1.1cm) .. (3) {};
        \draw[thick,green]       (3) .. controls +( 0.1cm,-0.7cm) and +(-0.1cm,-0.7cm) .. (4) {};
        \draw[thick,green]       (4) .. controls +(-0.1cm, 0.9cm) and +( 0.1cm, 0.9cm) .. (5) {};
        \draw[dashed,thick,teal] (1) .. controls +( 0.0cm, 1.0cm) and +( 0.0cm, 1.0cm) .. (3) {};
        \draw[dashed,thick,teal] (3) .. controls +( 0.0cm,-1.0cm) and +( 0.0cm,-1.0cm) .. (5) {};
        \draw[dotted,thick,orange]      (1) .. controls +( 0.0cm,-1.0cm) and +( 0.0cm,-1.0cm) .. (2) {};
        \draw[dotted,thick,orange]      (2) .. controls +( 0.0cm, 0.8cm) and +(-0.1cm, 0.8cm) .. (3) {};
        \draw[dotted,thick,orange]      (3) .. controls +(-0.1cm,-1.1cm) and +( 0.1cm,-1.1cm) .. (5) {};
	\end{tikzpicture}
    \hspace{.2\linewidth}
    \begin{tikzpicture}[scale=0.85]
        \foreach \x in {1,2,...,5}{
            \pgfmathsetmacro\y{int(\x-1)}
            \node [draw,circle] (\x) at (\x,0) {\y};
        }
        \draw[thick,green]       (1) .. controls +(-0.1cm, 1.1cm) and +( 0.1cm, 1.1cm) .. (3) {};
        \draw[dotted,thick,orange]       (3) .. controls +( 0.1cm,-0.7cm) and +(-0.1cm,-0.7cm) .. (4) {};
        \draw[dotted,thick,orange]       (4) .. controls +(-0.1cm, 0.9cm) and +( 0.1cm, 0.9cm) .. (5) {};
        \draw[dashed,thick,teal] (1) .. controls +( 0.0cm, 1.0cm) and +( 0.0cm, 1.0cm) .. (3) {};
        \draw[dashed,thick,teal] (3) .. controls +( 0.0cm,-1.0cm) and +( 0.0cm,-1.0cm) .. (5) {};
        \draw[dotted,thick,orange]      (1) .. controls +( 0.0cm,-1.0cm) and +( 0.0cm,-1.0cm) .. (2) {};
        \draw[dotted,thick,orange]      (2) .. controls +( 0.0cm, 0.8cm) and +(-0.1cm, 0.8cm) .. (3) {};
        \draw[thick,green]      (3) .. controls +(-0.1cm,-1.1cm) and +( 0.1cm,-1.1cm) .. (5) {};
	\end{tikzpicture}
    \caption{
        Left: A collection of $3$ stacked arc diagrams.
        Right: The unique collection of $3$ noncrossing and noninterfering arc diagrams using the same multiset of arcs.
        }
    \label{fig:non-bijection-of-faces}
\end{figure}
\end{example}

\begin{problem}
    Find an explicit dimension-preserving bijection between the faces of the $c$-dimensional mutoperhedron $\doppel_c$ and the $c$-dimensional permutohedron $\Pi_{c+1}$.
\end{problem}

\begin{example} \label{ex:other realization doppel 3}
    Let $P$ be the polytope inside the hyperplane $x_1+x_2+x_3+x_4=26$ of $\RR^4$ defined by inequalities
    \begin{equation}
        \label{eq:submodular-like}
        \sum_{i \in I} x_i \leq z_I,
    \end{equation}
    for the following pairs $(I,z_I)$:
    \[
        \begin{array}{cccccc}
            & (\{1\}, 11) & (\{2\}, 12) & (\{3\}, 12) & (\{4\}, 11) \\
            (\{1,2\}, 18) & (\{1,3\}, 18) & (\{1,4\}, 19) & (\{2,3\}, 19) & (\{2,4\}, 18) & (\{3,4\}, 18) \\
            & (\{1,2,3\}, 24) & (\{1,2,4\}, 25) & (\{1,3,4\}, 25) & (\{2,3,4\}, 24)
        \end{array}
    \]
    The polytope $P$ is combinatorially isomorphic to the $3$-dimensional mutoperhedron $\doppel_3$ shown in \cref{fig:doppel3}.
    Its vertices are:
    \begin{align*}
        & (6, 12, 6, 2), (11, 1, 7, 7), (11, 7, 1, 7), (11, 7, 6, 2),
        (11, 6, 1, 8), (11, 1, 6, 8),\\
        &(11, 6, 7, 2), (7, 11, 1, 7),
        (2, 12, 6, 6), (2, 6, 12, 6), (6, 12, 2, 6), (6, 6, 12, 2),\\
        &(2, 12, 7, 5), (5, 12, 7, 2), (2, 7, 12, 5), (5, 7, 12, 2),
        (7, 1, 11, 7), (6, 2, 12, 6), \\
        &(7, 1, 7, 11), (2, 6, 7, 11),
        (2, 7, 6, 11), (7, 7, 1, 11), (8, 6, 1, 11), (8, 1, 6, 11).
    \end{align*}
\end{example}

\begin{problem}
    Like $\Pi_{c+1}$ and $\doppel_3$ in \cref{ex:other realization doppel 3}, can $\doppel_c$ be realized inside $\RR^{c+1}$ with inequalities of the form \eqref{eq:submodular-like}?
    That is, are there constants $\set{z_I \in \RR}{\varnothing \subsetneq I \subseteq [n+1]}$ such that the polytope in $\RR^{c+1}$ satisfying the equality
    \[
        \sum_{i \in [c+1]} x_i = z_{[c+1]},
    \]
    and the inequalities in \eqref{eq:submodular-like}
    for $\varnothing \subsetneq I \subsetneq [c+1]$, is combinatorially isomorphic to $\doppel_c$? 
\end{problem}

\bibliographystyle{plain}
\bibliography{references.bib}
\label{s:biblio}

\end{document}